\definecolor{Myblue}{rgb}{0,0,0.6}  
\theoremstyle{definition}
\newtheorem{defn}{Definition}
\newtheorem{thm}[defn]{Theorem}
\newtheorem{prp}[defn]{Proposition}
\newtheorem{lem}[defn]{Lemma}
\newtheorem{cor}[defn]{Corollary}
\newtheorem{defnprp}[defn]{Definition and Proposition}
\newtheorem{rem}[defn]{Remark}
\newtheorem{example}[defn]{Example}
\newtheorem{conv}[defn]{Convention}
\numberwithin{equation}{section}
\numberwithin{defn}{section}
\numberwithin{figure}{section}
\newcommand{\pic}[2][0.75]{
	\begin{tikzpicture}[scale=0.5,baseline={([yshift=-.5ex]current bounding box.center)}]
	\node at (0,0) {\includegraphics[scale=#1]{figures/#2}};
	\end{tikzpicture}
}
\begin{document}
\def\it{\textit}
\def\mcA{\mathcal{A}}
\def\mcB{\mathcal{B}}
\def\mcC{\mathcal{C}}
\def\mcD{\mathcal{D}}
\def\mcE{\mathcal{E}}
\def\mcF{\mathcal{F}}
\def\mcI{\mathcal{I}}
\def\mcM{\mathcal{M}}
\def\mcN{\mathcal{N}}
\def\mcS{\mathcal{S}}
\def\mcZ{\mathcal{Z}}
\def\mcACA{{_A \mcC_A}}
\def\mcACB{{_A \mcC_B}}
\def\mcBCA{{_B \mcC_A}}
\def\mcBCB{{_B \mcC_B}}
\def\mcD{\mathcal{D}}
\def\euT{\mathscr{T}}
\def\opk{\Bbbk}
\def\opA{\mathbb{A}}
\def\opB{\mathbb{B}}
\def\opC{\mathbb{C}}
\def\opF{\mathbb{F}}
\def\opR{\mathbb{R}}
\def\opZ{\mathbb{Z}}
\def\opid{\mathbbm{1}}
\def\a{\alpha}
\def\b{\beta}
\def\g{\gamma}
\def\d{\delta}
\def\D{\Delta}
\def\vareps{\varepsilon}
\def\l{\lambda}
\def\abar{\overline{\a}}
\def\bbar{\overline{\b}}
\def\gbar{\overline{\g}}
\def\ddbar{\overline{\d}}
\def\mubar{\overline{\mu}}
\def\pibar{{\bar{\pi}}}
\def\taub{\bar{\tau}}

\def\opp{{\operatorname{op}}}
\def\id{\operatorname{id}}
\def\im{\operatorname{im}}
\def\Hom{\operatorname{Hom}}
\def\End{\operatorname{End}}
\def\tr{\operatorname{tr}}
\def\ev{\operatorname{ev}}
\def\coev{\operatorname{coev}}
\def\evt{\widetilde{\operatorname{ev}}}
\def\coevt{\widetilde{\operatorname{coev}}}
\def\Id{\operatorname{Id}}
\def\Vect{\operatorname{Vect}}
\def\loc{{\circ}}
\def\orb{{\operatorname{orb}}}
\def\coker{{\operatorname{coker}}}
\def\Ind{{\operatorname{Ind}}}
\def\Irr{{\operatorname{Irr}}}
\def\Dim{\operatorname{Dim}}
\def\Indz{\operatorname{Ind}^{\circ}}
\def\Iz{I^{\circ}}
\def\Izbar{\overline{I^{\circ}}}

\newcommand{\taubar}[1]{\overline{\tau_{#1}}}

\def\Dhalf{D^{\scalebox{.5}{1/2}}}
\def\dhalf{d^{\scalebox{.5}{1/2}}}
\newcommand{\dhalfob}[1]{d^{\scalebox{.5}{1/2}}_{#1}}
\newcommand{\dquartob}[1]{d^{\scalebox{.5}{1/4}}_{#1}}

\def\Lra{\Leftrightarrow}
\def\Ra{\Rightarrow}
\def\ra{\rightarrow}
\def\lra{\leftrightarrows}
\def\xra{\xrightarrow}

\newcommand{\eqrefO}[1]{\hyperref[eq:O#1]{\text{(O#1)}}}
\newcommand{\eqrefT}[1]{\hyperref[eq:T#1]{\text{(T#1)}}}

\title{Condensation inversion and Witt equivalence via generalised orbifolds}

\author{
Vincentas Mulevi\v{c}ius\\[0.5cm]
\normalsize{\texttt{\href{mailto:mulevicius@mpim-bonn.mpg.de}{mulevicius@mpim-bonn.mpg.de}}}\\[0.1cm]
{\normalsize\slshape Max-Planck-Institut f\"ur Mathematik, Bonn, Germany}\\[-0.1cm]
}

\date{}
\maketitle

\begin{abstract}
In~\cite{MR1} it was shown how a so-called orbifold datum $\opA$ in a given modular fusion category (MFC) $\mcC$ produces a new MFC $\mcC_\opA$.
Examples of these associated MFCs include condensations, i.e.\ the categories $\mcC_B^\loc$ of local modules of a separable commutative algebra $B\in\mcC$.
In this paper we prove that the relation $\mcC \sim \mcC_\opA$ on MFCs is the same as Witt equivalence.
This is achieved in part by providing one with an explicit construction for inverting condensations, i.e.\ finding an orbifold datum $\opA$ in $\mcC_B^\loc$ whose associated MFC is equivalent to $\mcC$.
As a tool used in this construction we also explore what kinds of functors $F\colon\mcC\ra\mcD$ between MFCs preserve orbifold data.
It turns out that $F$ need not necessarily be strong monoidal, but rather a `ribbon Frobenius' functor, which has weak monoidal and weak comonoidal structures, related by a Frobenius-like property.
\end{abstract}
\newpage

\setcounter{tocdepth}{2}

\tableofcontents
\newpage

\section{Introduction}
\label{sec:introduction}
The notion of a \textit{modular fusion category (MFC)} was introduced by Turaev~\cite{Tu1} in order to generalise the 3-manifold invariants~\cite{RT}, obtained from the categories of representations of modular Hopf algebras, in particular quantum groups at roots of unity.
This resulted in defining a MFC to be a ribbon fusion category $\mcC$ with a non-degenerate braiding.
The 3-manifold invariants were later collected into what is now called the Reshetikhin--Turaev construction of a topological quantum field theory (TQFT) (see e.g.\ \cite[Ch.\,IV]{Tu2}), which is a symmetric monoidal functor
\begin{equation}
\label{eq:ZRT}
Z^{\operatorname{RT}}_\mcC\colon
\widehat{\operatorname{Bord}}{}^{\operatorname{rib}}_3(\mcC) \ra \Vect_\opk \, ,
\end{equation}
where the source category is a central extension of the category of 3-dimensional bordisms with embedded ribbon graphs whose components are labelled with objects/morphisms of the input MFC $\mcC$.

\medskip

MFCs also appear in the study of conformal field theories (where categories of representations of rational vertex operator algebras provide examples of them) and 2-dimensional topological phases of matter (where the datum of a MFC describes point excitations (or anyons) of the system, their fusion and spin statistics).
Altogether, they constitute interesting mathematical objects, whose classification is an important problem, see e.g.~\cite{BNRW1,BNRW2,Cr,EG,GM,Gr,HRW,JMS,RSW}.
Also important are the constructions allowing one to obtain new MFCs out of a given one.
They can be seen e.g.\ as a way to `engineer' new topological orders from a one that can already be implemented in a lab.
A myriad of such constructions arise as instances of Hopf monads~\cite{CZW}.
In this paper we will look into another construction developed in~\cite{CRS3,MR1,MR2,CMRSS2}, called a \textit{generalised orbifold}.
The definition of a generalised orbifold is best motivated by the study of defect TQFTs~\cite{DKR,CR,Ca,CRS1}, which we now recall.

\subsubsection*{Defect TQFTs and generalised orbifolds}
A \textit{defect TQFT} is a symmetric monoidal functor $Z\colon\operatorname{Bord}^{\operatorname{def}}_n(\mathbb{D})\ra\Vect_\opk$, where the source category is that of stratified $n$-dimensional bordisms, whose strata carry labels from a set $\mathbb{D}$.
In~\cite{CRS2} the Reshetikhin--Turaev TQFT~\eqref{eq:ZRT} was extended to an example of a defect TQFT in dimension 3, which produced the symmetric monoidal functor
\begin{equation}
\label{eq:Zdef}
Z^{\operatorname{def}}_\mcC\colon
\widehat{\operatorname{Bord}}{}^{\operatorname{def}}_3(\mathbb{D}^\mcC) \ra \Vect_\opk \, ,
\end{equation}
The penultimate step in defining this defect TQFT is to implement surface defects, which can be done by a 2-dimensional state-sum construction internal to the Reshetikhin--Turaev TQFT~\cite{KSa}.
It has the following intuitive description due to~\cite{FSV}: If a surface defect is such that punching a contractible hole in it does not change the invariant, one can replace the defect with a ribbon graph, obtained by punching enough holes and taking the deformation retract, at which point the resulting bordism with an embedded ribbon graph can be readily evaluated with $Z^{\operatorname{RT}}_\mcC$.
The construction is choice independent if the lines and the points in the resulting graph are labelled by a symmetric Frobenius algebra object $A$ in $\mcC$ and its (co)multiplication morphisms.
This procedure is illustrated in Figure~\ref{fig:ribbonisation}.
In it, the $\psi$-labelled points contain information about the boundary condition on the rim of the hole encoded by a morphism $\psi\colon\opid\ra A$ in $\mcC$ (with $\psi_l$, $\psi_r$ defined as in~\eqref{eq:psi_lr_defs} below), such that the Frobenius algebra $A$ satisfies
\begin{equation}
\label{eq:intro_separability_cond}
\pic[1.25]{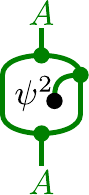} =
\pic[1.25]{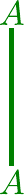} \, ,
\end{equation}
in which case it is called \textit{separable}, see Section~\ref{subsec:separability}.
\begin{rem}
\label{rem:psi_insertions}
The separability condition~\eqref{eq:intro_separability_cond} was already used in my PhD thesis~\cite{Mul}, where some of the results of~\cite{CRS2,CRS3,MR1,CMRSS2} were also adapted to this setting.
It generalises the use of the `window element' in the state-sum construction of 2-dimensional TQFTs as described in~\cite{LP}.
\end{rem}
\noindent
One can also include line defects, labelled by modules of symmetric separable Frobenius algebras, and point defects, labelled by module morphisms.
These data then constitute the label set $\mathbb{D}^\mcC$ in~\eqref{eq:Zdef}.
\begin{figure}
%\captionsetup{format=plain, indention=0.5cm}
\centering
\pic[1.25]{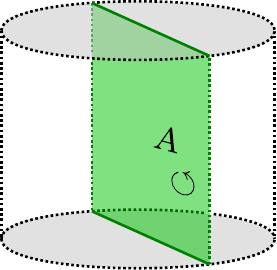} $\rightsquigarrow$
\pic[1.25]{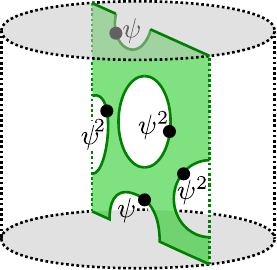} $\rightsquigarrow$
\pic[1.25]{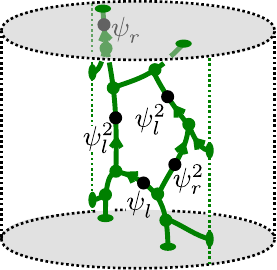}
\caption{
Punching holes in a surface defect and taking the retract yields the internal state sum construction.
}
\label{fig:ribbonisation}
\end{figure}

\medskip

The idea behind generalised orbifolds is to similarly perform a $n$-dimensional state-sum construction internal to a given $n$-dimensional defect TQFT.
In the case of $Z^{\operatorname{def}}_\mcC$ this means starting with an unstratified 3-bordism and then stratifying it by adding 3-dimensional solid balls, whose interior is labelled with the bulk theory of $Z^{\operatorname{RT}}_\mcC$ of the defect TQFT $Z^{\operatorname{def}}_\mcC$ (this should be compared to punching 3-dimensional holds in the bulk).
Taking the retract of the remaining bulk results in a network of defects, which looks like a 3-dimensional `sponge' or `foam' and can be evaluated with $Z^{\operatorname{def}}_\mcC$.
To make this construction independent of the stratification constituting the foam, one labels its components with the entries of a so-called \textit{orbifold datum}~\cite{CRS1,CRS3}.
This is a tuple $\opA=(A,T,\a,\abar,\psi,\phi)$, where $(A,\psi)$ is a symmetric separable Frobenius algebra that labels surfaces in the foam, $T$ is an $A$-$(A\otimes A)$-bimodule which labels the lines, $\a,\abar\colon T \otimes T \ra T \otimes T$ are certain morphisms labelling the two points (two labels for two possible orientations) and $\phi$ is a normalisation constant.
The 3-dimensional analogue of the Frobenius moves in dimension 2 are the so-called bubble, lune and triangle moves (see e.g.\ \cite{CMRSS1}).
Given various orientations of the strata in these moves, they yield 8 independent conditions on $\opA$, which can be written as algebraic identities listed in equations~\eqrefO{1}--\eqrefO{8} below.
Since the construction of $Z^{\operatorname{def}}_\mcC$ only depends on the MFC $\mcC$, we can call $\opA$ an orbifold datum in $\mcC$ and treat it as a purely algebraic entity.

\medskip

In dimension 3, the generalised orbifold TQFT was further developed to itself include framed line and point defects, or equivalently, embedded ribbon graphs \cite{CMRSS1}.
The intuition for the construction is as before: punch holes in the bulk (i.e.\ away from the ribbon graph) and take the retract so that upon evaluation the graph becomes trapped (or embedded) within the foam.
The strands of such ribbon graphs carry labels for lines, that can be embedded into the 2-strata of the foam, and those for points, at which a strand intersects the 1-strata of the foam.
The labels for strands and points must be such that upon evaluation the choice of embedding of the ribbon graph into the foam does not matter.
The strand and point labels are then collected into a ribbon category, which for the case of defect TQFT $Z^{\operatorname{def}}_\mcC$ we denote by $\mcC_\opA$, where $\opA=(A,T,\a,\abar,\psi,\phi)$ is the orbifold datum in $\mcC$~\cite{MR1}.
The objects of $\mcC_\opA$ are tuples $(M,\tau_1,\tau_2,\taubar{1},\taubar{2})$, where $M$ is an $A$-$A$-bimodule (to label a line defect within an $A$-labelled surface defect in $Z^{\operatorname{def}}_\mcC$) and $\tau_i$, $\taubar{i}$, $i\in\{1,2\}$ are $A$-$A\otimes A$-bimodule morphisms (to label intersection points with 1-strata in the foam), which satisfy a set of algebraic identities listed in the equations~\eqrefT{1}--\eqrefT{7} below (encoding the conditions for independence on the embedding in the foam).
Altogether this yields a TQFT~\cite{CMRSS2}
\begin{equation}
\label{eq:Zorb}
Z^{\operatorname{orb}\opA}_\mcC\colon
\widehat{\operatorname{Bord}}{}^{\operatorname{rib}}_3(\mcC_\opA) \ra \Vect_\opk \, .
\end{equation}
Of central importance to this paper is the following result (see~\cite[Thm.\,3.17]{MR1}, restated as Theorem~\ref{thm:CA_is_MFC} below): $\mcC_\opA$ is a multifusion category, which is a MFC in case it is fusion (i.e.\ in case the monoidal unit $\opid_{\mcC_\opA}$ is a simple object).
As shown in~\cite[Thm.\,4.1]{CMRSS2}, in this case one also has the isomorphism of TQFTs
\begin{equation}
\label{eq:intro_Zorb_equiv_ZRT}
Z^{\operatorname{orb}\opA}_\mcC \cong Z^{\operatorname{RT}}_{\mcC_\opA} \, .
\end{equation}

\subsubsection*{Condensation inversion}
Although best understood in terms of TQFTs, the construction of the MFC $\mcC_\opA$ from an orbifold datum $\opA$ in a MFC $\mcC$ can be seen as a purely algebraic one.
It is possible for $\mcC_\opA$ to have higher rank than $\mcC$, which makes this construction a potentially useful tool for classifying MFCs.
In particular, in~\cite[Sec.\,4.2]{CRS3}, \cite[Sec.\,4.2]{MR1} an example of an orbifold datum in the trivial MFC $\Vect_\opk$ of finite dimensional vector spaces was constructed from an arbitrary spherical fusion category $\mcS$ with non-vanishing global dimension, whose associated MFC is the Drinfeld centre $\mcZ(\mcS)$.
Furthermore, in~\cite{MR2} examples of orbifold data in rank~3 MFCs of Ising type were found, which produce rank~11 MFCs.
Both of these examples are instances of inverting the condensation construction: given a \textit{condensable algebra} $B\in\mcC$ (by which we mean a commutative haploid (or connected) symmetric separable Frobenius algebra), its \textit{condensation} is the category $\mcC_B^\loc$ of local modules of $\mcC$, which is known to be a MFC~\cite{KO} and typically has a lower rank than $\mcC$.
Concerning the aforementioned examples, Drinfeld centres $\mcZ(\mcS)$ posses a so-called Lagrangian algebra, whose condensation is equivalent to $\Vect_\opk$ (see~\cite[Sec.\,4.2]{DMNO}), whereas one of the Ising type MFCs arises as the condensation of the $E_6$ algebra in the category $\mcC(sl(2),10)$ of integrable highest weight representations of the affine Lie algebra $\widehat{sl}(2)$ at level 10 (having rank 11).
One of the main results of this paper is proving this result in general (see Theorem~\ref{thm:cond_inv_orb_datum}):
\begin{thm}
\label{thm:intro_inv_orb_datum}
Let $B$ be a condensable algebra in a MFC $\mcC$.
Then there is a an orbifold datum $\opA$ in the condensation $\mcC_B^\loc$ such that $(\mcC_B^\loc)_\opA \simeq \mcC$.
\end{thm}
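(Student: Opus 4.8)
Write $\mcD:=\mcC_B^\loc$ for brevity. The plan is to first realise the category $\mcC_B$ of all $B$-modules as the right ambient object: since $B$ is condensable, $\mcC_B$ is a spherical fusion category with $\Dim\mcC_B=\Dim\mcC/\dim B\neq 0$, and the braiding of $\mcC$ induces a fully faithful ribbon functor $\mcD\ra\mcZ(\mcC_B)$ whose composite with the forgetful functor $\mcZ(\mcC_B)\ra\mcC_B$ is the canonical inclusion $\mcD\hookrightarrow\mcC_B$; equivalently, $\mcC_B$ becomes a fusion category module over the MFC $\mcD$ via the regular action $D\cdot M=D\otimes_B M$. Note also that the forgetful functor $U\colon\mcD\ra\mcC$ (the restriction of $\mcC_B\ra\mcC$) is, by the analysis of ribbon Frobenius functors in the preceding section and the fact that $(B,\psi)$ is symmetric separable Frobenius, an example of such a functor; this is what makes the condensation $\mcC\rightsquigarrow\mcD$ itself a generalised orbifold, i.e.\ there is a (condensation) orbifold datum $\opA_B$ in $\mcC$ with underlying algebra $B$ and $\mcC_{\opA_B}\simeq\mcD$, and the content of the theorem is precisely that this orbifold datum can be inverted.

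\textbf{Step 1: the orbifold datum.}
I would build $\opA=(A,T,\a,\abar,\psi,\phi)$ in $\mcD$ as the ``relative over $\mcD$'' analogue of the spherical-fusion construction of \cite[Sec.\,4.2]{MR1}: choose a $\mcD$-module generating object $N$ of $\mcC_B$ (e.g.\ $N=B\otimes X$ with $X$ a sum of representatives of $\Irr\mcC$), let $A$ be essentially the internal endomorphism algebra $\underline{\End}_{\mcD}(N)$ of $N$, which inherits a symmetric separable Frobenius structure $(A,\psi)$ from the Frobenius structure of $B$ and the sphericality of $\mcC_B$; let $T$ encode, internal to $\mcD$ and transported along $\mcC_B\simeq\mcD_A$, the tensor product $\otimes_B$ of $\mcC_B$, and let $\a,\abar$ be assembled from the associativity constraint (the $6j$-symbols) of $\mcC_B$ together with the half-braidings of the central functor $\mcD\ra\mcZ(\mcC_B)$; fix $\phi$ by the usual normalisation in terms of $\Dim\mcC_B$. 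Verifying the orbifold conditions (O1)--(O8) then reduces to the pentagon and triangle coherence of $\mcC_B$, its sphericality, and the hexagon-type compatibility of the $\mcD$-central structure. \emph{This verification is the main obstacle}: it is a lengthy exercise in the graphical calculus of $\mcC$ (respectively $\mcC_B$), of the same nature as---but more involved than---the one carried out in \cite[Sec.\,4.2]{MR1}, the additional work being exactly the bookkeeping of the $\mcD$-valued rather than $\opk$-valued multiplicity spaces, which is where the ribbon Frobenius functor formalism of the earlier sections is used.

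\textbf{Step 2: identifying the orbifold category.}
By Theorem~\ref{thm:CA_is_MFC}, $\mcD_\opA$ is a multifusion category. By construction it is the relative Drinfeld centre $\mcZ_\mcD(\mcC_B)$, that is, the M\"uger centraliser of the image of $\mcD$ inside $\mcZ(\mcC_B)$: an object of $\mcD_\opA$ is an $A$-$A$-bimodule in $\mcD$ together with $\tau$-data satisfying (T1)--(T7), and under $\mcC_B\simeq\mcD_A$ this is precisely the datum of an object of $\mcC_B$ with a half-braiding against the $\mcD$-action. I would then produce the comparison functor $\mcC\ra\mcD_\opA$ sending $X$ to $B\otimes X$ equipped with the $\tau$-data coming from the braiding $c_{-,B\otimes X}$ of $\mcC$, and check it is braided and ribbon, fully faithful (a $B$-module map is compatible with the induced half-braiding exactly when it is a morphism of $\opA$-decorated bimodules, so the Hom-spaces match), and essentially surjective. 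For the last point a dimension count suffices: since the image of $\mcD$ in $\mcZ(\mcC_B)$ is a non-degenerate braided subcategory,
\[
\Dim\mcD_\opA=\Dim\mcZ_\mcD(\mcC_B)=\frac{\Dim\mcZ(\mcC_B)}{\Dim\mcD}=\frac{(\Dim\mcC_B)^2}{\Dim\mcD}=\frac{(\Dim\mcC/\dim B)^2}{\Dim\mcC/(\dim B)^2}=\Dim\mcC.
\]
Equivalently, one may invoke the description $\mcZ(\mcC_B)\simeq\mcC\boxtimes\overline{\mcD}$ of \cite{DMNO}, under which the canonical copy of $\mcD$ is the non-degenerate tensor factor $\overline{\mcD}$, so that its centraliser is $\mcC\boxtimes\Vect\simeq\mcC$. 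Either way $\mcD_\opA\simeq\mcC$ as MFCs; in particular $\mcD_\opA$ is fusion since $\mcC$ is. This proves $(\mcC_B^\loc)_\opA\simeq\mcC$, and by construction $\opA$ is the inverse of the condensation orbifold datum $\opA_B$, which is the sense in which condensations are inverted.
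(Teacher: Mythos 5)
Your strategy (realise the sought orbifold datum as an internal-End algebra of a $\mcC_B^\loc$-module generator of $\mcC_B$, and identify the resulting orbifold category with the centraliser of the image of $\mcC_B^\loc$ in $\mcZ(\mcC_B)$) is a plausible route, and it is close in spirit to the enriched-centre/Witt-equivalence picture that the paper itself discusses in Section~\ref{sec:Witt_equiv}. But as a proof it has two genuine gaps. First, Step 1 is not carried out: you yourself flag the verification of \eqrefO{1}--\eqrefO{8} for the proposed datum as ``the main obstacle'' and defer it. This is precisely the core technical content of the theorem, and it is not a routine reduction to coherence of $\mcC_B$ -- note that the paper never verifies these identities by hand for the inversion datum; it circumvents them entirely by proving a general transport theorem (Proposition~\ref{defnprp:FA_orb_datum}) for ribbon Frobenius functors compatible with an orbifold datum, and then applying it to the local induction functor $\Iz\colon\mcC\ra\mcC_B^\loc$ and the Morita transport $\opid_C$ of the trivial orbifold datum along the Kirby-colour algebra $C^*\otimes C$ (Corollary~\ref{cor:inv_orb_datum_constr}, Theorem~\ref{thm:cond_inv_orb_datum}). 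Second, the assertion in Step 2 that $(\mcC_B^\loc)_\opA$ is ``by construction'' the relative centre $\mcZ_{\mcC_B^\loc}(\mcC_B)$ is not by construction at all: objects of $(\mcC_B^\loc)_\opA$ are $A$-$A$-bimodules with $T$-crossings satisfying \eqrefT{1}--\eqrefT{7}, which under $\mcC_B\simeq(\mcC_B^\loc)_A$ correspond to module endofunctor data, and matching this with half-braiding data is a substantive theorem. The paper proves the analogous identification only in Section~\ref{sec:Witt_equiv} (Proposition~\ref{prp:CCA-ZCA1_equiv}), where it requires the factorisation theorem for MFCs, a page of computation converting half-braidings with $(T_2,\a)$ into $\tau_2$-data, and extra care because $\mcC_\opA^1$ is a priori only multifusion.

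A further point: the tools you invoke in Step 2 (the M\"uger centraliser dimension formula, the factorisation $\mcZ(\mcC_B)\simeq\mcC\boxtimes\widetilde{\mcC_B^\loc}$ of \cite{DMNO}, full faithfulness plus equal global dimension implying equivalence) are characteristic-zero results; the paper restricts to $\operatorname{char}\opk=0$ only in Section~\ref{sec:Witt_equiv}, whereas Theorem~\ref{thm:intro_inv_orb_datum} is proved over any algebraically closed field. The paper's route avoids this by exhibiting the equivalence $\mcC\simeq(\mcC_B^\loc)_\opA$ explicitly: the functor $\Iz_C$ of~\eqref{eq:IzC_functor} is shown to be fully faithful and essentially surjective by direct computations with the averaging map and pipe objects (Lemma~\ref{lem:Iz_is_equiv}, Appendix~\ref{appsec:proof_Iz_is_equiv}), with no centraliser or dimension-count argument. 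So your outline could likely be completed (at least in characteristic zero, essentially by re-deriving the content of Section~\ref{sec:Witt_equiv} and of \cite[Thm.\,5.3]{KZ3} in this setting), but as it stands both key steps are asserted rather than proved.
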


What is more, we give an explicit construction of the orbifold datum $\opA$ as in Theorem~\ref{thm:intro_inv_orb_datum}, which is again inspired by the orbifold TQFT.
To explain it we note that condensations were shown in~\cite[Sec.\,3.4]{CRS3}, \cite[Sec.\,4.1]{MR1} to also be instances of the MFCs associated to orbifold data: a condensable algebra $B\in\mcC$ yields an orbifold datum $\opB$ in $\mcC$ such that $\mcC_\opB \simeq \mcC_B^\loc$.
Together with the isomorphism of TQFTs~\eqref{eq:intro_Zorb_equiv_ZRT} this helps to think of $\opA$ as labelling a `foam within the $\opB$-foam'\footnote{This can be defined rigorously since for the case of orbifold data $\opB$ obtained from condensable algebras, the TQFT $Z^{\operatorname{orb}\opB}_\mcC$ was generalised to include defects~\cite{KMRS}.}.
Our ansatz for $\opA$ is then comprised of gap defects, at which a $\opB$-foam first terminates and then starts again.
For example the 2-strata in the $\opA$-foam will be labelled by the surface gap defects as in Figure~\ref{fig:gap_defect}, etc.
Applying the internal state sum construction using $\opA$ then has the effect of breaking the $\opB$-foam into contractible pieces, at which point it is clear that the resulting bulk theory is precisely $Z^{\operatorname{RT}}_{\mcC}$ and so the condensation has been inverted.

There is an extra nuisance when trying to express the constituents of $\opA$ as labels for defects in the defect TQFT $Z^{\operatorname{def}}_{\mcC_B^\loc}$, for example the surface defect in Figure~\ref{fig:gap_defect} should be labelled by a symmetric separable Frobenius algebra in $\mcC_B^\loc$.
This can be achieved by the `punching holes' algorithm as explained above: in this case it looks like connecting the two pieces of foam by a tunnel.
Doing so is not however an invertible procedure: intuitively this undermines the idea of breaking the $\opB$-foam into contractible pieces.
The idea can be saved if one decorates the tunnel by surrounding it with a line defect labelled with the Kirby colour, i.e.\ the object $C=\bigoplus_{i\in\Irr_\mcC} i\in\mcC$ with a point insertion $d = \bigoplus_{i\in\Irr_\mcC}(\dim_\mcC i)\cdot\id_i$, see Figure~\ref{fig:gap_tunnel}.
This is because the tunnel filled with $\opB$-foam can be retracted to a single $B$-labelled line and the extra line defect has the effect of cutting it due to what we call the `scissors identity' (see equations~\eqref{eq:scissors_id} and~\eqref{eq:scissors_on_B} below).
In the end, the symmetric separable Frobenius $A$ labelling the 2-strata in $\opA$ is obtained by localising the induced module $C^*\otimes C\otimes B$ and has the interpretation of labelling tubes within the $\opB$-foam having two $C$-labelled inner lines with opposite orientations, see Figure~\ref{fig:A_as_tubes}.
We note that the necessity to add the dimension factors $d$ when using the Kirby colour is what motives using the separability condition~\eqref{eq:intro_separability_cond}.
One can similarly find the expressions for the other entries in $\opA$, the explicit end result is listed in the equations \eqref{eq:inv-orb-dat_A}--\eqref{eq:inv-orb-dat_alphas} below.
\begin{figure}
\captionsetup{format=plain, indention=0.5cm}
%\centering
\begin{subfigure}[b]{0.32\textwidth}
	\centering
	\pic[1.25]{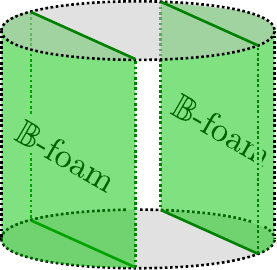}
	\caption{}
	\label{fig:gap_defect}
\end{subfigure}
\begin{subfigure}[b]{0.32\textwidth}
	\centering
	\pic[1.25]{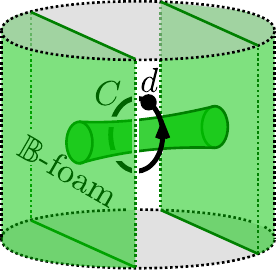}
	\caption{}
	\label{fig:gap_tunnel}
\end{subfigure}
\begin{subfigure}[b]{0.32\textwidth}
	\centering
	\pic[1.25]{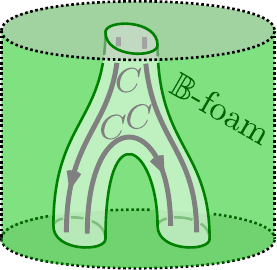}
	\caption{}
	\label{fig:A_as_tubes}
\end{subfigure}
\caption{
(a) A gap between two regions filled with $\opB$-foam is a surface defect in $Z^{\operatorname{orb}\opB}_\mcC\cong Z^{\operatorname{RT}}_{\mcC_B^\loc}$.
(b) Adding a tunnel, surrounded with a line labelled by Kirby colour is the analogue of punching a hole in the gap defect (cf.\ Figure~\ref{fig:ribbonisation}).
(c) As an object in $\mcC_B^\loc$, the Frobenius algebra corresponding to the gap defect can be interpreted as labelling hollow tubes in the $\opB$-foam with two $C$-lines inside.
}
\label{fig:gap}
\end{figure}

\subsubsection*{Ribbon Frobenius functors}
The above explanation of how the orbifold datum $\opA$ as in Theorem~\ref{thm:intro_inv_orb_datum} arises is more intuitive than precise and so in principle we should prove that it indeed satisfies the identities~\eqrefO{1}--\eqrefO{8}, as well as that the associated MFC is equivalent to $\mcC$.
This can be done by direct computations, but it seems more far reaching to ask the following more general question: having two MFCs $\mcC$ and $\mcD$ and an orbifold datum $\opA$ in $\mcC$, what kind of functors $F\colon\mcC\ra\mcD$ preserve $\opA$ (i.e.\ when can one define a natural orbifold datum $F(\opA)$ in $\mcD$) and how are the categories $\mcC_\opA$ and $\mcD_{F(\opA)}$ related?
One answer is given by ribbon functors $\mcC\ra\mcD$, but they are rather restrictive (in fact such functors are always full embeddings, see~\cite[Cor.\,3.26]{DMNO}).
A candidate for a more interesting answer is given by what we call \textit{ribbon Frobenius functors} $F\colon\mcC\ra\mcD$, which have weak monoidal and weak comonoidal structures, compatible with each other by a Frobenius-like relation and preserving braidings and twists in an appropriate sense.
They are slight generalisations of similar functors studied in~\cite{Sz1, Sz2, DP, MS} (mostly for the case of $\mcC$ and $\mcD$ being monoidal only), where they were shown to automatically preserve Frobenius algebras and, in case a certain separability condition applies, also those morphisms in $\mcC$ that can be depicted by connected string diagrams.
The morphisms appearing on both sides of~\eqrefO{1}--\eqrefO{8} happen to be given by connected string diagrams, and furthermore, the objects involved are always $A$-$A$-bimodule morphisms, so that the aforementioned separability condition can be exchanged for a weaker condition which we call compatibility with the orbifold datum $\opA$.
The promising result for finding new instances of orbifold data is then (see Proposition~\ref{defnprp:FA_orb_datum}):
\begin{thm}
\label{thm:intro_FA_orb_datum}
Let $\mcC$, $\mcD$ be MFCs, $\opA$ an orbifold datum and $F\colon\mcC\ra\mcD$ a ribbon Frobenius functor compatible with $\opA$.
Then $F(\opA)$ is an orbifold datum in $\mcD$ and one has a braided functor $F_\opA\colon\mcC_\opA \ra \mcD_{F(\opA)}$.
\end{thm}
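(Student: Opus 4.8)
The strategy is to push $\opA$ through $F$ entry by entry and then verify the orbifold axioms by reducing every identity to the effect of a ribbon Frobenius functor on a connected string diagram, which in the merely monoidal setting is controlled by \cite{Sz1,Sz2,DP,MS}. First I would assemble the transport lemmas. A ribbon Frobenius functor $F$ sends a symmetric separable Frobenius algebra $A$ to a Frobenius algebra $F(A)$, whose multiplication is $F(\mu_A)$ precomposed with the lax structure map $F(A)\otimes F(A)\ra F(A\otimes A)$ and whose comultiplication is $F(\Delta_A)$ postcomposed with the oplax map $F(A\otimes A)\ra F(A)\otimes F(A)$; symmetry is inherited because $F$ is compatible with dualities and twists, and separability holds up to the separability scalar of $F$. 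Likewise an $A$-$A$-bimodule $M$ becomes an $F(A)$-$F(A)$-bimodule $F(M)$ and an $A$-$(A\otimes A)$-bimodule $T$ becomes an $F(A)$-$(F(A)\otimes F(A))$-bimodule $F(T)$, the actions being $F$ of the actions composed with the lax structure maps; and the relative tensor products over $A$ occurring throughout the axioms — which for separable Frobenius $A$ are images of idempotents — are carried by $F$ to the corresponding relative tensor products over $F(A)$ up to the separability scalar of $F$, since $F$ commutes with idempotent splittings up to that scalar. With this in place, set $F(\opA):=(F(A),F(T),\widehat{F(\a)},\widehat{F(\abar)},\psi',\phi')$, where the hat denotes the image morphism conjugated by the appropriate (co)lax structure maps so as to land in the correct bimodule hom-space, $\psi'$ is $\psi$ composed with the unit map $\opid_\mcD\ra F(\opid_\mcC)$, and $\phi'$ is $\phi$ rescaled by the power of the separability scalar of $F$ dictated by compatibility with $\opA$.

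The bulk of the argument is then checking $\eqrefO{1}$--$\eqrefO{8}$ for $F(\opA)$. The point is that both sides of each of these equations are morphisms of $A$-$A$-bimodules presented by \emph{connected} string diagrams built from the (co)multiplication of $A$, the actions on $T$, the morphisms $\a,\abar,\psi$ and the ribbon structure of $\mcC$; applying a ribbon Frobenius functor to such a diagram reproduces the same diagram in $\mcD$ — with every strand, algebra and module replaced by its $F$-image and the structure maps of $F$ inserted along the outer boundary — up to a scalar determined purely by the combinatorics of the diagram (morally one factor of the separability scalar per independent cycle). Compatibility of $F$ with $\opA$ is exactly the hypothesis that renders this scalar constant on the relevant class of $A$-$A$-bimodule morphisms, so that the discrepancy between the two sides of $\eqrefO{i}$ is a single, fixed power of it; this power is absorbed once and for all into $\phi'$, and the choices across $\eqrefO{1}$--$\eqrefO{8}$ are mutually consistent because the moves are related (cf.\ the bubble, lune and triangle moves). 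Applying $F$ to the $\opA$-identity valid in $\mcC$ thus yields the $F(\opA)$-identity in $\mcD$.

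To build $F_\opA$ I would repeat the recipe on objects and morphisms of $\mcC_\opA$: an object $(M,\tau_1,\tau_2,\taubar{1},\taubar{2})$ is sent to $(F(M),\widehat{F(\tau_1)},\widehat{F(\tau_2)},\widehat{F(\taubar{1})},\widehat{F(\taubar{2})})$, which is an object of $\mcD_{F(\opA)}$ because the transport lemmas make $F(M)$ a bimodule and the $\widehat{F(\tau_i)}$ bimodule morphisms, while the relations $\eqrefT{1}$--$\eqrefT{7}$ are again equalities of connected-diagram morphisms and hence pass through $F$ by the scalar argument above (the relevant scalars being once more fixed by compatibility with $\opA$ and already accounted for in $\phi'$); a morphism of $\mcC_\opA$, i.e.\ an $A$-$A$-bimodule map intertwining the $\tau$'s — conditions that are themselves connected diagrams — is sent to $F$ of it, and functoriality is immediate from that of $F$. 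Finally, the tensor product, unit, associativity constraint, braiding and twist of $\mcC_\opA$ are all assembled from $T$, the $\tau$'s and the ribbon structure of $\mcC$, so the (co)lax structure maps of $F$ induce comparison morphisms $F_\opA(X)\otimes_{F(\opA)}F_\opA(Y)\ra F_\opA(X\otimes_\opA Y)$ and $\opid_{\mcD_{F(\opA)}}\ra F_\opA(\opid_{\mcC_\opA})$; the pentagon and unit axioms and compatibility with the braidings and twists follow, yet again, from the connected-diagram transport principle together with the fact that $F$ is ribbon.

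The step I expect to be the main obstacle is precisely the scalar bookkeeping underlying the previous two paragraphs: formulating and proving the statement ``$F$ of a connected string diagram equals an explicit scalar times the transported diagram'' in the generality actually needed here — with relative tensor products over $A$, braidings, twists and dualities all present, not merely plain monoidal composites — and then pinning down which power of the separability scalar occurs in each of $\eqrefO{1}$--$\eqrefO{8}$ and $\eqrefT{1}$--$\eqrefT{7}$ and verifying that a single rescaling $\phi\mapsto\phi'$ reconciles all of them. This is where the definition of ``compatible with $\opA$'' must be exactly calibrated, and where the analogy with the bubble, lune and triangle moves (whose two sides genuinely carry different numbers of loops) has to be turned into a precise count.
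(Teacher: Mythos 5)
Your overall transport recipe (push $A$, $T$, $\a$, $\abar$ and the $\tau_i$ through $F$ and sandwich with the (co)lax structure maps) matches the paper, and the paper indeed cites the connected-diagram results of the Frobenius-functor literature as motivation. But the scalar-bookkeeping mechanism you build the verification on does not work, for two concrete reasons. First, $\phi$ appears only in \eqrefO{8}; the identities \eqrefO{1}--\eqrefO{7} and \eqrefT{1}--\eqrefT{7} contain no free scalar, so a discrepancy of ``one separability factor per independent cycle'' could not be absorbed into a rescaled $\phi'$ --- those identities must hold on the nose for $F(\opA)$. In the paper they do hold exactly, because the relevant half of ``compatible with $\opA$'' is that $F$ is \emph{strongly separable with respect to $A$}: there is an invertible $\psi_F\colon\opid'\ra F(A)$ such that a $\psi_F^2$-insertion on $F(M\otimes_A N)$ equals $F$ applied to the $A$-relative projection/inclusion with its $\psi$-insertion, see~\eqref{eq:F_strong_sep_cond}. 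This makes $F$ preserve relative tensor products over $A$ strictly (the $\psi$-insertions already present in the diagrams are traded for $\psi_F$-insertions), and then \eqrefO{1}--\eqrefO{7}, \eqrefT{1}--\eqrefT{7} follow by direct computation with no scalar to track. Second, \eqrefO{8} cannot be derived by applying $F$ to the corresponding identity for $\opA$ at all: it is the one identity in which the tensor product relative to $A^{\otimes 2}$ occurs, and a Frobenius functor does not preserve tensor products of algebras, $F(A^{\otimes 2})\ncong F(A)^{\otimes' 2}$ in general. This is precisely why the second half of the compatibility hypothesis in the paper is that \eqrefO{8} holds for $(F(A),F(T),\psi_F,\phi_F)$ for \emph{some} scalar $\phi_F$ --- it is assumed, not deduced, and in general $\phi_F$ is not ``$\phi$ times a separability scalar of $F$'' (only in the special case $\psi_F=\xi\cdot F(\psi)\circ F_0$ does one get $\phi_F=\phi/\xi$, cf.\ Remark~\ref{rem:F-A_special}).

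The same issue resurfaces in your last paragraph: the braiding of $\mcC_\opA$ carries a $\phi^2$-prefactor and a closed $T$-loop, so showing that $F_\opA$ preserves braidings is not another instance of the connected-diagram principle; in the paper it uses exactly the assumed \eqrefO{8}-identity for $F(\opA)$, together with strong separability and a lemma from~\cite{MR1}. Note also that strong separability makes $F^A\colon\mcACA\ra{_{F(A)}\mcD_{F(A)}}$ strong monoidal and pivotal, so $F_\opA$ is a genuine strong braided functor rather than the merely lax comparison you describe. In short, the missing ideas are (i) an exact preservation statement for $A$-relative tensor products in place of a per-cycle scalar count, and (ii) the recognition that \eqrefO{8} must be built into the hypothesis ``compatible with $\opA$'' because it cannot be transported.
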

In our main example, given an arbitrary orbifold datum $\opA$ in $\mcC$, one will always have the compatibility between $F$ and a certain Morita transport $R(\opA)$ (see~\cite[Sec.\,3.3]{CRS3}), which is an orbifold datum obtained from $\opA$ by exchanging the algebra $A\in\mcC$ for a Morita equivalent algebra in $\mcC$.
In Sections~\ref{subsec:Morita_eq}, \ref{subsec:Morita_transports} we discuss the details regarding Morita transports of orbifold data and quickly show that they preserve associated MFCs (see Proposition~\ref{defnprp:RA_orb_datum}).

We use Theorem~\ref{thm:intro_FA_orb_datum} to prove Theorem~\ref{thm:intro_inv_orb_datum} in the following way: given a condensable algebra $B$ in a MFC $\mcC$, there is a ribbon Frobenius functor $\Iz_B=\Iz\colon\mcC\ra\mcC_B^\loc$, which maps an object $X\in\mcC$ to the localisation of the induced module $X\otimes B$, see Section~\ref{subsec:condensable_algs}.
In general $\Iz$ is compatible with a Morita transport $\opA_C$ of an arbitrary orbifold datum $\opA$ in $\mcC$, obtained by exchanging the algebra $A$ for a Morita equivalent algebra $C^*\otimes A \otimes C$.
This yields a functor $\Iz_\opA\colon \mcC_\opA\simeq\mcC_{\opA_C}\ra (\mcC_B^\loc)_{\Iz(\opA_C)}$, which can be checked to be an equivalence by hand (this is done in Appendix~\ref{appsec:proof_Iz_is_equiv}).
Specifying this to the trivial orbifold datum in $\mcC$ then yields Theorem~\ref{thm:intro_inv_orb_datum}.

\medskip

\begin{figure}
\captionsetup{format=plain, indention=0.5cm}
%\centering
\begin{subfigure}[b]{0.49\textwidth}
	\centering
	\pic[1.25]{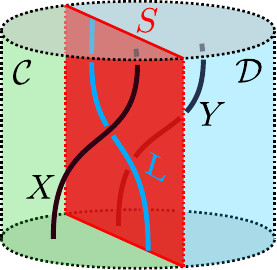}
	\caption{}
	\label{fig:Witt_ito_defects}
\end{subfigure}
\begin{subfigure}[b]{0.49\textwidth}
	\centering
	\pic[1.25]{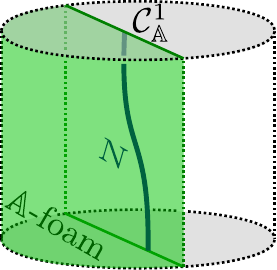}
	\caption{}
	\label{fig:CA1_lines}
\end{subfigure}
\caption{
(a) A domain wall $S$ between TQFTs $Z^{\operatorname{RT}}_\mcC$, $Z^{\operatorname{RT}}_\mcD$ comes with a category $\mcS$ of lines $L$ within $S$ and a functor $\mcC\boxtimes\widetilde{\mcD}\ra\mcZ(\mcS)$ which describes moving bulk lines $X\in\mcC$, $Y\in\mcD$ next to $S$.
(b) There is a natural domain wall between $Z^{\operatorname{orb}\opA}_\mcC\cong Z^{\operatorname{RT}}_{\mcC_\opA}$ and $Z^{\operatorname{RT}}_\mcC$ at which $\opA$-foam terminates.
}
\label{fig:Witt_eq}
\end{figure}
\subsubsection*{Witt equivalence}
Our final result expresses the notion of \textit{Witt equivalence of MFCs}~\cite{DMNO} in terms of orbifold data in them.
Recall that two MFCs $\mcC$, $\mcD$ are called Witt equivalent if there exists a spherical fusion category $\mcS$ and a ribbon equivalence $\mcC\boxtimes\widetilde{\mcD}\simeq\mcZ(\mcS)$, where $\widetilde{\mcD}$ denotes the category with reversed braiding.
There is also an alternative formulation of Witt equivalence (see~\cite[Prop.\,5.15]{DMNO}): there exists another MFC $\mcE$ and two condensable algebras $B,B'\in\mcE$ such that $\mcC\simeq\mcE_B^\loc$, $\mcD\simeq\mcE_{B'}^\loc$.
We prove (see Theorem~\ref{thm:Witt_vs_orb_eq}):
\begin{thm}
\label{thm:intro_Witt_orb_equiv}
Two MFCs $\mcC$ and $\mcD$ are Witt equivalent if and only if $\mcD\simeq\mcC_\opA$ for some orbifold datum $\opA$ in $\mcC$.
\end{thm}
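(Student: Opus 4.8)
The plan is to play the two descriptions of Witt equivalence recalled above against each other: the alternative (common-MFC) formulation drives the ``only if'' direction, while the original (spherical fusion category) formulation is the target of the ``if'' direction.

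For the ``only if'' direction, assume $\mcC$ and $\mcD$ are Witt equivalent. By \cite[Prop.\,5.15]{DMNO} there are a MFC $\mcE$ and condensable algebras $B,B'\in\mcE$ with $\mcE_B^\loc\simeq\mcC$ and $\mcE_{B'}^\loc\simeq\mcD$. I would then argue as follows. Since condensations are themselves generalised orbifolds (\cite[Sec.\,3.4]{CRS3}, \cite[Sec.\,4.1]{MR1}), $B'$ produces an orbifold datum $\opB'$ in $\mcE$ with $\mcE_{\opB'}\simeq\mcE_{B'}^\loc\simeq\mcD$, while $B$ produces the ribbon Frobenius functor $\Iz_B=\Iz\colon\mcE\to\mcE_B^\loc\simeq\mcC$. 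As in the proof of Theorem~\ref{thm:intro_inv_orb_datum}, $\Iz_B$ is compatible with a Morita transport $\opB'_C$ of $\opB'$, so by Theorem~\ref{thm:intro_FA_orb_datum} the image $\opA:=\Iz_B(\opB'_C)$ is an orbifold datum in $\mcC$ and there is a braided functor $\mcE_{\opB'_C}\to\mcC_\opA$; by the direct verification in Appendix~\ref{appsec:proof_Iz_is_equiv} this functor is an equivalence. As Morita transport does not change the associated MFC, $\mcE_{\opB'_C}\simeq\mcE_{\opB'}\simeq\mcD$, hence $\mcC_\opA\simeq\mcD$, as wanted. (Taking $\opB'$ to be the trivial orbifold datum recovers exactly the condensation inversion of Theorem~\ref{thm:intro_inv_orb_datum}.)

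For the ``if'' direction, let $\opA$ be an orbifold datum in $\mcC$ with $\mcC_\opA\simeq\mcD$. I would produce the data of a Witt equivalence from the domain wall $S$ between $Z^{\operatorname{RT}}_\mcC$ and $Z^{\operatorname{orb}\opA}_\mcC\cong Z^{\operatorname{RT}}_{\mcC_\opA}$ at which the $\opA$-foam terminates (Figure~\ref{fig:CA1_lines}), in four steps: (i) realise $S$ rigorously as a surface defect in the defect TQFT built from $\mcC$, obtained from the foam by the ``punch holes and retract'' procedure and hence labelled by a symmetric separable Frobenius algebra assembled from $\opA$, analogously to how $A$ was assembled from $B$ in the condensation-inversion construction; (ii) identify the category $\mcS$ of lines inside $S$, which is a spherical fusion category precisely because $\mcD\simeq\mcC_\opA$ is fusion; (iii) note, as in Figure~\ref{fig:Witt_ito_defects}, that pushing bulk lines of $\mcC$ and of $\mcC_\opA$ onto $S$ from its two sides yields a ribbon functor $G\colon\mcC\boxtimes\widetilde{\mcD}\to\mcZ(\mcS)$; and (iv) show that $G$ is an equivalence. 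For step (iv) I would first use that a ribbon functor between MFCs is automatically a full embedding \cite[Cor.\,3.26]{DMNO}, and then get essential surjectivity from a global-dimension count $(\dim\mcS)^2=\dim\mcC\cdot\dim\mcC_\opA$, using the formula for $\dim\mcC_\opA$ in terms of the entries of $\opA$ together with an analogous expression for $\dim\mcS$ read off the wall; equivalently, one may phrase (ii)--(iv) as the statement that $S$, being built entirely from separable data, is a gapped wall and therefore corresponds to a Lagrangian algebra in $\mcC\boxtimes\widetilde{\mcD}$, which is exactly a Witt equivalence between $\mcC$ and $\mcD$. (Combining the two directions then also shows a posteriori that $\mcD\simeq\mcC_\opA$ is a symmetric, transitive relation, as the abstract claims.)

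I expect the main obstacle to be step (iv), i.e.\ the non-degeneracy of the termination wall. The heuristic reason $G$ is essentially surjective is that $S$ ``un-foams'': a thin collar of $\opA$-foam squeezed between $S$ and its orientation-reverse contracts to the trivial wall of $\mcC$, and dually a thin collar of $\mcC$-bulk contracts to the trivial wall of $\mcC_\opA$, so $S$ is invertible as a domain wall; turning this picture into the stated dimension identity (or into the Lagrangian-algebra statement) is the technical heart of the argument, and also where one must be careful that the defect-TQFT formalism really supplies $S$, the category $\mcS$, and the functor $G$. The ``only if'' direction, by contrast, is essentially a bookkeeping argument resting on Theorem~\ref{thm:intro_FA_orb_datum}, the equivalence established in Appendix~\ref{appsec:proof_Iz_is_equiv}, and the fact that condensations are generalised orbifolds.
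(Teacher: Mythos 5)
Your ``only if'' direction coincides with the paper's argument (the condensation orbifold datum $\opB'$ in $\mcE$ from Remark~\ref{rem:condensation_orb_datum}, its Morita transport $\opB'_C$, transport along $\Iz_B$ via Theorem~\ref{thm:intro_FA_orb_datum}, and the equivalence checked in Appendix~\ref{appsec:proof_Iz_is_equiv}), so that half is fine.

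The ``if'' direction, however, has a genuine gap. The paper does not use the defect-TQFT picture at this point: the orbifold TQFT with defects is, at the required level of rigour, only available for condensation orbifold data, which is exactly why Section~\ref{sec:Witt_equiv} proceeds algebraically. The wall category is not something to be ``read off'' a surface defect; it is taken to be $\mcC_\opA^1$, with objects $(N,\tau_1,\taubar{1})$, the candidate trivialisation is the explicit functor $F$ of Proposition~\ref{prp:CCA-ZCA1_equiv} built from $F_l$ and $F_r$ with concretely defined half-braidings, and the multifusion issue is handled by Lemma~\ref{lem:ZCAi-ZF_equiv}: one passes to a component category $\mcF$ of $\mcC_\opA^1$, and proving that $\mcF$ is \emph{spherical} fusion uses simplicity of $\opA$ and a trace computation --- it is not true that $\mcS$ is spherical fusion ``precisely because $\mcC_\opA$ is fusion'', since $\mcC_\opA^1$ can be genuinely multifusion even for simple $\opA$. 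More seriously, your step (iv) rests on the identity $(\Dim\mcS)^2=\Dim\mcC\cdot\Dim\mcC_\opA$, for which you have no independent handle: there is no formula for $\Dim\mcC_\opA^1$ (or $\Dim\mcF$) analogous to \eqref{eq:Dim-CA_formula}, and producing one would essentially amount to re-proving the statement. The paper avoids any dimension count. Full faithfulness is shown by proving that $F(\D\boxtimes i)$ is simple for simples $\D\in\mcC_\opA$, $i\in\widetilde{\mcC}$, using modularity of $\mcC$; essential surjectivity comes from the centraliser computation $F_r(\widetilde{\mcC})'\simeq F_l(\mcC_\opA)$ inside $\mcZ(\mcC_\opA^1)$ --- given an object centralising $F_r(\widetilde{\mcC})$, its half-braiding with $(T_2,\a)$ is promoted to a second $T$-crossing $\tau_2$ and the identities \eqrefT{1}--\eqrefT{7} are verified --- combined with the factorisation theorem (Theorem~\ref{thm:MFCs_factor}) to match ranks. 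This reconstruction of $\tau_2$ from the half-braiding is the technical heart of the proof and is entirely absent from your proposal; without it (or an actual computation of $\Dim\mcS$, or a concrete Lagrangian algebra), your step (iv), and with it the ``if'' direction, does not go through.
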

One direction in the above theorem is immediate since the orbifold datum $\Iz_B(\opB'_C)$ in $\mcC\simeq\mcE_B^\loc$ has the associated MFC equivalent to $\mcE_{B'}^\loc\simeq\mcD$ (here $\opB'$ is the orbifold datum in $\mcE$ for the $B'$-condensation and $\opB'_C$ is the above mentioned Morita transport of it).
The other direction is inspired by a very intuitive interpretation of Witt equivalence in terms of TQFTs due to~\cite{FSV}.
It argues that having a Witt trivialisation $\mcC\boxtimes\widetilde{\mcD}\simeq\mcZ(\mcS)$ is the same as providing one with a domain wall $S$ separating bulk theories $Z^{\operatorname{RT}}_\mcC$ and $Z^{\operatorname{RT}}_\mcD$.
Indeed, the objects of $\mcS$ label line defects within $S$, whereas the two functors $\mcC,\mcD\ra\mcS$ are seen as describing the process of moving line defects from the two bulks to $S$.
Such bulk lines merely hover next to the domain wall and can cross to the other side of any other line $L\in\mcS$ (see Figure~\ref{fig:Witt_ito_defects}), so that as objects in $\mcS$ they have natural half-braidings.
The two functors are therefore central and combine into a candidate Witt trivialisation $\mcC\boxtimes\widetilde{\mcD}\ra\mcZ(\mcS)$.
To argue that it is an equivalence one in principle needs to impose assumptions on $S$, which we will ignore as we are only interested in a special case.
In particular, we note that the two bulk theories $Z^{\operatorname{RT}}_{\mcC_\opA}\cong Z^{\operatorname{orb}\opA}_{\mcC}$ and $Z^{\operatorname{RT}}_\mcC$ have a natural domain wall, at which the $\opA$-foam terminates, see Figure~\ref{fig:CA1_lines}.
One labels the line defects within this domain wall with the objects of the category $\mcC_\opA^1$, which consist of triples $(N,\tau_1,\taubar{1})$, defined analogously as those of $\mcC_\opA$, but lacking the other crossing morphisms $\tau_2$, $\taubar{2}$.
This has the effect of confining an $N$-labelled line to the domain wall; a similarly defined category $\mcC_\opA^2$ would then label lines in a domain wall at which the $\opA$-foam terminates from the other side.
Altogether this helps one to guess functors (see Proposition~\ref{prp:CCA-ZCA1_equiv}, Remark~\ref{rem:CAC-ZCA2_equiv})
\begin{equation}
\label{eq:intro_Witt_triv}
\mcC_\opA \boxtimes \widetilde{\mcC} \ra \mcZ(\mcC_\opA^1) \, , \quad
\mcC \boxtimes \widetilde{\mcC_\opA} \ra \mcZ(\mcC_\opA^2) \, ,
\end{equation}
which can then be checked to be equivalences by hand.
There are extra complications due to $\mcC_\opA^1$, $\mcC_\opA^2$ being multifusion, instead of spherical fusion, which can be fixed by replacing them with component categories (see Lemma~\ref{lem:ZCAi-ZF_equiv}).

\subsubsection*{Some corollaries}
Firstly, the above results can be used to better understand the plethora of examples of orbifold data.
In particular, the above mentioned examples within the categories of Ising type constructed in~\cite{MR2} were inspired by but not strictly proved to be related to the $E_6$ algebra condensation of $\mcC(sl(2),10)$.
In this paper we use the explicit construction of the orbifold datum in Theorem~\ref{thm:intro_inv_orb_datum} to prove this (see Theorem~\ref{thm:E6_is_inversion} below).

Secondly, we explore very briefly the notion of \textit{unital orbifold data} in MFCs.
This is inspired by the point of view, seeing an orbifold datum $\opA$ in a MFC $\mcC$ as a certain higher algebra object in the monoidal bicategory $\mcA{}lg_\mcC$ of algebras in $\mcC$, their bimodules and bimodule morphisms.
The current definition of an orbifold datum via the conditions~\eqrefO{1}--\eqrefO{8} however does not include the analogue of a unit.
The reason for this is that the notion of an orbifold datum in 3-dimensional defect TQFTs was originally derived in~\cite{CRS1} using a geometric rather than algebraic approach, by analysing dual Pachner moves on triangulated manifolds.
Already in dimension 2 this yields a datum, similar to a Frobenius algebra without unit/counit and satisfying additional conditions, ensuring that the Frobenius move holds for all consistent orientations for 1-strata.
The proof of Theorem~\ref{thm:intro_Witt_orb_equiv} however shows that at least up to the equivalence of associated MFCs, the orbifold data in $\mcC$ can always taken to be unital.
This is because the orbifold data of the form $\Iz_B(\opB'_C)$, described below the statement of Theorem~\ref{thm:intro_Witt_orb_equiv}, happen to be unital (see Theorem~\ref{thm:unital_orb_data}).

Finally, we note that the equivalence~\eqref{eq:intro_Witt_triv} relates orbifold data in MFCs to other constructions on fusion categories.
In particular, a monoidal category enriched over a braided category $\mcB$ was shown in~\cite{MP} to be equivalently given by a pair $(\mcA,F)$, where $\mcA$ is a fusion category and $F\colon\mcB\ra\mcZ(\mcA)$ a braided functor.
Associated to $(\mcA,F)$ there is another braided category, the underlying category of the enriched Drinfeld centre~\cite{KZ3}, which is equivalent to the centraliser $F(\mcA)'$ (see~\eqref{eq:commutator_in_MFC}) of the image $F(\mcA)$ in $\mcZ(\mcA)$ (see~\cite[Thm.\,5.3]{KZ3}).
A restriction e.g.\ of the second functor in~\eqref{eq:intro_Witt_triv} to $\mcC\ra\mcZ(\mcC_\opA^2)$ proves that an orbifold datum yields a category enriched in $\mcC$, and its associated braided category is exactly $\widetilde{\mcC_\opA}$.
Theorem~\ref{thm:intro_Witt_orb_equiv} also shows that this relation is two sided: given an enriched over a MFC $\mcC$ category $(\mcS,F)$ (where it is more natural to take $\mcS$ to be spherical fusion and $F\colon\mcC\ra\mcZ(\mcS)$ a ribbon functor), the associated braided category with the reversed braiding $\widetilde{F(\mcC)'}$ is a MFC Witt equivalent to $\mcC$ (see~\cite[Cor.\,7.8]{Mug2}, \cite[Prop.\,2.11, Thm.\,3.14]{DGNO} restated as Theorem~\ref{thm:MFCs_factor} below) and therefore comes from an orbifold datum in $\mcC$.
In the end this also opens a way to relate orbifold data to other similar notions, e.g.\ module tensor categories~\cite{MPP} and anchored planar algebras~\cite{HPT}.

\medskip

This paper is organised as follows:
The introductory section is the only one using the language of TQFTs, the following sections will present the material in a purely algebraic way.
Section~\ref{sec:preliminaries} reviews the various notions related to monoidal and (multi)fusion categories.
We do assume familiarity with most of these notions, in particular we will heavily rely on graphical calculus.
Section~\ref{sec:ssFAs} reviews the use of Frobenius algebras in (multi)fusion categories.
Although the material in it is also not new, we still present it somewhat thoroughly in order to emphasise some technical nuances that are less abundant in the literature (these arise mostly due to our use of the separability condition~\eqref{eq:intro_separability_cond} and the requirement for all categories/functors we encounter to be pivotal and the Frobenius algebras symmetric).
Section~\ref{sec:monoidal_FFs} gives an introduction to Frobenius functors which are needed to state one of the main results of this work (see Theorem~\ref{thm:intro_FA_orb_datum} above).
Also here we find it necessary to be sufficiently thorough as some notions are either new or slightly differ from those found in the literature.
Section~\ref{sec:gen_orbifolds} is dedicated to reviewing the notion of an orbifold data in MFCs as well as stating and proving Theorem~\ref{thm:intro_FA_orb_datum}.
Section~\ref{sec:condensation_inversion} reviews condensable algebras and constructs the condensation inversion orbifold datum in Theorem~\ref{thm:intro_inv_orb_datum} (more technical proofs in this section are moved to Appendix~\ref{appsec:proof_Iz_is_equiv}).
The example of inverting the $E_6$ algebra condensation is also presented here.
Finally, Section~\ref{sec:Witt_equiv} includes the proof of Theorem~\ref{thm:intro_Witt_orb_equiv} on Witt equivalence as well as its implication on the existence of unital orbifold data.

\subsubsection*{Acknowledgements}
I would like to thank
Nils Carqueville,
Lukas M\"uller and
Ingo Runkel
for helpful suggestions and many comments on the draft.
I am also thankful to Vincent Koppen and Christoph Schweigert for numerous productive discussions which brought many of the notions used in this work into my attention, and likewise to David Penneys and Gregor Schaumann for, among other things, sharing the insights on unitality on which Section~\ref{subsec:unital_orb_data} is based.
This work was initiated at the University of Hamburg and carried out at the Max Planck Institute for Mathematics in Bonn, the support of both institutions is most appreciated.
\goodbreak

\section{Preliminaries}
\label{sec:preliminaries}
In this section we list some notions and results related to monoidal categories that will be used throughout the paper.
We assume familiarity with these notions, the exposition here is mostly to introduce our preferred terminology, conventions and notation.
They are mostly in accord with the books~\cite{EGNO,TV}, which can be consulted for more details.

\subsection{Monoidal categories}
Let us start with the most basic notions:
\begin{itemize}
\item
A \textit{monoidal category} $\mcC = (\mcC,\otimes,\opid,a,l,r)$ is a category equipped with a monoidal (or tensor) product functor $\otimes\colon\mcC\times\mcC\ra\mcC$ (we omit the symbol $\otimes$ sometimes) and the associator and unitor natural isomorphisms $a\colon (- \otimes -) \otimes - \Ra - \otimes (- \otimes -)$, $l\colon \opid \otimes - \Ra \Id_\mcC$, $r\colon - \otimes \opid \Ra \Id_\mcC$, satisfying the standard pentagon and triangle identities, i.e.\ for all $X,Y,Z,W\in\mcC$ one has
\begin{gather} \nonumber
(\id_X \otimes a_{Y,Z,W}) \circ a_{X,YZ,W} \circ (a_{X,Y,Z} \otimes \id_W) = a_{X,Y,ZW}\circ a_{XY,Z,W}\\
(\id_X\otimes l_Y)\circ a_{X,\opid,Y} = r_X \otimes \id_Y \, .
\end{gather}
There is also a similar notion of a (left) module category $\mcM$ over a monoidal category $\mcC$, which comes equipped with an action functor $\triangleright\colon\mcC\times\mcM\ra\mcM$ and natural isomorphisms $(-\otimes -)\triangleright- \Ra - \triangleright (- \triangleright -)$ and $\opid \triangleright - \Ra \Id_\mcM$.
\item
Given two monoidal categories $\mcC=(\mcC,\otimes,\opid,a,l,r)$ and $\mcD=(\mcD,\otimes',\opid',a',l',r')$, a \textit{monoidal functor} $F\colon\mcC\ra\mcD$ is a functor equipped with a \textit{monoidal structure} $(F_2, F_0)$, consisting of a natural transformation $F_2\colon F(-) \otimes' F(-) \Ra F(-\otimes -)$ and a morphism $F_0\colon\opid' \ra F(\opid)$ such that for all $X,Y,Z$ the coherence relations
\begin{gather} \nonumber
\begin{align*}
F_2(X,YZ)  \circ (\id_{F(X)} & \otimes' F_2(Y,Z)\circ a'_{F(X),F(Y),F(Z)})\\
&=
F(a_{X,Y,Z}) \circ F_2(XY,Z) \circ (F_2(X,Y) \otimes' \id_{F(Z)}) \, ,
\end{align*}\\ \nonumber
l'_{F(X)} =
F(l_X) \circ F_2(\opid,X)\circ (F_0 \otimes' \id_{F(X)}) \, ,\\ \label{eq:monoidal_funct_ids}
r'_{F(X)} =
F(r_X) \circ F_2(X,\opid) \circ (\id_{F(X)} \otimes' F_0) \, ,
\end{gather}
hold.
$(F,F_2,F_0)$ is said to be a \textit{strong monoidal functor} if for all $X,Y\in\mcC$ the morphisms $F_2(X,Y)$ and $F_0$ are isomorphisms and a \textit{weak monoidal functor} otherwise.
Given two monoidal functors $F,G\colon\mcC\ra\mcD$, a natural transformation $\varphi\colon F \Ra G$ is said to be \textit{monoidal} if $\varphi_{X\otimes Y}\circ F_2(X,Y) = G_2(X,Y)\circ(\varphi_X \otimes' \varphi_Y)$ and $\varphi_\opid\circ F_0 = G_0$.

Similarly one defines functors between module categories and their natural transformations.
\item
A \textit{comonoidal functor} between monoidal categories $\mcC$ and $\mcD$ comes equipped with a comonoidal structure $\overline{F_2}\colon F(-\otimes -)\Ra F(-) \otimes' F(-)$, $\overline{F_0}\colon F(\opid)\ra\opid'$ such that for all $X,Y,Z\in\mcC$ one has
\begin{gather}\nonumber
\begin{align*}
a'_{F(X),F(Y),F(Z)}\circ (&\overline{F_2}(X,Y) \otimes' \id_{F(Z)})\circ\overline{F_2}(XY,Z)\\
&=
(\id_{F(X)} \otimes' \overline{F_2}(Y,Z)) \circ \overline{F_2}(X,YZ) \circ F(a_{X,Y,Z}) \, ,
\end{align*}\\ \nonumber
F(l_X)=l'_F(X)\circ(\overline{F_0} \otimes' \id_{F(X)})\circ \overline{F_2}(\opid,X) \, ,\\
F(r_X)=r'_{F(X)}\circ (\id_{F(X)}\otimes' \overline{F_0}) \circ \overline{F_2}(X,\opid) \, .
\end{gather}
As with monoidal functors, a comonoidal structure is called \textit{strong/weak} if $(\overline{F_2}, \overline{F_0})$ are/are not isomorphisms.
Note that if $F$ is strong monoidal, $(\overline{F_2} = F_2^{-1}, \overline{F_0} = F_0^{-1})$ is a strong comonoidal structure.
\end{itemize}
From now on, when talking about monoidal functors, unless specified otherwise we will always mean strong monoidal.
Weak (co)monoidal functors are somewhat less abundant in the literature, but we will indeed use them in Section~\ref{sec:monoidal_FFs}.

\medskip

Morphisms in a monoidal category can be depicted with the help of \textit{graphical calculus} (see e.g.\ \cite[Ch.\,2]{TV}), which we will heavily rely upon in this paper.
According to our convention the diagrams are to be read from bottom to top.
A morphism will either be depicted by a labelled point or a coupon.

\medskip

Let us now look at various notions of dualities in monoidal categories:
\begin{itemize}
\item
A monoidal category $\mcC$ is said to be \textit{rigid} if each object $X\in\mcC$ has a left dual $(X^*, \ev_X\colon X^*X\ra\opid, \coev_X\colon\opid \ra XX^*)$ and a right dual
$({^*X}, \evt_X\colon {^*X}X\ra\opid, \coevt_X\colon\opid \ra X{^*X})$ where the (co)evaluation morphisms satisfy the zig-zag identities, see e.g.\ \cite[Sec.\,2.10]{EGNO}.
Two choices of left/right duals for an object $X$ are canonically isomorphic.
A fixed choice of duals equips $\mcC$ with functors $(-)^*, {^*(-)}\colon \mcC \ra \mcC^\opp$.
A monoidal functor $F\colon\mcC\ra\mcD$ automatically preserves duals, the canonical isomorphisms provide one with natural isomorphisms $F_l\colon F((-)^*) \Ra F(-)^*$, $F_r\colon F(^*(-)) \Ra {^*F(-)}$.
\item
A \textit{pivotal category} is a rigid category $\mcC$ equipped with a monoidal natural isomorphism $\d\colon \Id_\mcC \Ra (-)^{**}$ called the \textit{pivotal structure}.
In a pivotal category each left dual is canonically a right dual and in terms of graphical calculus the corresponding (co)evaluation morphisms of an object $X\in\mcC$ are denoted/defined by
\begin{align} \nonumber
&
\ev_X = \pic[1.25]{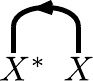} := \pic[1.25]{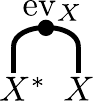} \, ,
&&
\coev_X = \pic[1.25]{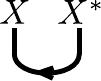} := \pic[1.25]{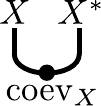} \, ,\\
&
\evt_X = \pic[1.25]{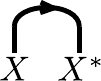} := \pic[1.25]{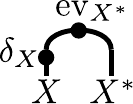} \, ,
&&
\coevt_X = \pic[1.25]{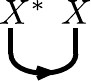} := \pic[1.25]{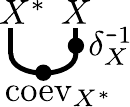} \, .
\end{align}
An equivalent way to equip $\mcC$ with a pivotal structure is to provide each object $X$ with a tuple $(X^*,\ev_X,\coev_X,\evt_X,\coevt_X)$ such that for all $X,Y\in\mcC$ and $f\in\mcC(X,Y)$ one has
\begin{equation}
\pic[1.25]{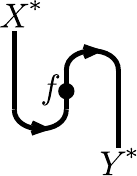} =  \pic[1.25]{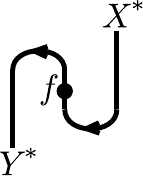} \, ,
\pic[1.25]{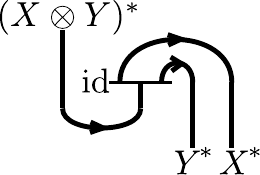} =  \pic[1.25]{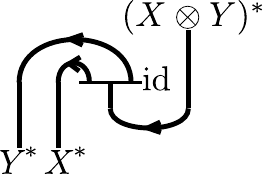} \, .
\end{equation}
As indicated above, the strands in the graphical calculus of a pivotal category are oriented with a downwards orientation indicating the dual object.
The axioms of a pivotal category allow one to deform a diagram up to a plane isotopy without changing the morphism it corresponds to.
\item
Given an object $X$ and a morphism $f\in\End_\mcC(X)$, one defines the \textit{left/right trace of $f$} to be the following morphisms in $\End_\mcC(\opid)$:
\begin{equation}
\tr_\mcC^l f :=\pic[1.25]{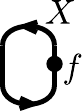} \, , \qquad
\tr_\mcC^r f :=\pic[1.25]{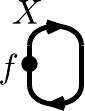} \, .
\end{equation}
Similarly, a left/right categorical dimension of $X\in\mcC$ is defined by $\dim_\mcC^{l/r} X := \tr_\mcC^{l/r} \id_X$.
$\mcC$ is called \textit{spherical} if one has $\tr_\mcC^l f = \tr_\mcC^r f (=: \tr_\mcC f)$ (and consequently $\dim_\mcC^l X = \dim_\mcC^r X =: \dim_\mcC(X)$).
\item
Let $(\mcC,\d)$, $(\mcD,\d')$ be pivotal categories.
A monoidal functor $F\colon\mcC\ra\mcD$ is called \textit{pivotal} if the following diagram commutes:
\begin{equation}
\label{eq:F-pivotal}
\begin{tikzpicture}[baseline={([yshift=-.5ex]current bounding box.center)}]
\node (P1) at (-3,2)    {$F(X)$};
\node (P2) at  (0,2)    {$F(X^{**})$};
\node (P3) at  (-3,0)   {$F(X)^{**}$};
\node (P4) at  (0,0) {$F(X^*)^*$};

\path[commutative diagrams/.cd,every arrow,every label]
(P1) edge node {$F(\delta_X)$} (P2)
(P2) edge node {$F_1(X^*)$} (P4)
(P1) edge node[swap] {$\delta_{F(X)}'$} (P3)
(P3) edge node {$F^*_1(X)$} (P4);
\end{tikzpicture} \, ,
\end{equation}
where one uses the notation $F_1\colon ((-)^*)\Ra F(-)^*$ for the dual functor $F_l$ as it is enough to only consider left duals in pivotal categories.
\end{itemize}

Finally, we will need some definitions on braided categories:
\begin{itemize}
\item
A \textit{braided category} is a monoidal category $\mcC$ equipped with a braiding natural isomorphism $\{c_{X,Y}\colon XY \ra YX \}_{X,Y\in\mcC}$ which satisfies the usual hexagon identities.
They will be denoted by
\begin{equation}
c_{X,Y} = 
\pic[1.25]{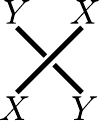} \, , \qquad
c_{X,Y}^{-1} =
\pic[1.25]{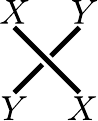} \, .
\end{equation}
If $\mcC$ is braided, $\widetilde{\mcC}$ will denote the braided category with the reversed braiding $\{c_{Y,X}^{-1}\colon XY \ra YX\}_{X,Y\in\mcC}$.
\item
A monoidal functor $F\colon\mcC\ra\mcD$ between braided categories $(\mcC,c)$, $(\mcD,c')$ is called \textit{braided} if it preserves braidings, i.e.\ for all $X,Y\in\mcC$ one has
\begin{equation}
F(c_{X,Y}) \circ F_2(X,Y) = F_2(Y,X)\circ c'_{F(X),F(Y)} \, .
\end{equation}
\item
If $\mcC$ is both braided and pivotal, for an object $X\in\mcC$ one defines the \textit{left/right twists} to be the natural isomorphisms $\theta^l,\theta^r\colon\Id_\mcC\ra\Id_\mcC$ (in general not monoidal) defined by
\begin{equation}
\theta_X^l =
\pic[1.25]{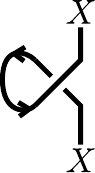} , \,
\theta_X^r =
\pic[1.25]{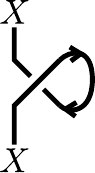}  , \,
(\theta_X^l)^{-1} =
\pic[1.25]{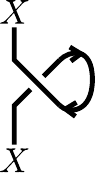}  , \,
(\theta_X^r)^{-1} =
\pic[1.25]{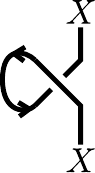}  .
\end{equation}
$\mcC$ is called \textit{ribbon} if one has $\theta^l_X = \theta^r_X (=: \theta_X)$.
In a ribbon category, the graphical calculus allows one to deform the diagrams as if they were ribbon tangles embedded in $\opR^3$.
\item
A \textit{ribbon functor} $F\colon\mcC\ra\mcD$ between two ribbon categories $\mcC$, $\mcD$ is a braided functor which preserves twists, i.e.\ for all $X\in\mcC$ one has $F(\theta_X) = \theta_{F(X)}$.
A braided functor between ribbon categories is ribbon if and only if it is pivotal.
\item
Given a monoidal category $\mcC$, its \textit{Drinfeld centre} is a braided category $\mcZ(\mcC)$ whose objects are pairs $(X,\gamma)$, where $X\in\mcC$ and $\gamma\colon X \otimes - \Ra - \otimes X$ is a \textit{half-braiding}, i.e.\ a natural isomorphism satisfying the hexagon identity for all $U,V\in\mcC$:
\begin{equation}
a_{U,V,X}\circ \gamma_{UV} \circ a_{X,U,V} =
(\id_U \otimes \gamma_V)\circ a_{U,X,V}\circ (\gamma_U \otimes \id_V) \, .
\end{equation}
A morphism $f\colon(X,\gamma)\ra (Y,\gamma')$ between two objects in $\mcZ(\mcC)$ is a morphism $f\colon X\ra Y$ which commutes with the half-braidings, i.e.\ for all $U\in\mcC$ one has $\gamma'_U\circ (f \otimes \id_U) = (\id_U \otimes f) \circ \gamma_U$.
The braiding in $\mcZ(\mcC)$ is defined to be: $c_{(X,\gamma),(Y,\gamma')} = \gamma_Y$.
If $\mcC$ is pivotal, so is $\mcZ(\mcC)$.
\end{itemize}

\subsection{Multifusion categories}
Throughout the entire paper, let $\opk$ be an algebraically closed field which, except for Section~\ref{sec:Witt_equiv}, is \textit{not} assumed to be of characteristic 0.
We assume familiarity with $\opk$-linear, abelian, semisimple, multitensor and multifusion categories, for more details see e.g.\ \cite[Ch.\,1\&{}4]{EGNO}, \cite[Ch.\,4]{TV}.
\begin{itemize}
\item
In a linear category $\mcA$, a \textit{direct sum} of objects $X_1,\dots,X_n\in\mcC$ is an object $X=X_1\oplus\dots\oplus X_n$ together with projection/inclusion morphisms $\pi_i\colon X \lra X_i : \imath_i$ such that $\sum_i \imath_i\circ\pi_i = \id_X$ and $\pi_j\circ\imath_i = \d_{ij} \cdot \id_{X_i}$.
\item
An object $X$ in a $\opk$-linear category $\mcA$ is called \textit{simple} if $\dim\End_\mcA(X)=1$, in which case one identifies $\End_\mcA(X)\cong\opk$ by $\id_X \mapsto 1$.
$\mcA$ is called \textit{semisimple} if every object is isomorphic to a finite direct sum of simples.
We will denote by $\Irr_\mcA$ a fixed set of representatives of isomorphism classes of all simple objects of $\mcA$.
If $\Irr_\mcA$ is finite, $\mcA$ is called \textit{finitely semisimple}.
\item
For any two $\opk$-linear categories $\mcA$, $\mcB$, their \textit{direct sum} $\mcA\oplus\mcB$ is the linear category of formal direct sums of objects in $\mcA$ and $\mcB$.
If $\mcA$, $\mcB$ are semisimple, their Deligne product $\mcA\boxtimes\mcB$ is the semisimple category of formal direct sums of objects of type $X\boxtimes Y$ with $X\in\mcA$, $Y\in\mcB$ (for general definition see~\cite[Sec.\,1.11]{EGNO}).
\item
Functors between $\opk$-linear categories will always be assumed to be linear (i.e.\ $\opk$-linear on spaces of morphisms).
Such functors then automatically preserve direct sums.
For $\mcA$, $\mcB$ semisimple, a  functor $F\colon\mcA\ra\mcB$ is called \textit{surjective} if every object $Y\in\mcB$ is a direct summand of $F(X)$ for some $X\in\mcA$.
\end{itemize}
Proving some of the results of this paper will involve showing that two semisimple categories are equivalent.
For that we always use
\begin{prp}
\label{prp:ssi_funct_equiv}
For $\mcA$, $\mcB$ semisimple, a functor $F\colon\mcA\ra\mcB$ is an equivalence if and only if it is fully faithful (i.e.\ an isomorphism on the spaces of morphisms) and surjective.
\end{prp}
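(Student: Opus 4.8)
The statement to prove is Proposition~\ref{prp:ssi_funct_equiv}: for semisimple $\opk$-linear categories $\mcA$, $\mcB$, a linear functor $F\colon\mcA\ra\mcB$ is an equivalence if and only if it is fully faithful and surjective (in the sense that every object of $\mcB$ is a direct summand of some $F(X)$).

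The plan is to prove both directions. The forward direction is the standard fact that any equivalence of categories is fully faithful and essentially surjective, and essential surjectivity trivially implies the weaker notion of surjectivity used here (if $Y\cong F(X)$ then $Y$ is in particular a direct summand of $F(X)$); this needs no semisimplicity. For the converse, suppose $F$ is fully faithful and surjective. By the classical characterisation of equivalences it suffices to upgrade "surjective" to "essentially surjective", i.e.\ to show every $Y\in\mcB$ is isomorphic (not merely a summand) of some object in the image of $F$. First I would reduce to simple objects: since $\mcB$ is semisimple, it is enough to show each $S\in\Irr_\mcB$ lies in the essential image. By the surjectivity hypothesis, pick $X\in\mcA$ with $S$ a direct summand of $F(X)$, witnessed by $\imath\colon S\ra F(X)$, $\pi\colon F(X)\ra S$ with $\pi\circ\imath=\id_S$. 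Decompose $X$ in $\mcA$ into simple summands $X\cong\bigoplus_k X_k$ with $X_k\in\Irr_\mcA$; then $F(X)\cong\bigoplus_k F(X_k)$ since linear functors preserve direct sums. Composing $\imath$ and $\pi$ with the inclusions/projections for this decomposition gives, for each $k$, morphisms $\imath_k\colon S\ra F(X_k)$ and $\pi_k\colon F(X_k)\ra S$ with $\sum_k \pi_k\circ\imath_k=\id_S$; since $\End_\mcB(S)\cong\opk$ and $\id_S\neq 0$, at least one $\pi_k\circ\imath_k$ is a nonzero scalar multiple of $\id_S$, so after rescaling $S$ is a direct summand of $F(X_k)$ for that particular simple $X_k$.

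Now I would use full faithfulness to promote this to an isomorphism. Write $Z:=X_k\in\Irr_\mcA$, so we have $\imath\colon S\ra F(Z)$, $\pi\colon F(Z)\ra S$ with $\pi\circ\imath=\id_S$; in particular $e:=\imath\circ\pi\in\End_\mcB(F(Z))$ is a nonzero idempotent. Since $F$ is fully faithful, $F$ induces an isomorphism $\End_\mcA(Z)\xrightarrow{\ \sim\ }\End_\mcB(F(Z))$. But $Z$ is simple in $\mcA$, so $\End_\mcA(Z)\cong\opk$, hence $\End_\mcB(F(Z))\cong\opk$ as well; its only nonzero idempotent is the identity, so $e=\id_{F(Z)}$. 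This forces $\imath$ and $\pi$ to be mutually inverse isomorphisms, i.e.\ $F(Z)\cong S$. Therefore every simple object of $\mcB$, and hence (taking finite direct sums) every object of $\mcB$, lies in the essential image of $F$. Combined with full faithfulness, $F$ is an equivalence by the standard criterion.

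The only mild subtlety — and the one "obstacle" worth flagging — is the bookkeeping in the second part of the first paragraph: surjectivity a priori only gives $S$ as a summand of $F(X)$ for a possibly decomposable $X$, and one must extract a single \emph{simple} $Z$ with $S\mid F(Z)$ before the Schur-type argument on endomorphism rings can be applied. This is exactly where semisimplicity of $\mcA$ (to decompose $X$) together with $\dim_\opk\End_\mcB(S)=1$ (to find a summand that survives) is used; everything else is formal. I would not belabour the verification that an equivalence is fully faithful and essentially surjective, nor the standard gluing of a quasi-inverse from these two properties, citing it as classical (e.g.\ \cite[Sec.\,1.3]{EGNO}).
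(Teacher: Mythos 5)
Your proposal is correct and follows essentially the same route as the paper: reduce to simple objects of $\mcB$, decompose $X$ into simples of $\mcA$, and use full faithfulness together with $\dim\End$ of a simple being $1$ to conclude that the relevant $F(X_k)$ is isomorphic to the given simple. The paper phrases this as ``$F$ maps simples to simples'' and then reads off the summand, while you locate the surviving summand via the scalar decomposition of $\id_S$ and finish with the idempotent argument in $\End_\mcB(F(Z))\cong\opk$; these are the same argument in slightly different packaging.
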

\begin{proof}
$F$ is an equivalence if it has an inverse or equivalently is both fully faithful and essentially surjective.
If $F$ has an inverse it is obviously surjective.
Conversely, it is enough to show that every simple of $\mcB$ is in the essential image of $F$.
This follows from $F$ mapping simples of $\mcA$ to simples of $\mcB$ - if $i\in\Irr_\mcB$ is a direct summand of $F(X)$, $X\in\mcA$, decomposing $X \cong \bigoplus_k i_k$, $i_k\in\Irr_\mcA$, one gets $i \cong F(i_k)$ for some index $k$.
\end{proof}

Next we look at $\opk$-linear categories with monoidal structure:
\begin{itemize}
\item
A \textit{multifusion category} is a $\opk$-linear, finitely semisimple, rigid monoidal category $\mcA$ whose monoidal product is bilinear on morphism spaces.
If the monoidal unit $\opid\in\mcA$ is simple, $\mcA$ is called \textit{fusion}.
In this case we always assume $\opid\in\Irr_\mcA$.
\item
If $\mcA$, $\mcB$ are multifusion, so are $\mcA\oplus\mcB$ and $\mcA\boxtimes\mcB$.
In particular, we call $\mcA$ \textit{indecomposable} if it is not equivalent (as a multifusion category) to a direct sum of non trivial (i.e.\ not equivalent to $\{0\}$) multifusion categories.
\item
Let $\opid=\bigoplus_i \opid_i$ be the decomposition of the tensor unit of a multifusion category $\mcA$ into simples.
Then all $\opid_i$ are mutually non-isomorphic and $\opid_i \otimes \opid_j \cong 0$ for $i\neq 0$.
Moreover one has the decomposition $\mcA\simeq\bigoplus_{ij}\mcA_{ij}$, where $\mcA_{ij} := \opid_i \otimes \mcA \otimes \opid_j$ are the so-called \textit{component categories} of $\mcA$.
Note that each $\mcA_{ii}$ is a fusion category.
\end{itemize}
The non-semisimple counterpart of a (multi)fusion category is called a \textit{(multi)tensor category} (see e.g.\ \cite[Ch.\,4]{EGNO}).
The categories that we are going to encounter later on will however always be either fusion or multifusion.

\medskip

It follows from the last item in the above list that if $\mcB$ is a braided multifusion category, then $\mcB\simeq\bigoplus\mcB_{ii}$, i.e.\ its non-diagonal component categories vanish.
In particular, the Drinfeld centre $\mcZ(\mcA)$ of a multifusion category $\mcA$ is a direct sum  of tensor categories ($\mcZ(\mcA)$ need not always be semisimple, see Proposition~\ref{prp:ZS_is_MFC} for a criterion).
In fact one has (see~\cite[Thm.\,2.5.1]{KZ1}):
\begin{prp}
\label{prp:ZA-ZAii_equiv}
Let $\mcA$ be an indecomposable multifusion category.
Then $\mcZ(\mcA)\simeq\mcZ(\mcA_{ii})$ as braided tensor categories.
\end{prp}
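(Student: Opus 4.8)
This is \cite[Thm.~2.5.1]{KZ1}, so in practice one would just cite it; if one wanted to reprove it, the plan would be as follows. Fix a simple summand $\opid_i$ of $\opid=\bigoplus_k\opid_k$ and construct a braided tensor functor $F\colon\mcZ(\mcA)\ra\mcZ(\mcA_{ii})$, then verify via Proposition~\ref{prp:ssi_funct_equiv} that it is an equivalence. It suffices to treat one index $i$: since $\mcA$ is indecomposable, the relation ``$k\sim l$ iff $\mcA_{kl}\neq 0$'' is an equivalence relation (reflexive as $\opid_k\in\mcA_{kk}$, symmetric by taking duals, transitive because in a multifusion category the tensor product of nonzero objects is nonzero), and as its classes split $\mcA$ as a multifusion category we must have $\mcA_{kl}\neq 0$ for all $k,l$; each $\mcA_{ij}$ is then an invertible $(\mcA_{ii},\mcA_{jj})$-bimodule category with inverse $\mcA_{ji}$, whence $\mcZ(\mcA_{ii})\simeq\mcZ(\mcA_{jj})$ by Morita invariance of the Drinfeld centre, so the choice of $i$ does not matter.

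I would define $F(X,\gamma):=(\opid_i\otimes X\otimes\opid_i,\gamma^{(i)})$, with $F$ acting on a morphism $f$ by $\id_{\opid_i}\otimes f\otimes\id_{\opid_i}$. The object $\opid_i\otimes X\otimes\opid_i$ lies in $\mcA_{ii}$, and for $U\in\mcA_{ii}$ the half-braiding $\gamma^{(i)}_U$ is obtained from $\gamma_U$ through the canonical isomorphisms $(\opid_i X\opid_i)\otimes U\cong\opid_i\otimes(X\otimes U)\otimes\opid_i$ and $\opid_i\otimes(U\otimes X)\otimes\opid_i\cong U\otimes(\opid_i X\opid_i)$ coming from $\opid_i\otimes U\cong U\cong U\otimes\opid_i$ in $\mcA$. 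The monoidal constraint $F_2$ on $(\opid_i X\opid_i)\otimes(\opid_i X'\opid_i)\cong\opid_i X\opid_i X'\opid_i$ contracts the inner copy of $\opid_i$ using the half-braiding of the first factor at $\opid_i$, namely $\gamma_{\opid_i}\colon X\otimes\opid_i\ra\opid_i\otimes X$, producing $\opid_i X X'\opid_i=\opid_i(X\otimes X')\opid_i$; and $F_0\colon\opid_i\cong\opid_i\otimes\opid_i$. That $F$ is braided follows directly from the braiding of a Drinfeld centre being the half-braiding, and the pentagon/triangle coherences for $\gamma^{(i)}$, $F_2$, $F_0$ are a routine diagram chase using only the half-braiding axioms and the idempotent relations $\opid_k\otimes\opid_l\cong\delta_{kl}\opid_k$.

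To see that $F$ is an equivalence one constructs a quasi-inverse by induction along the $(\mcA_{ii},\mcA)$-bimodule category $\mcM:=\opid_i\otimes\mcA=\bigoplus_j\mcA_{ij}$, which is invertible precisely because $\mcA$ is indecomposable: send $(Y,\delta)\in\mcZ(\mcA_{ii})$ to $\bigoplus_{j,k}\mcA_{ji}\boxtimes_{\mcA_{ii}}Y\boxtimes_{\mcA_{ii}}\mcA_{ik}\in\mcA$ equipped with the half-braiding transported from $\delta$, and check that the resulting functor $G$ satisfies $F\circ G\cong\Id$ and $G\circ F\cong\Id$ as braided tensor functors. Equivalently, having identified $\mcM$ as an invertible bimodule category, one may simply quote the Morita invariance of the Drinfeld centre for (multi)fusion categories and match the resulting braided equivalence with $F$.

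The main obstacle is this equivalence step: showing that every object and every morphism of $\mcZ(\mcA)$ is already accounted for by the $(i,i)$-corner is the only point where indecomposability is genuinely used (through $\mcA_{kl}\neq 0$ for all $k,l$ and $\mcA_{kl}\boxtimes_{\mcA_{ll}}\mcA_{lk}\simeq\mcA_{kk}$), and organising the bimodule-category bookkeeping cleanly, rather than by an ad hoc object-by-object analysis, is where the real work lies. The coherence verifications and the braided-functor axiom, by contrast, are mechanical.
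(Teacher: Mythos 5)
Your proposal matches the paper exactly: the paper does not prove this proposition but simply cites \cite[Thm.\,2.5.1]{KZ1} and records the explicit functor $(X,\gamma)\mapsto(\opid_i X\opid_i,\,\gamma^{(i)})$ with the half-braiding transported through the unitor isomorphisms, which is precisely the functor you write down. Your extra sketch of how one would reprove it (monoidal structure via $\gamma_{\opid_i}$, quasi-inverse via the invertible bimodule category $\opid_i\otimes\mcA$ and Morita invariance of the centre) is consistent with the standard argument behind the cited result, so there is nothing to correct.
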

The explicit equivalence $\mcZ(\mcA)\xra{\sim}\mcZ(\mcA_{ii})$ is given by
\begin{equation}
\label{eq:ZA-ZAii_equiv}
(X,\gamma) \mapsto \big(\opid_i X \opid_i, ~ \{\opid_i X \opid_i U \xra{\sim} \opid_i XU \opid_i \xra{\id_{\opid_i}\otimes\gamma_U\otimes\id_{\opid_i}} \opid_i UX \opid_i \xra{\sim} U \opid_i X \opid_i  \}_{U\in\mcA_{ii}} \big) \, ,
\end{equation}
where one uses the unitors of $\mcA_{ii}$ to define the half-braiding.

\medskip

We will mostly work with pivotal multifusion categories.
For such a category $\mcC$, the left/right traces of morphisms and categorical dimensions of objects, both being endomorphisms of $\opid\cong\bigoplus_i\opid_i$, are tuples of scalars.
The left/right dimensions of a simple object $i\in\Irr_\mcC$ are never 0 since one has for example $\mcC(ii^*,\opid)\cong\mcC(\opid,ii^*)\cong\mcC(i,i)\cong\opk$ and the (co)evaluation morphisms are non-zero.
This implies:
\begin{prp}
\label{prp:surj_pivotal_functs}
Let $\mcC, \mcD$ be pivotal multifusion and $F\colon\mcC\ra\mcD$ a pivotal functor.
Then if $F$ is surjective on the spaces of morphisms it is automatically fully faithful.
\end{prp}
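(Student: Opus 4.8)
The plan is to deduce faithfulness from the fact that a pivotal functor commutes with taking categorical traces, together with the non-vanishing of the dimensions of simple objects recorded just above the statement. Since $F$ is already assumed surjective on all Hom-spaces, it suffices to show $F$ is faithful, i.e.\ that $F(f)=0$ forces $f=0$ for any $f\colon X\ra Y$ in $\mcC$.

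The first step I would isolate, possibly as a separate lemma, is the trace identity: for every $X\in\mcC$ and every $h\in\End_\mcC(X)$ one has $\tr_\mcD^l(F(h)) = F_0^{-1}\circ F(\tr_\mcC^l h)\circ F_0$ in $\End_\mcD(\opid)$ (and similarly for right traces). This follows by unwinding the string-diagram definition of $\tr^l$ and using that $F$ is strong monoidal, so it intertwines $\otimes$, the coevaluations and the duals up to the structure isomorphisms $F_2$, $F_0$ and $F_1$, and that it is pivotal, so that by~\eqref{eq:F-pivotal} it also intertwines the pivotal structures and hence the second pair of (co)evaluations entering $\tr^l$. Specialising to $h=\id_X$ yields $F(\dim_\mcC^l X) = F_0\circ\dim_\mcD^l(F(X))\circ F_0^{-1}$.

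Now take $f\colon X\ra Y$ with $F(f)=0$ and suppose $f\neq 0$. Decomposing $X\cong\bigoplus_a X_a$ and $Y\cong\bigoplus_b Y_b$ into simples, some matrix component $\pi_b\circ f\circ\imath_a\colon X_a\ra Y_b$ is nonzero, hence an isomorphism (both objects being simple). Composing with a suitably rescaled $g\colon Y\ra X$ built from the inverse of this isomorphism together with the inclusion/projection for $X_a$, the endomorphism $k:=g\circ f\in\End_\mcC(X)$ has $\pi_a\circ k\circ\imath_a$ a nonzero scalar multiple of $\id_{X_a}$, so by cyclicity of the trace $\tr_\mcC^l(k)=\mu\cdot\dim_\mcC^l(X_a)$ for some $\mu\in\opk$, $\mu\neq 0$; this is nonzero because simple objects have nonzero dimension. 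On the other hand $F(k)=F(g)\circ F(f)=0$, so $\tr_\mcD^l(F(k))=0$, and the trace identity forces $F(\tr_\mcC^l k)=0$, i.e.\ $\dim_\mcD^l(F(X_a))=0$. But surjectivity of $F$ on $\End_\mcC(X_a)\cong\opk$ makes $\End_\mcD(F(X_a))$ one-dimensional, so $F(X_a)$ is a nonzero simple object of $\mcD$, and hence $\dim_\mcD^l(F(X_a))\neq 0$ by the same observation applied in $\mcD$ — a contradiction. Thus $f=0$, and $F$ is faithful, hence fully faithful.

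I expect the main obstacle to be the bookkeeping in the trace identity of step one: one must carefully chase $F_0$, $F_2$ and $F_1$ through the graphical expression for $\tr^l$ and genuinely invoke the pivotal compatibility~\eqref{eq:F-pivotal} (rather than mere strong monoidality) to handle the evaluations coming from the ``second'' duality. Once that identity is in place, everything else is the short semisimplicity argument above, where the only external input is the non-vanishing of $\dim^{l}$ on simple objects.
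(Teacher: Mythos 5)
Your argument follows the same route as the paper's (much terser) proof: surjectivity on Hom-spaces forces $F$ to send a simple object either to a simple object or to zero, pivotality makes $F$ preserve traces and hence categorical dimensions, and the non-vanishing of the dimensions of simple objects is supposed to exclude the zero case. Your trace identity and the reduction of a general $f$ to a single nonzero matrix component between simples is just a spelled-out version of exactly this, so in substance the two proofs coincide.

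There is, however, one step that does not hold as written, and it is precisely the step the paper's own proof also glosses over: ``surjectivity of $F$ on $\End_\mcC(X_a)\cong\opk$ makes $\End_\mcD(F(X_a))$ one-dimensional''. Surjectivity of a linear map out of $\opk$ only gives $\dim\End_\mcD(F(X_a))\le 1$, so the case $F(X_a)=0$ is not excluded, and in that case $\dim^l_\mcD(F(X_a))=0$ yields no contradiction. Your trace identity does not close this either: it shows that $\dim^l_\mcD F(X_a)$ is the image of $\dim^l_\mcC X_a$ under $F_0^{-1}F(-)F_0$, but for genuinely multifusion $\mcC$ the map $F\colon\End_\mcC(\opid)\ra\End_\mcD(F(\opid))$ need not be injective. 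Indeed the projection $\mcA\oplus\mcB\ra\mcA$ is pivotal, strong monoidal and surjective on all Hom-spaces, yet annihilates $\mcB$; so the statement itself requires $\mcC$ to be fusion or at least indecomposable. In that case one can repair the step: if $F(X_a)=0$ then $F$ kills the simple summand $\opid_k$ of the unit appearing in $X_a^*\otimes X_a$, and indecomposability of $\mcC$ together with $F(\opid)\cong\opid'\neq 0$ then forces $F(\opid)=0$, a contradiction. With that supplement (which is also needed to make the paper's one-line proof airtight, and which is harmless in all of the paper's applications) your argument is correct.
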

\begin{proof}
Because it is surjective, $F$ can map a simple of $\mcC$ either to a simple of $\mcD$ or to $0$.
Since $F$ is pivotal and hence preserves the left/right dimensions, which for simple objects are non-zero, the $0$ case is excluded.
\end{proof}
If $\mcC$ is in addition fusion, the traces/dimensions are scalars obtained upon the standard identification $\mcC(\opid,\opid)\cong\opk$.
When writing $(\dim^{l/r}_\mcC X)^{1/2}$ we will always mean a fixed choice of the square root of the left/right categorical dimension of an object $X\in\mcC$.

If $\mcS$ is a spherical fusion category, one defines its global dimension to be the scalar
\begin{equation}
\Dim\mcS := \sum_{i\in\Irr_\mcC} (\dim_\mcC i)^2 \, .
\end{equation}

\subsection{Modular fusion categories (MFCs)}
The following notion is of central importance of this paper.
As before we assume familiarity with the material in this section which can be acquired e.g.\ in~\cite[Sec.\,II.1]{Tu2}, \cite[Ch.\,3]{BakK}, \cite[Ch.\,8]{EGNO}.
\begin{defn}
\label{def:MFC}
A \textit{modular fusion category (MFC)} is a ribbon fusion category $\mcC$ satisfying one of the following equivalent conditions:
\begin{enumerate}[i)]
\item
The braiding of $\mcC$ is non-degenerate, i.e.\ if $T\in\mcC$ is such that for all $X\in\mcC$ one has
\begin{equation}
    c_{T,X} \circ c_{X,T} = \id_{X\otimes T} \, ,
\end{equation}
then $T\cong\opid^{\oplus n}$ for some $n\ge 0$.
\item
The $s$-matrix
\begin{equation}
    s_{ij} = \tr_\mcC(c_{j,i}\circ c_{i,j}) \, , \quad i,j\in\Irr_\mcC
\end{equation}
is non-degenerate.
\item
The functor $\mcC \boxtimes \widetilde{\mcC} \ra \mcZ(\mcC)$, $X\boxtimes Y \mapsto (X\otimes Y, \gamma^{\operatorname{dol}})$, where
\begin{equation}
\gamma^{\operatorname{dol}}_U := \pic[1.25]{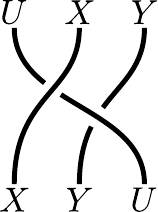} \, , \quad U\in\mcC
\end{equation}
is a so-called `dolphin' half-braiding, is a ribbon equivalence.
\end{enumerate}
\end{defn}
The trivial example of a MFC is the category $\Vect_\opk$ of finite dimensional vector spaces.
A further family of examples of MFCs is given by the following (see~\cite[Thm.\,3.16]{Mug2}, \cite[Thm.\,2.3, Rem.\,2.4]{ENO1}):
\begin{prp}
\label{prp:ZS_is_MFC}
The Drinfeld centre $\mcZ(\mcS)$ of a spherical fusion category $\mcS$ is a MFC if and only if $\Dim\mcS\neq 0$.
For $\operatorname{char}\opk=0$ the condition $\Dim\mcS\neq 0$ holds automatically.
\end{prp}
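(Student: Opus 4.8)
The plan is to verify the three defining properties of a MFC for $\mcZ(\mcS)$ one by one, to observe that only finite semisimplicity can fail, and to pin down that it fails precisely when $\Dim\mcS=0$.

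\emph{Structural part.} Since $\mcS$ is $\opk$-linear rigid monoidal, so is $\mcZ(\mcS)$, and it is braided; since $\mcS$ is pivotal, $\mcZ(\mcS)$ is pivotal (as recalled above), and it is spherical because $\mcS$ is, hence ribbon. The monoidal unit $(\opid,\id)$ is simple, since $\End_{\mcZ(\mcS)}(\opid_{\mcZ(\mcS)})=\End_\mcS(\opid)=\opk$. Moreover $\mcZ(\mcS)$ is always a finite abelian category with finitely many isomorphism classes of simple objects. Hence the only obstruction to $\mcZ(\mcS)$ being fusion is semisimplicity, and once that holds the only remaining obstruction to its being a MFC is non-degeneracy of the braiding.

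\emph{Semisimplicity --- the heart of the matter.} Let $U\colon\mcZ(\mcS)\ra\mcS$ be the forgetful functor. As $\mcS$ is fusion, $U$ has a two-sided adjoint (induction) $I\colon\mcS\ra\mcZ(\mcS)$ with $U\circ I\cong\bigoplus_{i\in\Irr_\mcS}i\otimes(-)\otimes i^*$, and $A:=I(\opid)=\bigoplus_{i\in\Irr_\mcS}i\otimes i^*$ carries the structure of a connected commutative Frobenius algebra in $\mcZ(\mcS)$ with $\dim_{\mcZ(\mcS)}A=\Dim\mcS$. The crucial point is that the separability obstruction for $A$ --- the composite of its comultiplication and multiplication, which since $A$ is connected and commutative is a scalar multiple of $\id_A$ --- is governed by $\Dim\mcS$: when $\Dim\mcS\neq 0$ one normalises by $1/\Dim\mcS$ to split all idempotents, equivalently to exhibit every object of $\mcZ(\mcS)$ as projective, so $\mcZ(\mcS)$ is semisimple; when $\Dim\mcS=0$ this element is nilpotent, the unit $\opid_{\mcZ(\mcS)}$ fails to be projective, and $\mcZ(\mcS)$ is a genuinely non-semisimple finite tensor category (for instance $\mcS=\Vect_G$ with $\operatorname{char}\opk\mid|G|$). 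This is \cite[Thm.\,2.3, Rem.\,2.4]{ENO1}. Since a MFC is by definition fusion, this already settles the `only if' direction.

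\emph{Non-degeneracy and the characteristic-$0$ statement.} Assume now $\Dim\mcS\neq 0$, so that $\mcZ(\mcS)$ is ribbon fusion. To see the braiding is non-degenerate, suppose $(X,\gamma)$ lies in the M\"uger centre, i.e.\ double-braids trivially with every object. Every object of $\mcZ(\mcS)$ is a direct summand of some $I(Y)$, because $U$ is surjective, and $\Hom_{\mcZ(\mcS)}((X,\gamma),I(Y))\cong\Hom_\mcS(X,Y)$ by adjunction; spelling out the transparency condition against the objects $I(Y)$ together with their explicit half-braidings forces $\gamma$ to restrict to the trivial half-braiding on each simple summand of $X$, whence each such summand is $\cong\opid$ and $(X,\gamma)\cong\opid_{\mcZ(\mcS)}^{\oplus n}$. (This is the standard fact that Drinfeld centres of fusion categories are non-degenerate, cf.\ \cite[Thm.\,3.16]{Mug2}, \cite{DGNO}.) Finally, when $\operatorname{char}\opk=0$ the scalar $\Dim\mcS$ is a totally positive algebraic integer, in particular nonzero, so $\mcZ(\mcS)$ is automatically a MFC. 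I expect the positive-characteristic semisimplicity criterion to be the main obstacle: the analytic positivity of $\Dim\mcS$ available in characteristic $0$ is lost, and one must instead argue via projectivity of the unit and separability of $A$. By comparison, the non-degeneracy step is essentially formal once semisimplicity is established.
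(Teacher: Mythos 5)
The paper offers no proof of this proposition: it is quoted from the literature with pointers to \cite[Thm.\,3.16]{Mug2} and \cite[Thm.\,2.3, Rem.\,2.4]{ENO1}. Your sketch is a correct outline of exactly those cited arguments — semisimplicity of $\mcZ(\mcS)$ controlled by separability of the canonical algebra $I(\opid)=\bigoplus_{i\in\Irr_\mcS} i\otimes i^*$ of dimension $\Dim\mcS$, M\"uger's non-degeneracy of the braiding, and the characteristic-$0$ non-vanishing of $\Dim\mcS$ from \cite{ENO1} — so it matches the approach the paper implicitly relies on.
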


For a MFC $\mcC$ we define the following object/endomorphisms:
\begin{equation}
\label{eq:Kirby_col}
C := \bigoplus_{i\in\Irr_\mcC} i \, , \qquad
\dhalf := \bigoplus_{i\in\Irr_\mcC} (\dim_\mcC i)^{1/2} \cdot \id_i ~ \in\End_\mcC C \, , \qquad
d := \dhalf \circ \dhalf \, .
\end{equation}
In graphical calculus, a strand labelled by $C$ and having a single $d$-labelled insertion is said to carry the \textit{Kirby colour}.
The global dimension of a MFC $\mcC$
\begin{equation}
    \Dim\mcC = \tr_\mcC d
\end{equation}
is automatically non-zero.
In what follows we will often find the `scissors' identity useful (see~\cite[Cor.\,3.1.11]{BakK}):
\begin{equation}
\label{eq:scissors_id}
\pic[1.25]{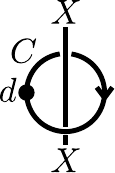} =
\Dim\mcC \cdot \sum_i
\pic[1.25]{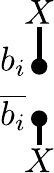} \, ,
\end{equation}
where $b_i$ run through a basis of $\mcC(\opid, X)$ and $\overline{b_i}$ is the dual basis of $\mcC(X,\opid)$ with respect to the composition pairing, i.e.\ $\overline{b_j}\circ b_i = \delta_{ij} \id_{\opid} \in \End_\mcC\opid \cong \opk$.

\section{Symmetric separable Frobenius algebras}
\label{sec:ssFAs}
In this section we review some notions and results related to Frobenius algebras in (multi)fusion categories.
Most of them can be found e.g.\ in~\cite{FRS1}.
Somewhat new is the separability condition for Frobenius algebras in Section~\ref{subsec:separability}.
It generalises the similar $\D$-separability condition required in the works~\cite{CRS2, CRS3, MR1, MR2, CMRSS2} in a way that allows one to better control the dependence of the constructions therein on the Morita class of a Frobenius algebra, see Section~\ref{subsec:Morita_eq}.

\subsection{Algebras and modules}
Let $\mcC$ be a monoidal category.
We list some definitions and conventions on algebras in $\mcC$ and their modules.
For more details one can consult e.g.\ \cite{FRS1,FFRS}.
\begin{itemize}
\item
An \textit{algebra in $\mcC$} is a tuple $A = (A,\mu,\eta)$, where $A\in\mcC$, and $\mu\colon A \otimes A \ra A$ is an associative multiplication with the unit $\eta\colon \opid \ra A$, i.e.\ morphisms depicted by \hspace{-8pt}\pic[1.25]{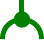}\hspace{-6pt} and \hspace{-8pt}\pic[1.25]{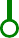}\hspace{-6pt} such that the following identities in $\mcC$ hold:
\begin{equation}
\label{eq:alg_assoc-unitality}
\pic[1.25]{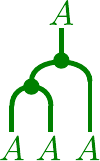} =
\pic[1.25]{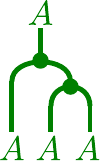} \, , \quad
\pic[1.25]{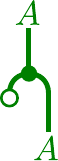} =
\pic[1.25]{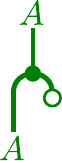} =
\pic[1.25]{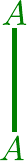} \, .
\end{equation}
\item
A \textit{coalgebra in $\mcC$} is a tuple $A = (A,\D,\vareps)$, where $A\in\mcC$ and $\D\colon A \ra A \otimes A$ is a coassociative comultiplication with the counit $\vareps\colon A \ra \opid$, to be depicted by \hspace{-8pt}\pic[1.25]{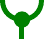}\hspace{-6pt} and \hspace{-8pt}\pic[1.25]{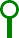}\hspace{-6pt}, i.e.\ such that
\begin{equation}
\label{eq:alg_coassoc-counitality}
\pic[1.25]{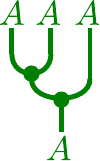} =
\pic[1.25]{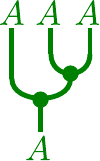} \, , \quad
\pic[1.25]{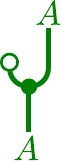} =
\pic[1.25]{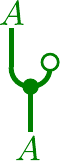} =
\pic[1.25]{31_alg_id.pdf} \, .
\end{equation}
\item
A \textit{Frobenius algebra in $\mcC$} is simultaneously an algebra and a coalgebra $A\in\mcC$ such that
\begin{equation}
\label{eq:Frob_cond}
\pic[1.25]{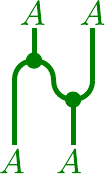} =
\pic[1.25]{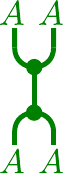} =
\pic[1.25]{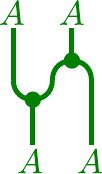} \, .
\end{equation}
If $\mcC$ is in addition pivotal, one calls a Frobenius algebra $A\in\mcC$ \textit{symmetric} if
\begin{equation}
\label{eq:Frob_symmetric}
\pic[1.25]{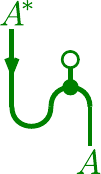} =
\pic[1.25]{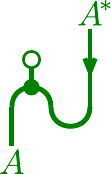} \, .    
\end{equation}
\item
If $\mcC$ is braided, an algebra (resp.\ coalgebra) $A\in\mcC$ is called \textit{commutative} (resp.\ \textit{cocommutative}) if one has
\begin{equation}
\label{eq:comm_algs}
\pic[1.25]{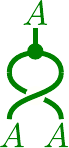} = 
\pic[1.25]{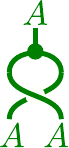} =
\pic[1.25]{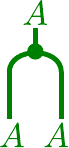} \, , \quad
\text{(resp. }
\pic[1.25]{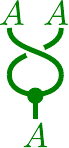} =
\pic[1.25]{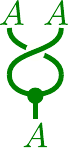} =
\pic[1.25]{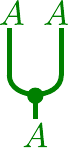} \,
\text{)} \, .
\end{equation}
\item
If $\mcC$ is braided and $A,B\in\mcC$ are algebras (resp.\ coalgebras) so is their product $A\otimes B$ with the multiplication/unit (resp.\ comultiplication/counit)
\begin{equation}
\label{eq:AxB_alg_morphs}
\pic[1.25]{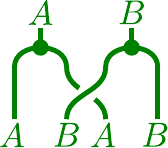} \, ,
\pic[1.25]{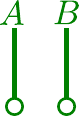} \, , \quad
\text{(resp. }
\pic[1.25]{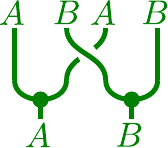} \, ,
\pic[1.25]{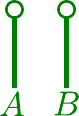}
\text{)} \, .
\end{equation}
In particular, if $A$, $B$ are Frobenius algebras, so is $A\otimes B$.
One can easily generalise this to more tensor factors by iterating.
\item
A \textit{left module} of an algebra $A\in\mcC$ is a tuple $L=(L,\l)$ where $\l\colon A \otimes L \ra L$ is an associative unital action to be depicted by \hspace{-8pt}\pic[1.25]{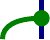}\hspace{-6pt}, i.e.\ so that one has
\begin{equation}
\pic[1.25]{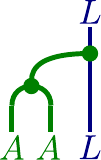} = 
\pic[1.25]{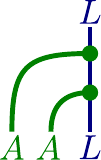} \, , \quad
\pic[1.25]{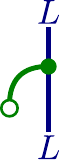} =
\pic[1.25]{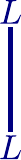} \, .
\end{equation}
A \textit{module morphism} $f\colon(L,\l) \ra (L',\l')$ is a morphism $f\colon L \ra L'$ which commutes with the $A$-actions, i.e.\ $f \circ \l = \l'\circ (\id_A \otimes f)$.
Similarly one defines the \textit{right modules} and their morphisms, and, for another algebra $B\in\mcC$, \textit{$A$-$B$-bimodules} and their morphisms.
We denote by ${_A\mcC}$ and $\mcC_A$ the categories of left and right $A$-modules and by $\mcACB$ the one of $A$-$B$-bimodules.
Note that for a Frobenius algebra $A\in\mcC$, the condition~\eqref{eq:Frob_cond} implies that the comultiplication $\Delta\colon A \ra A \otimes A$ is an $A$-$A$-bimodule morphism.
\item
If $\mcC$ is braided and $A,B\in\mcC$ are algebras, a left module $L\in{_{A\otimes B}\mcC}$ is the same as a simultaneous left $A$- and $B$-module whose actions commute in the sense that the following identity holds:
\begin{equation}
\label{eq:AxB_multimodul_action}
\pic[1.25]{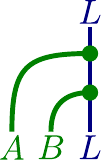} =
\pic[1.25]{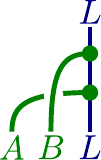} \, .
\end{equation}
It is easy to adapt this to right $A\otimes B$-modules or bimodules of tensor products of several algebras in $\mcC$.
\item
If $\mcC$ is pivotal and $A\in\mcC$ an algebra, the objects of ${_A\mcC}$ and $\mcC_A$ are in bijection, where $L\in{_A\mcC}$ is sent to the dual $L^*$ with the right action
\begin{equation}
\label{eq:lambda_dual_action}
\pic[1.25]{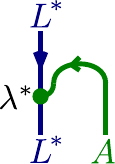}  \, .
\end{equation}
Similarly, for two algebras $A,B\in\mcC$ one has the bijection $\mcACB\ra\mcBCA$, $M\mapsto M^*$.
\item
By analogy to modules of algebras, for coalgebras $A,B\in\mcC$ one defines left and right \textit{$A$-comodules} and \textit{$A$-$B$-bicomodules}.
If $\mcC$ is pivotal and $A$, $B$ are symmetric Frobenius algebras, modules are canonically comodules where the coaction e.g.\ for a left module $L\in{_A\mcC}$ is defined to be
\begin{equation}
\label{eq:Frob_coaction}
\pic[1.25]{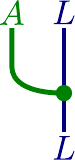} :=
\pic[1.25]{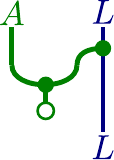} \, .
\end{equation}
\end{itemize}

In what follows we will mostly work with algebras in multifusion categories, especially in MFCs.
For algebras $A$, $B$ in a multifusion category $\mcC$, the categories of modules ${_A\mcC}$, $\mcC_A$ and $\mcACB$ are automatically abelian and $\opk$-linear, with the direct sum inherited from $\mcC$ and module morphisms automatically forming vector subspaces of the respective morphism spaces in $\mcC$.
It is easy to see that $A\oplus B$ in this case is an algebra as well, with e.g.\ the left modules having the form $L \oplus L'$, $L\in{_A\mcC}$, $L'\in{_B\mcC}$ so that one has $_{A\oplus B}\mcC \simeq {_A\mcC} \oplus {_B\mcC}$.
If an algebra $A\in\mcC$ cannot be written as a direct sum of two non-zero algebras in $\mcC$, it is called \textit{indecomposable}.

\subsection{Separability}
\label{subsec:separability}
Let $\mcC$ be a monoidal category.
For the constructions in the upcoming sections we will find it necessary to consider \textit{separable} algebras in $\mcC$
Recall that an algebra $A\in\mcC$ is called separable if the multiplication $\mu\colon A \otimes A \ra A$ has a section in $\mcACA$, i.e.\ a bimodule morphism $s\colon A \ra A \otimes A$ such that $\mu \circ s = \id_A$.
\begin{prp}
\label{prp:FA_sep_cond}
A Frobenius algebra $A\in\mcC$ is separable if and only if there is a morphism $\zeta\colon\opid\ra A$ such that
\begin{equation}
\label{eq:FA_sep_cond}
\pic[1.25]{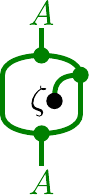} =
\pic[1.25]{1_sep_rhs.pdf} \, .
\end{equation}
\end{prp}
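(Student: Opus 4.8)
The plan is to prove the two implications by passing explicitly between an $A$-$A$-bimodule section $s\colon A\ra A\otimes A$ of the multiplication $\mu$ and a morphism $\zeta\colon\opid\ra A$ satisfying \eqref{eq:FA_sep_cond}. For the ``if'' direction, suppose $\zeta$ is given and let $s$ be the morphism read off from the left-hand side of \eqref{eq:FA_sep_cond} by omitting the topmost multiplication, so that up to reassociation $s=(\id_A\otimes\mu)\circ(\id_A\otimes\zeta\otimes\id_A)\circ\D$. Then $\mu\circ s=\id_A$ is literally a restatement of \eqref{eq:FA_sep_cond}, so the only thing to check is that $s$ is a bimodule morphism, i.e.\ $s\in\mcACA$. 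This follows because $\D$ is an $A$-$A$-bimodule morphism — a consequence of the Frobenius relation \eqref{eq:Frob_cond}, as noted right after that equation — while fusing a fixed morphism $\zeta\colon\opid\ra A$ into the second leg leaves the left $A$-action on the first tensor factor untouched and commutes with the right $A$-action on the second factor by associativity \eqref{eq:alg_assoc-unitality}; this is a short graphical manipulation. Hence $\mu$ splits in $\mcACA$ and $A$ is separable.

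For the ``only if'' direction I would work with the handle endomorphism $H:=\mu\circ\D\in\End_\mcC A$. Using \eqref{eq:Frob_cond} and \eqref{eq:alg_assoc-unitality} one checks that $H$ is a bimodule endomorphism of the regular bimodule $A$, hence equals ``multiplication by'' a central morphism $c:=H\circ\eta\colon\opid\ra A$ (a morphism $z\colon\opid\ra A$ with $\mu\circ(z\otimes\id_A)=\mu\circ(\id_A\otimes z)$). The main point is that separability of $A$ forces $H$, equivalently $c$, to be invertible: given a section $s\in\mcACA$ of $\mu$, right $A$-linearity of $s$ yields $s=(\id_A\otimes\mu)\circ\big((s\circ\eta)\otimes\id_A\big)$, so $s$ is recorded by the morphism $s\circ\eta\colon\opid\ra A\otimes A$, which left $A$-linearity of $s$ forces to be ``central'' in $A\otimes A$; the self-duality of $A$ afforded by the Frobenius pairing $(\vareps\circ\mu,\,\D\circ\eta)$ then identifies such central morphisms $\opid\ra A\otimes A$ with central morphisms $\opid\ra A$ in a way that turns $\mu\circ s=\id_A$ into the invertibility of $c$. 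Granting this, take $\zeta$ to be the (automatically central) inverse of $c$; then by centrality of $\zeta$ the left-hand side of \eqref{eq:FA_sep_cond} equals the composite of $H$ with multiplication by $\zeta$, which is $\id_A$, as required. (One can also phrase this more symmetrically: the bijection above sends a section $s$ directly to $\zeta:=(\vareps\otimes\id_A)\circ s\circ\eta$ and reconstructs $s$ as the expression from the first paragraph.)

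I expect the Frobenius rigidity step — that a bimodule section of $\mu$ is pinned down by a single morphism $\opid\ra A$ — to be where the real content lies: a priori $s\circ\eta$ carries more data than $\zeta\in\Hom_\mcC(\opid,A)$, and collapsing it genuinely uses the Frobenius axiom \eqref{eq:Frob_cond}, counitality \eqref{eq:alg_coassoc-counitality}, and the fact that $s$ is a bimodule map rather than merely a splitting of $\mu$ in $\mcC$. Everything else is bookkeeping of the left/right actions on $A\otimes A$. Finally I would note that neither the symmetry condition \eqref{eq:Frob_symmetric} nor any semisimplicity or finiteness assumption on $\mcC$ enters; this is purely a statement about Frobenius algebras in arbitrary monoidal categories.
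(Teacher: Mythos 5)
Your ``if'' direction is fine and is exactly the paper's argument: given $\zeta$, the morphism $s=(\id_A\otimes\mu)\circ(\id_A\otimes\zeta\otimes\id_A)\circ\D$ is a bimodule section of $\mu$ because $\D$ is a bimodule morphism and $\mu$ is associative. The gap is in your ``only if'' direction, and it is not cosmetic: the central claim that a bimodule section of $\mu$ forces the handle endomorphism $H=\mu\circ\D$ (equivalently $c=\mu\circ\D\circ\eta$) to be invertible is false, and so is the stronger conclusion you would derive, namely that $\zeta$ can be chosen central. Concretely, take $\mcC=\Vect_\opk$ with $\operatorname{char}\opk=2$ (allowed here, since the paper only assumes characteristic $0$ in Section~7) and $A=M_2(\opk)$ with the trace Frobenius form, so $\D(1)=\sum_{i,j}E_{ij}\otimes E_{ji}$. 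This $A$ is separable (it is a matrix algebra, and indeed $\zeta=E_{11}$ satisfies \eqref{eq:FA_sep_cond} because $\sum_{i,j}E_{ij}\,\zeta\,E_{ji}=\operatorname{tr}(\zeta)\cdot 1$), yet $c=\sum_{i,j}E_{ij}E_{ji}=2\cdot 1=0$, and no \emph{central} $\zeta$ exists since central elements are scalars and $\operatorname{tr}(\lambda 1)=2\lambda=0$. The same failure occurs even in characteristic $0$ for non-symmetric forms, e.g.\ $\vareps(x)=\operatorname{tr}(ux)$ on $M_2(\mathbb{C})$ with $u=\operatorname{diag}(1,-1)$ gives $c=\operatorname{tr}(u^{-1})\cdot 1=0$. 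The underlying error is the asserted identification, via the Frobenius self-duality, of ``Casimir'' morphisms $\opid\ra A\otimes A$ (i.e.\ $s\circ\eta$ for bimodule maps $s$) with central morphisms $\opid\ra A$: separability idempotents such as $\sum_i E_{i1}\otimes E_{1i}$ do not arise from central insertions into $\D(1)$, so $\mu\circ s=\id_A$ does not translate into invertibility of $c$, and the route through a central inverse cannot be repaired.

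The correct argument for this direction is the one you mention only parenthetically at the end, and it is the paper's proof: given a bimodule section $s$ of $\mu$, set $\zeta:=(\vareps\otimes\id_A)\circ s\circ\eta$ and verify \eqref{eq:FA_sep_cond} directly by a short chain of graphical identities, using only that $s$ commutes with the left/right $A$-actions (hence, via the Frobenius relation \eqref{eq:Frob_cond}, with the multiplication and comultiplication) together with (co)unitality. No invertibility or centrality of anything enters, and indeed $\zeta$ need not be central, as the examples above show. That verification — not the handle-element argument — is the actual content of the converse, and it is the part missing from your proposal.
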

\begin{proof}
If such $\zeta$ exists, one has the section $s := (\id_A\otimes \mu)\circ(\id_A\otimes\zeta)\circ\D$, where $\D\colon A \ra A \otimes A$ denotes the comultiplication.
On the other hand, if $A$ is separable with a section $s$, define $\zeta := (\vareps \otimes \id_A) \circ s\circ\eta$, where $\eta\colon\opid\ra A$, $\vareps\colon A \ra \opid$ denote the unit and the counit respectively.
Then one has
\begin{equation}
\pic[1.25]{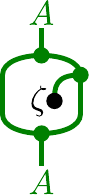} =
\pic[1.25]{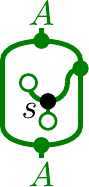} =
\pic[1.25]{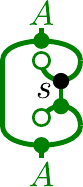} =
\pic[1.25]{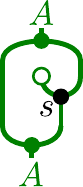} =
\pic[1.25]{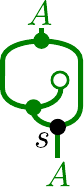} =
\pic[1.25]{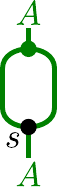} =
\pic[1.25]{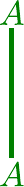}\, ,
\end{equation}
where one uses that $s$ is an $A$-$A$-bimodule morphism and therefore commutes with the (co)multiplication morphisms of $A$.
\end{proof}

Our constructions involving a separable Frobenius algebra $A\in\mcC$ will in principle be independent of the choice of a morphism $\zeta$ satisfying the condition~\eqref{eq:FA_sep_cond}.
For later convenience we will furthermore assume that $\zeta$ is given as a multiplicative square of another morphism $\psi\colon\opid\ra A$, i.e.\ $\zeta = \psi^2 := \mu\circ(\psi\otimes\psi)$ so that one has
\begin{equation}
\label{eq:FA_sep_cond_ito_psi}
\pic[1.25]{32_sep_ito_psi_1.pdf} =
\pic[1.25]{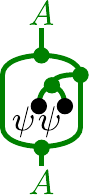} =
\pic[1.25]{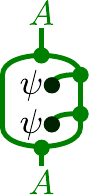} =
\pic[1.25]{1_sep_rhs.pdf} \, .
\end{equation}
If $\mcC$ is a (multi)fusion category over an algebraically closed field $\opk$, this assumption can be made without loss of generality (for the case of $\mcC$ being spherical fusion, this follows from the proof of Proposition~\ref{prp:FA_psi_inv} below).
Abusing the terminology, we will refer to such a morphism $\psi\colon\opid\ra A$ as a \textit{section of $A$} (since it provides one with a section of the multiplication map $\mu\colon A \otimes A \ra A$).

\medskip

Let $\mcC$ be pivotal and $A\in\mcC$ a symmetric separable Frobenius algebra with a section $\psi$.
For an arbitrary left $A$-module $L$, a right $A$-module $K$ and an $A$-$A$-bimodule $M$, we introduce the following endomorphisms:
\begin{equation}
\label{eq:psi_lr_defs}
\psi_l^L \hspace{-2pt}=\hspace{-4pt} \pic[1.25]{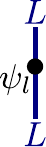} \hspace{-4pt}:=\hspace{-4pt} \pic[1.25]{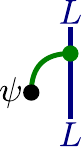}, \quad
\psi_r^K \hspace{-2pt}=\hspace{-4pt} \pic[1.25]{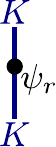} \hspace{-4pt}:=\hspace{-4pt} \pic[1.25]{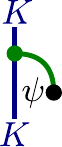}, \quad
\omega_M \hspace{-2pt}=\hspace{-4pt} \pic[1.25]{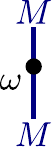} \hspace{-4pt}:=\hspace{-4pt} \pic[1.25]{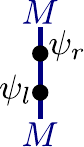} \hspace{-4pt}=\hspace{-4pt} \pic[1.25]{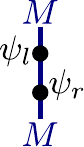}\, .
\end{equation}
They are compatible with the (co)action morphisms of $A$ in the sense that the identities
\begin{equation}
\label{eq:psi_comm_identities}
\pic[1.25]{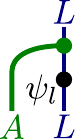} \hspace{-4pt}=\hspace{-4pt} \pic[1.25]{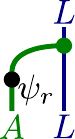} \, , \quad
\pic[1.25]{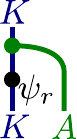} \hspace{-4pt}=\hspace{-4pt} \pic[1.25]{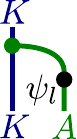} \, , \quad
\pic[1.25]{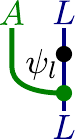} \hspace{-4pt}=\hspace{-4pt} \pic[1.25]{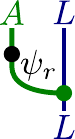} \, , \quad
\pic[1.25]{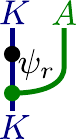} \hspace{-4pt}=\hspace{-4pt} \pic[1.25]{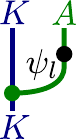} \, .
\end{equation}
hold, the first two of which follow from the associativity of the actions, while for the last two one needs the symmetry property, for example
\begin{equation}
\label{eq:symm_moves_psi}
\pic[1.25]{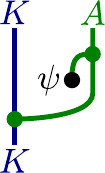} \hspace{-4pt}=\hspace{-4pt}
\pic[1.25]{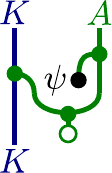} \hspace{-4pt}=\hspace{-4pt}
\pic[1.25]{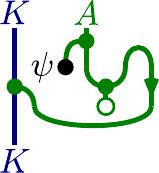} \hspace{-4pt}=\hspace{-4pt}
\pic[1.25]{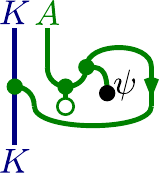} \hspace{-4pt}=\hspace{-4pt}
\pic[1.25]{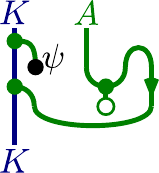} \hspace{-4pt}=\hspace{-4pt}
\pic[1.25]{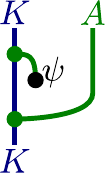} \, .
\end{equation}

For the rest of the section, unless specified otherwise, $\mcC$ will denote a spherical fusion category and $A\in\mcC$ a symmetric separable Frobenius algebra with a section $\psi\colon\opid\ra A$.
As it is the case with separable algebras in multifusion categories in general, the categories of left and right modules ${_A\mcC}$ and $\mcC_A$ (and, for another separable algebra $B\in\mcC$, also the one of bimodules $\mcACB$) are semisimple, see~\cite[Prop.\,2.7]{DMNO}, \cite[Sec.\,4]{KZ2}.
\begin{lem}
\label{lem:tr-psi_non_zero}
For all simple objects $\l\in {}_A\mcC$, $\kappa\in\mcC_A$, $\mu\in\mcACA$, the scalars $\tr_\mcC (\psi_l^\lambda)^2$, $\tr_\mcC (\psi_r^\kappa)^2$, $\tr_\mcC \omega_\mu^2$ are non-zero and independent of the choice of $\psi$.
\end{lem}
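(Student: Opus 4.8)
The plan is to treat the three statements uniformly; I would spell out the argument for $\psi_l^\lambda$ and indicate the (identical) modifications for $\psi_r^\kappa$ and $\omega_\mu$. First I would unwind the defining diagrams~\eqref{eq:psi_lr_defs} together with the multiplicativity $\zeta:=\psi^2=\mu\circ(\psi\otimes\psi)$: a short manipulation shows that $(\psi_l^L)^2$ equals the left $A$-action of $\zeta$ on $L$ (possibly after conjugating by a canonical automorphism of $L$ built from the pivotal structure, which does not affect the trace), and likewise $(\psi_r^K)^2$, resp.\ $\omega_M^2$, is the right action, resp.\ the combined left-and-right action, of $\zeta$ — where for the last case one invokes the symmetry of $A$ exactly as in~\eqref{eq:symm_moves_psi}. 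In particular $(\psi_l^\lambda)^2$, hence $\tr_\mcC(\psi_l^\lambda)^2$, depends on $\psi$ only through $\zeta=\psi^2$, and is unchanged under a different choice of $\zeta$ satisfying~\eqref{eq:FA_sep_cond}, either by the general choice-independence noted right after Proposition~\ref{prp:FA_sep_cond} or by a direct check using~\eqref{eq:FA_sep_cond}. This settles the independence assertions.

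Next, for the non-vanishing, I would rewrite the trace as a pairing: taking the partial trace of the action over the module leg gives a ``character'' morphism $\chi_\lambda\colon A\ra\opid$ in $\mcC$, and by sphericality $\tr_\mcC(\psi_l^\lambda)^2=\chi_\lambda\circ\zeta\in\End_\mcC\opid\cong\opk$. Now I use that $A$ is separable Frobenius, so that induction $\Ind=A\otimes(-)\colon\mcC\ra{}_A\mcC$ is two-sided adjoint to the forgetful functor, with (co)units assembled from $(\eta,\mu)$ and $(\vareps,\D)$ and with $\zeta$ governing the composite of the two adjunctions. Since ${}_A\mcC$ is semisimple (as recalled above) and $\lambda$ is simple, $\lambda$ is a non-zero summand of $\Ind(j)$ for some $j\in\Irr_\mcC$, so the composition pairing $\Hom_{{}_A\mcC}(\Ind j,\lambda)\times\Hom_{{}_A\mcC}(\lambda,\Ind j)\ra\End_{{}_A\mcC}(\lambda)\cong\opk$ is non-degenerate; pick $f,g$ with $f\circ g=\id_\lambda$. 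Transporting $f$ and $g$ along the Frobenius adjunction to morphisms $j\lra U(\lambda)$ in $\mcC$ and composing/tracing them in the other order turns the identity $f\circ g=\id_\lambda\neq0$ into $\chi_\lambda\circ\zeta\neq 0$ (up to the non-zero factor $\dim_\mcC j$, which is non-zero because $j$ is simple in $\mcC$). The same argument with $\mcC_A$, resp.\ $\mcACA$, in place of ${}_A\mcC$ (both of which are semisimple) handles $\psi_r^\kappa$ and $\omega_\mu$.

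The conceptual skeleton is thus short, and I expect the main obstacle to be purely diagrammatic bookkeeping: making precise, in the pivotal/spherical setting, the rewriting $(\psi_l^\lambda)^2=\,$``$\zeta$-action'', the identity $\tr_\mcC(\psi_l^\lambda)^2=\chi_\lambda\circ\zeta$, and above all the passage from $\id_\lambda\neq0$ to $\chi_\lambda\circ\zeta\neq0$ through the two adjunctions without misplacing a dimension factor or a sign. A more hands-on alternative that sidesteps the adjunctions: the categories ${}_A\mcC$, $\mcC_A$, $\mcACA$ carry left/right ``trace'' functions defined from the (co)evaluations of $\mcC$ twisted by the data in~\eqref{eq:psi_lr_defs}, and $\tr_\mcC(\psi_l^\lambda)^2$ coincides, up to the non-zero scalars appearing there, with such a trace of $\id_\lambda$; its non-vanishing on a simple object then follows verbatim from the argument recalled just before Proposition~\ref{prp:surj_pivotal_functs} (the relevant Hom-space is one-dimensional and the (co)evaluation morphisms are non-zero). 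Either way the essential inputs are semisimplicity of the three module categories and non-degeneracy of categorical (co)evaluations in $\mcC$.
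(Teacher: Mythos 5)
Your overall strategy is the paper's: reduce everything to a trace computation in $\mcC$ via the separable Frobenius adjunction between induction and the forgetful functor. The independence part is fine in substance — observing that $(\psi_l^L)^2$ is the left action of $\zeta=\psi^2$ and then doing a direct check against~\eqref{eq:FA_sep_cond} is essentially the computation~\eqref{eq:tr-psi_indep_calc}. Do not, however, lean on your first option (``the general choice-independence noted right after Proposition~\ref{prp:FA_sep_cond}''): that remark is a forward-looking claim which this very lemma helps to justify, so citing it here is circular; the direct check \emph{is} the proof.

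The non-vanishing step, as you have set it up, contains a genuine gap and not just bookkeeping. You split $\lambda$ off $\Ind(j)$ in ${}_A\mcC$, choose module maps $f,g$ with $f\circ g=\id_\lambda$, and transport across the adjunction. After the trace identity this lands you on $\tr_\mcC(\psi_l^\lambda)^2=\tr_\mcC(\hat f\circ\hat g)=(\hat g\circ\hat f)\cdot\dim_\mcC j$ (up to normalisation), where $\hat g\circ\hat f\in\End_\mcC(j)\cong\opk$. Nothing in $f\circ g=\id_\lambda$ forces this scalar to be non-zero: that condition controls the \emph{averaged} pairing $(\hat f,\hat g)\mapsto\operatorname{avg}(\hat f\circ\hat g)$ on the $\mcC$-side Hom spaces, not the plain composition pairing, and the discrepancy between the two pairings is exactly the quantity $\tr_\mcC(\psi_l^\lambda)^2$ you are trying to bound away from zero — so the argument is circular at the decisive moment. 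The paper quantifies on the other side: it takes dual bases $b_p\in\mcC(i,\lambda)$, $\overline{b_p}\in\mcC(\lambda,i)$ with $\overline{b_q}\circ b_p=\delta_{pq}\,\id_i$ (non-degeneracy of composition in $\mcC$, not in ${}_A\mcC$), so that the resulting identity $X^{i,\lambda}_{pp}\cdot\tr_\mcC(\psi_l^\lambda)^2=\dim_\mcC i$ has a manifestly non-zero right-hand side; see~\eqref{eq:tr_psi_calc}. Your ``hands-on alternative'' does not rescue this: non-degeneracy of the module traces on ${}_A\mcC$, $\mcC_A$ is precisely what this lemma is later used to establish, and the argument recalled before Proposition~\ref{prp:surj_pivotal_functs} applies to dimensions of simples in a pivotal \emph{multifusion} category, not to module traces on a module category.
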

\begin{proof}
We show the above claim for a simple left $A$-module $\l$, the proofs of other cases are similar.

For a simple object $i\in\Irr_\mcC$, let $\{ b_p \}$ be a basis of the space $\mcC(i,\l)$ and $\{\overline{b_p}\}$ be the dual basis of $\mcC(\l,i)$ with respect to the composition pairing, i.e.\ $\overline{b_q} \circ b_p = \d_{pq} \cdot \id_i$.
One then has scalars $X_{pq}^{i,\l}$, such that
\begin{equation}
\pic[1.25]{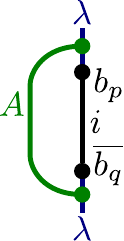} = X_{pq}^{i,\l}
\pic[1.25]{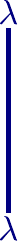} \, ,
\end{equation}
since $\l$ is simple and the morphism on the left-hand side is a left module morphism.
Precomposing both sides with $(\psi_l^\l)^2$ and taking the trace in $\mcC$ one gets:
\begin{equation}
\label{eq:tr_psi_calc}
X_{pq}^{i,\l} \cdot \tr_\mcC(\psi_l^\l)^2 = 
\pic[1.25]{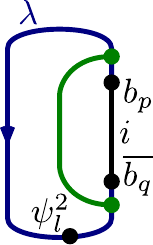} = 
\pic[1.25]{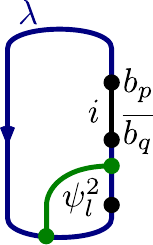} = 
\pic[1.25]{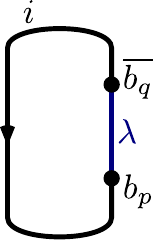} =
d_i \cdot \delta_{pq} \, ,
\end{equation}
where in the second equality one uses the canonical action~\eqref{eq:lambda_dual_action} of $A$ on $\l^*$, the coaction of the form~\eqref{eq:Frob_coaction} and the symmetry property~\eqref{eq:Frob_symmetric}.
Since for $p=q$ and $i$ such that $\mcC(i,\l)\neq\{0\}$ the right-hand side is non-zero, one gets $\tr_\mcC(\psi_l^\l)^2\neq 0$.

To show the independence of $\psi$, note that since $A$ is symmetric one has
\begin{equation}
\pic[1.25]{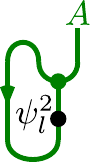} =
\pic[1.25]{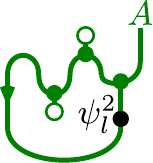} =
\pic[1.25]{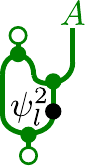} =
\pic[1.25]{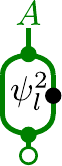} =
\pic[1.25]{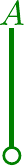} \, .
\end{equation}
Using this, for any other section $\psi'\colon\opid\ra A$ one computes:
\begin{equation}
\label{eq:tr-psi_indep_calc}
\pic[1.25]{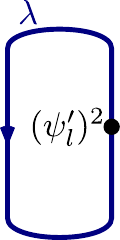} =
\pic[1.25]{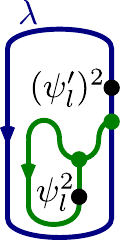} =
\pic[1.25]{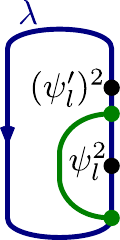} =
\pic[1.25]{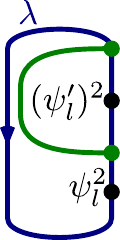} =
\pic[1.25]{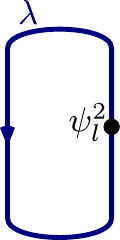} \, ,
\end{equation}
where in the second equality one uses the symmetry property of $A$, the associativity of the action and a combination of the identities~\eqref{eq:psi_comm_identities}.
\end{proof}

We will find that working with a symmetric separable Frobenius algebra $A\in\mcC$ is easier if the section $\psi$ has a certain invertibility property, which one can impose without loss of generality as stated in the following
\begin{prp}
\label{prp:FA_psi_inv}
The section $\psi\colon\opid\ra A$ can be chosen so that it has a multiplicative inverse, i.e.\ a morphism $\psi^{-1}\colon\opid\ra A$ such that
\begin{equation}
\mu \circ (\psi \otimes \psi^{-1}) = \mu \circ (\psi^{-1} \otimes \psi) = \eta \, ,
\end{equation}
where $\mu\colon A\otimes A \ra A$, $\eta\colon\opid\ra A$ are the multiplication and the unit morphisms~of~$A$.
\end{prp}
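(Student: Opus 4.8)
The plan is to reduce the statement to a fact about the finite-dimensional $\opk$-algebra $R := \mcC(\opid, A)$ equipped with the product $f \cdot g := \mu \circ (f \otimes g)$ (using $\opid \otimes \opid \cong \opid$) and unit $\eta$. First I would observe that $f \mapsto (a \mapsto \mu \circ (f \otimes a))$ is an isomorphism of $\opk$-algebras $R \xrightarrow{\sim} \End_{\mcC_A}(A)$, where $A$ is viewed as a right $A$-module (the inverse being $\varphi \mapsto \varphi \circ \eta$). Since $A$ is separable and $\mcC$ is fusion, $\mcC_A$ is a finite semisimple $\opk$-linear category (as recalled above), so $\End_{\mcC_A}(A)$, and hence $R$, is a finite-dimensional semisimple $\opk$-algebra; as $\opk$ is algebraically closed, its centre $Z(R)$ is therefore a reduced commutative algebra, $Z(R) \cong \opk^{\oplus m}$ for some $m$.

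Next I would identify the separability morphism $\zeta$ from Proposition~\ref{prp:FA_sep_cond} (applied to the given $A$) as a central unit of $R$. Because $\Delta$ is an $A$-$A$-bimodule morphism, the handle endomorphism $\mu \circ \Delta$ of $A$ is an endomorphism of the regular bimodule, hence equals right multiplication by the central element $\omega := \mu \circ \Delta \circ \eta \in Z(R)$; rewriting the defining equation \eqref{eq:FA_sep_cond} then says precisely that $\zeta$ is a right inverse of $\omega$ in $R$. A right inverse in a finite-dimensional algebra is automatically a two-sided inverse, so $\zeta = \omega^{-1} \in Z(R)^{\times}$.

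To finish, under the identification $Z(R) \cong \opk^{\oplus m}$ the element $\zeta$ is a tuple of nonzero scalars; since $\opk$ is algebraically closed each of them has a square root, producing $\psi \in Z(R) \subseteq R$ with $\psi \cdot \psi = \zeta$, i.e. a morphism $\psi \colon \opid \ra A$ with $\mu \circ (\psi \otimes \psi) = \zeta$. Thus $\psi$ is a section of $A$ in the sense of \eqref{eq:FA_sep_cond_ito_psi} — this in particular also establishes the claimed 'without loss of generality' for writing $\zeta = \psi^2$ — and, being a componentwise nonzero element of $\opk^{\oplus m}$, it is a unit of $R$; its componentwise inverse $\psi^{-1} \colon \opid \ra A$ satisfies $\mu \circ (\psi \otimes \psi^{-1}) = \mu \circ (\psi^{-1} \otimes \psi) = \eta$.

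The only substantive input is the semisimplicity of $R$, equivalently that $Z(R)$ is reduced, which is exactly where separability of $A$ is used: without it $Z(R)$ could be non-reduced (say $\opk[x]/(x^2)$), and a unit such as $1+x$ then has no square root in characteristic $2$, so no such $\psi$ would exist. A secondary point to be careful about is that $\zeta$ is genuinely central, not merely one-sidedly invertible, since this is what reduces the existence of a square root to extracting scalar square roots componentwise rather than square-rooting an arbitrary invertible endomorphism.
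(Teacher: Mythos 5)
Your structural setup is fine and is in fact the same first move as the paper's: identifying $\mcC(\opid,A)$ with $\End_{\mcC_A}(A)$ and using semisimplicity of $\mcC_A$. The error is in the translation of the separability equation. In \eqref{eq:FA_sep_cond} the morphism $\zeta$ is inserted \emph{between} the comultiplication and the multiplication, not pre- or post-composed with the whole handle: precomposing with $\eta$ (and using that $\D$ is a bimodule map), the condition says that the central element $\mu\circ(\mu\otimes\id_A)\circ(\id_A\otimes\zeta\otimes\id_A)\circ\D\circ\eta$ equals $\eta$, i.e.\ the ``average of $\zeta$ around the handle'' is the unit. This coincides with $\omega\cdot\zeta=\eta$ in $R$ only when $\zeta$ is central, and Proposition~\ref{prp:FA_sep_cond} gives no central $\zeta$. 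Concretely, for $\mcC=\Vect_\opk$ and $A=X^*\otimes X\cong M_n(\opk)$ with the trace form as in Example~\ref{eg:XAX_alg}, one has $\D\circ\eta=\sum_{i,j}E_{ij}\otimes E_{ji}$, so the condition on $\zeta$ is just $\operatorname{tr}(\zeta)=1$: the rank-one idempotent $\zeta=E_{11}$ (coming from $\varphi=E_{11}$ in that example) is a legitimate section which is neither central nor invertible, and certainly not $\omega^{-1}$. So the step ``$\zeta$ is a right inverse of $\omega$, hence $\zeta=\omega^{-1}\in Z(R)^{\times}$'' is false.

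This is not a repairable slip within your strategy, because the strategy reduces everything to invertibility of $\omega$ followed by a central square root, and $\omega$ need not be invertible: for $A=M_n(\opk)$ with $\operatorname{char}\opk\mid n$ one has $\omega=n\cdot 1=0$, yet invertible sections exist (any invertible $\psi$ with $\operatorname{tr}(\psi^2)=1$), which is exactly what the proposition asserts. Since the paper assumes $\operatorname{char}\opk=0$ only in Section~\ref{sec:Witt_equiv}, no proof of Proposition~\ref{prp:FA_psi_inv} may rely on the existence of a \emph{central} section; your closing remark about square roots in characteristic $2$ misses that the real characteristic issue is the possible vanishing of $\omega$. This is precisely why the paper's proof, after the same reduction to $\End_{\mcC_A}(A)$, proceeds non-centrally: it decomposes $A$ into simple bimodule summands $\mu_p$, refines each into simple right-module summands $\kappa^p_q$, and takes $\psi_l^A$ to be a weighted sum of the corresponding projections, with weights $(\Lambda^p_q)^{1/2}$ constrained only by $\Lambda^p_q\neq 0$ and $\sum_q\Lambda^p_q=1$; the trace identity underlying Lemma~\ref{lem:tr-psi_non_zero} then yields \eqref{eq:FA_sep_cond_ito_psi}, while invertibility is manifest from the construction. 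Even in characteristic zero your argument would still owe a separate proof that $\omega$ is invertible before one could set $\zeta:=\omega^{-1}$; as written, the proposal both misreads \eqref{eq:FA_sep_cond} and implicitly needs an assumption the paper does not make.
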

\begin{proof}
One has an explicit isomorphism of the vector spaces $\mcC(\opid,A)$ and $\mcC_A(A,A)$ sending $[\varphi\colon\opid\ra A]$ to a morphism $[\varphi_l^A:A\ra A]$ defined similarly as in~\eqref{eq:psi_lr_defs}.
The multiplicative inverse of $\varphi$ then corresponds to the inverse of $\varphi_l^A$ with respect to composition.
We can therefore construct an invertible right $A$-module morphism $\psi_l^A \colon A \ra A$ satisfying~\eqref{eq:FA_sep_cond_ito_psi} instead.

Let $A\cong\bigoplus_p \mu_p$ be the decomposition of $A$ into simple $A$-$A$-bimodules provided by the sets of morphisms $\{b_p\colon\mu_p\ra A\}$, $\{\overline{b_p}\colon A \ra \mu_p\}$, i.e.
\begin{equation}
\label{eq:psi-inv_proof_mu_split}
\sum_p b_p\circ\overline{b_p} = \id_A \quad , \qquad
\overline{b_{p'}} \circ b_p = \d_{pp'} \cdot \id_{\mu_p} \, .
\end{equation}
Similarly, for each index $p$, let $\mu_q \cong \bigoplus_q \kappa^p_q$ be the decomposition into simple right $A$-modules provided by the sets of morphisms $\{b^p_q\colon \kappa^p_q\ra \mu_p\}$, $\{\overline{b^p_q}\colon \mu_p \ra \kappa^p_q\}$, i.e.\
\begin{equation}
\label{eq:psi-inv_proof_kappa_split}
\sum_q b^p_q \circ \overline{b^p_q} = \id_{\mu_p} \quad , \qquad
\overline{b^p_{q'}} \circ b^p_q = \d_{qq'} \cdot \id_{\kappa^p_q} \, .
\end{equation}

Let $\psi'\colon \opid\ra A$ be any section (which exists by Proposition~\ref{prp:FA_sep_cond}).
A similar computation as in~\eqref{eq:tr_psi_calc} yields the identity
\begin{equation}
\pic[1.25]{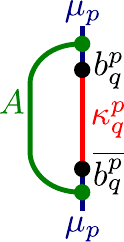} = \frac{\tr_\mcC (\psi'^{\,\kappa^p_q}_r)^2}{\tr_\mcC (\omega'_{\mu_a})^2} \pic[1.25]{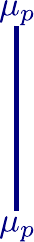} \, ,
\end{equation}
where the factor on the right-hand side is well defined because of Lemma~\ref{lem:tr-psi_non_zero}.
We can now define
\begin{equation}
\psi_l^A := \sum_{p,q} \left( \frac{\tr_\mcC (\psi'^{\,\kappa^p_q}_r)^2}{\tr_\mcC (\omega'_{\mu_a})^2} \right)^{-1/2} \cdot \left(\Lambda^p_q\right)^{1/2} b_p \circ b^p_q \circ \overline{b^p_q} \circ \overline{b_p} \, ,
\end{equation}
where $\Lambda^p_q\in\opk$ are arbitrary non-zero scalars such that $\sum_q \Lambda^p_q = 1$ for each index $p$ (one can for example take $\Lambda^p_q = (\dim \End_{\mcC_A}(\mu_p))^{-1}$ if this dimension is non-zero in $\opk$).
The relations~\eqref{eq:psi-inv_proof_mu_split} and~\eqref{eq:psi-inv_proof_kappa_split} then imply that $\psi_l^A$ is invertible with
\begin{equation}
(\psi_l^A)^{-1} := \sum_{p,q} \left( \frac{\tr_\mcC (\psi'^{\,\kappa^p_q}_r)^2}{\tr_\mcC (\omega'_{\mu_a})^2} \right)^{1/2} \cdot \left(\Lambda^p_q\right)^{-1/2} b_p \circ b^p_q \circ \overline{b^p_q} \circ \overline{b_p} \, .
\end{equation}
Moreover one has
\begin{equation}
\pic[1.25]{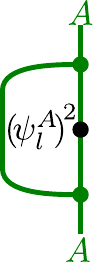} = \sum_{p,q} \left(\frac{\tr_\mcC (\psi'^{\,\kappa^p_q}_r)^2}{\tr_\mcC (\omega'_{\mu_a})^2}\right)^{-1} \hspace{-8pt}\cdot \Lambda^p_q
\pic[1.25]{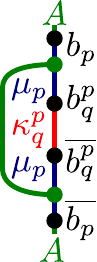} = \sum_{p,q}  \Lambda^p_q
\pic[1.25]{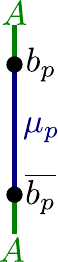} = \sum_p 
\pic[1.25]{1_psi_inv_calc_3.pdf} = 
\pic[1.25]{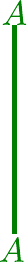} \, ,
\end{equation}
so the condition~\eqref{eq:FA_sep_cond_ito_psi} is satisfied as needed.
\end{proof}

\begin{conv}
Slightly abusing the terminology, we will sometimes call a pair $(A,\psi)$ a symmetric separable Frobenius algebra in a pivotal category $\mcC$, in which case $\psi\colon\opid\ra A$ is understood to be a fixed choice of a section with a multiplicative inverse in the sense of Proposition~\ref{prp:FA_psi_inv} (for $\mcC$ a spherical fusion category this can be assumed without loss of generality).
If $\psi$ is given by the unit $\eta\colon\opid\ra A$, we say that $A$ is \textit{$\D$-separable}.
In this case the section of the product $\mu\colon A \otimes A \ra A$ in $\mcACA$ is given by the coproduct $\Delta:A \ra A \otimes A$.
\end{conv}

Most of the properties of symmetric $\D$-separable Frobenius algebras, as developed and exploited e.g.\ in \cite{FRS1}, apply also to the separable ones. 
For example, for two left modules $L, L' \in {_A\mcC}$ the map
\begin{equation}
\label{eq:A-mod-morphs_avg}
\mcC(L,L') \ni f \mapsto \pic[1.25]{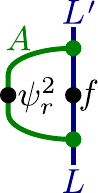} \in {_A\mcC}(L,L')
\end{equation}
acts as a projector onto the subspace ${_A\mcC}(L,L') \subseteq \mcC(L,L')$ of left module morphisms.
Similarly, one defines the projectors onto the subspaces right modules and bimodules.

\begin{example}
Let $A\in\mcC$ be a symmetric Frobenius algebra with the structure morphisms $(\mu,\eta,\D,\vareps)$ which is in addition haploid, i.e.\ $\dim\mcC(\opid,A)=1$.
Then it is separable if and only if one has $\mu\circ\D\neq 0$, in which case the section has to be of the form $\xi\cdot\eta$ for some $\xi\in\opk^\times$.
\end{example}

\begin{example}
\label{eg:XAX_alg}
Let $(A,\psi)$ be a symmetric separable Frobenius algebra in $\mcC$ and $X\in\mcC$ an arbitrary non-zero object.
Then $A_X := X^* \otimes A \otimes X$ is a symmetric Frobenius algebra in $\mcC$ with
\begin{align} \nonumber
&  \text{multiplication: } && \pic[1.25]{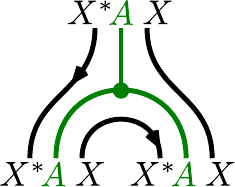} ,
&& \text{comultiplication: } && \pic[1.25]{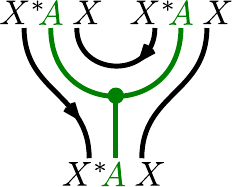},\\ \label{eq:XAX_alg}
&  \text{unit:} && \pic[1.25]{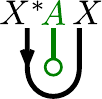} ,
&& \text{counit:} && \pic[1.25]{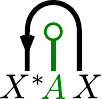} .
\end{align}
It is also separable as evident from picking an endomorphism $\varphi\in\End_\mcC(X)$ such that $\tr_\mcC\varphi^2 = \tr_\mcC(\varphi\circ\varphi) \neq 0$ and setting the section to be
\begin{equation}
\label{eq:XAX_section}
\psi_X = (\tr_\mcC(\varphi\circ\varphi))^{-1/2} \cdot \pic[1.25]{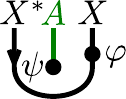} .
\end{equation}
It is also an invertible section if $\varphi$ is invertible.
Later we will encounter this algebra in the case of $\mcC$ being a MFC and $X = C = \bigoplus i$ being the Kirby colour object.
In this case there is a canonical choice of an invertible section $\varphi = d^{1/2}$ since one has $\tr_\mcC d = \Dim\mcC \neq 0$.

Note that in case $A=\opid$ and $\mcC=\Vect_\opk$,  $X^*\otimes X$ is a matrix algebra.
\end{example}

\begin{example}
\label{eg:Euler_completion}
Let $A\in\mcC$ be a symmetric $\D$-separable Frobenius algebra with the structure morphisms $(\mu,\eta,\D,\vareps)$ and $\widetilde{\psi}\colon A \ra A$ an $A$-$A$-bimodule isomorphism.
This yields a symmetric separable Frobenius algebra $(A,\psi)$ having the same underlying object, the structure morphisms $(\widetilde{\psi}^{-1} \circ \mu, \widetilde{\psi} \circ \eta, \widetilde{\psi}^{-1} \circ \D, \widetilde{\psi} \circ \vareps)$ and the section $\psi := \widetilde{\psi}\circ \eta$.

The setting of symmetric $\D$-separable Frobenius algebras was used in the works \cite{CRS3, MR1} whose results we seek to expand in this paper.
In~\cite{CRS3} in particular, a pair like $(A,\widetilde{\psi})$ was used in the construction of surface defects in the Reshetikhin--Turaev TQFT, where the entry $\widetilde{\psi}$ was used in the procedure called Euler completion (see~\cite[Sec.\,2.5]{CRS1}).
We noted in the introduction that symmetric separable Frobenius algebras can be used for construction of surface defects as well.
In fact, one can check that using the algebra $(A,\psi)$ in the present setting is equivalent to using the pair $(A,\widetilde{\psi})$ in the constructions of~\cite{CRS3, MR1}.
Our approach is however more general, as e.g.\ the section~\eqref{eq:XAX_section} is not obtained from a bimodule morphism.
\end{example}

\subsection{Relative tensor products}
\label{subsec:relative_tensor_products}
For an algebra $A$ in a (multi)tensor category $\mcC$, the relative tensor product of a right module $K\in\mcC_A$ and a left module $L\in {}_A\mcC$ is given by the difference cokernel~\cite[Def.\,7.8.21]{EGNO}
\begin{equation}
\label{eq:relative_tensor_prod}
K \otimes_A L := \coker[ K \otimes A \otimes L \xra{\rho\otimes\id_K - \id_L \otimes \l} K \otimes L] \, ,
\end{equation}
where $\rho\colon K\otimes A \ra K$, $\l\colon A \otimes L \ra L$ are the action morphisms.

\medskip

In case $\mcC$ is a pivotal multifusion category and $(A,\psi)$ is a symmetric separable Frobenius algebra in $\mcC$, the relative tensor product can be computed as follows:
Recall that for an object $X\in\mcC$ the image of an idempotent $p\in\End_\mcC X$, $p\circ p = p$, is an object $\im p\in\mcC$ together with projection and inclusion morphisms $\pi\colon X \lra \im p : \imath$ such that $\imath\circ\pi=p$ and $\pi\circ\imath = \id_{\im p}$ ($\pi$ and $\imath$ are then said to \textit{split} the idempotent $p$).
One then has $K\otimes_A L \cong \im P_{K,L}$ where the idempotent $P_{K,L}$ is defined by
\begin{equation}
\label{eq:PKL_idempotent}
P_{K,L} :=
\pic[1.25]{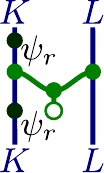} =
\pic[1.25]{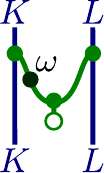} =
\pic[1.25]{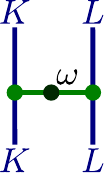} =
\pic[1.25]{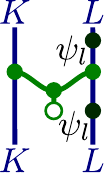}.
\end{equation}
The horizontal line after the third equality can be read as a composition of $A$-coaction~\eqref{eq:Frob_coaction} and $A$-action).
The respective inclusion/projection morphisms will be denoted by
\begin{equation}
\pi = \pic[1.25]{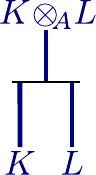} \, , \quad
\imath = \pic[1.25]{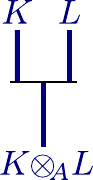} \, .
\end{equation}
To show that one indeed has $\im P_{K,L}\cong K\otimes_A L$ one uses the splitting property to check that the pair $(\im P_{K,L}, q\colon K \otimes L \ra \im P_{K,L})$ where $q := \pi\circ((\psi_r^K)^{-1}\otimes\id_L)$ constitutes a choice of cokernel for the morphism $\rho\otimes\id_K - \id_L \otimes \l$.

Note that the morphisms~\eqref{eq:PKL_idempotent} might fail to be equal if $A$ is not symmetric as this is shown by a computation like~\eqref{eq:symm_moves_psi}.
Still, each of them gives an idempotent which can be used to define the relative tensor product, in this case $\mcC$ also need not be pivotal.

\medskip

For arbitrary objects $X,Y\in\mcC$, when writing down an explicit morphism $f\colon K \otimes_A L \ra X $ or $g\colon Y \ra K \otimes_A L$ we will often find it more convenient to give the morphisms
\pagebreak
\begin{equation}
\label{eq:hatted_morphisms}
\widehat{f} := \pic[1.25]{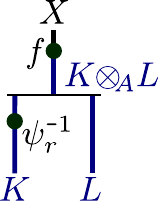} \, , \qquad
\widehat{g} := \pic[1.25]{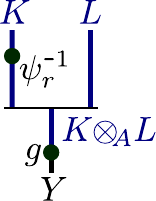} 
\end{equation}
instead, which are defined so that the balancing property holds, i.e.\
\begin{equation}
\pic[1.25]{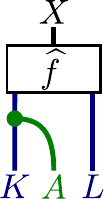} =
\pic[1.25]{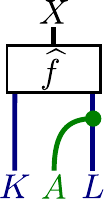} \, , \qquad
\pic[1.25]{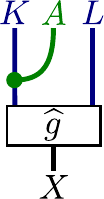} =
\pic[1.25]{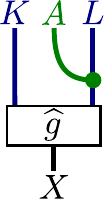} \, .
\end{equation}
Note that the compositions of balanced maps does not necessarily correspond to the composition of maps into/out of the relative tensor product:
\begin{equation}
\label{eq:extra_psi}
\pic[1.25]{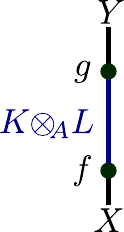} =
\pic[1.25]{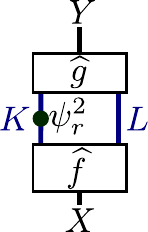}.
\end{equation}
The composition is however preserved properly if $A$ is $\D$-separable.
\begin{conv}
In what follows we will often encounter objects defined as images of idempotents similar to the one in \eqref{eq:PKL_idempotent}.
The corresponding projection/inclusion morphisms will also be denoted by horizontal lines.
The overhats as in \eqref{eq:hatted_morphisms} will be omitted.
If necessary, we will refer to the equation \eqref{eq:extra_psi} to explain the extra $\psi$-insertions due to this computational nuisance.
\end{conv}

For any algebra $A\in\mcC$, the relative tensor product~\eqref{eq:relative_tensor_prod} equips the category $\mcACA$ of bimodules with a monoidal structure with the tensor unit $\opid_{\mcACA} := A$.
If $\mcC$ is pivotal multifusion and $(A,\psi)$ a symmetric separable Frobenius algebra, the monoidal structure is easy to describe in terms of the idempotents as in~\eqref{eq:PKL_idempotent}.
In particular, for $M,N\in\mcACA$, $M\otimes_A N$ has the left/right actions
\begin{equation}
\pic[1.25]{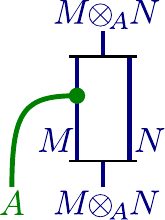} \, \qquad
\pic[1.25]{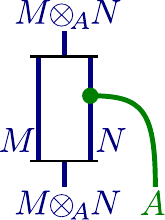} \, ,
\end{equation}
while for $M,N,K\in\mcACA$, the associator, as well as the left/right unitors and their inverses are obtained from the balanced maps
\begin{equation}
\label{eq:ACA_assoc_unitors}
a_{M,N,K} = \hspace{-4pt}\pic[1.25]{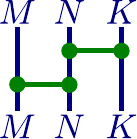}         \hspace{-4pt}, ~
l_M       = \hspace{-4pt}\pic[1.25]{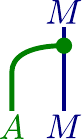}     \hspace{-4pt}, ~
r_M       = \hspace{-4pt}\pic[1.25]{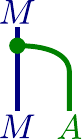}    \hspace{-4pt}, ~
l_M^{-1}  = \hspace{-4pt}\pic[1.25]{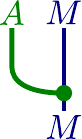} \hspace{-4pt}, ~
r_M^{-1}  = \hspace{-4pt}\pic[1.25]{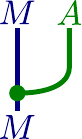}\hspace{-4pt}.
\end{equation}
We emphasise one more time, that obtaining the actual morphisms out of the balanced maps is obtained by using the relation~\eqref{eq:hatted_morphisms}, e.g.\ the associator is given by
\begin{equation}
a_{M,N,K} = 
\pic[1.25]{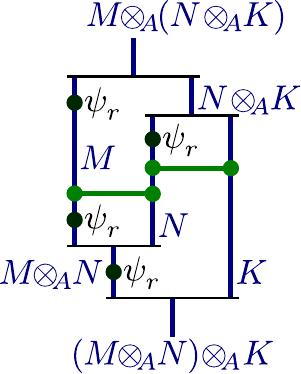} =
\pic[1.25]{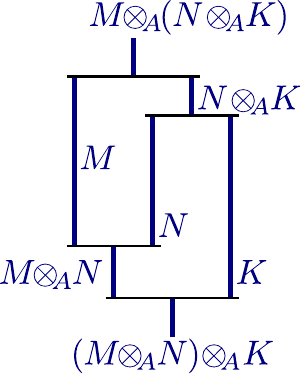} \, .
\end{equation}
Note that when following these definitions closely, $\mcACA$ should not be treated as a strict monoidal category even if $\mcC$ is assumed to be strict to make the use of graphical calculus easier.

\medskip

If $\mcC$ is pivotal multifusion and $(A,\psi)$ is symmetric separable, $\mcACA$ is pivotal multifusion as well, with the dual of an object $M\in\mcACA$ being the dual $M^*$ of the underlying object in $\mcC$ with the actions as in~\eqref{eq:lambda_dual_action} and the (co)evaluation morphisms given by the balanced maps
\begin{equation}
\ev_M    = \hspace{-4pt}\pic[1.25]{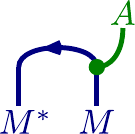}   \hspace{-4pt}, ~
\coev_M  = \hspace{-4pt}\pic[1.25]{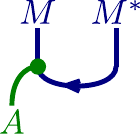} \hspace{-4pt}, ~
\evt_M   = \hspace{-4pt}\pic[1.25]{33_ev-M.pdf}   \hspace{-4pt}, ~
\coevt_M = \hspace{-4pt}\pic[1.25]{33_coev-M.pdf} \hspace{-4pt},
\end{equation}
(note that $A$ indeed needs to be symmetric for the coevaluation maps to be balanced).
As was shown in~\eqref{eq:extra_psi}, the balanced maps do not compose exactly as the maps in $\mcACA$, so the expressions for the left/right traces of $f\in\End_\mcACA(M)$ in $\mcACA$ have additional $\psi^2$-insertions, for example
\begin{equation}
\label{eq:tr_in_ACA}
\tr_l f = \pic[1.25]{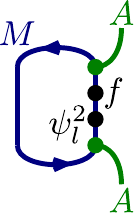} \, , \qquad
\tr_r f = \pic[1.25]{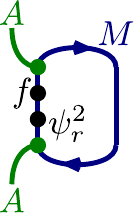} \, .
\end{equation}
We note that in general $\mcACA$ need not be spherical even if $\mcC$ is.
If however $\mcC$ is spherical fusion and $A$ is haploid, $\mcACA$ is spherical as well (the argument for this is used in the proof of Lemma~\ref{lem:ZCAi-ZF_equiv} below).

\subsection{Morita equivalence}
\label{subsec:Morita_eq}
Let $A$, $B$ be two algebras in a monoidal category $\mcC$.
Then the categories of right modules $\mcC_A$, $\mcC_B$ are left $\mcC$-module categories, where the action is $X \triangleright (K,\rho) := (X\otimes K, \id_X \otimes \rho)$ for all $X\in\mcC$ and $K=(K,\rho)\in\mcC_A$ (or $\in\mcC_B$).
The algebras $A$, $B$ are called \textit{Morita equivalent} if $\mcC_A$, $\mcC_B$ are equivalent as left $\mcC$-module categories.
If $\mcC$ is a pivotal tensor category, this is equivalent to the existence of a \textit{Morita module}, i.e.\ an $A$-$B$-bimodule $R\in\mcC$, such that $R^* \otimes_A R \cong B$ as $B$-$B$-bimodules and $R \otimes_B R^* \cong A$ as $A$-$A$-bimodules.
The equivalence $\mcC_A \simeq \mcC_B$ of module categories is then provided by the functor $-\otimes_A R\colon \mcC_A \ra \mcC_B$ with the inverse $-\otimes_B R^*\colon\mcC_B\ra\mcC_A$.

\medskip

Let us now investigate the case when $(A,\psi)$, $(B,\psi')$ are symmetric separable Frobenius algebras in a pivotal fusion category $\mcC$.
The $\mcC$-module categories $\mcC_A$, $\mcC_B$ are then equipped with the following additional structure, see~\cite{Schm}:
\begin{defn}
\label{def:mod_trace}
Let $\mcC$ be a pivotal fusion category.
\begin{itemize}
\item
A \textit{module trace} on a (left) $\mcC$-module category $\mcM$ is a collection of linear maps $\{\Theta_M\colon\End_\mcM(M)\ra\opk\}_{M\in\mcM}$, such that for all $X\in\mcC$, $M,N\in\mcM$, $f\in\mcM(M,N)$, $g\in\mcM(N,M)$ and $h\in\End_\mcM(X\triangleright M)$ one has
\begin{enumerate}[i)]
\item $\Theta_M(g\circ f) = \Theta_N(f\circ g)$,
\item the pairing $(f,g)\mapsto \Theta(g\circ f)$ is non-degenerate,
\item $\Theta_{X\triangleright M}(h) = \Theta_M(\overline{h})$, where $\overline{h}$ is the partial trace
\begin{equation}
\overline{h} := [M \xra{\coevt_X \triangleright \id_M} X^* \triangleright X \triangleright M \xra{\id_{X^*}\triangleright h} X^* \triangleright X \triangleright M \xra{\ev_X \triangleright \id_M} M] \, .
\end{equation}
\end{enumerate}
\item
A module functor $F\colon\mcM\ra\mcN$ between two $\mcC$-module categories with module traces $\mcM=(\mcM,\Theta)$ and $\mcN=(\mcN,\Theta')$ is called \textit{isometric} if one has $\Theta'_{F(M)}(F(f)) = \Theta_M(f)$ for all $M\in\mcM$, $f\in\End_\mcM(M)$.
$\mcM$ and $\mcN$ are called (isometrically) equivalent if there is an isometric module equivalence between them.
\end{itemize}
\end{defn}
\begin{prp}
For $(A,\psi)$ a symmetric separable Frobenius algebra in a spherical fusion category $\mcC$, the collection of maps 
\vspace{-12pt}
\begin{equation}
\label{eq:CA_mod_tr}
\vspace{-14pt}
\{\Theta^\psi_K\colon \End_{\mcC_A}(K)\ra\opk\}_{K\in\mcC_A} \, , \quad
\Theta^\psi_K(f) := \pic[1.25]{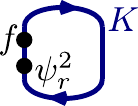}
\end{equation}
is a module trace on $\mcC_A$ as $\mcC$-module category.
\end{prp}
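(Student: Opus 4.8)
The plan is to verify the three axioms of a module trace from Definition~\ref{def:mod_trace} directly for the collection $\{\Theta^\psi_K\}$ defined in~\eqref{eq:CA_mod_tr}, using graphical calculus together with the symmetry of $A$, the sphericity of $\mcC$, and the compatibility identities~\eqref{eq:psi_comm_identities} for the endomorphisms $\psi^K_r$, $\omega_M$. Throughout I will use that the coaction appearing in $\Theta^\psi_K(f)$ is the canonical one~\eqref{eq:Frob_coaction} coming from the Frobenius structure, so that manipulations involving the (co)multiplication of $A$ are available.

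First I would check property i), the cyclicity $\Theta^\psi_K(g\circ f) = \Theta^\psi_L(f\circ g)$ for $f\in\mcC_A(K,L)$, $g\in\mcC_A(L,K)$. This is essentially the cyclicity of $\tr_\mcC$ in $\mcC$ (sphericity) applied after sliding the $\psi$- and coaction-insertions past $f$ and $g$; since $f,g$ are right $A$-module morphisms, they commute with the $A$-action and (via~\eqref{eq:Frob_coaction}) with the induced $A$-coaction, and they commute with $\psi^K_r$, $\psi^L_r$ by the relations~\eqref{eq:psi_comm_identities}. So the insertions can be gathered on one side and the trace cyclically rotated. Next, for property ii), non-degeneracy of the pairing $(f,g)\mapsto\Theta^\psi_K(g\circ f)$ on $\mcC_A(K,L)\times\mcC_A(L,K)$: I would reduce to the case $K=L$ simple, where $\End_{\mcC_A}(K)\cong\opk$, and there non-degeneracy amounts to $\Theta^\psi_K(\id_K)\neq 0$; but $\Theta^\psi_K(\id_K)$ is, up to the identification of the coaction, exactly (a variant of) $\tr_\mcC(\psi^K_r)^2$, which is non-zero by Lemma~\ref{lem:tr-psi_non_zero}. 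For general semisimple $K$, $L$ one decomposes into simples, observes that the pairing block-diagonalises according to the common simple summands, and on each block it is a non-degenerate bilinear form times a non-zero scalar, hence non-degenerate overall.

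The main obstacle is property iii), the partial-trace compatibility $\Theta^\psi_{X\triangleright K}(h) = \Theta^\psi_K(\overline{h})$ for $X\in\mcC$, $h\in\End_{\mcC_A}(X\otimes K)$. Here one must carefully unravel what $\Theta^\psi$ does on the module $X\triangleright K = (X\otimes K,\,\id_X\otimes\rho)$: the $A$-action (and hence the induced coaction~\eqref{eq:Frob_coaction}) on $X\otimes K$ only touches the $K$-tensor-factor, and $\psi^{X\otimes K}_r$ likewise acts only on the $K$-strand (since $\psi^K_r$ is built from the right $A$-action). So in the diagram for $\Theta^\psi_{X\triangleright K}(h)$ the $X$-strand is entirely passive except for passing through $h$; closing it up with $\coevt_X$/$\ev_X$ — which is precisely the definition of the partial trace $\overline{h}$ — then leaves exactly the diagram for $\Theta^\psi_K(\overline{h})$. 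The care needed is in checking that the coaction on $X\otimes K$ is genuinely the one obtained from the $K$-coaction alone (this uses that the Frobenius coaction~\eqref{eq:Frob_coaction} is natural and only involves the module action), and in making sure no stray $\psi$-insertion or twist is picked up when pulling the $X$-loop outside; sphericity of $\mcC$ guarantees the left/right partial traces agree so that the orientation chosen in Definition~\ref{def:mod_trace}~iii) is immaterial.

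Finally I would remark that, although the statement does not ask for it, the module trace $\Theta^\psi$ is independent of the choice of section $\psi$ and depends only on $(A,\psi)$ through the data already fixed, which follows from the independence clause of Lemma~\ref{lem:tr-psi_non_zero}; this is worth noting since later constructions invoke $\Theta^\psi$ as canonical data attached to $(A,\psi)$.
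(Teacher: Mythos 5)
Your proposal is correct and follows the same route as the paper's (much terser) proof: cyclicity of $\Theta^\psi$ from cyclicity of $\tr_\mcC$, non-degeneracy by reducing to $\Theta^\psi_\kappa(\id_\kappa)\neq 0$ for simple $\kappa$ via Lemma~\ref{lem:tr-psi_non_zero}, and the partial-trace axiom from sphericity of $\mcC$ together with the fact that the $A$-(co)action and $\psi$-insertions on $X\triangleright K$ only touch the $K$-strand. Your closing remark on independence of the choice of $\psi$ is also consistent with the independence clause of Lemma~\ref{lem:tr-psi_non_zero}.
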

\begin{proof}
The condition i) in Definition~\ref{def:mod_trace} is implied by the cyclicity of the trace in $\mcC$, while the condition iii) is holds since $\mcC$ is spherical.
To check ii) it is enough to check that for a simple object $\kappa\in\mcC_A$ its dimension $\Theta_\kappa(\id_\kappa)$ does not vanish, which holds by Lemma~\ref{lem:tr-psi_non_zero}.
\end{proof}

\begin{rem}
\begin{enumerate}[i), wide, align=left]
\item
It was shown in~\cite[Prop.\,4.4]{Schm} that for an indecomposable $\mcC$-module category $\mcM$, any two module traces $\Theta$, $\Theta'$ are proportional, i.e.\ there is a $\zeta\in\opk^\times$ such that $\Theta' = \zeta\cdot\Theta$.
Similarly, for an indecomposable symmetric separable Frobenius algebra $A\in\mcC$ one can rescale the module trace~\eqref{eq:CA_mod_tr} by rescaling the Frobenius structure of $A$.
In particular, if $(\mu,\eta,\Delta,\vareps)$ are the structure morphisms of $A$, define the Frobenius algebra $A_\zeta$ with the structure morphisms $(\mu,\eta,\zeta^{-1}\cdot\Delta, \zeta\cdot\vareps)$.
Then if $\psi\colon\opid\ra A$ is a section of $A$, $\zeta^{1/2}\cdot\psi$ is a section of $A_{\zeta}$ and the module category with module trace $(\mcC_{A_\zeta}, \Theta^{\zeta^{1/2}\cdot\psi})$ is equivalent to $(\mcC_A, \zeta\cdot\Theta^\psi)$.
Note that by Lemma~\ref{lem:tr-psi_non_zero}, choosing other section for $A_\zeta$ does not change the module trace.
\item
For decomposable module categories/algebras, one can similarly relate the module traces/Frobenius structures by performing a rescaling on each direct summand: if $A = \bigoplus_i A_i$, let $\zeta = \bigoplus_i \zeta_i\cdot \id_{A_i}$ for $\zeta_i\in\opk^\times$ and define the rescaling $A_\zeta$ to have the structure morphisms $(\mu,\eta,(\id\otimes\zeta^{-1})\circ\D, \vareps\circ\zeta)$ and the section $\psi\circ\zeta^{1/2}$.
This can also be generalised to rescalings of $A$ with an arbitrary invertible $A$-$A$-bimodule morphism $\zeta\colon A \ra A$ (see Example~\ref{eg:Euler_completion}), but taking a diagonal $\zeta$ as before is enough for the purpose of changing the module trace of $\mcC_A$ into any other one.
\end{enumerate}
\end{rem}

\begin{defn}
\label{def:isometric_Morita_mod}
We call two symmetric separable Frobenius algebras $(A,\psi)$, $(B,\psi')$ in a spherical fusion category $\mcC$ \textit{Morita equivalent} if $(\mcC_A,\Theta^\psi)$, $(\mcC_B,\Theta^{\psi'})$ are isometrically equivalent and a Morita module ${}_A R_B$ \textit{isometric} if the functor $-\otimes_A R\colon\mcC_A\ra\mcC_B$ is isometric.
\end{defn}

\begin{rem}
Note that it is possible for $(A,\psi)$ and $(B,\psi')$ to be Morita equivalent as algebras, but not as symmetric separable Frobenius algebras - take e.g.\ $A$ and its rescaling $A_\zeta$ as above.
However, as the resulting module traces can be rescaled to each other, in this case it is possible to find $\zeta\in\End_{\mcBCB}(B)$ such that $(A,\psi)$ and $(B_\zeta, \psi'\circ \zeta^{1/2})$ are Morita equivalent.
Similarly, if ${}_A R_B$ is an arbitrary Morita module, ${}_A R_{B_\zeta}$ is an isometric Morita module if the rescaling $\zeta$ is chosen suitably.
\end{rem}

\begin{example}
\label{eg:RX_Morita_mod}
For a symmetric separable Frobenius algebra $(A,\psi)$ in a spherical fusion category $\mcC$, an object $X\in\mcC$, and an invertible morphism $\varphi\in\End_\mcC(X)$ such that $\tr_\mcC(\varphi^2)\neq 0$, let $(A_X,\psi_X)$ be the algebra in Example~\ref{eg:XAX_alg}.
Then the object $R_X := A \otimes X$ can be equipped with an $A$-$A_X$-bimodule structure and $A$-$A$- bimodule maps $R_X \otimes_{A_X} R_X^* \lra A$ defined by
\begin{equation}
\label{eq:RX_morphs}
\pic[1.25]{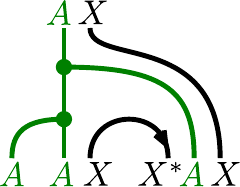} \quad , \quad
\pic[1.25]{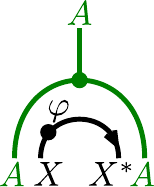} \quad , \quad
\pic[1.25]{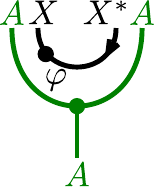},
\end{equation}
where due to $A$ being symmetric one has $R_X^* \cong X^* \otimes A^* \cong X^* \otimes A$.
The bimodule $R_X$ is evidently a Morita module as one has
\begin{equation}
R_X \otimes_{A_X} R_X^* \cong \im
\pic[1.25]{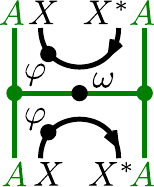} \cong
A \, , \quad
R_X^* \otimes_A R_X \cong X^* \otimes (A \otimes_A A) \otimes X \cong A_X \, ,
\end{equation}
where the first isomorphism is given explicitly by the maps in~\eqref{eq:RX_morphs}.
$R_X$ is also isometric since for an arbitrary simple module $\kappa\in\mcC_A$ one has $\kappa \otimes_A R_X \cong \kappa \otimes X$ and
\begin{equation}
\Theta^{\psi_X}_{\kappa \otimes_A R_X} (\id_{\kappa\otimes_A R_X}) =
\frac{1}{\tr_\mcC(\varphi^2)}\cdot
\tr_\mcC (\psi_r^\kappa)^2 \cdot \tr_\mcC(\varphi^2) = \tr_\mcC(\psi_r^\kappa)^2 = \Theta^\psi_{\kappa}(\id_\kappa) \, .
\end{equation}
\end{example}

In what follows we will find the following identity useful for computations:
\begin{lem}
\label{lem:tr-R_id}
Let $(A,\psi)$, $(B,\psi')$ be symmetric separable Frobenius algebras in a spherical fusion category $\mcC$.
Then a Morita module ${}_A R_B$ is isometric if and only if one has
\begin{equation}
\label{eq:tr-R_id}
\pic[1.25]{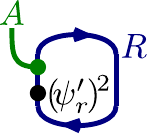} =
\pic[1.25]{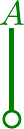} \, , \quad
\pic[1.25]{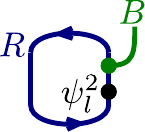} =
\pic[1.25]{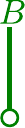} \, .
\end{equation}
\end{lem}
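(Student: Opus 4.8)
The plan is to characterise isometry of the module functor $F:=-\otimes_A R\colon\mcC_A\ra\mcC_B$ by first cutting it down to the free module $A\in\mcC_A$ and then opening up the resulting trace identity into the string diagrams~\eqref{eq:tr-R_id}. Since $R$ is a Morita module, $F$ is a $\opk$-linear module equivalence, hence it induces isomorphisms $\End_{\mcC_A}(K)\xra{\sim}\End_{\mcC_B}(F(K))$, and by additivity of the module traces $F$ is isometric iff $\Theta^{\psi'}_{F(K)}(F(f))=\Theta^\psi_K(f)$ for all $K\in\mcC_A$, $f\in\End_{\mcC_A}(K)$. Every $K$ is a direct summand of a free module $X\triangleright A=X\otimes A$ ($X\in\mcC$); passing to such a summand via axiom~i) of Definition~\ref{def:mod_trace}, and then applying axiom~iii) on both sides together with the fact that the module-functor structure of $F$ intertwines the partial traces over $X$, the condition reduces to
\[
\Theta^{\psi'}_{F(A)}\big(F(p)\big)=\Theta^\psi_A(p)\qquad\text{for all }p\in\End_{\mcC_A}(A).
\]
Using $F(A)=A\otimes_A R\cong R$ via the left $A$-action and the identification $\End_{\mcC_A}(A)\cong\mcC(\opid,A)$, $\varphi\mapsto\varphi^A_r$ (cf.\ the proof of Proposition~\ref{prp:FA_psi_inv}), under which $F(p)$ becomes, up to $\psi'$-bookkeeping, the left $A$-action of $\varphi$ on $R$, the displayed condition reads: $\Theta^{\psi'}_R\big(\l_R\circ(\varphi\otimes\id_R)\big)=\Theta^\psi_A(\varphi^A_r)$ for every $\varphi\colon\opid\ra A$, where $\l_R$ is the left $A$-action on $R$.

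Next I would open the loop. Both sides of this last condition are linear functionals of $\varphi\in\mcC(\opid,A)$. Writing them out with the definition~\eqref{eq:CA_mod_tr} of $\Theta^{\psi}$, $\Theta^{\psi'}$ and of the endomorphisms in~\eqref{eq:psi_lr_defs}, and using the non-degenerate pairing $\mcC(A,\opid)\times\mcC(\opid,A)\ra\opk$, each side has the form $\varphi\mapsto\langle\Omega,\varphi\rangle$ for a morphism $\Omega\in\mcC(A,\opid)$ obtained as a partial trace of the corresponding string diagram over the $R$-leg, resp.\ over the second $A$-leg. Equality for all $\varphi$ is therefore a single identity of morphisms $A\ra\opid$; transporting the $\psi'$-decorated cap around $R$ through the isomorphism $R\otimes_B R^*\cong A$ and simplifying the resulting $\psi^2$-insertions with~\eqref{eq:FA_sep_cond_ito_psi} rewrites it as the first identity in~\eqref{eq:tr-R_id}. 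This gives ``${}_AR_B$ isometric $\Leftrightarrow$ first identity of~\eqref{eq:tr-R_id}''. For the second identity I would use that a module equivalence is isometric exactly when its inverse is, and that the inverse of $F$ is $-\otimes_B R^*\colon\mcC_B\ra\mcC_A$, the functor attached to the Morita module ${}_B(R^*)_A$; repeating the two previous steps with $A,\psi,R$ replaced by $B,\psi',R^*$ shows that ${}_B(R^*)_A$ is isometric iff the second identity in~\eqref{eq:tr-R_id} holds. Since ${}_AR_B$ and ${}_B(R^*)_A$ are simultaneously isometric, each of the two identities is equivalent to isometry, which in particular yields the lemma as stated.

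The technical heart, and the step I expect to be the main obstacle, is the bookkeeping of the $\psi$- and $\psi'$-insertions forced by computing relative tensor products through the idempotents~\eqref{eq:PKL_idempotent}: as warned around~\eqref{eq:extra_psi}, balanced maps do not compose on the nose, so unfolding $\Theta^{\psi'}_{F(A)}$ and matching it against $\Theta^\psi_A$ requires careful use of the invertibility of $\psi$, $\psi'$ (Proposition~\ref{prp:FA_psi_inv}) and of the non-vanishing and $\psi$-independence statements of Lemma~\ref{lem:tr-psi_non_zero}. One also has to make explicit that the module-functor structure of $-\otimes_A R$ commutes with the partial-trace operation entering axiom~iii) of Definition~\ref{def:mod_trace}, which is precisely what legitimises the reduction to the generator $A$ in the first step; this is a routine but necessary coherence check. (As a sanity check on the bookkeeping, specialising the first identity of~\eqref{eq:tr-R_id} to the Morita module of Example~\ref{eg:RX_Morita_mod} should reproduce the isometry computation carried out there.)
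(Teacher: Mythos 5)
Your strategy is sound and it reaches the lemma by a genuinely different route than the paper. The paper tests the trace condition directly on an arbitrary simple right $A$-module $\kappa$: the $R$-loop closure acting on $\kappa$ is a right $A$-module endomorphism, hence a scalar $X_\kappa$, and pre-/post-composing with $\psi_r^\kappa$ and tracing shows that isometry is equivalent to $X_\kappa=1$ for all $\kappa$; the identity~\eqref{eq:tr-R_id} is then read off by decomposing $A$ into simple right modules and precomposing with the unit, while conversely the identity forces $X_\kappa=1$. You instead reduce isometry to the single object $A$ (every module is a summand of a free module $X\otimes A$, plus axioms i) and iii) of Definition~\ref{def:mod_trace} and the compatibility of the module-functor structure of $-\otimes_A R$ with partial traces), and then convert the family of scalar conditions indexed by $\varphi\in\mcC(\opid,A)$ into one morphism identity via the non-degenerate composition pairing; finally you obtain the second identity by passing to the inverse Morita module ${}_B(R^*)_A$ and the inverse functor $-\otimes_B R^*$. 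Both arguments are valid; the paper's is shorter and purely computational, whereas yours buys a cleaner conceptual reduction and makes explicit something the paper leaves implicit (``we focus on the first identity only''), namely that each of the two identities is separately equivalent to isometry, the second being the first one for $R^*$ with the roles of $(A,\psi)$ and $(B,\psi')$ exchanged. The price is the extra coherence checks you already flag: the intertwining of partial traces with the module structure of $-\otimes_A R$, and the insertion bookkeeping in $A\otimes_A R\cong R$ — note that these insertions come from $\psi$, not $\psi'$, since the relative tensor product is over $A$, and the identification $\End_{\mcC_A}(A)\cong\mcC(\opid,A)$ is by left multiplication $\varphi\mapsto\varphi_l^A$ (right $A$-module endomorphisms), not $\varphi_r^A$; these are only notational slips and do not affect the argument, provided the unwinding of $\Theta^{\psi'}_{F(A)}$ is carried out as carefully as you indicate.
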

\begin{proof}
We focus on the first identity only.
Let $\kappa\in\mcC_A$ be an arbitrary simple right $A$-module.
Then one has a scalar $X_\kappa$, such that
\begin{equation}
\label{eq:tr-R_id_calc}
\pic[1.25]{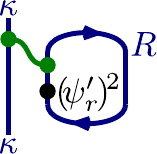} \hspace{-4pt}= X_\kappa \cdot
\pic[1.25]{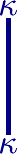} ~\Lra~
\pic[1.25]{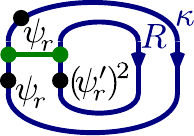} \hspace{-4pt}=
\pic[1.25]{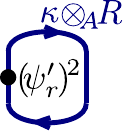} \hspace{-8pt}= X_\kappa \cdot
\pic[1.25]{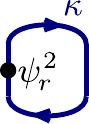} ,
\end{equation}
where the first equality holds because the morphism on the left-hand side is a morphism of right $A$-modules and the implied identity was obtained by pre- and postcomposing both sides with $\psi_r^\kappa$, taking the trace in $\mcC$ and noting that the action/coaction pair together with the $\psi_r^\kappa$ insertions compose into the idempotent $P_{\kappa,R}$ as in~\eqref{eq:PKL_idempotent} projecting onto the relative tensor product $\kappa \otimes_A R$.

If~\eqref{eq:tr-R_id} holds, one has $X_\kappa = 1$ so that the last equality implies that $-\otimes_A R$ preserves the module trace~\eqref{eq:CA_mod_tr} and ${_AR_B}$ is therefore isometric.

Conversely, if ${_AR_B}$ is isometric, the last equality of~\eqref{eq:tr-R_id_calc} again implies $X_\kappa=1$.
To show~\eqref{eq:tr-R_id}, one decomposes $A$ into simple right $A$-modules and precomposes both sides of the first equality in~\eqref{eq:tr-R_id_calc} with the unit $\eta\colon\opid\ra A$ for each direct summand.
\end{proof}

In the remainder of the section we look at how the Morita equivalence of two symmetric separable Frobenius algebras $(A,\psi)$, $(B,\psi')$ in a spherical fusion category $\mcC$ translates to the equivalence of the categories of bimodules $\mcACA$ and $\mcBCB$.
To this end, let ${_AR_B}$ be a (not necessarily isometric) Morita module and fix a $B$-$B$-bimodule isomorphism $\zeta\colon B \ra B$, such that ${_AR_{B_\zeta}}$ is isometric.
We define an isomorphism of $B$-$B$-bimodules $B\xra{\sim}R^*\otimes_A R$ and its inverse by the balanced maps
\begin{equation}
R_0      = \pic[1.25]{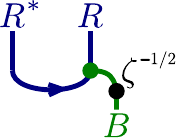} \, , \quad
R_0^{-1} = \pic[1.25]{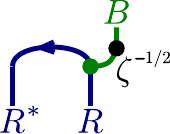} \, .
\end{equation}
That they are indeed inverses follows from the computation
\begin{equation}
\pic[1.25]{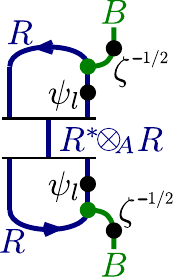} =
\pic[1.25]{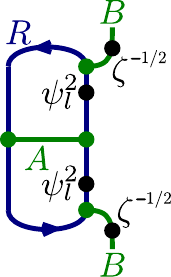} =
\pic[1.25]{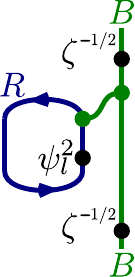} =
\pic[1.25]{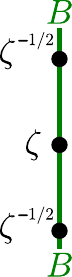} =
\pic[1.25]{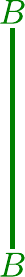} \, ,
\end{equation}
where in the third equality we have used the second identity in~\eqref{eq:tr-R_id} for $A$ and $B_\zeta$.
Since $R^* \otimes_A R$ and $B$ are isomorphic objects in $\mcBCB$ and $\mcBCB$ is finitely semisimple, this is enough for $R_0^{-1}$ to be the two-sided inverse.
Similarly, one defines a natural isomorphism $R_\otimes\colon (-\otimes_A R) \otimes_B (R^* \otimes_A -) \Ra  - \otimes_A -$ by setting for all $K\in\mcC_A$ and $L\in{_A\mcC}$ the balanced maps
\pagebreak
\begin{equation}
\label{eq:Rtensor_morphs}
R_\otimes(K,L)      = \pic[1.25]{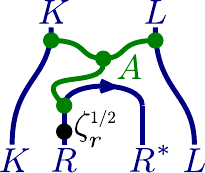} \, , \quad
R^{-1}_\otimes(K,L) = \pic[1.25]{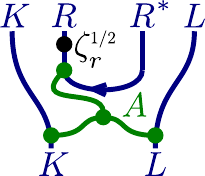} \, .
\end{equation}

\medskip

Recall from Section~\ref{subsec:relative_tensor_products} that $\mcACA$ and $\mcBCB$ are pivotal (multi)fusion categories.
By the functor $R\colon\mcACA\ra\mcBCB$ we will mean the functor $R^* \otimes_A - \otimes_A R$.
One has:
\begin{prp}
\label{prp:R_pivotal}
$R=(R,R_2,R_0)$, where $R_2(-,-) := R_\otimes(R^* \otimes_A -, - \otimes_A R)$, is a monoidal equivalence $\mcACA\xra{\sim}\mcBCB$.
If the Morita module ${_AR_B}$ is in addition isometric (i.e.\ $\zeta = \id_B$), $R$ is a pivotal equivalence.
\end{prp}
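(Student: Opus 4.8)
For the equivalence I would exhibit an explicit quasi-inverse $R^\vee := R\otimes_B(-)\otimes_B R^*\colon\mcBCB\ra\mcACA$. Using the $B$-$B$-bimodule isomorphism $R_0\colon B\xra{\sim}R^*\otimes_A R$, the $A$-$A$-bimodule isomorphism $R\otimes_B R^*\cong A$ supplied by the Morita module structure, the natural isomorphism $R_\otimes$ of~\eqref{eq:Rtensor_morphs} to re-split relative tensor products, and the associators/unitors of $\mcACA$, $\mcBCB$, one assembles natural isomorphisms $R^\vee\circ R\cong\Id_{\mcACA}$ and $R\circ R^\vee\cong\Id_{\mcBCB}$ (with the extra $\psi$- and $\psi'$-insertions of~\eqref{eq:extra_psi} cancelling, since $R_0$ and $R_\otimes$ were built with matching sections). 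Equivalently, one can invoke Proposition~\ref{prp:ssi_funct_equiv}: $R$ is surjective because $N\cong R(R\otimes_B N\otimes_B R^*)$ for every $N\in\mcBCB$, and fully faithful by the same bookkeeping.

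For the monoidal structure one first checks that $R_2(M,N)=R_\otimes(R^*\otimes_A M, N\otimes_A R)$ is, up to the associators of $\mcACA$ and $\mcBCB$, a $B$-$B$-bimodule isomorphism $R(M)\otimes_B R(N)\xra{\sim}R(M\otimes_A N)$ — immediate from $R_\otimes$ being a natural isomorphism — and that $R_0\colon\opid_{\mcBCB}=B\ra R(\opid_{\mcACA})=R^*\otimes_A R$ is an isomorphism, already established above. It then remains to verify the hexagon for $R_2$ and the two unit axioms. Conceptually, a Morita module is an equivalence $A\simeq B$ in the bicategory $\mcA{}lg_\mcC$, and such an equivalence induces a monoidal equivalence of the hom-categories $\mcACA=\End_{\mcA{}lg_\mcC}(A)\xra{\sim}\End_{\mcA{}lg_\mcC}(B)=\mcBCB$ by conjugation, with $R_2,R_0$ the canonical comparison data; the coherence axioms then reduce to the zig-zag identities of the Morita context relating $R_0$, $R_0^{-1}$ and $R_\otimes$. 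Done by hand this is a string-diagram computation using only associativity/unitality of the $A$- and $B$-(co)actions on $R,R^*,M,N,K$, the invertibility of $R_\otimes$ and $R_0$, and consistent tracking of the $\psi$-insertions of~\eqref{eq:extra_psi}.

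For pivotality when $\zeta=\id_B$, recall that $\mcACA$ and $\mcBCB$ are pivotal (multi)fusion with duals $M^*$ and (co)evaluations given by the balanced maps displayed before~\eqref{eq:tr_in_ACA}. Pivotality of $R$ is the commutativity of~\eqref{eq:F-pivotal}, i.e.\ the compatibility of these pivotal structures with the canonical isomorphism $R(M^*)\xra{\sim}R(M)^*$, which exists because the dual of a relative tensor product is the reversed relative tensor product of duals and because $R^{**}\cong R$ via the pivotal structure $\delta$ of $\mcC$. I would verify~\eqref{eq:F-pivotal} by writing out both legs as string diagrams and reducing them; the reduction forces a $\psi'$-insertion on an $R$-strand to be traded for a $\psi$-insertion, which is precisely the content of the isometry identities~\eqref{eq:tr-R_id} for $A$ and $B$, now available since ${}_A R_B$ is isometric. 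Without isometry, i.e.\ for $\zeta\neq\id_B$, the two legs of~\eqref{eq:F-pivotal} would differ by the $B$-$B$-bimodule automorphism $\zeta$ (equivalently, $R$ would intertwine the Frobenius/pivotal data of $A$ and $B$ only up to $\zeta$), so $\zeta=\id_B$ is exactly the hypothesis needed; $R$ stays a monoidal equivalence regardless.

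I expect the pivotality step to be the main obstacle: it is the one place where the isometry hypothesis must enter in an essential way, and the string-diagram reduction of~\eqref{eq:F-pivotal} — involving duals of relative tensor products, the $\delta$-twists coming from $R^{**}\cong R$, and the $\psi$-insertions of~\eqref{eq:tr_in_ACA} — is the most delicate bookkeeping in the argument. The monoidal coherence, while lengthier, is essentially automatic from the Morita-context structure and routine once the $\psi$-insertions of~\eqref{eq:extra_psi} are handled consistently.
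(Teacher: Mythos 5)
Your proposal is correct and follows essentially the same route as the paper: the quasi-inverse $R\otimes_B-\otimes_B R^*$, monoidal coherence reduced to bookkeeping with the balanced-map associators/unitors and the $\psi$-insertions of~\eqref{eq:extra_psi}, and pivotality verified by reducing both legs of~\eqref{eq:F-pivotal} with the isometry hypothesis entering there (the paper phrases this as the $\zeta^{\pm1/2}$-insertions in~\eqref{eq:R-pivotal_aux_morphs} cancelling when $\zeta=\id_B$, after which the two compositions agree via $\delta_R^*=\delta_{R^*}^{-1}$; your trading of $\psi$- for $\psi'$-insertions via~\eqref{eq:tr-R_id} is an equivalent formulation by Lemma~\ref{lem:tr-R_id}). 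One small caveat: your closing remark that $\zeta=\id_B$ is \emph{exactly} the hypothesis needed overstates the converse — Remark~\ref{rem:R_pivotal_nuisances} shows $R$ can be pivotal for non-isometric $R$ (e.g.\ $\zeta$ a global scalar) — but this does not affect the direction actually asserted in the proposition.
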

\begin{proof}
Checking the coherence identities for $(R,R_2,R_0)$ is a straightforward exercise, whose only difficulty is the occurrence of the non trivial associator/unitor morphisms in $\mcACA$ and $\mcBCB$, which are by definition obtained from the balanced maps~\eqref{eq:ACA_assoc_unitors}.

Let us check the pivotality of $R$, a priori without assuming ${_AR_B}$ to be isometric.
For $M\in\mcACA$, we denote by $\pi_M\colon R(M) \lra R^* \otimes M \otimes R :\imath_M$ the idempotent splitting morphisms in $\mcBCB$.
The four morphisms $R(\delta_M)$, $R_1(M^*)$, $\delta_{R(M)}$, $R_1^*(M)^*$ in the diagram~\ref{eq:F-pivotal} (where we set $X=M$ and $F=R$) are given by pre- and postcomposing
\begin{equation}
\label{eq:R-pivotal_aux_morphs}
\pic[1.25]{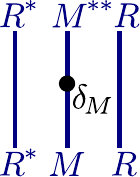}        \, , ~
\pic[1.25]{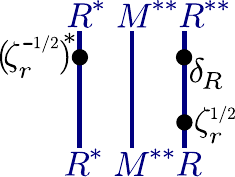}      \, , ~
\pic[1.25]{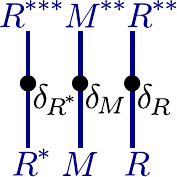}       \, , ~
\pic[1.25]{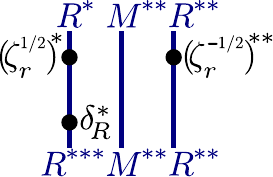}
\end{equation}
respectively with
$\imath_M$ and $\pi_{M^{**}}$,
$\imath_{M^{**}}$ and $\imath_{M^*}^*$,
$\imath_M$ and $\pi_M^{**}$,
$\imath_M^{**}$ and $\imath_{M^*}^*$.
By symmetry~\eqref{eq:Frob_symmetric} and the action~\eqref{eq:lambda_dual_action} on dual objects, the compositions $\imath_{M^{**}}\circ\pi_{M^{**}}$ and $\imath_M^{**}\circ\pi_{M}^{**}$ in particular yield the idempotents projecting onto the relative tensor products $R^* \otimes_A M^{**} \otimes_A R$ and $R^{***} \otimes_A M^{**} \otimes_A R^{**}$.
Since the action of $A$ commutes with the pivotal structure, upon composing $R_1(M^*) \circ R(\delta_M)$ and $R_1^*(M) \circ \delta_{R(M)}$ one can absorb these idempotents into $\imath_M$, so that they correspond to stacking the first-second and third-fourth diagrams in~\eqref{eq:R-pivotal_aux_morphs} and pre- and postcomposing with $\imath_M$ and $\imath_{M^*}^*$.
We see that if ${_AR_B}$ is isometric, i.e.\ $\zeta=\id_B$, the two compositions are equal since one has $\delta_R^* = \delta_{R^*}^{-1}$.

That $R$ is an equivalence follows from it being a Morita module, the inverse is given by $R^{-1} := R \otimes_B - \otimes_B R^*$.
\end{proof}

\begin{rem}
\label{rem:R_pivotal_nuisances}
Note that in Proposition~\ref{prp:R_pivotal} we do not state the converse: it is possible for the functor $R$ to be pivotal even if ${_AR_B}$ is not isometric.
As an example, suppose $\zeta$ is given by a global scaling, i.e.\ is proportional to $\id_A$.
Then the $\zeta^{\pm 1/2}$ factors in~\eqref{eq:R-pivotal_aux_morphs} cancel and so $R$ is pivotal by the same argument.
This can also be seen from the expression~\eqref{eq:tr_in_ACA} for the categorical traces: they remain invariant upon changing $A$ with its rescaling $A_\zeta$, since the additional $\zeta$ factor due to changing the $\psi^2$-insertion is compensated by the factor due to changing the coproduct (and therefore the coaction~\eqref{eq:Frob_coaction}).
In particular, for indecomposable and Morita equivalent symmetric separable Frobenius algebras $A, B\in\mcC$, the pivotal categories $\mcACA$ and $\mcBCB$ do not depend on the module trace.

In general however $R$ is not a pivotal functor.
Take for example an algebra of the form $A=\bigoplus_i A_i$ where each $A_i = (A_i,\psi_i)$ is an indecomposable symmetric separable Frobenius algebra in $\mcC$.
An arbitrary object $M\in\mcACA$ has the decomposition $\bigoplus_{ij} {_iM_j}$ where ${_iM_j}\in{_{A_i} \mcC_{A_j}}$.
In particular, a simple object is of the form $\mu={_i\mu_j}$ and its right categorical dimension in $\mcACA$ is a morphism $\dim_\mu = \imath_i \circ \tr_r\id_{\mu} \circ \pi_i \in \End_{\mcACA}(A)$ where $\pi_i\colon A \lra A_i :\imath_i$ are the projection/inclusion morphisms with respect to the decomposition of $A$ and $\tr_r \id_{\mu}\in \End_{_{A_i}\mcC_{A_i}}(A_i)$ is as in~\eqref{eq:tr_in_ACA}, but with the $\psi_j^2$-insertion inside the $\mu$-loop instead.
Scaling $A$ with $\zeta = \bigoplus \zeta_i \cdot \id_{A_i}$, $\zeta_i\in\opk^\times$, then changes $\dim_\mu$ into $\zeta_i^{-1}\zeta_j \dim_\mu$.
Since in general this delivers a different set of categorical dimensions of simple objects, the categories $\mcACA$ and ${_{A_\zeta}\mcC_{A_\zeta}}$ are not pivotal-equivalent.
\end{rem}

\section{Monoidal Frobenius functors}
\label{sec:monoidal_FFs}
The proofs of the main results of this paper will be based on mapping a Frobenius algebra $A\in\mcC$ to another Frobenius algebra $F(A)\in\mcD$ across a functor $F\colon\mcC \ra \mcD$ between monoidal categories.
It turns out that for this purpose one does not need $F$ to be monoidal, a weaker structure of a \textit{monoidal Frobenius} functor on $F$ is enough and in fact provides more interesting examples of such mappings.
We list the definitions related to and general properties of monoidal Frobenius functors in Section~\ref{subsec:FF_generalities}, describe how they preserve Frobenius algebras in Section~\ref{subsec:FF_preservation_of_FAs} and list some examples in Section~\ref{subsec:FF_examples}.

The notion of a monoidal Frobenius functor was introduced and explored in~\cite{Sz1, Sz2, DP, MS}, here we adapt it also to pivotal and ribbon categories.

\subsection{Generalities}
\label{subsec:FF_generalities}
Let $(\mcC,\otimes,\opid)$, $(\mcD,\otimes',\opid')$ be monoidal categories.
One can make an analogy between a (not necessarily strong) monoidal structure on a functor $F\colon\mcC\ra\mcD$ and an algebra in a monoidal category, according to which the natural transformation $F_2\colon F(-)\otimes F(-)\Ra F(-\otimes -)$ corresponds to the multiplication, the morphism $F_0\colon\opid'\ra F(\opid)$ to the unit and the coherence identities~\eqref{eq:monoidal_funct_ids} to associativity and left/right unitality.
Similarly, one has an analogy between (weak) comonoidal structures and coalgebras.
Extending this analogy even further, the following structure on $F$ is designed to correspond to a Frobenius algebra.
\begin{defn}
\label{def:FF}
A \textit{monoidal Frobenius functor} from $\mcC$ to $\mcD$ is a functor $F\colon\mcC\ra\mcD$ equipped with a weak monoidal structure
\begin{equation}
F_2\colon F(-) \otimes' F(-) \Ra F(- \otimes -)\, , \quad F_0\colon \opid' \ra F(\opid)
\end{equation}
and a weak comonoidal structure
\begin{equation}
\overline{F_2}\colon F(- \otimes -) \Ra F(-) \otimes' F(-) \, , \quad \overline{F_0} \colon F(\opid) \ra \opid'
\end{equation}
such that the following diagrams commute for all $X,Y,Z\in\mcC$
\begin{equation}
\label{eq:FF_Frob1}
\begin{tikzpicture}[baseline={([yshift=-.5ex]current bounding box.center)}]
\node (F1) at (-8,1.5)    {$F(X\otimes Y) \otimes' F(Z)$};
\node (F2) at  (0,1.5)    {$F((X\otimes Y) \otimes Z)$};
\node (F3) at  (-8,0)   {$(F(X) \otimes' F(Y)) \otimes' F(Z)$};
\node (F4) at  (0,0) {$F(X\otimes (Y \otimes Z))$};
\node (F5) at  (-8,-1.5) {$F(X) \otimes' (F(Y) \otimes' F(Z))$};
\node (F6) at  (0,-1.5) {$F(X) \otimes' F(Y \otimes Z)$};

\path[commutative diagrams/.cd,every arrow,every label]
(F1) edge node {$F_2(X \otimes Y, Z)$} (F2)
(F2) edge node {$F(a_{X,Y,Z})$} (F4)
(F3) edge node[swap] {$a'_{F(X),F(Y),F(Z)}$} (F5)
(F4) edge node {$\overline{F_2}(X,  Y \otimes Z)$} (F6)
(F1) edge node[swap] {$\overline{F_2}(X,Y) \otimes' \id_{F(Z)}$} (F3)
(F5) edge node {$\id_{F(X)} \otimes' F_2(Y,Z)$} (F6);
\end{tikzpicture} \, ,
\end{equation}
\begin{equation}
\label{eq:FF_Frob2}
\begin{tikzpicture}[baseline={([yshift=-.5ex]current bounding box.center)}]
\node (F1) at (-8,1.5)    {$F(X)\otimes' F(Y \otimes Z)$};
\node (F2) at  (0,1.5)    {$F(X\otimes (Y \otimes Z))$};
\node (F3) at  (-8,0)   {$F(X) \otimes' (F(Y) \otimes' F(Z))$};
\node (F4) at  (0,0) {$F((X \otimes Y) \otimes Z)$};
\node (F5) at  (-8,-1.5) {$(F(X) \otimes' F(Y)) \otimes' F(Z)$};
\node (F6) at  (0,-1.5) {$F(X \otimes Y) \otimes' F(Z)$};

\path[commutative diagrams/.cd,every arrow,every label]
(F1) edge node {$F_2(X, Y \otimes Z)$} (F2)
(F2) edge node {$F(a^{-1}_{X,Y,Z})$} (F4)
(F4) edge node {$\overline{F_2}(X \otimes Y,  Z)$} (F6)
(F1) edge node[swap] {$\id_{F(X)} \otimes' \overline{F_2}(Y,Z)$} (F3)
(F3) edge node[swap] {$a'^{-1}_{F(X),F(Y),F(Z)}$} (F5)
(F5) edge node {$F_2(X,Y) \otimes' \id_{F(Z)}$} (F6);
\end{tikzpicture} \, .
\end{equation}
\end{defn}

\begin{figure}
	\captionsetup{format=plain}
	\centering
    \begin{subfigure}[b]{1.0\textwidth}
    \begin{equation*}
        F(f) \hspace{-2pt}=\hspace{-8pt}\pic[1.25]{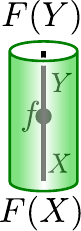} \hspace{-4pt},~
		F_2(X,Y) \hspace{-2pt}=\hspace{-16pt} \pic[1.25]{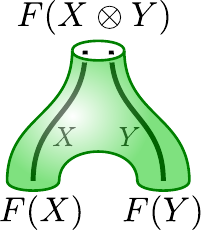} \hspace{-12pt},~
		F_0 \hspace{-2pt}=\hspace{-8pt} \pic[1.25]{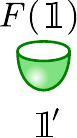} \hspace{-4pt},~
		\overline{F_2}(X,Y) \hspace{-2pt}=\hspace{-16pt} \pic[1.25]{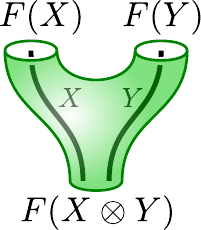} \hspace{-12pt},~
		\overline{F_0} \hspace{-2pt}=\hspace{-8pt} \pic[1.25]{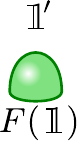} \hspace{-4pt},            
    \end{equation*}
	\end{subfigure}\\
	\vspace{-8pt}
	\begin{subfigure}[b]{1.0\textwidth}
    \begin{equation}
    \label{eq:F_graph_calc:assoc-unitality}
    \tag{F1}
            \pic[1.25]{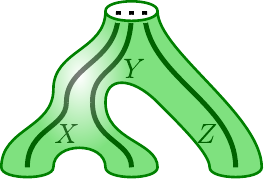} \hspace{-8pt}=\hspace{-8pt}
		    \pic[1.25]{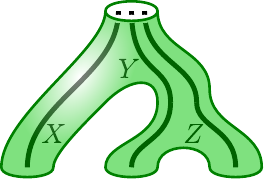} \hspace{-8pt},
		    \pic[1.25]{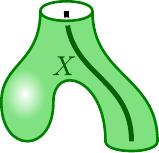} \hspace{-8pt}=\hspace{-8pt}
		    \pic[1.25]{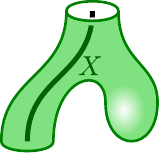} \hspace{-8pt}=\hspace{-4pt}
		    \pic[1.25]{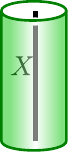}
    \end{equation}
	\end{subfigure}\\
	\vspace{-8pt}
	\begin{subfigure}[b]{1.0\textwidth}
	\begin{equation}
	\label{eq:F_graph_calc:coassoc-counitality}
	\tag{F2}
	    \pic[1.25]{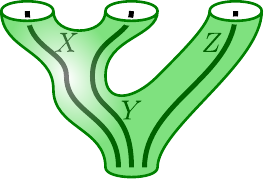} \hspace{-8pt}=\hspace{-8pt}
		\pic[1.25]{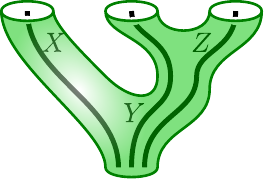} \hspace{-8pt},
		\pic[1.25]{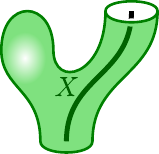} \hspace{-8pt}=\hspace{-8pt}
		\pic[1.25]{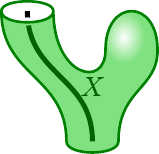} \hspace{-8pt}=\hspace{-4pt}
		\pic[1.25]{41_F_graph_calc_id.pdf}
	\end{equation}
	\end{subfigure}\\
	\vspace{-8pt}
	\begin{subfigure}[b]{1.0\textwidth}
	\begin{equation}
	\label{eq:F_graph_calc:Frob}
	\tag{F3}
	    \pic[1.25]{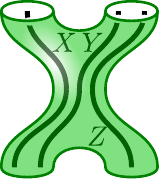} \hspace{-8pt}=\hspace{-2pt}
		\pic[1.25]{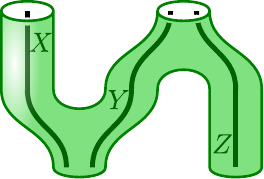} ,
		\pic[1.25]{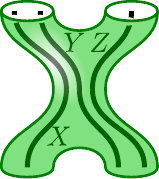} \hspace{-8pt}=\hspace{-2pt}
		\pic[1.25]{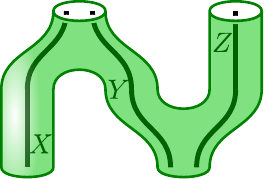}
	\end{equation}
	\end{subfigure}\\
	\vspace{-8pt}
    \begin{subfigure}[b]{0.45\textwidth}
	\begin{equation}
	\label{eq:F_graph_calc:braided}
	\tag{F4}
	    \pic[1.25]{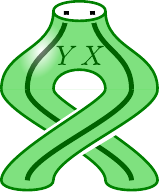} \hspace{-6pt}=\hspace{-6pt}
		\pic[1.25]{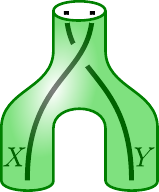}
	\end{equation}
	\end{subfigure}
	\begin{subfigure}[b]{0.45\textwidth}
	\begin{equation}
	\label{eq:F_graph_calc:cobraided}
	\tag{F4'}
		\pic[1.25]{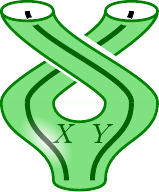} \hspace{-6pt}=\hspace{-6pt}
		\pic[1.25]{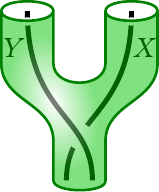}
	\end{equation}
	\end{subfigure}\\
	\vspace{-8pt}
	\begin{subfigure}[b]{0.45\textwidth}
	\begin{equation}
	\label{eq:F_graph_calc:twist}
	\tag{F5}
	    \pic[1.25]{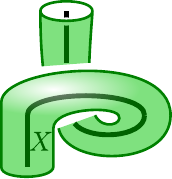} =
		\pic[1.25]{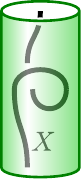}
	\end{equation}
	\end{subfigure}
	\caption{
	Graphical calculus for a monoidal Frobenius functor $F\colon\mcC\ra\mcD$.
	The identities~\eqref{eq:F_graph_calc:assoc-unitality}--\eqref{eq:F_graph_calc:Frob} correspond to the coherence of the (co)monoidal structure and the commutative diagrams~\eqref{eq:FF_Frob1}, \eqref{eq:FF_Frob2}.
	If $\mcC$ and $\mcD$ are braided, \eqref{eq:F_graph_calc:braided} (resp.\ \eqref{eq:F_graph_calc:cobraided}) is the identity~\eqref{eq:F_braided} (resp.\ \eqref{eq:F_cobraided}) which holds if $F$ is in addition braided (resp.\ cobraided).
	If $\mcC$ and $\mcD$ are ribbon, \eqref{eq:F_graph_calc:twist} is the identity~\eqref{eq:F_twist} in case $F$ is in addition ribbon.
	For a ribbon Frobenius functor~\eqref{eq:F_graph_calc:braided} and~\eqref{eq:F_graph_calc:cobraided} imply each other.
	}
	\label{fig:F_graph_calc}
\end{figure}
To make the properties of a monoidal Frobenius functor $F\colon\mcC\ra\mcD$ more obvious, for a morphism $[f\colon X \ra Y]\in\mcC$ we will display the morphism $F(f)\in\mcD$ in the string diagrams of $\mcD$ by a shaded cylindrical tube containing the string diagram in $\mcC$ corresponding to $f$, see Figure~\ref{fig:F_graph_calc}.
The functoriality of $F$ allows one to perform the graphical calculus of $\mcC$ inside such tubes\footnote{Admittedly, since the graphical calculus for monoidal categories is `planar', it would be more appropriate to use flat shaded regions instead of tubes in such notation.
Our goal is however to eventually use Frobenius functors for ribbon categories, for which the tube notation will be more appropriate.}.
The weak (co)monoidal structure morphisms $F_0$, $\overline{F_0}$ will be depicted by cups/caps, which open/close an empty tube, while for $X,Y\in\mcC$ the morphisms $F_2(X,Y)$, $\overline{F_2}(X,Y)$ will be depicted by merging/splitting of two tubes.
The (co)monoidality of $F$ then translates to the identities~\eqref{eq:F_graph_calc:assoc-unitality}, \eqref{eq:F_graph_calc:coassoc-counitality} in Figure~\ref{fig:F_graph_calc} while the commutative diagrams~\eqref{eq:FF_Frob1} and~\eqref{eq:FF_Frob2} to the ones in~\eqref{eq:F_graph_calc:Frob}.

\medskip

As expected, the conditions~\eqref{eq:F_graph_calc:assoc-unitality} and~\eqref{eq:F_graph_calc:coassoc-counitality} resemble the associativity/unitality \eqref{eq:alg_assoc-unitality} and coassociativity/counitality~\eqref{eq:alg_coassoc-counitality} identities of an algebra in a monoidal category, while~\eqref{eq:F_graph_calc:Frob} the Frobenius property~\eqref{eq:Frob_cond}.
Further developing the analogy with Frobenius algebras, the following property of monoidal Frobenius functors is formulated to correspond to separability
\begin{defn}
\label{def:FF_sep_cond}
A monoidal Frobenius functor $F\colon\mcC\ra\mcD$ is called \textit{separable} if there exists a morphism $\psi_F\colon \opid' \ra F(\opid)$ such that for all $X,Y\in\mcC$ one has
\begin{equation}
\label{eq:FF_sep_cond}
\pic[1.25]{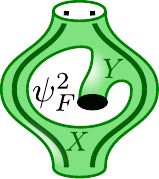} =
\pic[1.25]{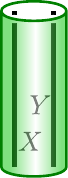} \, ,
\end{equation}
where $\psi^2_F = F_2(\opid,\opid)\circ (\psi_F \otimes \psi_F)$.
We call such morphism $\psi_F$ a \textit{section of $F$}.
\end{defn}
We note that the use of the squared section $\psi^2_F$ in~\eqref{eq:FF_sep_cond} is just a convention aimed at resembling our preferred condition~\eqref{eq:FA_sep_cond_ito_psi} for separable Frobenius algebras.
In principle the separability of $F$ can also be defined in terms of a morphism $\zeta_F\colon\opid'\ra F(\opid)$ by analogy with~\eqref{eq:FA_sep_cond}; the requirement that $\zeta_F = \psi_F^2$ for some section $\psi_F$ can be relaxed.
We also note that the separability condition~\eqref{eq:FF_sep_cond} is more general than the one usually found in the literature (see e.g.\ \cite{MS}), which corresponds to having $\psi_F = F_0$ (i.e.\ analogous to the condition for a Frobenius algebra to be $\D$-separable).

\medskip

It was noted in~\cite{DP} that a monoidal Frobenius functor $F\colon\mcC\ra\mcD$ preserves dualities.
Indeed, if an object $X\in\mcC$ has a left dual $X^*$ with evaluation/coevaluation morphisms $\ev_X\colon X^* \otimes X \ra \opid$, $\coev_X\colon\opid \ra X \otimes X^*$, one has the following pairing/copairing morphisms between $F(X)$ and $F(X^*)$:
\begin{equation}
\pic[1.25]{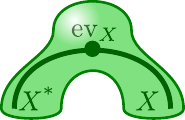} \, , \quad
\pic[1.25]{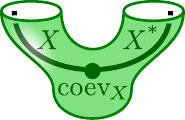} \, ,
\end{equation}
which are inverses of each other (by which we mean the zig-zag identities hold, see~\cite[Sec.\,1.5.1]{TV}) as shown by the following computation
\begin{equation}
\pic[1.25]{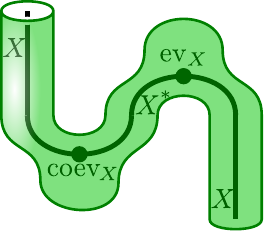} =
\pic[1.25]{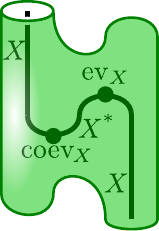} =
\pic[1.25]{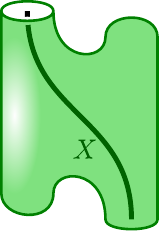} =
\pic[1.25]{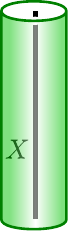}
\end{equation}
(similarly one shows the other zig-zag identity).
In particular, if $\mcC$, $\mcD$ are left rigid, there exists a unique family of isomorphisms $\{F_1(X) \colon F(X^*) \xra{\sim} F(X)^* \}_{X\in\mcC}$ such that
\begin{equation}
\label{eq:Frob_F1_morphs}
\pic[1.25]{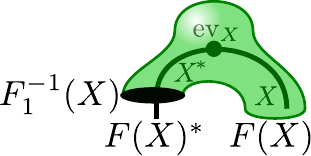} \hspace{-4pt}=\hspace{-4pt}
\pic[1.25]{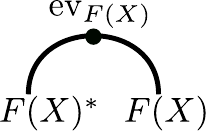} \hspace{-4pt},
\pic[1.25]{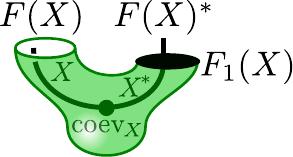} \hspace{-4pt}=\hspace{-4pt}
\pic[1.25]{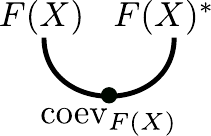} \hspace{-4pt}.
\end{equation}
\begin{defn}
\label{def:pivotal_braided_ribb_FFs}
Let $\mcC$, $\mcD$ be monoidal categories and $F\colon\mcC\ra\mcD$ a monoidal Frobenius functor.
\begin{itemize}
\item
If $\mcC = (\mcC,\d)$, $\mcD = (\mcD,\d')$ are pivotal, then $F$ is said to be \textit{pivotal} if it preserves the pivotal structure, i.e.\ if for all $X\in\mcC$ diagram~\eqref{eq:F-pivotal} commutes.
\item
If $\mcC = (\mcC,c)$, $\mcD = (\mcD,c')$ are braided, then $F$ is called \textit{braided} if $F_2$ preserves the braidings, i.e.\ for all $X,Y\in\mcC$ one has
\begin{equation}
\label{eq:F_braided}
F_2(Y,X) \circ c'_{F(X),F(Y)} = F(c_{X,Y}) \circ F_2(X,Y) \, .
\end{equation}
Similarly, $F$ is called \textit{cobraided} if $\overline{F_2}$ preserves the braidings, i.e.\ one has
\begin{equation}
\label{eq:F_cobraided}
c'_{F(X),F(Y)}\circ\overline{F_2}(X,Y) = \overline{F_2}(Y,X)\circ F(c_{X,Y}) \, .
\end{equation}
\item
If $\mcC$, $\mcD$ are ribbon with the twist morphisms $\{\theta_X\}_{X\in\mcC}$, $\{\theta'_{X'}\}_{X'\in\mcD}$, then $F$ is called \textit{ribbon} if it is braided and preserves the twists, i.e.\ for all $X\in\mcC$ one has
\begin{equation}
\label{eq:F_twist}
\theta'_{F(X)} = F(\theta_X) \, .
\end{equation}
\end{itemize}
\end{defn}
Like in the case of strong monoidal functors, one shows that if $F$ is pivotal then the same family of morphisms $\{F_1(X)\}_{X\in\mcC}$ preserves the right duals:
\begin{equation}
\label{eq:Frob_F1_morphs_pivotal}
\pic[1.25]{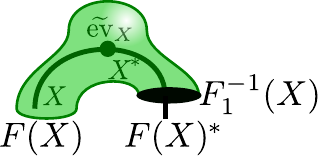} \hspace{-4pt}=\hspace{-4pt}
\pic[1.25]{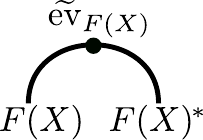} \hspace{-4pt},
\pic[1.25]{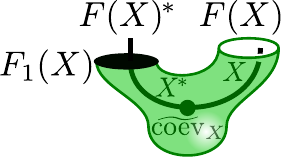} \hspace{-4pt}=\hspace{-4pt}
\pic[1.25]{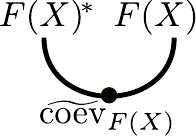} \hspace{-4pt}.
\end{equation}

Graphically the identities \eqref{eq:F_braided}, \eqref{eq:F_cobraided} and \eqref{eq:F_twist} are depicted respectively by the equations~\eqref{eq:F_graph_calc:cobraided}, \eqref{eq:F_graph_calc:cobraided} and~\eqref{eq:F_graph_calc:twist} in Figure~\ref{fig:F_graph_calc}.
For a ribbon Frobenius functor $F\colon\mcC\ra\mcD$, \eqref{eq:F_graph_calc:braided} and~\eqref{eq:F_graph_calc:twist} hold by definition, while~\eqref{eq:F_graph_calc:cobraided} holds as a consequence of the following
\begin{prp}
\label{prp:FF_ribb-piv-cobr}
A braided Frobenius functor $F\colon\mcC\ra\mcD$ between ribbon categories $\mcC$, $\mcD$ is ribbon if and only if it is pivotal and cobraided.
\end{prp}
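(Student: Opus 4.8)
The plan is to prove both implications by a single graphical manipulation in the tube calculus of Figure~\ref{fig:F_graph_calc}. The input is the standard fact that in any ribbon category $\mcE$ the twist $\theta_X$ equals the left curl $\theta^l_X$ --- the composite built from $\coev_X$, one braiding $c_{X,X}$, and $\evt_X$ --- so that the right-duality data of $\mcE$ is determined by its left-duality data together with the braiding and the twist. I would then examine the string diagram in $\mcD$ for $F(\theta_X)=F(\theta^l_X)$, namely the $F(X)$-tube with this curl inside it, and try to straighten the curl out of the tube into the left curl $\theta^l_{F(X)}=\theta'_{F(X)}$ of the tube $F(X)$. This straightening decomposes into three moves: (i) pulling the cup $\coev_X$ out of the tube, governed by the (co)evaluation-compatibility~\eqref{eq:Frob_F1_morphs}, which is always available for a monoidal Frobenius functor and produces a splitting $\overline{F_2}$, the new leg being identified with $F(X)^*$ via $F_1(X)$; (ii) pulling the cap $\evt_X$ out of the tube, governed by the ``right-dual'' version~\eqref{eq:Frob_F1_morphs_pivotal}, which is available exactly when $F$ is pivotal and produces a merging $F_2$; and (iii) pushing the remaining braiding $F(c_{X,X})$ to the outside, across the merging from (ii) via~\eqref{eq:F_braided} and across the splitting from (i) via~\eqref{eq:F_cobraided}, after which the Frobenius relations~\eqref{eq:F_graph_calc:Frob} and functoriality collapse the auxiliary tube. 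Since $F$ is braided by hypothesis the move~\eqref{eq:F_braided} is always available, so $F(\theta_X)=\theta'_{F(X)}$ for all $X$ if and only if moves (ii) and (iii) go through for all $X$, i.e.\ if and only if $F$ is pivotal and cobraided; for a strong monoidal functor $\overline{F_2}=F_2^{-1}$, so braidedness already entails cobraidedness and move (iii) is automatic, leaving only move (ii) and one recovers the statement recalled in Section~\ref{sec:preliminaries} that a braided functor between ribbon categories is ribbon iff it is pivotal.

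For the ``if'' direction this is immediate: assuming $F$ pivotal and cobraided, all three moves are available, so $F(\theta_X)=\theta'_{F(X)}$ and $F$ is ribbon. For the ``only if'' direction one must argue conversely that failure of pivotality or of cobraidedness obstructs twist-preservation. Pivotality: the diagram~\eqref{eq:F-pivotal} defining ``$F$ pivotal'' is equivalent to the compatibility of the family $\{F_1(X)\}$ with the right-duality data, i.e.\ to~\eqref{eq:Frob_F1_morphs_pivotal}; since in a ribbon category $\evt_X$ and $\coevt_X$ are expressed through $\ev_X$, $\coev_X$, the braiding and $\theta_X$, applying $F$ and using~\eqref{eq:Frob_F1_morphs}, \eqref{eq:F_braided} and the hypothesis $F(\theta_X)=\theta'_{F(X)}$ forces~\eqref{eq:Frob_F1_morphs_pivotal}, so $F$ is pivotal. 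Cobraidedness: with $F$ now known to be pivotal, \eqref{eq:Frob_F1_morphs_pivotal} is at our disposal and~\eqref{eq:F_cobraided} follows by a computation of the same type --- writing $c'_{F(X),F(Y)}\circ\overline{F_2}(X,Y)$ in the tube calculus, moving the braiding through the splitting using the Frobenius relations~\eqref{eq:F_graph_calc:Frob} together with~\eqref{eq:F_braided}, \eqref{eq:Frob_F1_morphs}, \eqref{eq:Frob_F1_morphs_pivotal} and~\eqref{eq:F_twist}, and arriving at $\overline{F_2}(Y,X)\circ F(c_{X,Y})$.

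The main obstacle is the graphical bookkeeping in move (iii): one has to check that the single braiding in the curl is routed so as to pass exactly one merging and one splitting --- hence is converted using precisely~\eqref{eq:F_braided} and~\eqref{eq:F_cobraided} and nothing more --- and that the Frobenius relations then collapse the auxiliary tube with no residual scalar or section insertions. I would stress here that, unlike the preservation of a general connected string diagram (which needs $F$ separable), the twist requires no separability of $F$: the curl is connected but contains no genuine branching of independent strands, so the sections $\psi_F$ never enter. The other delicate point is in the ``only if'' direction: one cannot simply run the ``if''-computation backwards, because that computation already presupposes pivotality through~\eqref{eq:Frob_F1_morphs_pivotal}; pivotality must be obtained first, by the dedicated argument that exploits the ribbon-category relation expressing the right-duality data in terms of the left-duality data, the braiding and the twist.
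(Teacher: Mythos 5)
Your proposal is correct and follows essentially the same route as the paper: both directions hinge on writing the twist as the left curl, deducing pivotality first from twist-preservation via the compatibility of $\{F_1(X)\}$ with the right-duality data (equivalently the diagram~\eqref{eq:F-pivotal}), and then obtaining cobraidedness by the Frobenius-functor analogue of the argument that a symmetric commutative Frobenius algebra is cocommutative, with the converse by the same computations run with the hypotheses interchanged. Your observations that separability never enters and that pivotality must be established before the cobraidedness step match the paper's proof exactly.
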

\begin{proof}
Given that $F$ is ribbon, one shows that it is pivotal by adapting a similar argument for strong monoidal ribbon functors.
For an arbitrary $X\in\mcC$, one has by definition
\begin{equation}
\theta'_{F(X)} =
\pic[1.25]{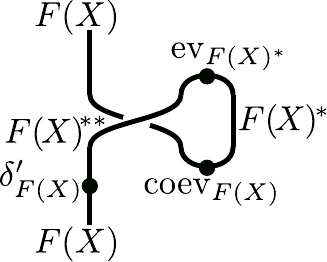} \quad\Ra\quad
\delta'_{F(X)} =
\pic[1.25]{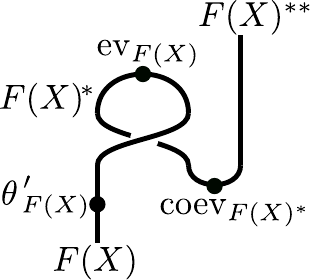} \, .
\end{equation}
Using the canonical isomorphism $F_1(X)\colon F(X^*) \ra F(X)^*$ as in~\eqref{eq:Frob_F1_morphs} (which along with its inverse is not always labelled explicitly in this proof) as well as~\eqref{eq:F_graph_calc:braided}, \eqref{eq:F_graph_calc:twist} one gets:
\begin{align} \nonumber
&\delta'_{F(X)} = \hspace{-4pt}
\pic[1.25]{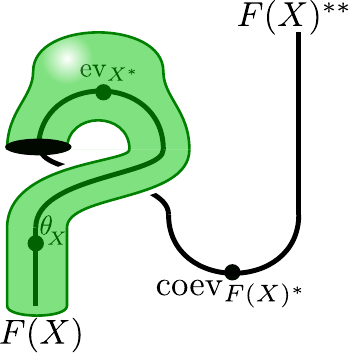} \hspace{-15pt}=\hspace{-4pt}
\pic[1.25]{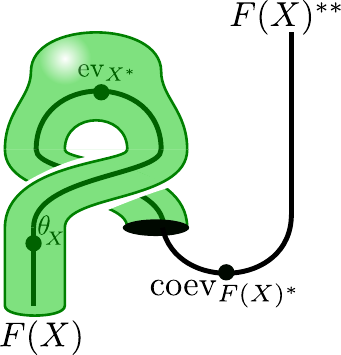}  \hspace{-15pt}=\hspace{-4pt}
\pic[1.25]{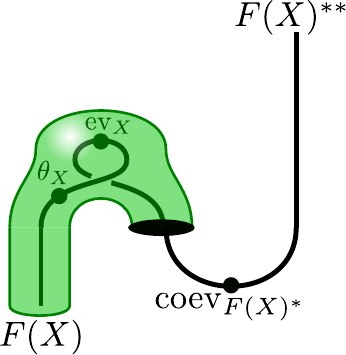}\\ \nonumber
&=
\pic[1.25]{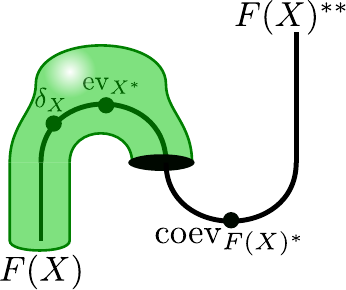} \hspace{-10pt}=\hspace{-4pt}
\pic[1.25]{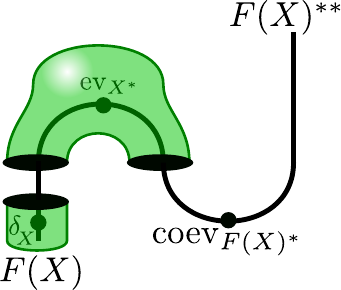} \hspace{-10pt}=\hspace{-4pt}
\pic[1.25]{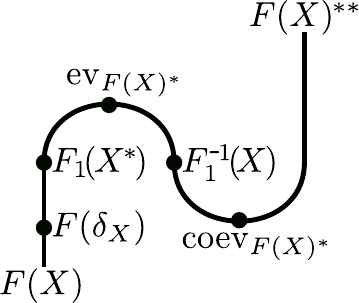}\\
&=
(F_1^*(X))^{-1}\circ F_1(X^*)\circ F(\delta_X) \, ,
\end{align}
\goodbreak
\noindent{}which is exactly the condition for $F$ being pivotal.

Having shown that $F$ is pivotal, we can use the identities~\eqref{eq:Frob_F1_morphs_pivotal} to show that $F$ is also cobraided.
The argument follows from~\eqref{eq:F_graph_calc:assoc-unitality}--\eqref{eq:F_graph_calc:twist} and resembles the one that a symmetric commutative Frobenius algebra is automatically cocommutative:
\begin{align} \nonumber
&
\pic[1.25]{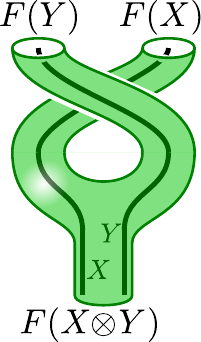} \hspace{-4pt}=\hspace{-4pt}
\pic[1.25]{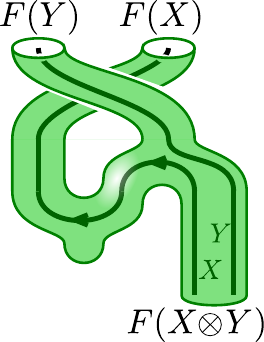} \hspace{-4pt}=\hspace{-4pt}
\pic[1.25]{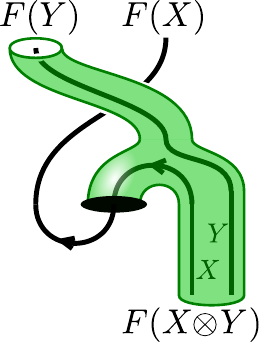} \hspace{-4pt}=\hspace{-4pt}
\pic[1.25]{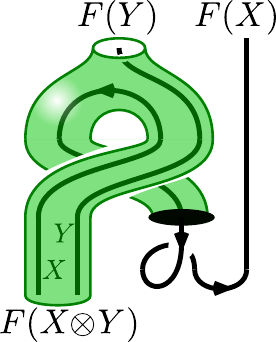}\\
&
=\hspace{-4pt}
\pic[1.25]{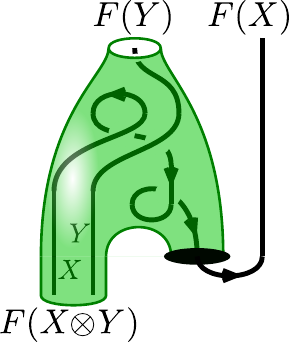} \hspace{-4pt}=\hspace{-4pt}
\pic[1.25]{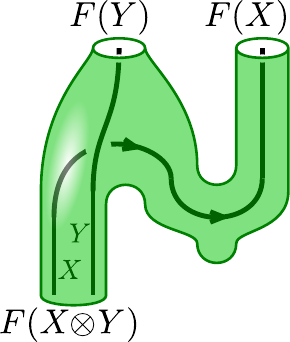} \hspace{-2pt}=\hspace{-4pt}
\pic[1.25]{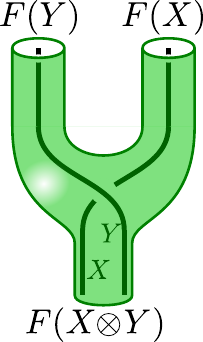} \, .
\end{align}

The other implication is shown by similar computations.
\end{proof}
\begin{rem}
If $F\colon\mcC\ra\mcD$ is a ribbon Frobenius functor, the identities \eqref{eq:F_graph_calc:assoc-unitality}--\eqref{eq:F_graph_calc:twist}, as well as the pivotality of $F$, allow one to deform the tubes in the graphical calculus of $\mcD$ up to an isotopy preserving the string diagrams in $\mcC$ inside the tubes.
This makes the notation very intuitive to use.
\end{rem}

\subsection{Preservation of Frobenius algebras}
\label{subsec:FF_preservation_of_FAs}
Let $F\colon\mcC\ra\mcD$ be a monoidal Frobenius functor and let $A = (A,\mu,\eta,\Delta,\vareps)$ a Frobenius algebra in $\mcC$.
Using the relations~\eqref{eq:F_graph_calc:assoc-unitality}--\eqref{eq:F_graph_calc:Frob} it is easy to show that $F(A)$ is a Frobenius algebra in $\mcD$ (see~\cite[Cor.\,5]{DP}) with multiplication $F(\mu) \circ F_2(A,A)$, unit $F(\eta)\circ F_0$, comultiplication $\overline{F_2}(A,A) \circ F(\Delta)$ and counit $\overline{F_0} \circ F(\vareps)$, or graphically:
\begin{equation}
\pic[1.25]{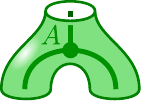}   \, , \quad
\pic[1.25]{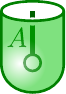}   \, , \quad
\pic[1.25]{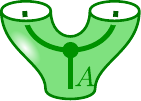} \, , \quad
\pic[1.25]{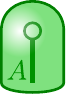} \, .
\end{equation}
Similarly, for a left module $L = (L,\l)\in{_A\mcC}$ and a right module $K = (K,\rho)\in\mcC_A$, the images $F(L)$ and $F(K)$ are left and right $F(A)$-modules respectively with the actions $F(\l)\circ F_2(A,L)$ and $F(\rho)\circ F_2(K,A)$, or
\begin{equation}
\label{eq:FA_actions}
\pic[1.25]{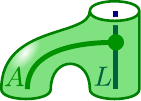} \quad , \qquad
\pic[1.25]{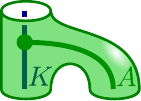} \, .
\end{equation}
The property~\eqref{eq:F_graph_calc:Frob} guarantees that the $F(A)$-comodule structures on $F(L)$ and $F(K)$, obtained by similarly transporting the coactions on $L$ and $K$, coincide with the ones induced by $F(A)$ as in~\eqref{eq:Frob_coaction}.

In this section we look at variations of this observation when $F$ is in addition separable, pivotal, (co)braided or ribbon.

\medskip

We start by noting that in general $F(A)$ need not be separable even if $A$ is.
Unsurprisingly one has instead:
\begin{prp}
If $F\colon\mcC\ra\mcD$ is a separable monoidal Frobenius functor and $A\in\mcC$ a separable Frobenius algebra, then $F(A)\in\mcD$ is a separable Frobenius algebra as well.
\end{prp}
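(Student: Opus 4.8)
The plan is to mimic the proof of Proposition~\ref{prp:FA_sep_cond} at the level of functors: produce a section of $F(A)$ out of the section $\zeta$ of $A$ and the section $\psi_F$ of $F$. Write $\zeta = \psi^2$ for the fixed section of $A$, and let $\psi_F\colon\opid'\ra F(\opid)$ be a section of $F$ as in Definition~\ref{def:FF_sep_cond}, with $\psi_F^2 = F_2(\opid,\opid)\circ(\psi_F\otimes\psi_F)$. The candidate section of $F(A)$ that I would take is the composite
\begin{equation*}
\opid' \xra{\psi_F} F(\opid) \xra{F(\psi^2)} F(A) \, ,
\end{equation*}
i.e.\ graphically a cup of an empty $F$-tube capped off inside by the section $\psi$ of $A$ (squared). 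The goal is to check that this morphism verifies the separability condition~\eqref{eq:FA_sep_cond_ito_psi} for the Frobenius algebra $F(A)$, whose structure morphisms are $F(\mu)\circ F_2(A,A)$, $F(\eta)\circ F_0$, $\overline{F_2}(A,A)\circ F(\Delta)$, $\overline{F_0}\circ F(\vareps)$ as recalled just above the statement.

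The verification proceeds by a purely diagrammatic computation in $\mcD$ using the tube calculus of Figure~\ref{fig:F_graph_calc}. First I would write out the left-hand side of~\eqref{eq:FA_sep_cond_ito_psi} for $F(A)$: this is the comultiplication of $F(A)$, followed by inserting $\psi_F\circ F(\psi^2)$ on the middle strand (squared), followed by the multiplication of $F(A)$. Expanding the (co)multiplication of $F(A)$ introduces two $F_2$ merges, two $\overline{F_2}$ splits, and the images $F(\Delta)$, $F(\mu)$ inside the tubes. Using the Frobenius relations~\eqref{eq:F_graph_calc:Frob} for $F$ together with the separability identity~\eqref{eq:FF_sep_cond} of $F$ applied once, the two $\psi_F$-insertions coming from $\psi_F^2$ can be absorbed so that only a single $F$-tube remains, now containing the string diagram of $\mcC$ given by $\mu\circ(\id_A\otimes\psi^2\otimes\id_A)\circ\Delta$ — here one uses that $\Delta$ is an $A$-$A$-bimodule morphism and~\eqref{eq:F_graph_calc:coassoc-counitality}, \eqref{eq:F_graph_calc:assoc-unitality} to slide the $\psi^2$ insertion from the outside into the tube. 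At that point the separability of $A$ itself, in the form~\eqref{eq:FA_sep_cond_ito_psi}, collapses the diagram inside the tube to $\id_A$, and what remains is exactly the right-hand side $\id_{F(A)}$ (an empty $F$-tube on $A$). Hence $F(A)$ is separable with the stated section.

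The main obstacle I anticipate is purely bookkeeping: correctly commuting the two $\psi_F$-factors past the $F_2$/$\overline{F_2}$ structure so that the separability condition~\eqref{eq:FF_sep_cond} of $F$ can be applied, while simultaneously using the Frobenius axioms~\eqref{eq:F_graph_calc:Frob} to reroute the tubes. There is a small subtlety in the order of operations — one must apply~\eqref{eq:FF_sep_cond} to the $F$-tube structure \emph{before} invoking~\eqref{eq:FA_sep_cond_ito_psi} inside the tube, since the latter is an identity of morphisms in $\mcC$ and only makes sense once everything has been collected inside a single $F$-tube. No deeper difficulty is expected: this is the functor-level analogue of the elementary fact that a composite of separable (Frobenius) structures is separable, and the pivotal/braided refinements play no role here since separability of $F(A)$ uses only the monoidal Frobenius and separability data of $F$ and $A$.
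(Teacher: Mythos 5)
Your overall strategy is the same as the paper's (whose proof is a one-liner: the section of $F(A)$ is $\opid'\xra{\psi_F}F(\opid)\xra{F(\psi)}F(A)$), and your verification sketch is essentially the check the paper leaves implicit. However, your candidate section is misstated, and as written it is not a section of $F(A)$ in the paper's sense. Recall the convention around \eqref{eq:FA_sep_cond_ito_psi}: a ``section'' is a morphism $\psi$ whose \emph{multiplicative square} $\psi^2=\mu\circ(\psi\otimes\psi)$ witnesses separability. You propose $\chi:=F(\psi^2)\circ\psi_F$. Computing its square with the multiplication $F(\mu)\circ F_2(A,A)$ of $F(A)$ and using naturality of $F_2$ gives $\chi^2=F(\psi^4)\circ\psi_F^2$. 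Inserting this and absorbing $\psi_F^2$ via \eqref{eq:FF_sep_cond} leaves $\mu\circ(\id_A\otimes\psi^4\otimes\id_A)\circ\D$ inside the tube; since $\D$ is an $A$-$A$-bimodule morphism, this equals left multiplication by $\psi^2$ on $A$, not $\id_A$ (it is the identity only when $A$ is $\D$-separable). So the diagram you claim to find inside the single remaining tube, $\mu\circ(\id_A\otimes\psi^2\otimes\id_A)\circ\D$, does not arise from your $\chi$ — it arises from the square of $F(\psi)\circ\psi_F$, whose square is $F(\psi^2)\circ\psi_F^2$.

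The fix is simply to take the candidate section to be $F(\psi)\circ\psi_F$ (one factor of $\psi$, not $\psi^2$). With that choice your computation goes through exactly as you describe: naturality of $F_2$ collects the two $\psi_F$'s into $\psi_F^2$ and the two $F(\psi)$'s into $F(\psi^2)$ inside one tube, \eqref{eq:FF_sep_cond} removes the tube split/merge, and \eqref{eq:FA_sep_cond_ito_psi} for $A$ collapses what remains to $\id_{F(A)}$. Your remark on the order of operations (apply the separability of $F$ before that of $A$) is correct, and you are right that pivotality/braiding play no role here.
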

\begin{proof}
If $\psi_F\colon\opid' \ra F(\opid)$ is a section of $F$ in the sense of Definition~\ref{def:FF_sep_cond} and $\psi: \opid \ra A$ is a section of $A$ in the sense that~\eqref{eq:FA_sep_cond_ito_psi} holds, then $\opid' \xra{\psi_F} F(\opid) \xra{F(\psi)} F(A)$ is a section of $F(A)$.
\end{proof}
\begin{rem}
\label{rem:F-F1_sections}
By the above proposition, for a separable monoidal Frobenius functor $F\colon\mcC\ra\mcD$, $F(\opid)\in\mcD$ is always a separable Frobenius algebra.
If $\mcD$ is pivotal and $F(\opid)$ symmetric, a similar computation as in~\eqref{eq:tr-psi_indep_calc} shows that $\psi_F\colon\opid'\ra F(\opid)$ is a section of the functor $F$ if and only if it is a section of the algebra $F(\opid)$.
We note that the assumption for $F$ to be separable is necessary in this case: in general $F$ need not be separable even if $F(\opid)$ is a separable Frobenius algebra and so a section of $F(\opid)$ need not be a section of $F$.
\end{rem}

As expected, the definitions of pivotal and (co)braided Frobenius functors are designed to preserve the corresponding properties of Frobenius algebras:
\begin{prp}
\label{prp:FF_sym-br-algs}
Let $F\colon\mcC\ra\mcD$ be a monoidal Frobenius functor and $A\in\mcC$ a Frobenius algebra.
\begin{enumerate}[i)]
\item If $\mcC$, $\mcD$, $F$ are pivotal and $A$ is symmetric, then $F(A)$ is symmetric as well.
\item If $\mcC$, $\mcD$ are braided, $F$ is (co)braided and $A$ is (co)commutative, then $F(A)$ is (co)commutative as well.
\end{enumerate}
\end{prp}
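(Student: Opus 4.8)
The two parts are of quite different difficulty, so I would dispatch them separately. Part~ii) is essentially immediate from the definitions together with the functoriality of $F$, and amounts to sliding a tube-crossing past a trivalent tube-vertex. Recall from just above that the multiplication of $F(A)$ is $F(\mu)\circ F_2(A,A)$. If $\mcC$, $\mcD$ are braided, $F$ is braided and $A$ is commutative, I would compute, using first~\eqref{eq:F_braided}, then functoriality of $F$, then the commutativity~\eqref{eq:comm_algs} of $A$,
\[
F(\mu)\circ F_2(A,A)\circ c'_{F(A),F(A)}
= F(\mu)\circ F(c_{A,A})\circ F_2(A,A)
= F(\mu\circ c_{A,A})\circ F_2(A,A)
= F(\mu)\circ F_2(A,A),
\]
which is precisely commutativity of $F(A)$; the $(c')^{-1}$ variant of~\eqref{eq:comm_algs} follows the same way. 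The cocommutative statement is formally dual: the comultiplication of $F(A)$ is $\overline{F_2}(A,A)\circ F(\Delta)$, and one argues identically using the cobraided condition~\eqref{eq:F_cobraided} in place of~\eqref{eq:F_braided} and cocommutativity of $A$. Graphically, these are the moves~\eqref{eq:F_graph_calc:braided} resp.~\eqref{eq:F_graph_calc:cobraided} applied at the (co)multiplication tube-vertex of $F(A)$.

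Part~i) is where the pivotality of $F$ does its work. Conceptually, a Frobenius algebra is symmetric exactly when its Frobenius pairing $\vareps\circ\mu$ is invariant under the ``rotation'' built from the pivotal structure, which is the content of~\eqref{eq:Frob_symmetric}, and a pivotal monoidal Frobenius functor by design carries this rotation to the corresponding one downstairs. Concretely, I would argue in the graphical calculus of Figure~\ref{fig:F_graph_calc}. Writing out the left-hand side of~\eqref{eq:Frob_symmetric} for the Frobenius algebra $F(A)$ and substituting the structure morphisms of $F(A)$ recalled above, the multiplication and counit become a tube-merge followed by $F(\mu)$, $F(\vareps)$ and a tube-cap $\overline{F_0}$, all contained in a single shaded tube; the only data not yet inside the tube are the (co)evaluations of $\mcD$ attached to $F(A)$ and $F(A)^{*}$. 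Now I use that $F$ is pivotal: by~\eqref{eq:Frob_F1_morphs} and~\eqref{eq:Frob_F1_morphs_pivotal} all four (co)evaluations $\ev_{F(A)},\coev_{F(A)},\evt_{F(A)},\coevt_{F(A)}$ in $\mcD$ agree --- after conjugating by the canonical isomorphism $F_1(A)\colon F(A^{*})\xra{\sim}F(A)^{*}$ and its inverse --- with $F$ applied to the corresponding (co)evaluations of $A$ in $\mcC$, and the commuting square~\eqref{eq:F-pivotal} ensures that the identifications of $F(A)^{**}$, $F(A^{**})$ and $F(A^{*})^{*}$ that occur are mutually compatible, so that all the $F_1(A)^{\pm1}$ cancel. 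At this point the whole left-hand side has become $F$ applied to the left-hand side of~\eqref{eq:Frob_symmetric} for $A$ (up to the tube-isotopies permitted by~\eqref{eq:F_graph_calc:assoc-unitality}--\eqref{eq:F_graph_calc:Frob} together with pivotality, in the spirit of the remark after Proposition~\ref{prp:FF_ribb-piv-cobr}). I then invoke the symmetry of $A$ inside the tube and run the same rewriting in reverse to arrive at the right-hand side of~\eqref{eq:Frob_symmetric} for $F(A)$.

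The only genuine work, and the main obstacle, is the bookkeeping in part~i): one has to keep track of the canonical isomorphisms $F_1(A)$ and of the three a priori different presentations $F(A^{**})\cong F(A)^{**}\cong F(A^{*})^{*}$ of the double dual, and check that~\eqref{eq:F-pivotal} is invoked with the correct orientation at each step, so that these isomorphisms truly cancel rather than leaving behind a stray twist or a mismatch. Once the graphical calculus for pivotal monoidal Frobenius functors is in place this is routine but does require care; part~ii), by contrast, presents no obstacle at all.
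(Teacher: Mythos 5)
Your proposal is correct and follows essentially the same route as the paper: part~ii) is the one-line application of~\eqref{eq:F_braided} (resp.~\eqref{eq:F_cobraided}) together with functoriality and (co)commutativity of $A$, and part~i) is the graphical computation using~\eqref{eq:F_graph_calc:assoc-unitality}--\eqref{eq:F_graph_calc:Frob} and the compatibility of duals~\eqref{eq:Frob_F1_morphs}, \eqref{eq:Frob_F1_morphs_pivotal} to pull the (co)evaluations inside the tube, apply the symmetry of $A$, and rewrite back. The bookkeeping of the $F_1(A)^{\pm1}$ insertions you flag is exactly what the paper's chain of diagrams carries out implicitly.
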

\begin{proof}
For i) one uses~\eqref{eq:F_graph_calc:assoc-unitality}--\eqref{eq:F_graph_calc:Frob} and~\eqref{eq:Frob_F1_morphs},\eqref{eq:Frob_F1_morphs_pivotal} to compute
\begin{equation}
\pic[1.25]{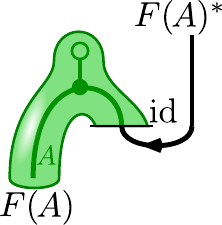} \hspace{-10pt}=\hspace{-4pt}
\pic[1.25]{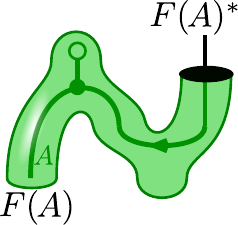} \hspace{-4pt}=\hspace{-4pt}
\pic[1.25]{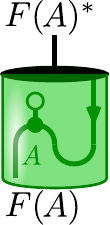} \hspace{-4pt}=\hspace{-4pt}
\pic[1.25]{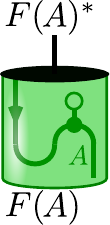} \hspace{-4pt}=\dots =\hspace{-10pt}
\pic[1.25]{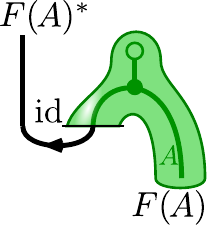}.
\end{equation}
For ii), in case of $F$ being braided and $A$ commutative, \eqref{eq:F_graph_calc:braided} yields
\begin{equation}
\pic[1.25]{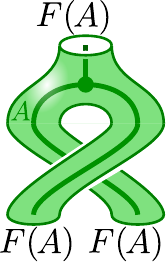} =
\pic[1.25]{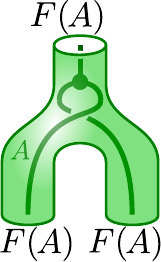} =
\pic[1.25]{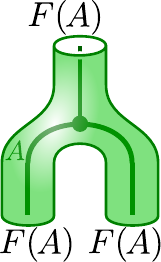} .
\end{equation}
Similarly, in case of $F$ being cobraided and $A$ cocommutative one uses~\eqref{eq:F_graph_calc:cobraided}.
\end{proof}

\begin{conv}
When referring to a pair $(F,\psi_F)$ as a separable Frobenius functor, the entry $\psi_F\colon\opid'\ra F(\opid)$ will mean a fixed choice of an invertible section for $F$.
If $F$ is pivotal with $\mcD$ spherical and fusion, the invertibility can be assumed without loss of generality (see Proposition~\ref{prp:FA_psi_inv}, Remark~\ref{rem:F-F1_sections}).
\end{conv}

For the rest of the section, let $\mcC$ and $\mcD$ be pivotal multifusion categories, $F\colon\mcC\ra\mcD$ a pivotal Frobenius functor and $A=(A,\psi)\in\mcC$ a symmetric separable Frobenius algebra.
We will compare the categories of bimodules ${_A\mcC_A}$ and ${_{F(A)}\mcD_{F(A)}}$ under various separability assumptions.
\begin{defn}
\label{def:F-A_weakly_sep}
We call $F$ \textit{weakly separable with respect to $A$} (or simply the pair $(F,A)$ weakly separable) if the symmetric Frobenius algebra $F(A)\in\mcD$ is separable.
\end{defn}

Let $(F,A)$ be weakly separable and $\psi_F\colon \opid' \ra F(A)$ an invertible section of $F(A)$.
We define natural transformations $F^A_\otimes\colon F(-) \otimes_{F(A)} F(-) \Ra F( - \otimes_A -)$ and $\overline{F}^A_\otimes\colon F(-\otimes_A-)\Ra F(-) \otimes_{F(A)} F(-)$ by setting the following balanced maps for all $K\in\mcC_A$, $L\in{_A\mcC}$:
\begin{equation}
\label{eq:Ftensor_morphs}
F^A_\otimes (K,L) := \pic[1.25]{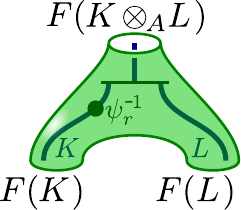} \, , \quad
\overline{F}^A_\otimes (K,L) := \pic[1.25]{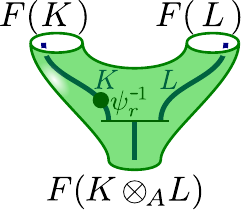} \, .
\end{equation}
For an arbitrary bimodule $M\in\mcACA$, the object $F(M)$ is an $F(A)$-$F(A)$-bimodule with the actions~\eqref{eq:FA_actions} (for $K=L=M$).
This induces from $F$ a functor $F^{A}\colon{_A\mcC_A}\ra{_{F(A)}\mcD_{F(A)}}$.
\begin{prp}
\label{prp:F-A_is_FF}
If $(F,A)$ is weakly separable, $F^A = (F^A, F^A_2, F^A_0, \overline{F}^A_2, \overline{F}^A_0)$ where $F^A_2(-,-) := F^A_\otimes(-,-)$, $\overline{F}^A_2(-,-) := F^A_\otimes(-,-)$ and $F^A_0 = \overline{F}^A_0 = \id_{F(A)}$ is a pivotal Frobenius functor.
\end{prp}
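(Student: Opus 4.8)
The plan is to verify, in this order, that $F^A$ is a well-defined functor into $_{F(A)}\mcD_{F(A)}$, that $F^A_\otimes$ and $\overline F^A_\otimes$ define a weak monoidal and a weak comonoidal structure on it, that these satisfy the Frobenius compatibility diagrams \eqref{eq:FF_Frob1}, \eqref{eq:FF_Frob2}, and finally that $F^A$ is pivotal. For well-definedness I would first observe that for $M\in\mcACA$ the morphisms \eqref{eq:FA_actions} (with $K=L=M$) equip $F(M)$ with an $F(A)$-$F(A)$-bimodule structure: associativity and unitality of these actions follow from \eqref{eq:F_graph_calc:assoc-unitality} together with functoriality of $F$ applied to the $A$-action axioms, and commutativity of the left and right actions follows from the mixed-associativity axiom for $M$. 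For a bimodule morphism $f$, the morphism $F(f)$ intertwines the $F(A)$-actions by naturality of $F_2$. Next, the balanced maps in \eqref{eq:Ftensor_morphs} must be shown to descend: using the idempotent description \eqref{eq:PKL_idempotent} of the relative tensor products in $\mcC$ and $\mcD$ and the Frobenius relations \eqref{eq:F_graph_calc:Frob}, one checks that $F^A_\otimes(K,L)$ (resp.\ $\overline F^A_\otimes(K,L)$) is unchanged upon pre-/postcomposing with the idempotents $P_{F(K),F(L)}$ and $F(P_{K,L})$, so it factors through the images defining the two relative tensor products, and that it is a map of $F(A)$-bimodules. Naturality in $K$ and $L$ is inherited from that of $F_2$ and $\overline F_2$.

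The bulk of the work is coherence. Since $\mcACA$ and $_{F(A)}\mcD_{F(A)}$ carry the non-strict associators and unitors \eqref{eq:ACA_assoc_unitors}, each pentagon/triangle identity for $(F^A,F^A_2,F^A_0)$ — with $F^A_2:=F^A_\otimes$, $F^A_0:=\id_{F(A)}$ — and, dually, for the comonoidal structure $\overline F^A_2:=\overline F^A_\otimes$, $\overline F^A_0:=\id_{F(A)}$, as well as the two Frobenius squares \eqref{eq:FF_Frob1}, \eqref{eq:FF_Frob2}, should be transported to an identity of \emph{balanced} maps in $\mcD$ by composing with the relevant inclusion/projection morphisms and using the convention \eqref{eq:hatted_morphisms}. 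Once in that form, each identity reduces to the corresponding coherence of $F$ itself, namely \eqref{eq:F_graph_calc:assoc-unitality} for the monoidal part, \eqref{eq:F_graph_calc:coassoc-counitality} for the comonoidal part, and \eqref{eq:F_graph_calc:Frob} for the Frobenius squares, with the $A$-(co)action strands simply carried along. The step I expect to be the main obstacle is the bookkeeping of the stray $\psi_F$-insertions that appear whenever one composes maps into and out of a relative tensor product, as recorded in \eqref{eq:extra_psi}: here one uses that $F^A_0=\overline F^A_0=\id_{F(A)}$, that every object in sight is an $F(A)$-bimodule, and the separability condition \eqref{eq:FF_sep_cond} together with invertibility of $\psi_F$, so that these spurious factors either cancel pairwise or get absorbed into the idempotents, leaving precisely the diagrams required.

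Finally, for pivotality I would use that $F$ being a pivotal Frobenius functor provides canonical isomorphisms $F_1(X)\colon F(X^*)\xra{\sim}F(X)^*$ satisfying \eqref{eq:Frob_F1_morphs} and \eqref{eq:Frob_F1_morphs_pivotal}. Recalling from Section~\ref{subsec:relative_tensor_products} that the dual of $M\in\mcACA$ is the dual $M^*$ in $\mcC$ with the action \eqref{eq:lambda_dual_action} and (co)evaluations given by balanced maps, and that the $A$-action commutes with the pivotal structure — the same point exploited in the proof of Proposition~\ref{prp:R_pivotal} — one checks that $F_1(M)$ is automatically an isomorphism of $F(A)$-bimodules $F(M^*)\xra{\sim}F(M)^*$, and that $F^A$ sends the (co)evaluation balanced maps of $M$ to those of $F(M)$ under this identification. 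Taking $F^A_1(M)$ to be this restriction of $F_1(M)$ (which is forced by the uniqueness in \eqref{eq:Frob_F1_morphs}), commutativity of diagram \eqref{eq:F-pivotal} for $F^A$ follows by composing with the inclusion/projection morphisms for $F(M^{**})$ and $F(M^*)^*$ and reducing to commutativity of \eqref{eq:F-pivotal} for $F$, once more after disposing of the $\psi_F$-bookkeeping as above. Assembling these four points gives that $F^A$ is a pivotal Frobenius functor.
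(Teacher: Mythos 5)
Your proposal follows essentially the same route as the paper: reduce each coherence and Frobenius identity to the corresponding identity for $F$ by working with balanced maps, absorb the stray $\psi_F$-insertions of \eqref{eq:extra_psi} into the idempotents splitting the relative tensor products over $F(A)$, and deduce pivotality from the morphisms $F_1(M)$ of \eqref{eq:Frob_F1_morphs}, \eqref{eq:Frob_F1_morphs_pivotal} being $F(A)$-$F(A)$-bimodule isomorphisms. The one correction: the absorption step is justified by separability of the algebra $F(A)$ (i.e.\ the weak separability of $(F,A)$ that is the stated hypothesis), not by the functor-separability condition \eqref{eq:FF_sep_cond}, which is not assumed in this proposition.
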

\begin{proof}
Checking the identities~\eqref{eq:F_graph_calc:assoc-unitality}--\eqref{eq:F_graph_calc:Frob} is straightforward if one keeps track of the non-trivial associators and unitors of the categories $\mcACA$ and ${_{F(A)}\mcC_{F(A)}}$ which are given by the balanced maps analogous to the ones in~\eqref{eq:ACA_assoc_unitors}.
We sketch the first identity of~\eqref{eq:F_graph_calc:Frob} only, i.e. that the diagram~\eqref{eq:FF_Frob1} commutes.

Starting with the down-down-right path in~\eqref{eq:FF_Frob1} one computes:
\begin{equation}
\pic[1.25]{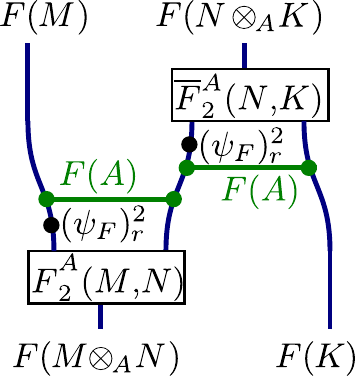} \hspace{-4pt}=\hspace{-4pt}
\pic[1.25]{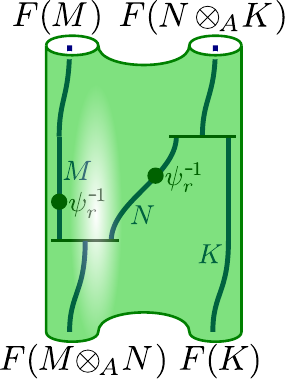} \hspace{-4pt}=\hspace{-4pt}
\pic[1.25]{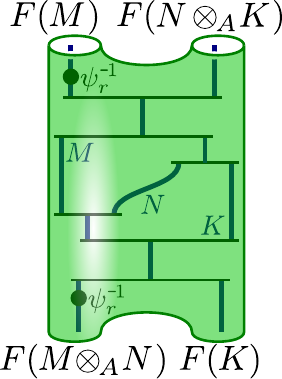} ,
\end{equation}
where the $(\psi_F)^2_r$-insertions on the left-hand side are due to composing the balanced maps as in~\eqref{eq:extra_psi}, in the first step we used that $F$ is a monoidal Frobenius functor and in the second step we used the idempotent splitting property.
On the right-hand side one recognises the associator of $\mcACA$~\eqref{eq:ACA_assoc_unitors} and so it yields the right-down-down path in~\eqref{eq:FF_Frob1}.

That $F^A$ is pivotal follows from $F$ being pivotal and the morphisms $F_1(M)$ as in~\eqref{eq:Frob_F1_morphs}, \eqref{eq:Frob_F1_morphs_pivotal} for $X=M\in\mcACA$ being $F(A)$-$F(A)$-bimodule morphisms.
\end{proof}

Note that for $(F,A)$ weakly separable and $K\in\mcC_A$, $L\in{_A\mcC}$, the idempotent splitting property of $F(A)$ implies that $\overline{F}^A_\otimes(K,L) \circ F^A_\otimes(K,L) = \id_{F(K) \otimes_{F(A)} F(L)}$.
One can check that the other composition $F^A_\otimes(K,L) \circ \overline{F}^A_\otimes(K,L)$ yields an idempotent on $F(K \otimes_A L)$, which in general is not identity.
Trying to circumvent this leads to a stronger separability condition on the pair $(F,A)$, which is easy to formulate noting that the forgetful functor $U\colon\mcACA\ra\mcC$ has a natural structure of a pivotal Frobenius functor (see Example~\ref{eg:FF_forgetful} below) and the composition of monoidal Frobenius functors is again a monoidal Frobenius functor.
\begin{defn}
\label{def:F-A_strongly_sep_full}
Let $F\colon\mcC\ra\mcD$ be a pivotal Frobenius functor between pivotal multifusion categories and $A\in\mcC$ a symmetric separable Frobenius algebra.
We call $F$
\begin{itemize}
\item
\textit{strongly separable with respect to $A$} (or simply the pair $(F,A)$ strongly separable) if the pivotal Frobenius functor $F \circ U\colon \mcACA \ra \mcD$ is separable.
\item
\textit{full with respect to $A$} (or simply the pair $(F,A)$ full) if the functor $F^A\colon\mcACA\ra{_{F(A)}\mcD_{F(A)}}$ is surjective on the spaces of morphisms.
\end{itemize}
\end{defn}
The condition for $(F,A)$ to be strongly separable is equivalent to the existence of a section $\psi_F\colon \opid' \ra F(A)$ such that for all $M,N\in \mcC_A$ one has
\begin{equation}
\label{eq:F_strong_sep_cond}
\pic[1.25]{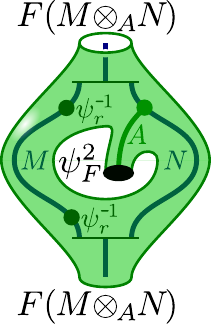} = \hspace{-12pt}
\pic[1.25]{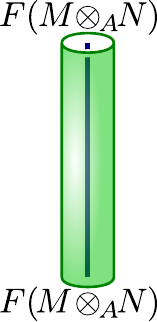} \hspace{-6pt} ,  \text{ or equivalently }
\pic[1.25]{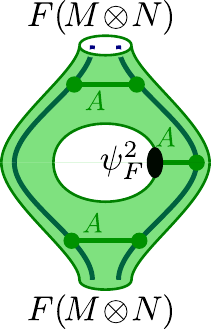} = \hspace{-12pt}
\pic[1.25]{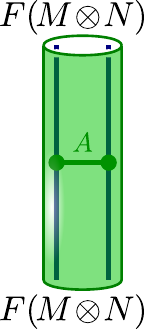} 
\end{equation}
(we will sometimes denote $\psi_F$-insertions as in the second equality in~\eqref{eq:F_strong_sep_cond}).
It is then also a section for the symmetric Frobenius algebra $F(A)$, so naturally $(F,A)$ is also weakly separable.
Since the morphisms~\eqref{eq:Ftensor_morphs} for $K=M$ and $L=N$ are (as balanced morphisms $F(M)\otimes_{F(A)}F(N)\lra F(M\otimes_A N)$) inverses of each other in this case, from Propositions~\ref{prp:surj_pivotal_functs} and~\ref{prp:F-A_is_FF} one obtains
\begin{prp}
\label{prp:F-A_pivotal_embed}
If $(F,A)$ is strongly separable, $F^A = (F^A, F_2^A, F_0^A)$ where $F_2^A(-,-) := F_\otimes^A(-,-)$, $F_0^A := \id_{F(A)}$ is a (strong monoidal) pivotal functor $\mcACA \ra {_{F(A)}\mcD_{F(A)}}$.
If in addition $(F,A)$ is full, $F^A$ is a full embedding of pivotal categories.
\end{prp}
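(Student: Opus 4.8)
The plan is to bootstrap from Propositions~\ref{prp:F-A_is_FF} and~\ref{prp:surj_pivotal_functs}. First I would note that a strongly separable pair $(F,A)$ is in particular weakly separable: a section $\psi_F\colon\opid'\ra F(A)$ satisfying~\eqref{eq:F_strong_sep_cond} is by construction also a section of the symmetric Frobenius algebra $F(A)$, so Definition~\ref{def:F-A_weakly_sep} applies and $F(A)$ is a symmetric separable Frobenius algebra in $\mcD$ (symmetric by item i) of Proposition~\ref{prp:FF_sym-br-algs}). In particular ${_{F(A)}\mcD_{F(A)}}$ is a pivotal multifusion category by Section~\ref{subsec:relative_tensor_products}, and Proposition~\ref{prp:F-A_is_FF} already equips $F^A$ with the structure of a pivotal Frobenius functor, whose weak monoidal and weak comonoidal structures are given by the balanced maps $F^A_\otimes$ and $\overline{F}^A_\otimes$ of~\eqref{eq:Ftensor_morphs} together with $F^A_0 = \overline{F}^A_0 = \id_{F(A)}$.

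The key step is to upgrade this weak monoidal structure to a strong one, i.e.\ to show that $F^A_\otimes(K,L)\colon F(K)\otimes_{F(A)}F(L)\ra F(K\otimes_A L)$ is invertible for all $K\in\mcC_A$, $L\in{_A\mcC}$. I would exhibit $\overline{F}^A_\otimes(K,L)$ as a two-sided inverse, working with balanced maps throughout and tracking the $\psi_F$-insertions dictated by~\eqref{eq:extra_psi}. The composition $\overline{F}^A_\otimes(K,L)\circ F^A_\otimes(K,L) = \id_{F(K)\otimes_{F(A)}F(L)}$ is the observation recorded just after Proposition~\ref{prp:F-A_is_FF}, following from the idempotent-splitting property of $F(A)$ used to define the relative tensor product over $F(A)$. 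The reverse composition $F^A_\otimes(K,L)\circ\overline{F}^A_\otimes(K,L) = \id_{F(K\otimes_A L)}$ is exactly what the strong separability condition~\eqref{eq:F_strong_sep_cond} provides, after splitting the idempotent $P_{K,L}$ on $K\otimes_A L$ and using the functoriality and Frobenius relations \eqref{eq:F_graph_calc:assoc-unitality}--\eqref{eq:F_graph_calc:Frob} of $F$. Since $F^A_0 = \id_{F(A)}$ is trivially invertible, this makes $(F^A,F^A_2,F^A_0)$ strong monoidal, and the pivotality already available from Proposition~\ref{prp:F-A_is_FF} (whose monoidal part is now precisely this strong structure) shows that $F^A$ is a pivotal functor $\mcACA\ra{_{F(A)}\mcD_{F(A)}}$ in the sense of diagram~\eqref{eq:F-pivotal}.

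For the last assertion, assume in addition that $(F,A)$ is full, so that by Definition~\ref{def:F-A_strongly_sep_full} the functor $F^A$ is surjective on spaces of morphisms. Since $\mcACA$ and ${_{F(A)}\mcD_{F(A)}}$ are pivotal multifusion and $F^A$ is a pivotal functor between them, Proposition~\ref{prp:surj_pivotal_functs} forces $F^A$ to be fully faithful. A strong monoidal, pivotal, fully faithful functor is precisely a full embedding of pivotal categories, which finishes the proof.

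The step I expect to be the main obstacle is the verification in the second paragraph: carrying the balanced-map formalism and the auxiliary $\psi_F$-insertions cleanly through both compositions and checking that~\eqref{eq:F_strong_sep_cond} is invoked at exactly the right place. The remaining ingredients are either formal or already supplied by the cited propositions.
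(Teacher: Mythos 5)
Your proposal is correct and follows essentially the same route as the paper: the paper likewise observes that strong separability makes the balanced maps $F^A_\otimes$ and $\overline{F}^A_\otimes$ two-sided inverses (one composition being the identity by the idempotent-splitting over $F(A)$, the other by~\eqref{eq:F_strong_sep_cond}), upgrades the Frobenius structure of Proposition~\ref{prp:F-A_is_FF} to a strong monoidal pivotal one, and deduces the full embedding from Proposition~\ref{prp:surj_pivotal_functs}.
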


\subsection{Examples}
\label{subsec:FF_examples}
The following examples serve to illustrate the use of monoidal Frobenius functors.
We will rely on some of them in the later sections.

\begin{example}
\label{eg:Vect-C_functor}
For a (multi)tensor category $\mcC$, a linear monoidal Frobenius functor $F\colon\Vect_\opk\ra\mcC$ is determined by the Frobenius algebra $F(\opk)$, which is separable exactly when $F$ is separable.
Moreover, if $\mcC$ is pivotal/braided, then by Proposition~\ref{prp:FF_sym-br-algs} $F$ is pivotal/(co)braided precisely when $F(\opk)$ is symmetric/(co)commuta-tive.
\end{example}

In the following examples we let $\mcC$ be a pivotal multifusion category and $(A,\psi)$ a symmetric separable Frobenius algebra in $\mcC$.
The qualifiers `pivotal' and `symmetric' can in principle be dropped, but this is the setting which will be the most relevant for us in later sections.

\begin{example}
\label{eg:FF_forgetful}
The forgetful functor $U\colon\mcACA\ra\mcC$ is a separable pivotal Frobenius functor with the section $\psi\colon \opid \ra A = U(\opid_\mcACA)$ and the structure morphisms
\begin{equation}
U_2(M,N) := \hspace{-4pt}\pic[1.25]{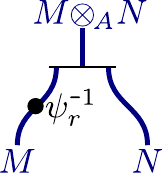} \hspace{-6pt} , \quad
U_0 := \eta \, , \quad
\overline{U_2}(M,N) := \hspace{-4pt}\pic[1.25]{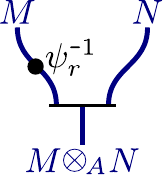} \hspace{-6pt} , \quad
\overline{U_0} := \vareps \, ,
\end{equation}
where $M,N\in\mcACA$ and $\eta\colon\opid\ra A$ and $\vareps\colon A\ra\vareps$ are the unit and the counit of $A$.
Indeed, similar computations as the one in the proof of Proposition~\ref{prp:F-A_is_FF} show that the diagrams~\eqref{eq:FF_Frob1}, \eqref{eq:FF_Frob2} commute (where one takes $F=\Id$ and does not regard the structure maps $U_2$, $\overline{U_2}$ as defining morphisms with the domain/codomain $M \otimes_A N$ via balanced maps).

Note that as $U$ is separable, it is strongly separable with respect to any symmetric separable Frobenius algebra $(A',\psi')$ in $\mcACA$.
By Proposition~\ref{prp:FF_sym-br-algs} $(U(A'),\opid\xra{\eta}A\xra{\psi'}A')$ is a symmetric separable Frobenius algebra in $\mcC$, whose (co)multiplication morphisms are exactly the balanced maps in $\mcC$ that would define the (co)multiplication morphisms of $A'$ in $\mcACA$ (in particular, $U(\opid_{\mcACA})$ gives back the algebra $(A,\psi)$ in $\mcC$).
As the $A$-actions on $U(A')$ can be used to equip each $U(A')$-$U(A')$-bimodule with a structure of an $A$-$A$-bimodule, $U$ is also full with respect to $A'$.
\end{example}

\begin{example}
\label{eg:FF_Ind_functor}
The induced bimodule functor $[\Ind\colon\mcC\ra\mcACA] := A \otimes - \otimes A$ is a pivotal Frobenius functor with structure morphisms
\begin{align}\nonumber
&
\Ind_2(X,Y) := \hspace{-4pt}\pic[1.25]{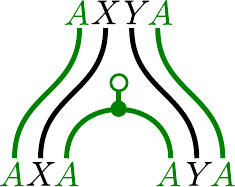} \hspace{-6pt} ,
&&
\overline{\Ind_2}(X,Y) := \hspace{-4pt}\pic[1.25]{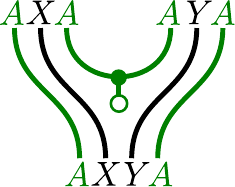} \hspace{-6pt} ,\\
&
\Ind_0 := \D \, , 
&&
\overline{\Ind_0} := \mu \, ,
\end{align}
where $X,Y\in\mcC$ and $\mu\colon A \otimes A \ra A$ and $\D\colon A \ra A \otimes A$ are the multiplication and comultiplication of $A$.
Again, if $\mcC$ is pivotal and $A$ symmetric, $\Ind$ is also pivotal.

Note that if $\mcC$ is multifusion and $\opid=\bigoplus\opid_i$ is the decomposition of the monoidal unit into simple objects, each $\opid_i$ is trivially a $\D$-separable Frobenius algebra in $\mcC$.
The category of bimodules ${_{\opid_i}\mcC_{\opid_i}}$ is then equivalent to the component category $\mcC_{ii} = \opid_i \otimes \mcC \otimes \opid_i$ and the projection functor $\opid_i \otimes - \otimes \opid_i$ is exactly the bimodule induction functor.
In particular, if $\mcC$ is pivotal then $\opid_i$ is also symmetric, and if $\mcC$ is in addition indecomposable, the equivalence $\mcZ(\mcC)\simeq \mcZ(\mcC_{ii})$ as in Proposition~\eqref{prp:ZA-ZAii_equiv} is pivotal.
\end{example}

\begin{example}
\label{eg:FF_EAl_functor}
Let $\mcC$, $(A,\psi)$ be as in the previous example with $\mcC$ in addition ribbon.
The (left) local induction functor $E_A^l\colon\mcC\ra\mcC$ (see~\cite[Sec.\,3]{FFRS}) is defined on an arbitrary object $X\in\mcC$ and a morphism $f\colon X \ra Y$ by
\begin{equation}
\label{eq:EAl_functor}
E_A^l(X) := \im P_A^l(X) = \im\pic[1.25]{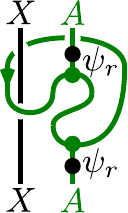} \, , \qquad
E_A^l(f) := \pic[1.25]{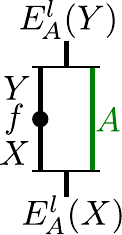} \, ,
\end{equation}
where one can check that the morphism $P_A^l(X)\in\End_\mcC(X\otimes A)$ is an idempotent (when $A$ is $\D$-separable this is done in~\cite[Lem.\,5.2]{FRS1}).
$E_A^l$ is a ribbon Frobenius functor with the structure morphisms
\begin{align} \nonumber
&
(E_A^l)_2(X,Y) := \pic[1.25]{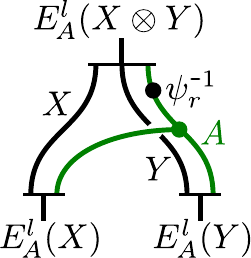} ,
&&
\overline{(E_A^l)_2}(X,Y) := \pic[1.25]{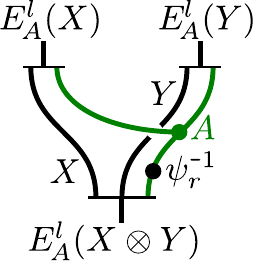}, \\ \label{eq:EAl_struct_morphs}
&
(E_A^l)_0 := \pi_A^l(\opid) \circ \psi_r^A \circ \eta \, , 
&&
\overline{(E_A^l)_0} := \vareps \circ \psi_r^A \circ \imath_A^l(\opid) \, ,
\end{align}
where for arbitrary $X\in\mcC$, $\pi_A^l(X)\colon X\otimes A \lra E_A^l(X) : \imath_A^l(X)$ are the projection/inclusion morphisms, splitting the idempotent $P_A^l(X)$ (as usual depicted in the graphical calculus by horizontal lines).
The relations~\eqref{eq:F_graph_calc:assoc-unitality}--\eqref{eq:F_graph_calc:twist} are straightforward to check using the idempotent splitting property, for example~\eqref{eq:F_graph_calc:braided} and~\eqref{eq:F_graph_calc:twist} follow from the computations
\begin{equation}
\pic[1.25]{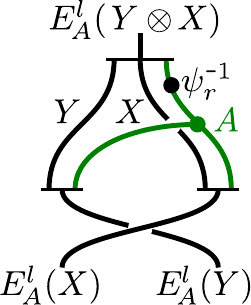} \hspace{-8pt}=\hspace{-8pt}
\pic[1.25]{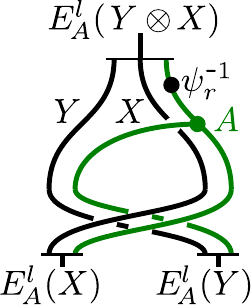}  \hspace{-8pt}=\hspace{-8pt}
\pic[1.25]{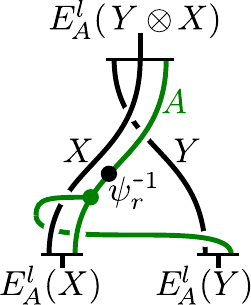}  \hspace{-8pt}\stackrel{(*)}{=}\hspace{-8pt}
\pic[1.25]{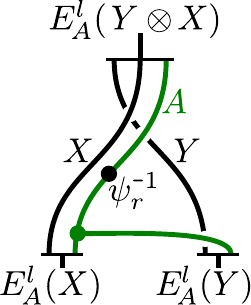}\hspace{-6pt}.
\end{equation}
and
\begin{equation}
\pic[1.25]{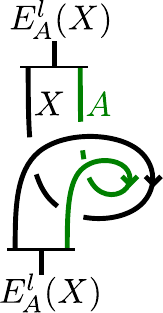} =
\pic[1.25]{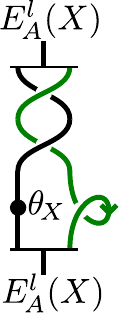} \stackrel{(**)}{=}
\pic[1.25]{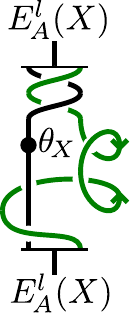} =
\pic[1.25]{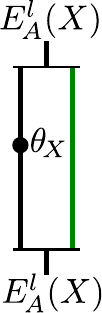} ,
\end{equation}
where in the steps $(*)$ and $(**)$ one uses the identity $\imath_A^l(X) = \imath_A^l(X) \circ  P_A^l(X)$ and the properties of the algebra $A$ (cf.\ \cite[Lem.\,3.5, Lem.\,3.11)]{FFRS}).

Flipping the undercrossings and overcrossings in the definition of $P_A^l(X)$ in~\eqref{eq:EAl_functor} yields a different idempotent $P_A^r(X)$.
They can be used to define the analogous right local induction functor $E_A^r\colon\mcC\ra\mcC$, which is also a ribbon Frobenius functor.
The Frobenius algebras $E^{l/r}_A(\opid)$ are symmetric and commutative by Proposition~\ref{prp:FF_sym-br-algs} and are known as the left/right centres of $A$, see~\cite[Def.\,5.8]{FRS1}, \cite[Lem.\,3.13]{FFRS}.
A priori they need not be separable and so $E^{l/r}_A$ are not in general separable Frobenius functors.
\end{example}

\begin{rem}
Example~\ref{eg:FF_EAl_functor} is best understood in terms of 3-dimensional TQFTs obtained from a modular fusion category (MFC) $\mcC$ via the Reshetikhin--Turaev construction.
In it, the objects of $\mcC$ are used to label the framed line defects, whereas a symmetric separable Frobenius algebra $(A,\psi)$ in a MFC $\mcC$ constitutes a datum for a surface defect, constructed using the internal state-sum procedure (This was briefly explained in Section~\ref{sec:introduction}, see~\cite{KSa,FSV,CRS2} for more details).
For a line defect $X\in\mcC$, the images $E^{l/r}_A(X)$ have an interpretation of a line defect obtained by wrapping $X$ with a cylindrical surface defect with the label $(A,\psi)$ (the two functors $E^{l/r}_A$ correspond to the two possible orientations of this defect).
The structure morphisms~\eqref{eq:EAl_struct_morphs} correspond to merging/splitting two cylindrical defects and opening/closing an empty cylindrical defect.
The conditions~\eqref{eq:F_graph_calc:assoc-unitality}--\eqref{eq:F_graph_calc:twist} follow automatically, since the surface defect is topological, i.e.\ can be deformed.

An interesting observation is that by Example~\ref{eg:Vect-C_functor} a ribbon Frobenius functor $F\colon\Vect_\opk\ra\Vect_\opk$ is the same as a 2-dimensional TQFT (i.e.\ a commutative Frobenius algebra in $\Vect_\opk$), and so can be equivalently interpreted as a surface defect in the trivial 3-dimensional TQFT.
In this simple case such surface defects go beyond the aforementioned internal state-sum procedure as $F(\opk)$ need not be semisimple.
We leave it for the future work to determine under what conditions a ribbon Frobenius functor $F\colon\mcC\ra\mcD$ between the MFCs $\mcC$, $\mcD$ can be used to define a surface defect and whether it is possible to obtain non-semisimple surface defects in case $\mcC$ or $\mcD$ is not equivalent to $\Vect_\opk$.
\end{rem}

\section{Generalised orbifolds}
\label{sec:gen_orbifolds}
The notion of an orbifold datum is of central importance to the results of this paper.
The motivation to introduce it stems from the generalised orbifold construction of TQFTs, which given a defect TQFT $Z^{\operatorname{def}}$ and an orbifold datum $\opA$ produces a new (ordinary) TQFT $Z^{\operatorname{orb}\opA}$, see~\cite{CRS1}.
When specialised to the 3-dimensional defect TQFTs of Reshetikhin--Turaev (RT) type~\cite{CRS2}, the generalised orbifolds can be shown to be of RT type as well~\cite{CRS3,CMRSS2} and so an orbifold datum $\opA$ can be seen as an algebraic input in a MFC $\mcC$ which produces another MFC $\mcC_\opA$~\cite{MR1}.

In this section we review the notion of an orbifold datum $\opA$ in a MFC $\mcC$, the construction of the MFC $\mcC_\opA$ associated to it, as well as two ways to map $\opA$ to other orbifold data.

\subsection{Orbifold data and the associated MFCs}
\label{subsec:orb_data_and_assoc_cats}
Throughout this section, let $\mcC$ be a MFC.
Recall that if $A=(A,\psi)$ is a symmetric separable Frobenius algebra in $\mcC$, so is $A^{\otimes n}$ where the structure morphisms are as in~\eqref{eq:AxB_alg_morphs}.
In the following definition we will need some flexibility when working with an $A$-$A\otimes A$-bimodule $T$: by adapting~\eqref{eq:AxB_multimodul_action} such bimodule can be seen as an object $T\in\mcC$ having one left $A$-action and two right $A$-actions satisfying
\begin{equation}
\label{eq:right_T_actions_commute}
\pic[1.25]{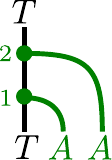} =
\pic[1.25]{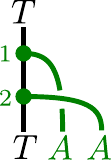} \, , \qquad
\pic[1.25]{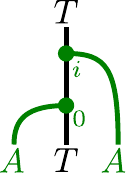} =
\pic[1.25]{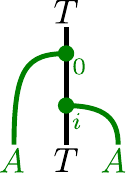} \, , \quad i\in\{1,2\} \, ,
\end{equation}
where the indices $1$, $2$ are used to distinguish the two right actions and similarly the left action is indicated by $0$ whenever we find it necessary to avoid ambiguity.
For a left module $L\in{}_A\mcC$ and a right module $K\in\mcC_A$, the relative tensor products $K \otimes_0 T$ and $T\otimes_i L$, $i\in\{1,2\}$ are defined as images of idempotents like the one in~\eqref{eq:PKL_idempotent} using the corresponding action of $T$.
In particular, $T \otimes_i T$, $i\in\{1,2\}$ are $A$-$A^{\otimes 3}$-bimodules.
We also adapt the notations~\eqref{eq:psi_lr_defs} as follows:
\begin{equation}
\label{eq:psis_on_T}
\pic[1.25]{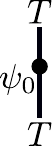} = \pic[1.25]{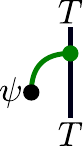} \, , \qquad
\pic[1.25]{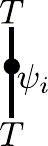} = \pic[1.25]{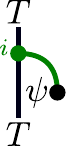} \, , \quad
i\in\{1,2\} \, .
\end{equation}

\begin{defn}
\label{def:orb_datum}
An \textit{orbifold datum} in $\mcC$ is a tuple $\opA = (A,T,\a,\abar,\psi,\phi)$ where
\begin{itemize}
\item
$(A,\psi)$ is a symmetric separable Frobenius algebra in $\mcC$;
\item
$T$ is an $A$-$A^{\otimes 2}$-bimodule in $\mcC$;
\item
$\a\colon T \otimes_2 T \ra T \otimes_1 T$ is an $A$-$A^{\otimes 3}$-bimodule isomorphism with the inverse $\abar$;
\item
$\phi\in\opk^\times$;
\end{itemize}
which satisfies the conditions \eqrefO{1}--\eqrefO{8} in Figure~\ref{fig:orb_datum_conds}.
\end{defn}
\begin{rem}
\label{rem:Delta-sep_orb_data}
The definition of an orbifold datum $\opA=(A,T,\a,\abar,\psi,\phi)$ is slightly different in our main source references~\cite{CRS3, MR1, MR2, CMRSS2}.
There the algebra $A$ was required to be $\D$-separable and the $\psi$ entry was instead taken to be an $A$-$A$-bimodule isomorphism $A\ra A$.
The reason for this difference is our use of the more general separability condition~\eqref{eq:FA_sep_cond_ito_psi}.
All examples in the references can be rewritten in our setting by using the rescaled algebras as in Example~\ref{eg:Euler_completion}.
This also explains minor differences in the identities \eqrefO{1}--\eqrefO{8} in between the two settings as rescaling also changes the action/coaction on modules.
\end{rem}

That $\a$, $\abar$ are $A$-$A^{\otimes 3}$-bimodule morphisms means that the corresponding balanced maps commute with the various $A$-(co)actions as follows:
\begin{equation}
\label{eq:aabar-A_commute}
\pic[1.25]{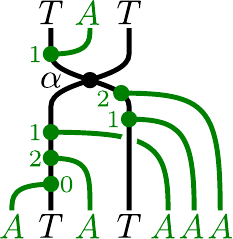}\hspace{-4pt}=\hspace{-4pt}
\pic[1.25]{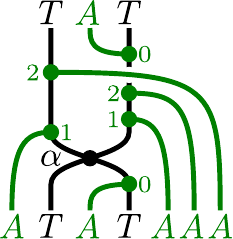}, \,
\pic[1.25]{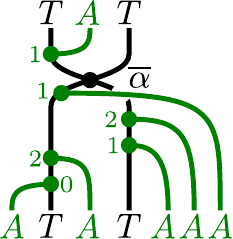}\hspace{-4pt}=\hspace{-4pt}
\pic[1.25]{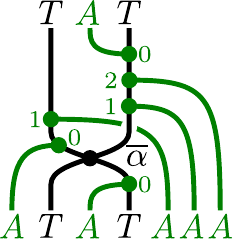}.
\end{equation}

The main results of this paper explore three further categories which one can construct given an orbifold datum in a MFC $\mcC$ as described in~\cite[Sec.\,3]{MR1}.
\begin{defn}
Let $\opA$ be an orbifold datum in $\mcC$.
Define the category $\mcC_\opA$ to have
\begin{itemize}[leftmargin=*]
\item \textit{objects:} tuples $M=(M,\tau_1,\tau_2,\taubar{1},\taubar{2})$, where
\begin{itemize}
\item $M\in\mcACA$ is an $A$-$A$-bimodule;
\item $\tau_1\colon M \otimes_0 T \ra T \otimes_1 M$, $\tau_2\colon M \otimes_0 T \ra T \otimes_2 M$ are $A$-$A^{\otimes 3}$-bimodule isomorphisms with inverses $\taubar{1}$, $\taubar{2}$, denoted by
\begin{equation}
\hspace{-20pt}
\tau_1 = \pic[1.25]{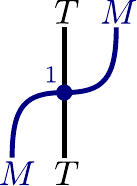} \hspace{-8pt}, \quad
\tau_2 = \pic[1.25]{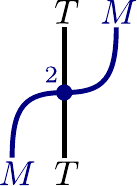} \hspace{-8pt}, \quad
\taubar{1} = \pic[1.25]{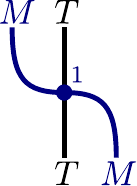} \hspace{-8pt}, \quad
\taubar{2} = \pic[1.25]{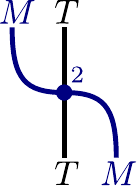} \hspace{-8pt},
\end{equation}
such that the identities in \eqrefT{1}--\eqrefT{7} in Figure~\ref{fig:CA_identities} are satisfied; they will be referred to as the \textit{$T$-crossings} of $M$;
\end{itemize}
\item
\textit{morphisms:} for $M,N\in\mcC_\opA$, a morphism $f\colon M \ra N$ is a morphism of the underlying $A$-$A$-bimodules such that
\begin{equation}
\label{eq:M}
\pic[1.25]{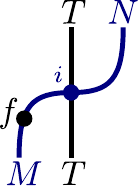} = \pic[1.25]{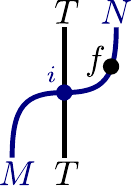} \, , \quad i = 1,2 \, .
\end{equation}
\end{itemize}
Similarly one defines two categories $\mcC_\opA^i$, $i=1,2$, whose objects are triples $(M,\tau_i,\taubar{i})$, with $M\in\mcACA$ and $\tau_i$, $\taubar{i}$ a $T$-crossing and its inverse, i.e.\ satisfying~\eqrefT{1} and \eqrefT{4}--\eqrefT{7} for $i=1$ and \eqrefT{3}--\eqrefT{7} for $i=2$, and a morphism $f\in\mcC_\opA^i(M,N)$ being a bimodule morphism satisfying~\eqref{eq:M} for the given value of $i$.
\end{defn}

For an orbifold datum $\opA$, the categories $\mcC_\opA$, $\mcC^1_\opA$, $\mcC^2_\opA$ are multifusion and pivotal, with the tensor product and dualities inherited from $\mcACA$.
In particular, for two objects $M$, $N$ in either of these categories, the $T$-crossings of $M\otimes_A N$, the monoidal unit $A$ and a dual $M^*$ are defined by
\begin{equation}
\label{eq:CA_MN_A_T-crossings}
\pic[1.25]{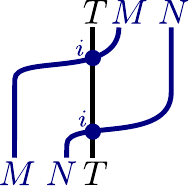} ,
\pic[1.25]{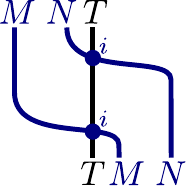} , \,
\pic[1.25]{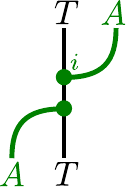} ,
\pic[1.25]{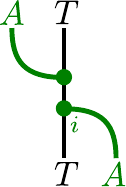} , \,
\pic[1.25]{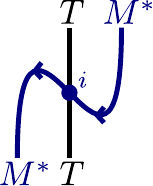} ,
\pic[1.25]{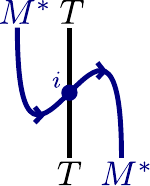} ,
\end{equation}
with $i$ having values in $\{1,2\}$, $\{1\}$ and $\{2\}$ correspondingly for $\mcC_\opA$, $\mcC_\opA^1$ and $\mcC_\opA^2$.
The identities~\eqrefT{6}, \eqrefT{7} imply that the evaluation/coevaluation morphisms of $\mcACA$ satisfy~\eqref{eq:M}.
We use the symbols $\otimes_\opA$, $\otimes_\opA^1$, $\otimes_\opA^2$ to denote the monoidal product in the categories $\mcC_\opA$, $\mcC_\opA^1$, $\mcC_\opA^2$ respectively.

\begin{defn}
We call an orbifold datum $\opA$ \textit{simple} if $\mcC_\opA$ is fusion (i.e.\ if $\opid_{\mcC_\opA} := A$ is a simple object of $\mcC_\opA$) and \textit{haploid} if both $\mcC_\opA^1$ and $\mcC_\opA^2$ are fusion.
\end{defn}

Additionally, $\mcC_\opA$ is ribbon with the braiding and twist morphisms for arbitrary $M,N\in\mcC_\opA$ given by
\begin{equation}
\label{eq:CA_br_twist}
c^\opA_{M,N}  = \phi^2 \cdot \pic[1.25]{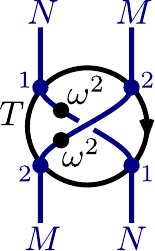} \, , \quad
\theta^\opA_M = \phi^2 \cdot \pic[1.25]{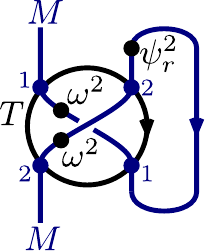} \, .
\end{equation}

\begin{figure}[!tbp]
	\captionsetup[subfigure]{labelformat=empty}
	\centering
	\begin{subfigure}[b]{0.5\textwidth}
		\centering
		\pic[1.25]{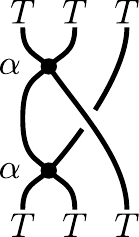}$=$\pic[1.25]{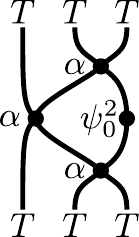}
		\caption{}
		\label{eq:O1}
	\end{subfigure}\hspace{-2em}\raisebox{5.5em}{(O1)}\\
	\begin{subfigure}[b]{0.4\textwidth}
		\centering
		\pic[1.25]{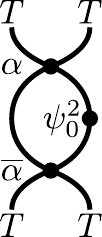}$=$\pic[1.25]{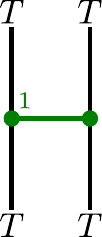}
		\caption{}
		\label{eq:O2}
	\end{subfigure}\hspace{-2em}\raisebox{5.5em}{(O2)}
	\begin{subfigure}[b]{0.4\textwidth}
		\centering
		\pic[1.25]{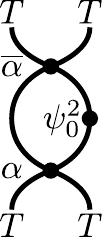}$=$\pic[1.25]{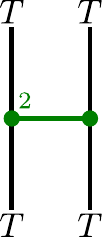}
		\caption{}
		\label{eq:O3}
	\end{subfigure}\hspace{-2em}\raisebox{5.5em}{(O3)}\\
	\begin{subfigure}[b]{0.4\textwidth}
		\centering
		\pic[1.25]{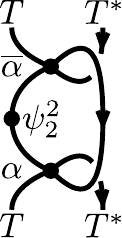}$=$\pic[1.25]{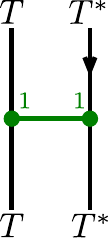}
		\caption{}
		\label{eq:O4}
	\end{subfigure}\hspace{-2em}\raisebox{5.5em}{(O4)}
	\begin{subfigure}[b]{0.4\textwidth}
		\centering
		\pic[1.25]{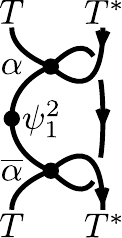}$=$\pic[1.25]{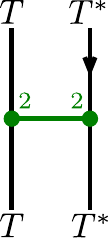}
		\caption{}
		\label{eq:O5}
	\end{subfigure}\hspace{-2em}\raisebox{5.5em}{(O5)}\\
	\begin{subfigure}[b]{0.4\textwidth}
		\centering
		\pic[1.25]{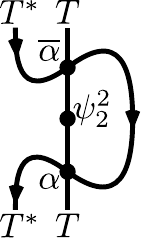}$=$\pic[1.25]{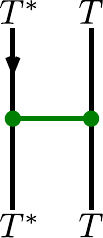}
		\caption{}
		\label{eq:O6}
	\end{subfigure}\hspace{-2em}\raisebox{5.5em}{(O6)}
	\begin{subfigure}[b]{0.4\textwidth}
		\centering
		\pic[1.25]{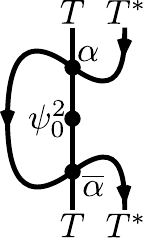}$=$\pic[1.25]{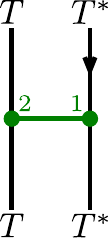}
		\caption{}
		\label{eq:O7}
	\end{subfigure}\hspace{-2em}\raisebox{5.5em}{(O7)}\\
	\begin{subfigure}[b]{0.8\textwidth}
		\centering
		\pic[1.25]{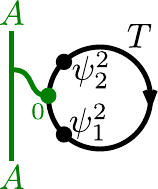}$=$\pic[1.25]{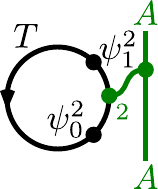}$=$\pic[1.25]{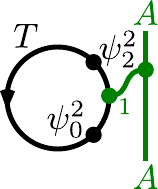}$=$\pic[1.25]{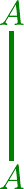}$\cdot \, \phi^{-2}$
		\caption{}
		\label{eq:O8}
	\end{subfigure}\hspace{-1em}\raisebox{4.25em}{(O8)}

\vspace*{-1em}

	\caption{Conditions on an orbifold datum $\opA = (A,T,\alpha,\overline{\alpha}, \psi,\phi)$.}
	\label{fig:orb_datum_conds}
\end{figure}

\begin{figure}[!tbp]
	\captionsetup[subfigure]{labelformat=empty}
	\centering
	\begin{subfigure}[b]{0.45\textwidth}
		\centering
		\pic[1.25]{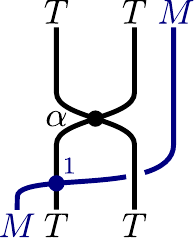}$=$\pic[1.25]{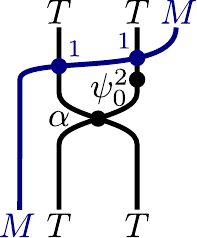}
		\caption{}
		\label{eq:T1}
	\end{subfigure}\hspace{-2em}\raisebox{5.5em}{(T1)}
	\begin{subfigure}[b]{0.45\textwidth}
		\centering
		\pic[1.25]{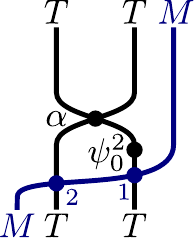}$=$\pic[1.25]{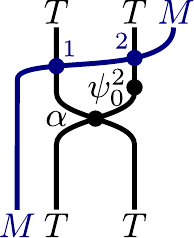}
		\caption{}
		\label{eq:T2}
	\end{subfigure}\hspace{-2em}\raisebox{5.5em}{(T2)}\\
	\begin{subfigure}[b]{0.45\textwidth}
		\centering
		\pic[1.25]{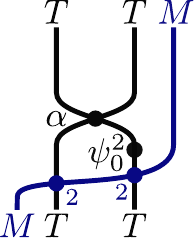}$=$\pic[1.25]{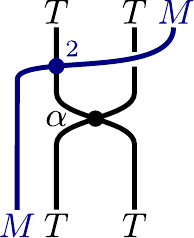}
		\caption{}
		\label{eq:T3}
	\end{subfigure}\hspace{-2em}\raisebox{5.5em}{(T3)}\\
	\begin{subfigure}[b]{0.35\textwidth}
		\centering
		\pic[1.25]{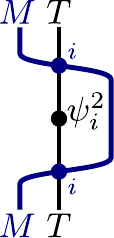}$=$\pic[1.25]{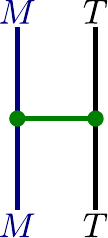}
		\caption{}
		\label{eq:T4}
	\end{subfigure}\hspace{-1em}\raisebox{5.5em}{(T4)}
	\begin{subfigure}[b]{0.35\textwidth}
		\centering
		\pic[1.25]{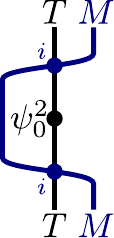}$=$\pic[1.25]{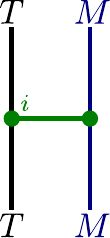}
		\caption{}
		\label{eq:T5}
	\end{subfigure}\hspace{-1em}\raisebox{5.5em}{(T5)}\hspace{4em}\raisebox{5.5em}{$i \in \{1,2\}$}\\
	\begin{subfigure}[b]{0.35\textwidth}
		\centering
		\pic[1.25]{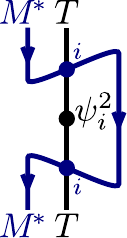}$=$\pic[1.25]{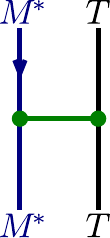}
		\caption{}
		\label{eq:T6}
	\end{subfigure}\hspace{-1em}\raisebox{5.5em}{(T6)}
	\begin{subfigure}[b]{0.35\textwidth}
		\centering
		\pic[1.25]{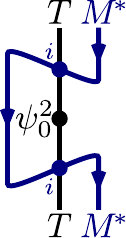}$=$\pic[1.25]{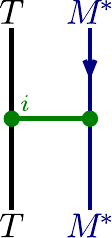}
		\caption{}
		\label{eq:T7}
	\end{subfigure}\hspace{-1em}\raisebox{5.5em}{(T7)}\hspace{4em}\raisebox{5.5em}{$i \in \{1,2\}$}
	\caption{Identities for $\mcC_\opA$.} 
	\label{fig:CA_identities} 
\end{figure}

One has (originally shown in~\cite[Thm.\,3.17]{MR1} and adapted in~\cite{Mul} for symmetric separable Frobenius algebras, see Remark~\ref{rem:psi_insertions}):
\begin{thm}
\label{thm:CA_is_MFC}
For a simple orbifold datum $\opA$, the category $\mcC_\opA$ is a MFC whose global dimension is
\begin{equation}
\label{eq:Dim-CA_formula}
\Dim\mcC_\opA = \frac{\Dim\mcC}{\phi^8\cdot(\tr_\mcC\omega_A^2)^2} \, ,
\end{equation}
with $\tr_\mcC\omega_A^2\neq 0$ holding automatically.
\end{thm}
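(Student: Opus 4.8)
The plan is to follow the strategy of \cite[Thm.\,3.17]{MR1} (adapted to symmetric separable Frobenius algebras as in \cite{Mul}), in four stages: (i) $\mcC_\opA$ is a $\opk$-linear, finitely semisimple, rigid monoidal category whose unit $\opid_{\mcC_\opA}=A$ is simple; (ii) it carries a ribbon structure given by \eqref{eq:CA_br_twist}; (iii) its global dimension is \eqref{eq:Dim-CA_formula}, in particular $\tr_\mcC\omega_A^2\neq 0$; (iv) its braiding is non-degenerate. For (i): since $(A,\psi)$ is symmetric separable, $\mcACA$ is finitely semisimple, pivotal and multifusion (Section~\ref{subsec:relative_tensor_products}). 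An object of $\mcC_\opA$ is an object of $\mcACA$ equipped with the extra datum of $T$-crossings, and $\mcC_\opA(M,N)$ is cut out of $\mcACA(M,N)$ by the condition \eqref{eq:M}; averaging a bimodule morphism over the $T$-crossings (in the spirit of \eqref{eq:A-mod-morphs_avg}, using $\tau_i,\taubar{i}$ and the $\psi$-insertions \eqref{eq:psis_on_T}) produces a projector onto this subspace, so all Hom-spaces are finite-dimensional and idempotents split; hence $\mcC_\opA$ is finitely semisimple. The tensor product, duals and (co)evaluations descend from $\mcACA$ via \eqref{eq:CA_MN_A_T-crossings}, with \eqrefT{6} and \eqrefT{7} being exactly what makes $\ev$, $\coev$ satisfy \eqref{eq:M}; simplicity of $A$ is the hypothesis that $\opA$ is simple.

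For (ii) one checks that $c^\opA$ and $\theta^\opA$ in \eqref{eq:CA_br_twist} are well defined in $\mcC_\opA$ (i.e.\ commute with $T$-crossings in the sense of \eqref{eq:M}), natural, and satisfy the hexagon axioms, naturality of the twist, and the ribbon compatibility $\theta_{M^*}=(\theta_M)^*$. This is a lengthy but routine graphical computation carried out inside $\mcACA$, whose only inputs are that $\a,\abar$ are $A$-$A^{\otimes 3}$-bimodule isomorphisms \eqref{eq:aabar-A_commute}, the orbifold conditions \eqrefO{1}--\eqrefO{8}, and the identities \eqrefT{1}--\eqrefT{7}; the scalar $\phi^2$ in \eqref{eq:CA_br_twist} is forced, \eqrefO{8} being precisely the normalisation making $c^\opA$ invertible with the expected inverse and the remaining axioms hold.

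For (iii) one computes $\Dim\mcC_\opA=\sum_{M\in\Irr_{\mcC_\opA}}(\dim_{\mcC_\opA}M)^2$ by recognising the right-hand side as the evaluation inside $\mcC$ of a closed $\opA$-labelled network (morally a handle or $S^3$-type graph of $Z^{\operatorname{orb}\opA}_\mcC$). Resolving the $T$- and $A$-lines, inserting the traces \eqref{eq:tr_in_ACA} in $\mcACA$ and collapsing with the completeness (scissors) identity \eqref{eq:scissors_id} in $\mcC$ yields $\Dim\mcC$ divided by the normalisation factor $\phi^8(\tr_\mcC\omega_A^2)^2$. That $\tr_\mcC\omega_A^2\neq 0$ follows from Lemma~\ref{lem:tr-psi_non_zero} applied to the simple summands of the regular bimodule $A\in\mcACA$, which is also what makes the formula \eqref{eq:Dim-CA_formula} meaningful.

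Stage (iv) — non-degeneracy of the braiding — is the \emph{main obstacle}, since $\Dim\mcC_\opA\neq 0$ (already visible from (iii)) is by itself insufficient. The plan is to verify one of the equivalent conditions of Definition~\ref{def:MFC}: suppose $N\in\mcC_\opA$ double-braids trivially with every object of $\mcC_\opA$; using the explicit form of $c^\opA$ in \eqref{eq:CA_br_twist}, rewrite this hypothesis as an identity among the underlying morphisms in $\mcC$, test it against the images in $\mcC_\opA$ of sufficiently many induced bimodules, and, after resolving the $\opA$-decorated network and applying the scissors identity \eqref{eq:scissors_id} in $\mcC$, conclude that the Hopf-link (i.e.\ $s$-matrix) pairing of $\mcC$ degenerates on a sub-object of $N$; non-degeneracy of the braiding of $\mcC$ then forces $N\cong A^{\oplus n}$. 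Equivalently, one shows the dolphin functor $\mcC_\opA\boxtimes\widetilde{\mcC_\opA}\ra\mcZ(\mcC_\opA)$ of Definition~\ref{def:MFC}(iii) is fully faithful and surjective via Proposition~\ref{prp:ssi_funct_equiv}, the dimension input being supplied by (iii). Throughout (iv) the delicate point is the bookkeeping of the $\psi$- and $\phi$-insertions, kept under control by \eqref{eq:extra_psi} and \eqrefO{8}, rather than anything conceptually new.
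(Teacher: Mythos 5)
The paper itself contains no proof of this theorem: it is recalled from \cite[Thm.\,3.17]{MR1} (adapted to the separable setting in \cite{Mul}), so there is no in-paper argument to compare against. Your four-stage plan --- multifusion structure via the averaging projector \eqref{eq:avg_map}, ribbon structure from \eqrefO{1}--\eqrefO{8} and \eqrefT{1}--\eqrefT{7}, global dimension via the scissors identity, and non-degeneracy by testing transparency against pipe objects and reducing to the modularity of $\mcC$ --- is exactly the architecture of the cited proof, so the approach is the intended one.

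That said, what you have written is a roadmap rather than a proof: at every point where the cited argument does real work you write ``routine graphical computation'' or ``resolving the network''. Two places deserve explicit care if you were to execute the plan. First, in stage (iii) the sum $\sum_{M\in\Irr_{\mcC_\opA}}(\dim_{\mcC_\opA}M)^2$ is not immediately a single closed $\opA$-network; one must use the traces \eqref{eq:tr_in_ACA} with their $\psi^2$-insertions and pass from a sum over simples of $\mcC_\opA$ to a sum over simples of $\mcC$ (via the pipe functor and its separable biadjunction with the forgetful functor) before the scissors identity \eqref{eq:scissors_id} can be applied; the nonvanishing $\tr_\mcC\omega_A^2\neq 0$ does follow from Lemma~\ref{lem:tr-psi_non_zero} as you say. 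Second, in stage (iv), concluding $N\cong A^{\oplus n}$ \emph{in $\mcC_\opA$} requires identifying not only the underlying bimodule of a transparent object with a sum of copies of $A$, but also its $T$-crossings with the standard ones of \eqref{eq:CA_MN_A_T-crossings}; modularity of $\mcC$ alone only controls the underlying object, and the extra step (showing the crossings are then forced) is part of the content of the cited proof. Neither point is a wrong turn, but both are genuine work that your outline defers.
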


\medskip

For an arbitrary orbifold datum $\opA=(A,T,\a,\abar,\psi,\phi)$ in a MFC $\mcC$, let us look at some constructions of objects/morphisms in the associated categories $\mcC_\opA$, $\mcC_\opA^1$, $\mcC_\opA^2$.
Firstly, for any left module $L\in{_A\mcC}$ the $A$-$A$-bimodules $T \otimes_1 L$ and $T \otimes_2 L$ are naturally objects of $\mcC_\opA^1$ and $\mcC_\opA^2$ respectively with the $T$-crossings given by
\begin{equation}
\label{eq:TxL_T-crossings}
\pic[1.25]{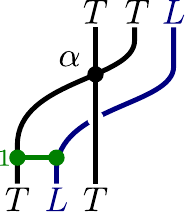},
\pic[1.25]{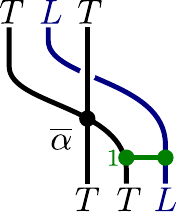} \qquad, \qquad
\pic[1.25]{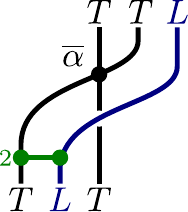},
\pic[1.25]{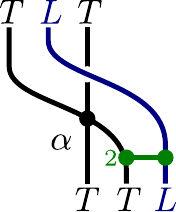}.
\end{equation}
Indeed, in both cases the identities~\eqrefT{4}, \eqrefT{5} are implied by~\eqrefO{2}, \eqrefO{3} and the identities~\eqrefT{6}, \eqrefT{7} by~\eqrefO{6}, \eqrefO{7}.
Then for the first pair~\eqrefT{1} is obtained from~\eqrefO{1}, while for the second pair~\eqrefT{3} is obtained from an analogous identity as~\eqrefO{1} but involving $\abar$, which is obtained by combining~\eqrefO{1}, \eqrefO{2} and \eqrefO{3}.
Of particular importance later will be such kind of objects obtained by setting $L=A$.
In this case one has canonical $A$-$A$-bimodule isomorphisms $T \otimes_1 A \cong T_2$, $T \otimes_2 A \cong T_1$ where $T_2 = T$ seen as a bimodule by forgetting the first right $A$-action and similarly $T_1=T$ by forgetting the second right $A$-action.
Simplifying the $T$-crossings~\eqref{eq:TxL_T-crossings} accordingly, one gets:
\begin{equation}
\label{eq:T1_T2_objs}
(T_2, \a, \abar) \in \mcC_\opA^1 \qquad , \qquad
(T_1, \abar, \a) \in \mcC_\opA^2 \, .
\end{equation}

\medskip

Another family of objects is given by the so called `pipe functor' $P_\opA\colon\mcACA\ra\mcC_\opA$, see~\cite[Sec.\,3.3]{MR1}.
It assigns to a bimodule $M\in\mcACA$ an object of $\mcC_\opA$ whose underlying bimodule is
\begin{equation}
\label{eq:pipe_funct_def}
P_\opA(M) := \im \pic[1.25]{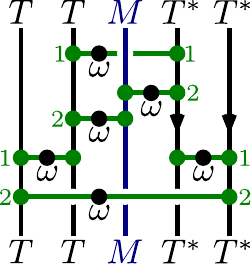} \, ,
\end{equation}
and the $T$-crossings $\tau_1$ and $\tau_2$ are
\begin{equation}
\tau_1 := \pic[1.25]{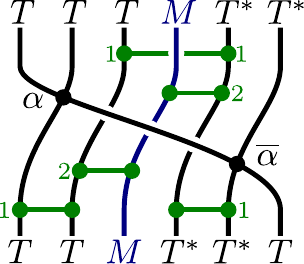} \, , \quad
\tau_2 := \pic[1.25]{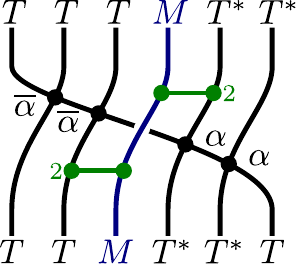} \, ,
\end{equation}
with the inverses $\taubar{1}$, $\taubar{2}$ defined similarly.
The functor $P_\opA$ is both left and right adjoint to the forgetful functor $U\colon\mcC_\opA\ra\mcACA$ (\cite[Prop.\,3.11,\,Rem.\,3.12]{MR1}) and so can be treated as the `free construction' of an object in $\mcC_\opA$.
Moreover one has:
\begin{prp}
\label{prp:pipe_objs_generate}
Any object $M\in\mcC_\opA$ is a subobject of a pipe object $P_\opA(M')$ for some $M'\in\mcACA$.
\end{prp}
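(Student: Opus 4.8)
The plan is to exploit the adjunction between the pipe functor $P_\opA\colon\mcACA\ra\mcC_\opA$ and the forgetful functor $U\colon\mcC_\opA\ra\mcACA$, together with semisimplicity of $\mcC_\opA$. Since $P_\opA$ is (in particular) right adjoint to $U$, for any $M\in\mcC_\opA$ the counit of the adjunction gives a morphism $\varepsilon_M\colon P_\opA(U(M))\ra M$ in $\mcC_\opA$, and I would set $M' := U(M)\in\mcACA$. The claim then reduces to showing that $\varepsilon_M$ is a split epimorphism in $\mcC_\opA$ (equivalently, since $\mcC_\opA$ is semisimple, that it is an epimorphism, or even just that $M$ is a direct summand of $P_\opA(M')$); a split epi admits a section, which realises $M$ as a subobject of $P_\opA(M')$.

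First I would recall that $P_\opA$ is also \emph{left} adjoint to $U$, so there is a unit $\eta_M\colon M\ra P_\opA(U(M))$ as well; more usefully, applying the unit of the (other) adjunction to the object $U(M)\in\mcACA$, or dually, chasing the standard triangle identities, produces morphisms whose composite $M\xra{} P_\opA(U(M))\xra{\varepsilon_M} M$ I want to identify with a nonzero scalar multiple of $\id_M$. Concretely, since $U$ is faithful (it is the forgetful functor to $\mcACA$) and $U(\varepsilon_M)$ is, by one of the triangle identities for the $P_\opA\dashv U$ part, a retraction of a natural morphism, one gets that $U(\varepsilon_M)$ is split epi in $\mcACA$. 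Because $\mcACA$ is a pivotal multifusion (hence semisimple) category and $U$ reflects direct sums, $U(M)$ is a direct summand of $U(P_\opA(U(M)))$; one then upgrades this to a splitting in $\mcC_\opA$ itself using that $\mcC_\opA$ is semisimple so that $\Hom_{\mcC_\opA}(M,-)$ is exact and the idempotent $s\circ\varepsilon_M$ (for $s$ a section of $U(\varepsilon_M)$ averaged/corrected to land in $\mcC_\opA$-morphisms) splits. Alternatively, and perhaps more cleanly: any epimorphism onto $M$ in a semisimple category splits, so it suffices to check $\varepsilon_M$ is epi, i.e. that $U(\varepsilon_M)$ is epi, which follows from the triangle identity $U(\varepsilon_M)\circ (\text{unit of }U\dashv P_\opA \text{ at } U(M)) = \id_{U(M)}$.

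I expect the main obstacle to be the bookkeeping around the two adjunctions: $P_\opA$ being simultaneously left and right adjoint to $U$ means there are two units and two counits, and I must be careful to use the triangle identity for the adjunction in which $\varepsilon_M$ is the counit of $U$ acting as the right adjoint — i.e. $P_\opA\dashv U$ giving unit $\eta\colon \Id\Ra U P_\opA$ and counit $\varepsilon\colon P_\opA U\Ra\Id$ with $U\varepsilon\circ\eta U=\id_U$ — so that $U(\varepsilon_M)$ is automatically a split epimorphism in $\mcACA$ with explicit section $\eta_{U(M)}$. Granting Proposition~3.11/Remark~3.12 of~\cite{MR1} (that $P_\opA$ is both adjoint to $U$), the only remaining point is that a split epimorphism in the underlying category $\mcACA$, between objects of $\mcC_\opA$, can be promoted to a split epimorphism in $\mcC_\opA$; this holds because $\mcC_\opA$ is abelian semisimple, so $\varepsilon_M$ (being epi, as $U$ is faithful and exact and detects this) splits in $\mcC_\opA$, and the chosen section $M\hookrightarrow P_\opA(M')$ exhibits $M$ as a subobject. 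A short closing remark would note that $M'=U(M)$ is the natural choice and that the argument is just the standard fact that the counit of a comonad (here $U P_\opA$, or rather the relevant monad/comonad structure) is split when the category is semisimple.
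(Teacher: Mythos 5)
Your argument is correct, but it takes a different route from the paper's. The paper's proof is a one-liner using the fact that $(U,P_\opA)$ is a \emph{separable} biadjunction in the sense of \cite[App.\,A.2]{MR1}: the unit $\widetilde{\eta}\colon\Id_{\mcC_\opA}\Ra P_\opA\circ U$ of $U\dashv P_\opA$ composed with the counit $\vareps\colon P_\opA\circ U\Ra\Id_{\mcC_\opA}$ of $P_\opA\dashv U$ is the identity, so $\widetilde{\eta}_M$ is already an explicit split monomorphism $M\ra P_\opA(U(M))$ \emph{in} $\mcC_\opA$, with no appeal to semisimplicity of $\mcC_\opA$. You instead use only the bare adjunction $P_\opA\dashv U$: the triangle identity $U(\vareps_M)\circ\eta_{U(M)}=\id_{U(M)}$ makes $U(\vareps_M)$ split epi, faithfulness of $U$ then makes $\vareps_M$ epi in $\mcC_\opA$, and semisimplicity (multifusion-ness) of $\mcC_\opA$ splits it; this is valid, since the paper records that $\mcC_\opA$ is multifusion before the proposition, and your choice $M'=U(M)$ agrees with the paper's. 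Two caveats: your opening sentence attributes the morphism $P_\opA(U(M))\ra M$ to the counit of $U\dashv P_\opA$ (it is the counit of $P_\opA\dashv U$); you fix this later, and the middle discussion about a section of $U(\vareps_M)$ being ``averaged/corrected'' to a $\mcC_\opA$-morphism is unnecessary hand-waving once you invoke the epi-splits argument -- if you did want an explicit section you would be reproving the separability of the biadjunction via the average map~\eqref{eq:avg_map}. Also be aware that in \cite{MR1} semisimplicity of $\mcC_\opA$ is itself obtained from this separable biadjunction, so your proof trades the paper's direct use of that input for a result that ultimately rests on it; within the logical structure of the present paper this is harmless, but the paper's version is the more self-contained one.
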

\begin{proof}
The pair $(U,P_\opA)$ forms what in~\cite[App.\,A.2]{MR1} was called a separable biadjunction: for the unit $\widetilde{\eta}\colon \Id_{\mcC_\opA} \Ra P_\opA\circ U$ of $U\dashv P_\opA$ and the counit $\vareps\colon P_\opA\circ U \Ra \Id_{\mcC_\opA}$ of $P_\opA \dashv U$ one has $\vareps\circ\widetilde{\eta} = \id$.
Hence $M$ is a subobject of $P_\opA(U(M))$.
\end{proof}
\begin{rem}
\label{rem:pipe_objs_on_induced_bimods}
Abusing the notation, we will also use the symbol $P_\opA$ to denote the functor $P_\opA \circ \Ind_A = P_\opA(A \otimes - \otimes A)\colon\mcC\ra\mcC_\opA$, i.e.\ the pipe functor applied to an induced $A$-$A$-bimodule.
Since induced bimodules generate $\mcACA$, by Proposition~\ref{prp:pipe_objs_generate} the objects $\{P_\opA(X)\}_{X\in\mcC}$ generate $\mcC_\opA$.
\end{rem}

\medskip

Finally, let us look at a way to obtain morphisms in $\mcC_\opA$.
In particular we provide an analogue of the map~\eqref{eq:A-mod-morphs_avg} projecting onto the $A$-module morphisms.
Let $M$, $N$ be arbitrary objects of $\mcC_\opA$ and $f\in\mcC(M,N)$ an arbitrary morphism of the underlying objects.
Then using the identities~\eqrefO{1}--\eqrefO{8} and~\eqrefT{1}--\eqrefT{7} one can show that the `averaged' morphism
\begin{equation}
\label{eq:avg_map}
\operatorname{avg} f := \phi^4\cdot\pic[1.25]{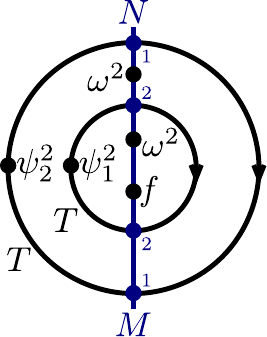}
\end{equation}
commutes with the $A$-actions and satisfies~\eqref{eq:M} and therefore is a morphism in $\mcC_\opA(M,N)$.
Moreover, the map $f\mapsto\operatorname{avg}f$ is indeed the idempotent projecting onto the subspace $\mcC_\opA(M,N)\subseteq\mcC(M,N)$.
To show this note that if $f$ is already in $\mcC_\opA(M,N)$, one can use~\eqref{eq:M}, \eqrefT{4} and~\eqrefO{8} to remove the two $T$-lines in~\eqref{eq:avg_map}.

\subsection{Morita transports}
\label{subsec:Morita_transports}
Let $\mcC$ be a MFC and $\opA=(A,T,\a,\abar,\psi,\phi)$ an orbifold datum in it.
In the following two sections we discuss two ways to obtain another orbifold datum out of $\opA$.
The first one was introduced in~\cite{CRS3} and involves changing (in the present setting isometrically, see Definitions~\ref{def:mod_trace} and~\ref{def:isometric_Morita_mod}) the symmetric separable Frobenius algebra $(A,\psi)$ for a Morita equivalent one $(B,\psi')$.

\medskip

Suppose $(B,\psi')$ is a symmetric separable Frobenius algebra in $\mcC$ and ${}_AR_B$ an isometric Morita module.
We use the terminology as in~\cite{CRS3}, where $R$ was said to `transport $\opA$ along the Morita equivalence'.
One then has (cf.\, \cite[Def.\,3.7, Prop.\,3.8]{CRS3}):
\begin{defnprp}
\label{defnprp:RA_orb_datum}
Let $\opA=(A,T,\a,\abar,\psi,\phi)$ be an orbifold datum in a MFC $\mcC$, $(B,\psi')$ a symmetric separable Frobenius algebra in $\mcC$ and ${}_A R_B$ an isometric Morita module.
\begin{enumerate}[i)]
\item
The \textit{Morita transport of $\opA$ along $R$} is the tuple $R(\opA) := (B, T^R, \a^R,$ $\abar^R, \psi', \phi)$
where
\begin{equation}
\label{eq:RA_orb_datum}
\hspace{-32pt}
T^R     := \im \hspace{-0.25cm}\pic[1.25]{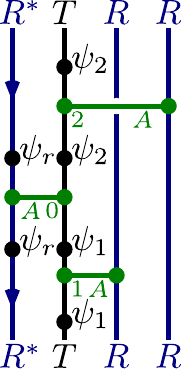}\hspace{-0.1cm}, \,
\a^R    := \hspace{-12pt}\pic[1.25]{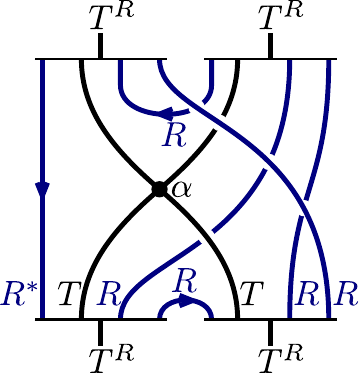}\hspace{-0.25cm}, \,
\abar^F := \hspace{-12pt}\pic[1.25]{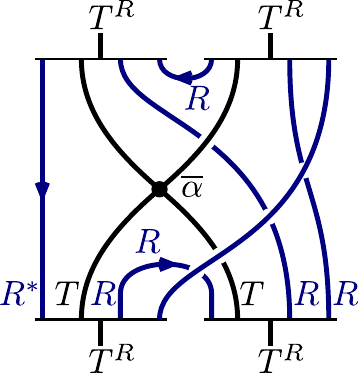}\hspace{-10pt}.
\end{equation}
\item
The \textit{Morita transport of an object $M\in\mcC_\opA$} is defined to be the tuple $R(M) := (M^R, \tau_1^R, \tau_2^R, \taubar{1}^R, \taubar{2}^R)$ where $M^R = R(M) = R^* \otimes_A M \otimes_A R$ and
\begin{align} \nonumber
\hspace{-32pt}
M^R     := \im \hspace{-0.25cm}\pic[1.25]{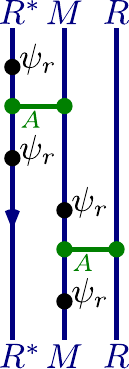}, \quad
&\tau_1^R        := \hspace{-12pt}\pic[1.25]{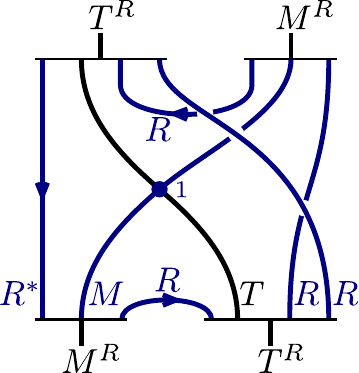}\hspace{-0.25cm}, \,
 \tau_2^R        := \hspace{-12pt}\pic[1.25]{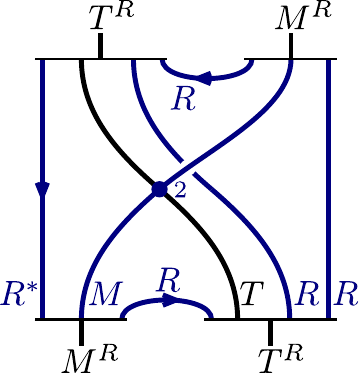}\hspace{-10pt},\\ \label{eq:RM_obj}
&\taubar{1}^R    := \hspace{-12pt}\pic[1.25]{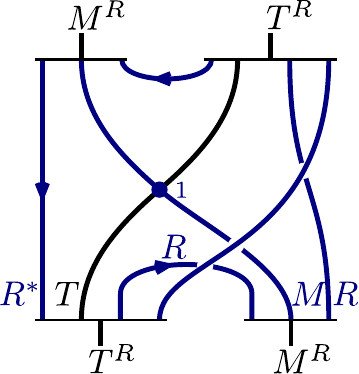}\hspace{-0.25cm}, \,
 \taubar{2}^R    := \hspace{-12pt}\pic[1.25]{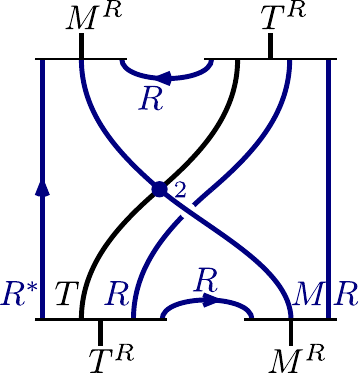}\hspace{-10pt}.
\end{align}
Similarly one defines the transport $R(N)$ of an object $N\in\mcC_\opA^i$, $i\in\{1,2\}$.
\item
$R(\opA)$ is an orbifold datum in $\mcC$ and $R(M)$, $R(N)$ for $M\in\mcC_\opA$, $N\in\mcC_\opA^i$, $i\in\{1,2\}$ are objects of $\mcC_{R(\opA)}$ and $\mcC_{R(\opA)}^i$ respectively.
\item
The functor $R\colon\mcACA\ra\mcBCB$ induces a ribbon equivalence $R\colon\mcC_\opA\ra\mcC_{R(\opA)}$ and pivotal equivalences $R\colon\mcC_\opA^i\ra\mcC_{R(\opA)}^i$, $i=1,2$.
\end{enumerate}
\end{defnprp}
\begin{proof}
iii) Since $T^R \cong R^* \otimes_A T \otimes_{A^{\otimes 2}} R^{\otimes 2}$, the $B$-action on $R$ induces a $B$-$B^{\otimes 2}$-bimodule structure on $T^R$.
The braidings in the definition~\eqref{eq:RA_orb_datum} of $\a^R$, $\abar^R$ are such that the identities~\eqref{eq:aabar-A_commute} hold, i.e.\ so that $\a^R\colon T^R \otimes_2 T^R \lra T^R \otimes_1 T^R :\abar^R$ are $B$-$B^{\otimes 3}$-bimodule morphisms as needed.

The identities~\eqrefO{1}--\eqrefO{8} for $R(\opA)$ can be checked by straightforward computations using the same identities for $\opA$, the splitting of the idempotent defining $T^R$ and the identities~\eqref{eq:tr-R_id}.
For example, let us check~\eqrefO{1}.
In this case both sides are $B$-$B^{\otimes 4}$-bimodule morphisms.
Let us simplify them by taking the relative tensor product $\id_R \otimes_B - \otimes_{B^{\otimes 4}} \id_{(R^*)^{\otimes 4}}$ and applying the $A$-$A$-bimodule isomorphism $R \otimes_B R^* \cong A$.
The left-hand side of~\eqrefO{1} for $R(\opA)$ can then be rewritten as the balanced map
\begin{align}\nonumber
&
\pic[1.25]{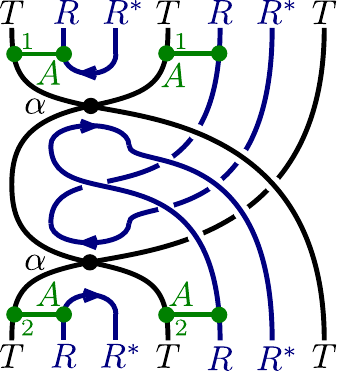} =
\pic[1.25]{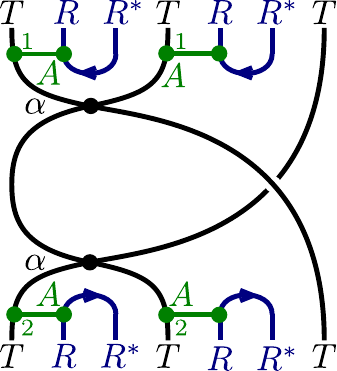}\\ \label{eq:Morita_transport_O1calc}
&\stackrel{\text{\eqrefO{1} for $\opA$}}{=}
\pic[1.25]{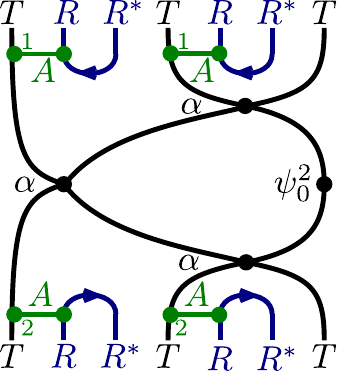} \stackrel{\eqref{eq:tr-R_id}}{=}
\pic[1.25]{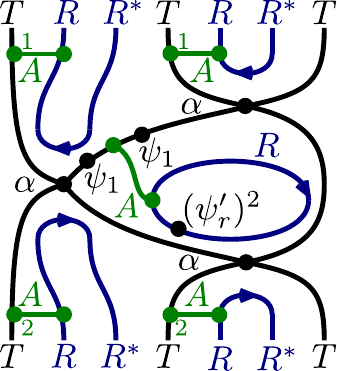}.
\end{align}
One recognises the right-hand side of~\eqref{eq:Morita_transport_O1calc} as the right-hand side of~\eqrefO{1} for $R(\opA)$ (or rather the morphism obtained after the relative tensor product $\id_R \otimes_B - \otimes_{B^{\otimes 4}} \id_{(R^*)^{\otimes 4}}$ as before).
Note that the two $\psi_1$-insertions together with the $A$-line between them constitute the projector onto $T \otimes_1 R$.

The identities~\eqrefT{1}--\eqrefT{7} for $R(M)$, $R(N)$ are shown in the same way as the identities~\eqrefO{1}--\eqrefO{8} for the orbifold datum $R(\opA)$.

iv) The pivotal structures in $\mcC_\opA$, $\mcC_\opA^1$, $\mcC_\opA^2$ are inherited from $\mcACA$ and therefore preserved by the functor $R\colon\mcACA\ra\mcBCB$ (see Proposition~\ref{prp:R_pivotal}) since by definition one has $M^R = R(M)$ as $A$-$A$-bimodules.
Since the functor $R$ on bimodules is fully faithful, so is the induced functor on $\mcC_\opA$, $\mcC_\opA^1$, $\mcC_\opA^2$.
It is also an equivalence, since it has the inverse $R \otimes_B - \otimes_B R^*$.
That $R\colon\mcC_\opA \ra \mcC_{\opA}$ preserves braidings can be shown by a similar computation as in iii) using the expressions~\eqref{eq:CA_br_twist}.
\end{proof}

\begin{rem}
A more natural way to prove the above proposition is the following:
Let $\mcA{}lg_\mcC$ be the monoidal bicategory of algebras in $\mcC$, their bimodules and bimodule morphisms (with the monoidal product being given by the tensor product of algebras in $\mcC$) and let $\mcA{}lg_\mcC(A)$ be the subcategory generated by the objects $A$ and $A^{\opp}$ (the opposite algebra).
Then the identities~\eqrefO{1}--\eqrefO{8}, \eqrefT{1}--\eqrefT{7} can be perceived as identities of 2-morphisms in $\mcA{}lg_\mcC(A)$.
The bicategory $\mcA{}lg_\mcC(A)$ is pivotal in the sense of~\cite[Sec.\,2.2]{Ca}, i.e.\ the 1-morphisms have coinciding left/right adjoints which satisfy the analogous identities as those for duals in a pivotal category.
The isometric Morita module ${_AR_B}$ provides one with an equivalence $R\colon\mcA{}lg_\mcC(A) \xra{\sim} \mcA{}lg_\mcC(B)$ preserving these structures, which is defined analogously as the pivotal equivalence $R\colon\mcACA\ra\mcBCB$ in Proposition~\ref{prp:R_pivotal}.
The 1-morphisms $T^R$, $M^R$ and the 2-morphisms $\a^R$, $\abar^R$, $\tau_i^R$, $\taubar{i}^R$, $i\in\{1,2\}$ in~\eqref{eq:RA_orb_datum}, \eqref{eq:RM_obj} are obtained by mapping $T$, $M$, $\a$, $\abar$, $\tau_i$, $\taubar{i}$ along the functor $R$ and therefore they preserve~\eqrefO{1}--\eqrefO{8}, \eqrefT{1}--\eqrefT{7}.
\end{rem}

\subsection{Transports along ribbon Frobenius functors}
\label{subsec:transports_along_FFs}
The second way to transport an orbifold datum $\opA$ in a MFC $\mcC$ that we will consider is along a ribbon Frobenius functor (see Definitions~\ref{def:FF} and~\ref{def:pivotal_braided_ribb_FFs}) $F\colon\mcC\ra\mcD$ to another MFC $\mcD$.
Although $F$ need not always preserve orbifold data, we will see that it does so under a compatibility assumption which we now explain.

Let $F\colon\mcC\ra\mcD$ be a ribbon Frobenius functor and $\opA=(A,T,\a,\abar,\psi,\phi)$ an orbifold datum in $\mcC$.
Then $F(A)\in\mcD$ is a symmetric Frobenius algebra and $F(T)$ together with the induced $F(A)$-actions is automatically an $F(A)$-$F(A)^{\otimes'2}$-bimodule since one has
\begin{equation}
\pic[1.25]{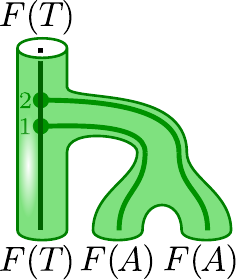} =
\pic[1.25]{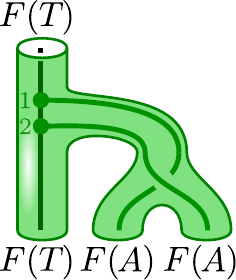} =
\pic[1.25]{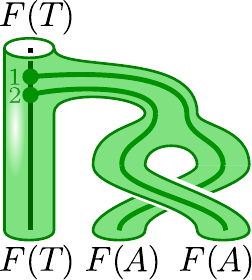} .
\end{equation}
Recall that if $(F,A)$ is strongly separable (see Definition~\ref{def:F-A_strongly_sep_full}), one can fix a section $\psi_F\colon\opid'\ra F(A)$ which serves as a section to the Frobenius algebra $F(A)$ and without loss of generality can be assumed to be invertible.
\begin{defn}
We call a ribbon Frobenius functor $F\colon\mcC\ra\mcD$ between two MFCs $\mcC$, $\mcD$ \textit{compatible with an orbifold datum $\opA = (A,T,\a,\abar,\psi,\phi)$} (or simply \textit{$(F,\opA)$ compatible}) if 
\begin{enumerate}[i)]
\item $(F,A)$ is strongly separable with an invertible section $\psi_F\colon\opid'\ra F(A)$;
\item there is a scalar $\phi_F\in\opk^\times$ such that the identity~\eqrefO{8} holds for $(F(A),F(T),$ $\psi_F,\phi_F)$, i.e.\ one has
\begin{equation}
\label{eq:F-A_compatible_cond}
\hspace{-32pt}
\pic[1.25]{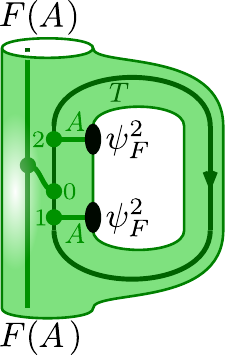} =
\pic[1.25]{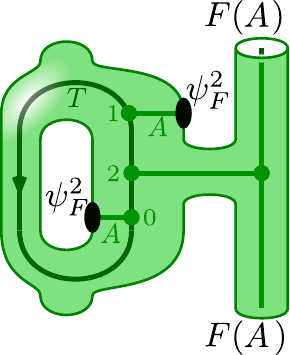} =
\pic[1.25]{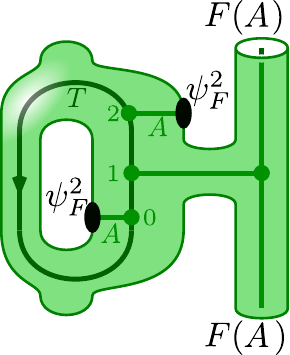} =
\phi_F^{-2} \cdot \id_{F(A)} \, .
\end{equation}
\end{enumerate}
Furthermore, we call $(F,\opA)$ \textit{full} if $(F,A)$ is full.
\end{defn}
\goodbreak
\begin{rem}
\label{rem:F-A_special}
In our main example below, the section $\psi_F\colon\opid'\ra F(A)$ will have the form $\xi \cdot [\opid'\xra{F_0} F(\opid) \xra{F(\psi)} F(A)]$, $\xi\in\opk^\times$.
In this case the condition ii) in Definition~\ref{eq:F-A_compatible_cond} holds automatically with $\phi_F = \phi / \xi$.
\end{rem}

The transport of orbifold data along a ribbon Frobenius functor is then defined in an obvious way:
\begin{defnprp}
\label{defnprp:FA_orb_datum}
Let $F\colon\mcC\ra\mcD$ be a ribbon Frobenius functor between two MFCs $\mcC$ and $\mcD$, $\opA=(A,T,\a,\abar,\psi,\phi)$ an orbifold datum in $\mcC$ and suppose $(F,\opA)$ is compatible with an invertible section $\psi_F\colon\opid'\ra F(A)$ of $F$ and the scalar $\phi_F\in\opk^\times$ as in~\eqref{eq:F-A_compatible_cond}.
\begin{enumerate}[i)]
\item
The \textit{transport of $\opA$ along $F$} is the tuple $F(\opA) := (F(A), F(T), \a^F, \abar^F, \psi_F, \phi_F)$, where
\begin{equation}
\label{eq:FA_orb_datum}
\a^F    := \pic[1.25]{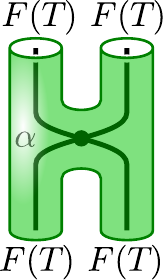} \, , \quad 
\abar^F := \pic[1.25]{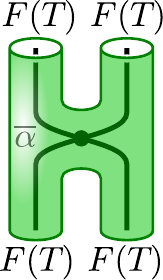} \, .
\end{equation}
\item
The \textit{transport of an object $M=(M,\tau_1,\tau_2,\taubar{1},\taubar{2})\in\mcC_\opA$} is the tuple $F^\opA(M) := (F(M), \tau_1^F, \tau_2^F, \taubar{1}^F, \taubar{2}^F)$ where
\begin{equation}
\label{eq:FM_obj}
\tau_i^F     := \pic[1.25]{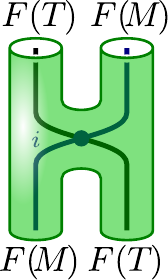} \, , \quad 
\taubar{i}^F := \pic[1.25]{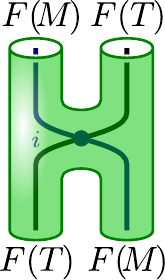} \, , \quad i\in\{1,2\} \, .
\end{equation}
Similarly one defines the transport $F^{\opA,i}(N)$ of an object $N\in\mcC_\opA^i$, $i\in\{1,2\}$.
\item
$F(\opA)$ is an orbifold datum in $\mcD$ and $F(M)$, $F(N)$ for $M\in\mcC_\opA$, $N\in\mcC_\opA^i$, $i\in\{1,2\}$ are objects of $\mcD_{F(\opA)}$ and $\mcD_{F(\opA)}^i$ respectively.
\item
The pivotal functor $F^A\colon\mcACA\ra{_{F(A)}\mcD_{F(A)}}$ induces a ribbon functor $F^\opA\colon\mcC_\opA\ra\mcD_{F(\opA)}$ and pivotal functors $F^{\opA,i}\colon\mcC_\opA^i\ra\mcD_{F(\opA)}^i$, $i\in\{1,2\}$.
If in addition $(F,\opA)$ is full, the functors $F^\opA$ are full embeddings.
\end{enumerate}
\end{defnprp}

The proof below essentially argues that for an orbifold datum $\opA=(A,T,\a,$ $\abar,\psi,\phi)$ in a MFC $\mcC$ and a ribbon Frobenius functor $F\colon\mcC\ra\mcD$ to another MFC $\mcD$ which is separable with respect to $A$, the identities \eqrefO{1}--\eqrefO{7} for $F(\opA)$ hold automatically, and \eqrefO{8} is imposed by the compatibility condition~\eqref{eq:F-A_compatible_cond}.
 The need to treat~\eqrefO{8} separately can be explained as follows: $F$ does not preserve tensor products of algebras, i.e.\ in general one has $F(A^{\otimes 2}) \ncong F(A)^{\otimes' 2}$ and the identity~\eqrefO{8} is the only one in where the tensor product relative to $A^{\otimes 2}$ arises.
\begin{proof}
We have already seen that $(F(A), \psi_F)$ is a symmetric separable Frobenius algebra in $\mcD$ and $F(T)$ is a $F(A)$-$F(A)^{\otimes' 2}$-bimodule.
It remains to show that the morphisms in~\eqref{eq:FA_orb_datum}, \eqref{eq:FM_obj} are (balanced maps defining) $F(A)$-$F(A)^{\otimes' 3}$ and $F(A)$-$F(A)^{\otimes' 2}$-bimodule morphisms $\a^F\colon F(T) \otimes_2 F(T) \lra F(T) \otimes_1 F(T) : \abar^F$ and $\tau_i^F\colon F(M) \otimes_0' F(T) \lra F(T) \otimes_i' F(M)$, $i\in\{1,2\}$, and that the tuples $F(\opA)$ and $F^\opA(M)$ satisfy the identities~\eqrefO{1}--\eqrefO{8} and~\eqrefT{1}--\eqrefT{7} respectively.
Apart from~\eqrefO{8}, which holds automatically since $(F,\opA)$ is compatible, all of this can be checked by straightforward computations using the same identities for $\opA$ and $M\in\mcC_\opA$, the identities~\eqref{eq:F_graph_calc:assoc-unitality}--\eqref{eq:F_graph_calc:twist} and the condition for strong separability~\eqref{eq:F_strong_sep_cond}.
For example~\eqrefO{1} is shown as follows:
\begin{align} \nonumber
&
\pic[1.25]{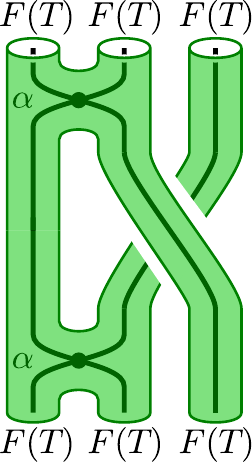} =
\pic[1.25]{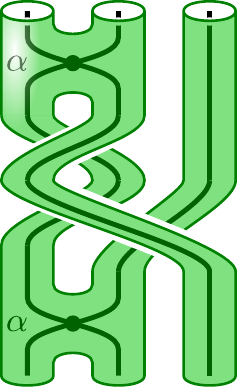} \stackrel{\eqref{eq:F_graph_calc:braided}}{=}
\pic[1.25]{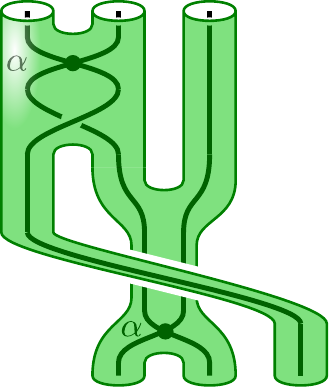}\\ \nonumber
&=
\pic[1.25]{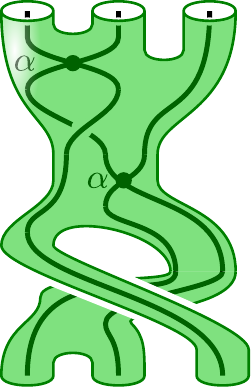} \stackrel{\eqref{eq:F_graph_calc:braided}}{=}
\pic[1.25]{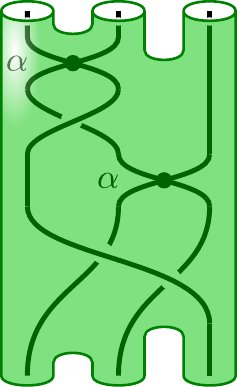} =
\pic[1.25]{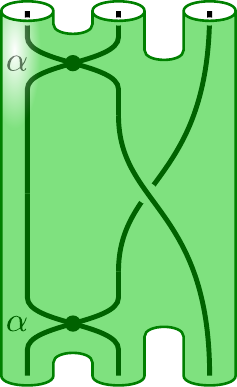}\\
& \hspace{-20pt} \stackrel{\eqrefO{1}}{=}
\pic[1.25]{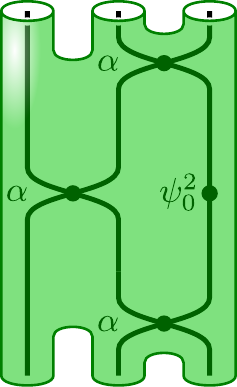}
\stackrel{\eqref{eq:F_strong_sep_cond}}{=}
\pic[1.25]{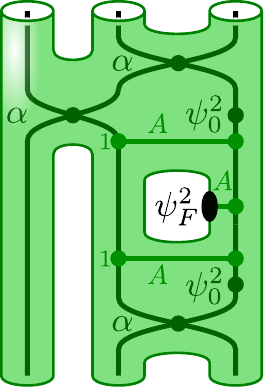} =
\pic[1.25]{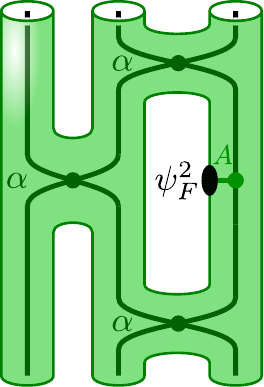} .
\end{align}
We note that in order to show~\eqrefO{4}--\eqrefO{7} and~\eqrefT{6}, \eqrefT{7} for $F(\opA)$ one needs $F$ to be pivotal, hence ribbon.

iv) Since the monoidal and pivotal structures in $\mcC_\opA$, $\mcD_{F(\opA)}$ and $\mcC_\opA^i$, $\mcD_{F(\opA)}^i$, $i\in\{1,2\}$ are inherited from $\mcACA$, ${_{F(A)}\mcD_{F(A)}}$, the induced functors are pivotal as well.
To show that $F^\opA\colon\mcC_\opA\ra\mcD_{F(\opA)}$ is ribbon, i.e.\ preserves the braiding, we perform the following auxiliary computation for arbitrary $M,N\in\mcC_\opA$:
\begin{align} \nonumber
&
\pic[1.25]{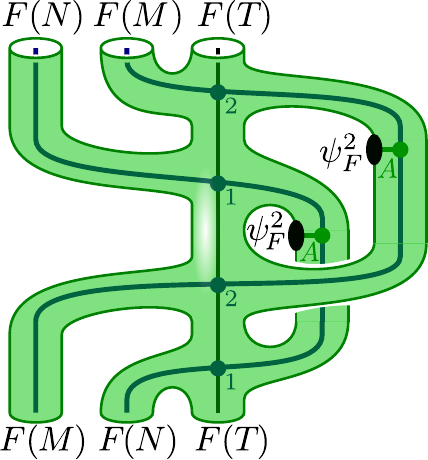} &&\stackrel{\eqref{eq:F_graph_calc:cobraided}}{=}
\pic[1.25]{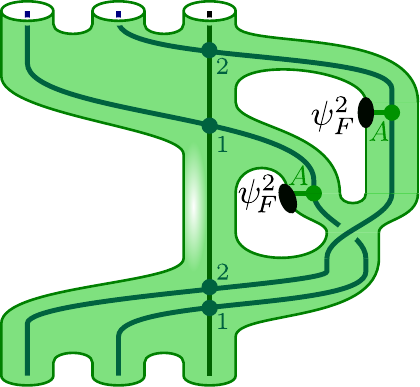}\\ \label{eq:F_braiding_calc}
& \stackrel{\eqref{eq:F_strong_sep_cond}}{=}
\pic[1.25]{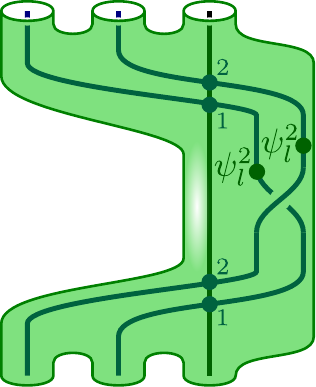} && \stackrel{(*)}{=}
\pic[1.25]{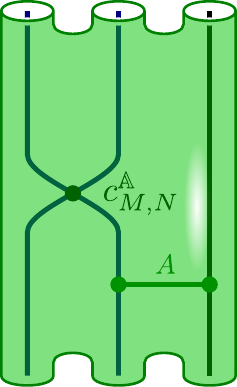} \, ,
\end{align}
where in the step $(*)$ we have used the identity in~\cite[Lem.\,3.4]{MR1}.
Precomposing both sides of~\eqref{eq:F_braiding_calc} with $\phi_F^2 \cdot (\id_{F(N)} \otimes' \id_{F(M)} \otimes' ((\psi_F)_1^2 \circ (\psi_F)_2^2))$ and taking the partial right trace with respect to $F(T)$, the left-hand side yields the expression~\eqref{eq:CA_br_twist} for the braiding of $F(M), F(N)\in\mcD_{F(\opA)}$, while the right-hand side yields (the balanced map corresponding to) $F^\opA(c^\opA_{M,N})$, since by the virtue of $(F,\opA)$ being compatible, the identity~\eqref{eq:F-A_compatible_cond} holds.

Finally, the proof that the functors $F^{\opA,(i)}\colon\mcC^{(i)}_\opA\ra\mcD^{(i)}_{F(\opA)}$, $i\in\{1,2\}$ are full embeddings is the same as that of Proposition~\ref{prp:F-A_pivotal_embed}.
\end{proof}

\begin{rem}
Our main example in Section~\ref{subsec:inv_orb_datum} deals with a ribbon Frobenius functor $F\colon\mcC\ra\mcD$ which is compatible but not full with respect to an orbifold datum $\opA$ in $\mcC$.
In this case the functors $F^{\opA,i}$, $i\in\{1,2\}$ do not need to be surjective on the spaces of morphisms.
However the functor $F^\opA$ still tends to be surjective, in fact if both $\opA$ and $F(\opA)$ are simple orbifold data, $F^\opA\colon\mcC_\opA\ra\mcD_{F(\opA)}$ is automatically fully faithful (see~\cite[Cor.\,3.26]{DMNO}).
We also note that $F(\opA)$ might fail to be simple even if $\opA$ is (take for example the diagonal functor $\mcC \ra \mcC \oplus \mcC$).
\end{rem}

\section{Condensation inversion}
\label{sec:condensation_inversion}
Given a commutative haploid symmetric $\D$-separable Frobenius algebra $B$ (to be called a condensable algebra) in a modular fusion category (MFC) $\mcC$ the category $\mcC_B^\loc$ of its local (or dyslectic) modules is again a MFC~\cite{KO} (to be called the $B$-condensation of $\mcC$).
It was shown in~\cite[Sec.\,4.1]{MR1} that condensations of $\mcC$ are examples of MFCs associated to orbifold data in $\mcC$.
In this section we explore the other direction of this construction, i.e.\ we find an orbifold datum $\opA$ in $\mcC_B^\loc$ whose associated MFC $(\mcC_B^\loc)_\opA$ gives back $\mcC$.

\subsection{Condensable algebras in MFCs}
\label{subsec:condensable_algs}
Let $B$ be a commutative algebra in a braided category $\mcC$ (see~\eqref{eq:comm_algs}).
One can equip a right $B$-module $M$ with two left actions
\begin{equation}
\label{eq:local_mod_cond}
\rho_M^+ := \pic[1.25]{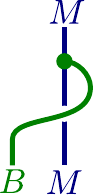}\, , \quad
\rho_M^- := \pic[1.25]{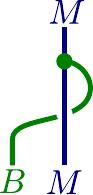} .
\end{equation}
This gives two full embeddings $\mcC_B\ra\mcBCB$ whose images we denote by $\mcC_B^+$ and $\mcC_B^-$ respectively.
For an arbitrary bimodule $N\in\mcBCB$ the morphisms
\begin{equation}
p^+_N := \pic[1.25]{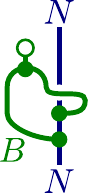}\, , \quad
p^-_N := \pic[1.25]{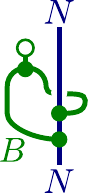}.
\end{equation}
are idempotents which (if they are split) project onto subobjects $N^\pm\in\mcC_B^+$, $N^-\in\mcC_B^\pm$.
Indeed, one has for example:
\begin{equation}
\pic[1.25]{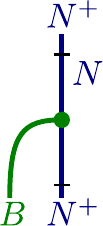} =
\pic[1.25]{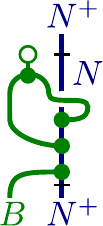} =
\pic[1.25]{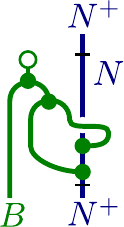} =
\pic[1.25]{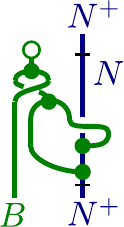} =
\pic[1.25]{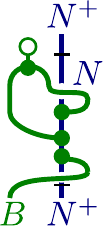} =
\pic[1.25]{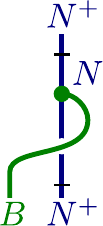} .
\end{equation}
i.e.\ the left action on $N^+$ is exactly $\rho^+_{N^+}$.

\begin{defn}
A (right) $B$-module $M\in\mcC_B$ is called \textit{local} (or \textit{dyslectic}) if one has $\rho_M^+ = \rho_M^-$.
We denote the full subcategory of local modules in $\mcC_B$ by $\mcC_B^\loc$ and identify it with its image in $\mcBCB$.
\end{defn}
For a local module $M\in\mcC_B^\loc$ one has by definition
\begin{equation}
\pic[1.25]{61_rho-plus.pdf} =
\pic[1.25]{61_rho-minus.pdf} =
\pic[1.25]{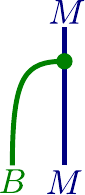}\quad , \qquad
\pic[1.25]{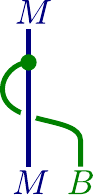} =
\pic[1.25]{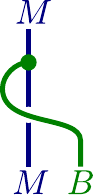} =
\pic[1.25]{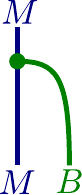}\, .
\end{equation}
Furthermore, for an arbitrary $N\in\mcBCB$ the morphism
\begin{equation}
\label{eq:pz-N_projector}
p^\circ_N := p^-_N\circ p^+_N = p^+_N\circ p^-_N
\end{equation}
is an idempotent projecting onto a subobject $N^\circ$ of $N$ in $\mcBCB$, which is a local module and will be called the \textit{localisation} of $N$.

\begin{defn}
\label{def:condensable_algebra}
A \textit{condensable algebra} in a MFC $\mcC$ is a commutative haploid (i.e.\ $\dim\mcC(\opid,B)=1$) symmetric $\D$-separable Frobenius algebra $B\in\mcC$.
We refer to the category of local modules $\mcC_B^\loc$ as the \textit{$B$-condensation} of $\mcC$.
\end{defn}
\begin{rem}
The term `condensable algebra' is borrowed from applications of fusion categories to condensed matter physics, see e.g.\ \cite{Bur}, \cite[Def.\ 2.6]{Ko}, \cite[Ex.\ 3.2.4]{CZW}.
Sometimes it is also used to refer to an \textit{\'etale algebra} in a non-degenerate braided fusion category, which is a haploid (or connected) commutative separable algebra, i.e.\ without an assigned Frobenius structure (see e.g.\ \cite[Def.\ 3.1 \& Ex.\ 3.3ii)]{DMNO}).
Although it is related to, it should not be considered as a synonym to the more general higher categorical notion of a `condensation algebra' \cite{GJ}.
\end{rem}

The $B$-condensation $\mcC_B^\loc$ is a finitely semisimple ribbon category, with monoidal and pivotal structures inherited from $\mcBCB$, the braiding morphisms $c^\circ_{M,N}$ for all objects $M,N\in\mcC_B^\loc$ are defined by
\begin{equation}
\label{eq:CBloc_braiding}
c^\circ_{M,N}        := \pic[1.25]{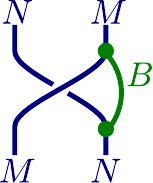} \, , \quad
(c^\circ_{M,N})^{-1} := \pic[1.25]{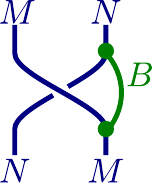} \, ,
\end{equation}
and the twist $\theta^\circ_M$ of an arbitrary object $M\in\mcC_B^\loc$ being equal to the twist of the underlying object in $\mcC$, i.e.\ $\theta^\circ_M := \theta_M$ (note that one automatically has $\theta_B = \id_B$ since $B$ is the tensor unit in $\mcC_B^\loc$).
It was proven in~\cite{KO} and~\cite[Cor.\,3.30]{DMNO} that the $B$-condensation $\mcC_B^\loc$ is in fact a MFC.
Its global dimension is given by
\begin{equation}
\Dim \mcC_B^\loc = \frac{\Dim\mcC}{(\dim_\mcC B)^2} \, ,
\end{equation}
where the categorical dimension $\dim_\mcC B$ of the condensable algebra $B\in\mcC$ is automatically non-zero, see e.g.~\cite[Lem.\,2.7]{KMRS}.
\begin{rem}
\begin{enumerate}[i)]
\item
The $B$-labelled strands in the graphical calculus of $\mcC$ will be regarded as unframed and undirected as $B$ has a trivial twist and a canonical isomorphism $B\cong B^*$.
\item 
Note that altering Definition~\ref{def:condensable_algebra} to require a condensable algebra $B\in\mcC$ to be separable instead of $\D$-separable does not yield different categories $\mcC_B^\loc$, $\mcC_B^+$, $\mcC_B^-$.
Indeed, since $B$ is haploid, any other Frobenius structure on $B$ results in a rescaling $B_\zeta$, $\zeta\in\opk^\times$ so that $\mcBCB$ and ${_{B_\zeta}\mcC_{B_\zeta}}$ (along with the corresponding subcategories) are pivotal-equivalent, see Proposition~\ref{prp:R_pivotal} and Remark~\ref{rem:R_pivotal_nuisances}.
\end{enumerate}
\end{rem}

Using the results of Section~\ref{subsec:transports_along_FFs} one can quickly relate the orbifold data in the condensation $\mcC_B^\loc$ and those in the original MFC $\mcC$.
\begin{prp}
\label{prp:UA_orb_datum}
Let $\opA$ be a candidate orbifold datum in $\mcC_B^\loc$ (i.e.\ a tuple $(A,T,\a,\abar$, $\psi,\phi)$ as in Definition~\ref{def:orb_datum} which does not a priori satisfy~\eqrefO{1}--\eqrefO{8}).
Then
\begin{enumerate}[i)]
\item The forgetful functor $U\colon\mcC_B^\loc\ra\mcC$ is a ribbon Frobenius functor.
\item $\opA$ is an orbifold datum in $\mcC_B^\loc$ if and only if $U(\opA)$ is an orbifold datum in $\mcC$.
\item The induced functor $U^\opA\colon (\mcC_B^\loc)_{\opA} \ra \mcC_{U(\opA)}$ is an equivalence.
\end{enumerate}
\end{prp}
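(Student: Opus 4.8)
The plan is to handle the three parts in turn, leaning on the machinery of Section~\ref{sec:monoidal_FFs} and~\ref{subsec:transports_along_FFs}. For part i), note that $\mcC_B^\loc$ is a full subcategory of $\mcBCB$ which is closed under $\otimes_B$ and under duals, and whose monoidal, pivotal and ribbon structure is inherited from $\mcBCB$ (in particular $\theta^\circ_M=\theta_M$, and $c^\circ$ is given by~\eqref{eq:CBloc_braiding}). Since $B$ is $\D$-separable, Example~\ref{eg:FF_forgetful} (with $A=B$, section $\eta_B$) shows that the forgetful functor $\mcBCB\ra\mcC$ is a separable pivotal Frobenius functor, with $U_2(M,N),\overline{U_2}(M,N)$ the projection/inclusion attached to $M\otimes_B N$, $U_0=\eta_B$, $\overline{U_0}=\vareps_B$; restricting along $\mcC_B^\loc\hookrightarrow\mcBCB$ equips $U$ with the same structure. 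It is ribbon because it preserves twists ($U(\theta^\circ_M)=\theta_M=\theta_{U(M)}$) and braidings: comparing~\eqref{eq:CBloc_braiding} with the description of $U_2$, the identity~\eqref{eq:F_braided} reduces to cancelling the relative-tensor idempotent against $c_{M,N}$, which is immediate from the locality of $M$ and $N$.

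For part ii), I would first check that $(U,A)$ is strongly separable in the sense of Definition~\ref{def:F-A_strongly_sep_full}: $U\circ U_A$ is a composite of the separable functors $U$ and the forgetful functor $U_A\colon{}_A(\mcC_B^\loc)_A\ra\mcC_B^\loc$ from Example~\ref{eg:FF_forgetful}, hence separable, and its section relative to $A$ may be taken to be $\psi_U=U(\psi)\circ U_0$, invertible because $\psi$ is. This $\psi_U$ has exactly the form considered in Remark~\ref{rem:F-A_special} (with $\xi=1$), so~\eqref{eq:F-A_compatible_cond} holds automatically with $\phi_U=\phi$ and $(U,\opA)$ is compatible; Proposition~\ref{defnprp:FA_orb_datum}~iii) then gives the ``only if'' direction. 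For the ``if'' direction I would use that $U$ is faithful: the chains of equalities in the proof of Proposition~\ref{defnprp:FA_orb_datum} identify, for each of~\eqrefO{1}--\eqrefO{7}, both sides of that identity for $\opA$ with both sides for $U(\opA)$ after applying $U$, and—precisely because $\psi_U$ is of the special form—identify~\eqrefO{8} for $\opA$ (with scalar $\phi$) with~\eqrefO{8} for $U(\opA)$ (with $\phi_U=\phi$). By faithfulness each of these becomes a biconditional, so $U(\opA)$ being an orbifold datum forces $\opA$ to be one.

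For part iii) I would first observe that $(U,A)$ is also full: a $U(A)$-bimodule morphism $g\colon U(M)\ra U(N)$ (for $M,N\in{}_A(\mcC_B^\loc)_A$) commutes with $B$ acting through the unit $B\ra U(A)$, hence is a morphism of the (local) $B$-modules $U(M),U(N)$ in $\mcC_B^\loc$, and since the $A$-actions in $\mcC_B^\loc$ are recovered from the $U(A)$-actions via the projection $U(A)\otimes-\twoheadrightarrow U(A)\otimes_B-$, it is a morphism in ${}_A(\mcC_B^\loc)_A$, i.e.\ in the image of $U^A$. So $(U,\opA)$ is full and compatible, and Proposition~\ref{defnprp:FA_orb_datum}~iv) makes $U^\opA\colon(\mcC_B^\loc)_\opA\ra\mcC_{U(\opA)}$ a full embedding of semisimple (multifusion) categories. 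By Proposition~\ref{prp:ssi_funct_equiv} it then remains only to show $U^\opA$ is surjective, and for this the plan is to show that the underlying $U(A)$-bimodule of an arbitrary object $(M',\tau_1',\tau_2',\taubar{1}',\taubar{2}')\in\mcC_{U(\opA)}$ is automatically a \emph{local} $B$-module; then $M'$ carries the structure of an $A$-$A$-bimodule in $\mcC_B^\loc$, the crossings $\tau_i'$ descend by faithfulness of $U$, and the resulting object of $(\mcC_B^\loc)_\opA$ is mapped by $U^\opA$ back to $(M',\tau')$.

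The hard part is exactly this last locality claim. Since $U(A)$ and $U(T)$ are local $B$-modules (being images under $U$ of objects of $\mcC_B^\loc$), $B$ behaves centrally relative to $U(A)$, and the crossing isomorphism $\tau_1'\colon M'\otimes_0 U(T)\ra U(T)\otimes_1 M'$, constrained by~\eqrefT{1} and~\eqrefT{4}--\eqrefT{7}, must propagate this locality to $M'$; concretely one extracts from $\tau_1'$ a half-braiding-type datum between $M'$ and $B$ and checks $\rho^+_{M'}=\rho^-_{M'}$ in the notation of~\eqref{eq:local_mod_cond}. (In the special case $A=B$, $T=A$ this recovers the familiar identification $\mcC_{\opB}\simeq\mcC_B^\loc$ with the pipe functor $P_{\opB}$ playing the role of localisation.) When $\opA$—hence $U(\opA)$—is simple one can instead finish by a dimension count: $U^\opA$ embeds the MFC $(\mcC_B^\loc)_\opA$ as a modular subcategory of $\mcC_{U(\opA)}$, and~\eqref{eq:Dim-CA_formula} together with $\phi_U=\phi$, $\Dim\mcC_B^\loc=\Dim\mcC/(\dim_\mcC B)^2$ and $\dim_\mcC U(-)=(\dim_\mcC B)\,\dim_{\mcC_B^\loc}(-)$ gives $\Dim(\mcC_B^\loc)_\opA=\Dim\mcC_{U(\opA)}$, whence the embedding is an equivalence because a modular subcategory of a MFC splits off as a direct factor.
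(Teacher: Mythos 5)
Parts i) and ii) of your proposal follow the paper's own route essentially verbatim: the forgetful functor inherits its Frobenius structure from Example~\ref{eg:FF_forgetful}, ribbonness is the short braiding computation, the section $\psi_U=U(\psi)\circ U_0$ is of the special form of Remark~\ref{rem:F-A_special} so compatibility is automatic, and the converse in ii) is the observation that \eqrefO{1}--\eqrefO{8} for $U(\opA)$ \emph{are} the same identities for $\opA$ read as $B$-balanced maps. No issues there.

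Part iii) is where there is a genuine gap. Your main route hinges on the claim that the underlying $U(A)$-bimodule of an arbitrary object of $\mcC_{U(\opA)}$ is automatically a local $B$-module, and you correctly identify this as the hard part -- but you do not prove it. The sketch "extract from $\tau_1'$ a half-braiding-type datum between $M'$ and $B$" does not obviously go through: $\tau_1'$ is a crossing with $U(T)$, not with $B$, and there is no canonical way to specialise it to a monodromy statement for $B$; the locality of $U(A)$ and $U(T)$ alone does not propagate to $M'$ (a bimodule like $X\otimes U(A)$ has non-local induced $B$-action for general $X\in\mcC$, so some use of the crossings is essential and must be spelled out). Your fallback via global dimensions only covers simple $\opA$ (the proposition is stated for arbitrary orbifold data) and invokes the factorisation theorem for MFCs, which the paper only uses under the $\operatorname{char}\opk=0$ hypothesis imposed in Section~\ref{sec:Witt_equiv}, not in Section~\ref{sec:condensation_inversion}. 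The paper sidesteps the locality claim entirely: by Proposition~\ref{prp:pipe_objs_generate} every object of $\mcC_{U(\opA)}$ is a direct summand of a pipe object $P_{U(\opA)}(N)$, and a direct computation with the defining idempotent (using that the $U(A)$-actions on $U(T)$ are $B$-balanced and that $U(A)$, $U(T)$ are local) shows that this idempotent absorbs the localisation projector $p^\circ_N$, whence $P_{U(\opA)}(N)\cong P_{U(\opA)}(U(N^\circ))\cong U^\opA(P_\opA(N^\circ))$. Surjectivity follows, and the locality of underlying bimodules of objects of $\mcC_{U(\opA)}$ is then a corollary of the equivalence rather than an input to it. To repair your argument, either carry out this pipe-object computation or supply an actual proof of the locality claim from \eqrefT{1}--\eqrefT{7}.
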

\begin{proof}
i) We know from Example~\ref{eg:FF_forgetful} that the forgetful functor $U\colon\mcBCB\ra\mcC$ is a pivotal Frobenius functor.
That its restriction to $\mcC_B^\loc$ is also ribbon follows from the Definition~\ref{eq:CBloc_braiding} of the braidings in $\mcC_B^\loc$ as shown by the following calculation:
\begin{equation}
\pic[1.25]{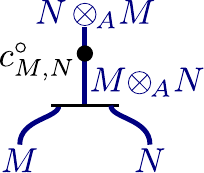} =
\pic[1.25]{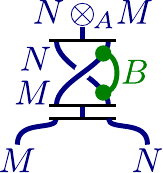} =
\pic[1.25]{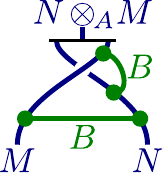} =
\pic[1.25]{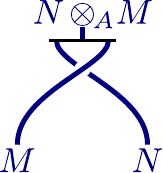} .
\end{equation}

ii) Let $\opA$ be an orbifold datum.
From the argument in Example~\ref{eg:FF_forgetful} follows that $U$ is strongly separable and full with respect to the symmetric separable Frobenius algebra $(A,\psi)$ and has the section $\psi_U = [\opid\xra{\eta=U_0}B\xra{\psi}A]$.
By Remark~\ref{rem:F-A_special} $(U,\opA)$ is compatible, so that $U(\opA)$ is an orbifold datum.

Conversely, if $U(\opA)$ is an orbifold datum, the identities~\eqrefO{1}--\eqrefO{8} for $U(\opA)$ correspond to the same identities for $\opA$ written in terms of $B$-balanced maps.

iii) By Proposition~\ref{prp:ssi_funct_equiv}, we need to show that $U^\opA$ is surjective, i.e.\ that every object of $\mcC_{U(\opA)}$ is a subobject of $U^\opA(M)$ for some $M\in(\mcC_B^\loc)_\opA$.
By Remark~\ref{prp:pipe_objs_generate}, any object of $\mcC_{U(\opA)}$ is a subobject of $P_{U(\opA)}(N)$ where $N$ is a $U(A)$-$U(A)$-bimodule and $P_{U(\opA)}\colon{_{U(A)}\mcC_{U(A)}} \ra \mcC_{U(\opA)}$ is the pipe functor.
A $U(A)$-$U(A)$-bimodule $N\in\mcC$ with the induced $B$-actions in general need not be a local $B$-module, however in case it is, the definition~\eqref{eq:pipe_funct_def} for $P_{U(\opA)}(N)$ can be read as the $B$-balanced map in $\mcC$ defining $P_\opA(N)$ where $P_\opA\colon{_A(\mcC_B^\loc)_A\ra (\mcC_B^\loc)_\opA}$ is the pipe functor for the orbifold datum $\opA$, i.e.\ one has $U^\opA\circ P_\opA (N) \cong P_{U(\opA)}(U(N))$.

We claim that for arbitrary $N\in{_{U(A)}\mcC_{U(A)}}$ one has $P_{U(\opA)}(N) \cong P_{U(\opA)}(U(N^\circ))$, where $N^\circ$ denotes localisation of $N$ with respect to the induced $B$-actions.
Indeed one has:
\begin{align}
&
\pic[1.25]{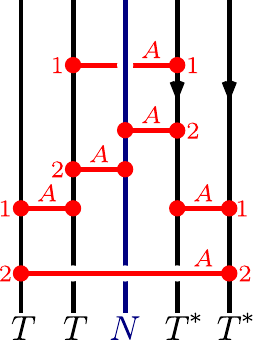} =
\pic[1.25]{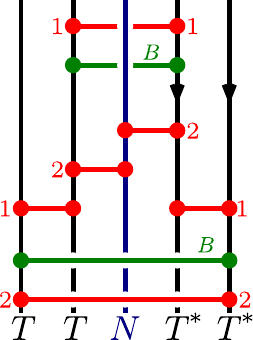} =
\pic[1.25]{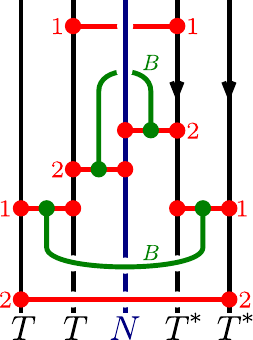}\\
&=
\pic[1.25]{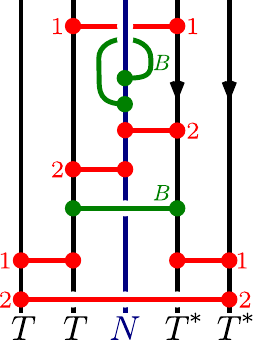} =
\pic[1.25]{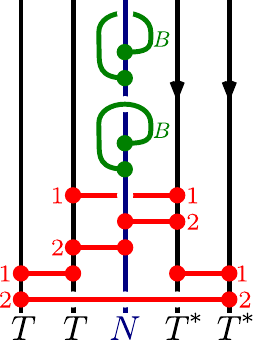},
\end{align}
where we have used that the actions of $U(A)$ on $U(T)$ are $B$-balanced and $U(A)$, $U(T)$ are already local $B$-modules.
On the right-hand side one recognises the idempotent $p^\circ_N$ projecting onto $N^\loc$.
Pre- and postcomposing the both sides with the $\psi_U$-insertions results in the idempotent defining $P_{U(\opA)}$ as needed.

We have shown that the pipe object $P_{U(\opA)}(N)$ for an arbitrary $N\in{}_{U(A)}\mcC_{U(A)}$ is isomorphic to $U^\opA\circ P_\opA(N^\circ)$, i.e.\ in the essential image of $U^\opA$, so that $U^\opA$ is surjective as needed.
\end{proof}
\begin{rem}
\label{rem:condensation_orb_datum}
It was shown in~\cite[Prop.\,3.15]{CRS3} that a condensable algebra $B\in\mcC$ yields an orbifold datum
\begin{equation}
\opB = (B,~ {_BB_{BB}},~ \a=\abar=\pic[1.25]{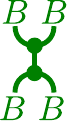} ,~ \psi=\eta,~ \phi=1) \, ,
\end{equation}
where the commutativity of $B$ implies the relations~\eqref{eq:right_T_actions_commute}, \eqref{eq:aabar-A_commute} and hence that $B$ can be treated as a $B$-$B^{\otimes 2}$-bimodule and $\a$, $\abar$ as $B$-$B^{\otimes 3}$-bimodule morphisms.
Furthermore in~\cite[Thm.\,4.1]{MR1} it was shown that the associated MFC $\mcC_\opB$ is equivalent to the condensation $\mcC_B^\loc$.
Both of these results follow from Proposition~\ref{prp:UA_orb_datum}: If $\opid$ is the trivial orbifold datum in $\mcC_B^\loc$, one checks that $\opB = U(\opid)$ and consequently $(\mcC_B^\loc)_{\opid} \simeq \mcC_B^\loc \simeq \mcC_\opB$.
We note that a similar equivalence as in~\ref{prp:UA_orb_datum}iii) does not in general apply for the embeddings $U^{\opA,i}\colon (\mcC_B^\circ)_\opA^{i} \ra \mcC_{U(\opA)}^{i}$, $i\in\{1,2\}$:
It can be inferred from the proof of~\cite[Thm.\,4.1]{MR1} that $\mcC_\opB^1 \simeq \mcC_B^+$, $\mcC_\opB^2 \simeq \mcC_B^-$ as pivotal categories, however one obviously has $(\mcC_B^\loc)_\opid^1 \simeq (\mcC_B^\loc)_\opid^2 \simeq \mcC_B^\loc$.
\end{rem}

\subsection{Inversion orbifold datum}
\label{subsec:inv_orb_datum}
One of the main results of this paper is
\begin{thm}
\label{thm:inv_orb_datum}
Let $\mcC$ be a MFC, $\opA$ an orbifold datum in $\mcC$ and $B\in\mcC$ a condensable algebra.
Then there is an orbifold datum in the condensation $\mcC_B^\loc$ whose associated braided category is equivalent as ribbon multifusion category to $\mcC_\opA$.
\end{thm}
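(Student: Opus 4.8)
Here is a plan of proof. The idea is to realise the required orbifold datum in $\mcC_B^\loc$ as the transport of a Morita-twisted version of $\opA$ along the ribbon Frobenius functor $\Iz$, and to read off the equivalence of associated categories from Propositions~\ref{defnprp:RA_orb_datum} and~\ref{defnprp:FA_orb_datum}.

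\emph{Step 1: a Morita transport by the Kirby colour.} First I would replace the algebra $A$ underlying $\opA$ by the Morita equivalent algebra $A_C := C^*\otimes A\otimes C$ of Example~\ref{eg:XAX_alg}, where $C=\bigoplus_{i\in\Irr_\mcC}i$ is the Kirby colour object and the section $\psi_C$ of $A_C$ is the one coming from the invertible morphism $\varphi=\dhalf$, which is admissible since $\tr_\mcC\varphi^2=\Dim\mcC\neq0$. By Example~\ref{eg:RX_Morita_mod} the bimodule $R_C:=A\otimes C$ is an isometric Morita module ${}_A(R_C)_{A_C}$, so Proposition~\ref{defnprp:RA_orb_datum} produces an orbifold datum $\opA_C:=R_C(\opA)$ in $\mcC$ together with a ribbon equivalence $\mcC_\opA\simeq\mcC_{\opA_C}$ of ribbon multifusion categories.

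\emph{Step 2: compatibility of $\Iz$ with $\opA_C$.} Recall from Section~\ref{subsec:condensable_algs} that $\Iz\colon\mcC\ra\mcC_B^\loc$, sending $X$ to the localisation of the induced module $X\otimes B$, is a ribbon Frobenius functor. I would then verify that $(\Iz,\opA_C)$ is compatible in the sense required by Proposition~\ref{defnprp:FA_orb_datum}. The essential point is that the Kirby colour is exactly what makes $(\Iz,A_C)$ \emph{strongly} separable: the section $\psi_{\Iz}\colon B\ra\Iz(A_C)$ is built from $d=\dhalf\circ\dhalf$, and when checking~\eqref{eq:F_strong_sep_cond} the two $C$-strands are cut apart by the scissors identity~\eqref{eq:scissors_id} (this is the algebraic shadow of the ``punching a tunnel'' picture in the introduction). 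Since $\psi_{\Iz}$ then has the form $\xi\cdot[B\xra{(\Iz)_0}\Iz(\opid)\xra{\Iz(\psi_C)}\Iz(A_C)]$ for a scalar $\xi\in\opk^\times$, Remark~\ref{rem:F-A_special} shows that condition ii) of compatibility (the analogue of~\eqrefO{8}) holds automatically with $\phi_{\Iz}=\phi/\xi$.

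\emph{Step 3: applying the transport and checking it is an equivalence.} Proposition~\ref{defnprp:FA_orb_datum} then yields an orbifold datum $\Iz(\opA_C)$ in $\mcC_B^\loc$ and a ribbon functor $\Iz^{\opA_C}\colon\mcC_{\opA_C}\ra(\mcC_B^\loc)_{\Iz(\opA_C)}$; it remains to prove that this functor is an equivalence. As both categories are finitely semisimple, Proposition~\ref{prp:ssi_funct_equiv} reduces this to surjectivity and full faithfulness. For surjectivity I would argue as in Proposition~\ref{prp:UA_orb_datum}iii): every object of $(\mcC_B^\loc)_{\Iz(\opA_C)}$ is a subobject of a pipe object (Proposition~\ref{prp:pipe_objs_generate}), and a pipe object $P_{\Iz(\opA_C)}(N)$ is recognised, after reducing $N$ to its localisation, as $\Iz^{\opA_C}$ applied to a pipe object for $\opA_C$, hence lies in the essential image. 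Full faithfulness would follow by comparing the averaging idempotents~\eqref{eq:avg_map} onto the morphism spaces of $\mcC_{\opA_C}$ and of $(\mcC_B^\loc)_{\Iz(\opA_C)}$, which match under $\Iz^{\opA_C}$. These (more computational) verifications are deferred to Appendix~\ref{appsec:proof_Iz_is_equiv}. Composing with Step 1 gives a ribbon multifusion equivalence $\mcC_\opA\simeq(\mcC_B^\loc)_{\Iz(\opA_C)}$, so $\Iz(\opA_C)$ is the sought orbifold datum. The main obstacle is precisely this last equivalence claim for $\Iz^{\opA_C}$; the conceptually delicate input, which forces the whole strategy, is that one must work with the Kirby-coloured algebra $A_C$ rather than with $A$ itself, since only then is $\Iz$ strongly separable and Proposition~\ref{defnprp:FA_orb_datum} available.
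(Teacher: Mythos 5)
Your proposal is correct and follows essentially the same route as the paper: Morita-transport $\opA$ to $\opA_C$ along $R_C=A\otimes C$, verify that $(\Iz,\opA_C)$ is compatible via the scissors identity and Remark~\ref{rem:F-A_special}, and then show $\Iz^{\opA_C}$ is an equivalence by the pipe-object and averaging arguments deferred to Appendix~\ref{appsec:proof_Iz_is_equiv}. The paper's proof is structured in exactly these steps, so there is nothing to add.
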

Specialising this statement to $\opA=\opid$ (the trivial orbifold datum in $\mcC$) one immediately gets
\begin{cor}
\label{cor:cond_inv_orb_datum}
There is a simple orbifold datum in the condensation $\mcC_B^\loc$ whose associated MFC is equivalent to $\mcC$.
\end{cor}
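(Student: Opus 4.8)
\emph{Strategy.} The plan is to produce the required orbifold datum by composing the two transport constructions of Sections~\ref{subsec:Morita_transports} and~\ref{subsec:transports_along_FFs}, applied to the functor $\Iz=\Iz_B\colon\mcC\ra\mcC_B^\loc$ sending $X\in\mcC$ to the localisation $(X\otimes B)^\loc$ of the induced module. One cannot apply Definition and Proposition~\ref{defnprp:FA_orb_datum} to $F=\Iz$ and $\opA$ directly, because $\Iz$ need not be separable with respect to the underlying algebra $A$ of $\opA$: it becomes strongly separable only after $A$ is replaced by the Morita equivalent algebra $A_C:=C^*\otimes A\otimes C$, where $C=\bigoplus_{i\in\Irr_\mcC}i$ is the Kirby-colour object, the point being that the Kirby colour brings along the scissors identity~\eqref{eq:scissors_id}, which is what trivialises the idempotents obstructing separability.

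\emph{Morita transport.} First I would take the isometric Morita module $R_C={}_AR_{A_C}$ of Example~\ref{eg:RX_Morita_mod} with $X=C$ and $\varphi=d^{1/2}$ (invertible since $\tr_\mcC d=\Dim\mcC\neq0$), where $(A_C,\psi_C)$ is the symmetric separable Frobenius algebra of Example~\ref{eg:XAX_alg} with its canonical invertible section. By Definition and Proposition~\ref{defnprp:RA_orb_datum} the Morita transport $\opA_C:=R_C(\opA)$ is an orbifold datum in $\mcC$ with underlying algebra $A_C$, and $R_C$ induces a ribbon equivalence of ribbon multifusion categories $\mcC_\opA\xra{\sim}\mcC_{\opA_C}$. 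It therefore suffices to realise $\mcC_{\opA_C}$ as $(\mcC_B^\loc)_{\opA'}$ for a suitable orbifold datum $\opA'$ in $\mcC_B^\loc$.

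\emph{Transport along $\Iz$.} Next I would verify that $\Iz$, presented as the composite of induction $X\mapsto X\otimes B$ with localisation, carries weak (co)monoidal structures built from the (co)multiplication of $B$ and the localising idempotents $p^\loc$, in close analogy with the local induction functor $E_A^l$ of Example~\ref{eg:FF_EAl_functor}, and that it is ribbon: the identities~\eqref{eq:F_braided} and~\eqref{eq:F_twist} follow from the definition~\eqref{eq:CBloc_braiding} of the braiding of $\mcC_B^\loc$ and from $\theta^\loc_M=\theta_M$, by the same computation as in Proposition~\ref{prp:UA_orb_datum}(i). Then one checks that $(\Iz,\opA_C)$ is compatible in the sense of Section~\ref{subsec:transports_along_FFs}: invertibility of the section is inherited from $\psi_C$, while strong separability of $\Iz$ with respect to $A_C$ (condition~\eqref{eq:F_strong_sep_cond}, cf.\ Definition~\ref{def:F-A_strongly_sep_full}) is the crux, and is obtained by cutting, via the scissors identity~\eqref{eq:scissors_id}, a $B$-line threaded by a $C$-loop; taking the section in the form prescribed by Remark~\ref{rem:F-A_special} makes condition ii) of compatibility automatic with $\phi_F=\phi/\xi$. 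Definition and Proposition~\ref{defnprp:FA_orb_datum} then yields the orbifold datum $\opA':=\Iz(\opA_C)$ in $\mcC_B^\loc$ together with a ribbon functor $\Iz_{\opA_C}\colon\mcC_{\opA_C}\ra(\mcC_B^\loc)_{\Iz(\opA_C)}$.

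\emph{The equivalence, the obstacle, and the corollary.} It remains to prove that $\Iz_{\opA_C}$ is an equivalence of ribbon multifusion categories, which by Proposition~\ref{prp:ssi_funct_equiv} (together with Proposition~\ref{prp:surj_pivotal_functs}) amounts to surjectivity and fully faithfulness on morphism spaces. Surjectivity follows as in the proof of Proposition~\ref{prp:UA_orb_datum}(iii): by Remark~\ref{rem:pipe_objs_on_induced_bimods} the pipe objects generate $(\mcC_B^\loc)_{\Iz(\opA_C)}$, and one shows that each $P_{\Iz(\opA_C)}(X)$ is isomorphic to the image under $\Iz_{\opA_C}$ of a pipe object over $\mcC_{\opA_C}$, using that $\Iz(A_C)$ and $\Iz(T^{R_C})$ are already local $B$-modules. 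Fully faithfulness is the main obstacle: since $\opA$ is arbitrary, neither $\opA_C$ nor $\Iz(\opA_C)$ need be simple, so one cannot invoke the automatic full-embedding property of braided functors of MFCs, and instead must compute the morphism spaces directly, comparing the averaging projector~\eqref{eq:avg_map} onto $\mcC_{\opA_C}(M,N)$ with its counterpart in $(\mcC_B^\loc)_{\Iz(\opA_C)}$ and reducing to computations in $\mcC$ via the induction description of $\Iz$; this computation is carried out in Appendix~\ref{appsec:proof_Iz_is_equiv}. Finally, Corollary~\ref{cor:cond_inv_orb_datum} drops out by specialising the theorem to the trivial orbifold datum $\opA=\opid$, for which $\mcC_\opA\simeq\mcC$: the resulting orbifold datum $\opA'$ in $\mcC_B^\loc$ then has $(\mcC_B^\loc)_{\opA'}\simeq\mcC$, which is fusion, so its tensor unit is simple, whence $\opA'$ is simple and $(\mcC_B^\loc)_{\opA'}$ is a MFC by Theorem~\ref{thm:CA_is_MFC}.
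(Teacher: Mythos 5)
Your proposal is correct and follows essentially the same route as the paper: Morita transport along $R_C={}_A(A\otimes C)_{A_C}$, strong separability of $\Iz$ with respect to $A_C$ via the scissors identity, transport of $\opA_C$ along the ribbon Frobenius functor $\Iz$, and the equivalence $\mcC_{\opA_C}\simeq(\mcC_B^\loc)_{\Iz(\opA_C)}$ established by surjectivity on pipe objects and on morphism spaces via the averaging projector, followed by specialisation to $\opA=\opid$. This matches steps (1)--(4) of Section~\ref{subsec:inv_orb_datum} and Appendix~\ref{appsec:proof_Iz_is_equiv}.
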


In the remainder of this section we prove Theorem~\ref{thm:inv_orb_datum} by constructing the orbifold datum in its statement and then write it out explicitly for the case in Corollary~\ref{cor:cond_inv_orb_datum}.
This is done in several steps where we
\begin{enumerate}[(1)]
\item
introduce a certain ribbon Frobenius functor $\Iz\colon \mcC \ra \mcC_B^\circ$;
\item
show that for an arbitrary orbifold datum $\opA$ in $\mcC$, $\Iz$ is compatible with respect to a certain Morita transport $\opA_C = R_C(\opA)$;
this automatically yields an orbifold datum $\Iz(\opA_C)$ in $\mcC_B^\circ$;
\item
show that the functor $\mcC_{\opA_C} \ra (\mcC_B^\loc)_{\Iz(\opA_C)}$ as in Proposition~\ref{defnprp:FA_orb_datum}iv) is an equivalence so that one has $\mcC_{\opA} \simeq \mcC_{\opA_C} \simeq (\mcC_B^\loc)_{\Iz(\opA_C)}$;
\item
unpack the definitions in for the case of trivial orbifold datum $\opA = \opid$.
\end{enumerate}

\medskip

\subsubsection*{Step (1)}
The functor $\Iz = \Iz_B\colon \mcC \ra \mcC_B^\loc$ is defined on an object $X\in\mcC$ and a morphism $[f\colon X \ra Y]\in\mcC$ by
\begin{equation}
\label{eq:Iz_funct_def}
\Iz(X) = \im \pic[1.25]{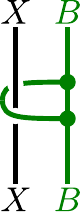} \, , \quad
\Iz(f) = \pic[1.25]{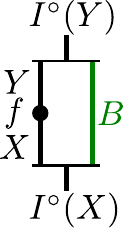} \, .
\end{equation}
Using the commutativity of $B$ and the isomorphisms $B\otimes_B B\cong B$ one can check that $\Iz(X) \cong \im p^\circ_{B \otimes X \otimes B}$, i.e.\ $\Iz(X)$ is obtained by localising the induced bimodule $B\otimes X \otimes B \in \mcBCB$ (cf.\ Example~\ref{eg:FF_EAl_functor} and~\cite[Sec.\,4]{FFRS}).
\begin{lem}
$\Iz\colon\mcC\ra\mcC_B^\loc$ is a ribbon Frobenius functor with the structure morphisms given by $\Iz_0 := \id_B$, $\overline{\Iz_0} := \id_B$ and the $B$-balanced maps
\begin{equation}
\label{eq:Iz2_morphisms}
\Iz_2(X,Y) := \pic[1.25]{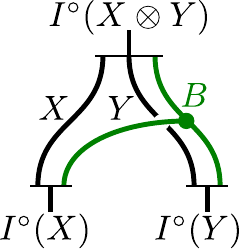} \, , \quad
\overline{\Iz_2}(X,Y) := \pic[1.25]{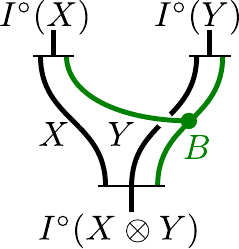} \, .
\end{equation}
\end{lem}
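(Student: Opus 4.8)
The plan is to verify directly that the tuple $(\Iz, \Iz_2, \Iz_0, \overline{\Iz_2}, \overline{\Iz_0})$ satisfies Definition~\ref{def:FF} together with the braided and twist conditions~\eqref{eq:F_braided} and~\eqref{eq:F_twist}, exploiting that $\Iz$ is built by localising the induction-to-bimodules functor $\Ind_B = B \otimes - \otimes B$. First I would recall from Example~\ref{eg:FF_Ind_functor} that $\Ind_B\colon\mcC\ra\mcBCB$ is already a pivotal Frobenius functor (with $\Ind_2$, $\overline{\Ind_2}$ the obvious maps, $\Ind_0 = \D$, $\overline{\Ind_0} = \mu$), and from Proposition~\ref{prp:UA_orb_datum}i) plus the observation $\Iz(X) \cong \im p^\circ_{B\otimes X\otimes B}$ that $\Iz$ factors as $\Ind_B$ followed by the localisation idempotents $p^\circ$. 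So the bulk of the verification is that the structure morphisms~\eqref{eq:Iz2_morphisms}, written as $B$-balanced maps and read inside the localised images, inherit the identities~\eqref{eq:F_graph_calc:assoc-unitality}--\eqref{eq:F_graph_calc:coassoc-counitality} and~\eqref{eq:F_graph_calc:Frob} from the corresponding identities for $\Ind_B$. Here one uses the idempotent-splitting conventions of Section~\ref{subsec:relative_tensor_products}: a horizontal line absorbs the projector $p^\circ_{B\otimes X\otimes B}$, and since $B$ is commutative and $\D$-separable, the extra $\psi$-insertions of~\eqref{eq:extra_psi} are trivial, so compositions of balanced maps behave as honest compositions. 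The Frobenius relations~\eqref{eq:F_graph_calc:Frob} then reduce to moving a $B$-strand across the localisation projector, which is possible because $p^\circ_{B\otimes X\otimes B}$ commutes with the $B$-(co)actions by construction.

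Next I would check the braiding condition~\eqref{eq:F_braided}, i.e.\ that $\Iz_2$ intertwines $c'_{\Iz(X),\Iz(Y)}$ — which by~\eqref{eq:CBloc_braiding} is the localised-module braiding built from the braiding of $\mcC$ — with $\Iz(c_{X,Y})$. Graphically this amounts to the statement that one may slide the $B$-strand of $Y$ (coming from its induction) through the braiding of $X$ with $Y$, using naturality of the braiding in $\mcC$ together with the commutativity of $B$ and the defining property of the localised braiding~\eqref{eq:CBloc_braiding}; this is essentially the same computation as the one displayed in the proof of Proposition~\ref{prp:UA_orb_datum}i) for the forgetful functor $U$, run in reverse. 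The twist condition~\eqref{eq:F_twist} is immediate since the twist $\theta^\circ_{\Iz(X)}$ of a local module agrees with the twist $\theta_{\Iz(X)}$ of the underlying object in $\mcC$, and $\Iz(X)$ is a retract of $B\otimes X\otimes B$ whose twist is $\theta_B \otimes \theta_X \otimes \theta_B$ composed with braiding corrections that collapse because $\theta_B = \id_B$ and $B$ is commutative; one concludes $\Iz(\theta_X) = \theta^\circ_{\Iz(X)}$. Pivotality of $\Iz$ — needed for `ribbon' per Definition~\ref{def:pivotal_braided_ribb_FFs} and Proposition~\ref{prp:FF_ribb-piv-cobr} — follows since the pivotal structure on $\mcC_B^\loc$ is inherited from $\mcBCB$, which in turn is inherited from $\mcC$, and $\Iz$ preserves it because $\Ind_B$ does.

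The main obstacle, such as it is, is purely bookkeeping: keeping track of which horizontal lines stand for which localisation idempotent $p^\circ$, $p^+$, $p^-$ and ensuring that the $B$-balanced maps in~\eqref{eq:Iz2_morphisms} genuinely descend to well-defined morphisms between the localised images rather than merely the induced bimodules. The key technical point that makes this routine is that $p^\circ_N = p^+_N \circ p^-_N = p^-_N \circ p^+_N$ commutes with the $B$-actions, so one can freely push these projectors through the string diagrams; and that $\D$-separability of $B$ means the pipe-style composition nuisance~\eqref{eq:extra_psi} never produces a non-trivial factor. Given these, every one of the identities~\eqref{eq:F_graph_calc:assoc-unitality}--\eqref{eq:F_twist} for $\Iz$ follows from the corresponding identity for $\Ind_B$ after inserting and absorbing projectors, so no genuinely new computation beyond those already performed for $U$ in Proposition~\ref{prp:UA_orb_datum} and for $E^l_A$ in Example~\ref{eg:FF_EAl_functor} is required.
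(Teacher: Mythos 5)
Your argument is correct and in substance the same as the paper's: the paper's proof simply observes that $\Iz$ is the specialisation of the local induction functor $E_A^l$ of Example~\ref{eg:FF_EAl_functor} to $(A,\psi)=(B,\eta)$, corestricted to $\mcC_B^\loc$, which inherits braiding and twist from $\mcC$ — so the identities \eqref{eq:F_graph_calc:assoc-unitality}--\eqref{eq:F_graph_calc:twist} are already verified there. The one caveat in your write-up is that the structure maps \eqref{eq:Iz2_morphisms} are of the $E^l$-type (they contain braidings), so they do not literally descend from the braiding-free structure maps of $\Ind = B\otimes-\otimes B$ by inserting and absorbing localisation projectors; the relevant template is Example~\ref{eg:FF_EAl_functor}, which you do also invoke, rather than Example~\ref{eg:FF_Ind_functor}.
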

\begin{proof}
Since $\mcC_B^\loc$ inherits the braidings and the twists from $\mcC$, this is shown analogously as for the ribbon Frobenius functor $E^l_A\colon\mcC\ra\mcC$ (see Example~\ref{eg:FF_EAl_functor}).
In fact, $\Iz$ is basically the specialisation of $E^l_A$ to the symmetric separable Frobenius algebra $(A,\psi)=(B,\eta)$.
\end{proof}

Specific to the functor $\Iz$ are the two natural transformations $\rho\colon \Iz(- \otimes B)\Lra \Iz(-) :\gamma$, which for $X\in\mcC$ are defined/denoted by
\begin{equation}
\label{eq:rho-gamma_morphs}
\rho_X   = \pic[1.25]{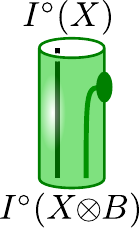}   := \pic[1.25]{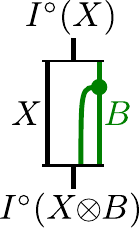} \, , \quad
\gamma_X = \pic[1.25]{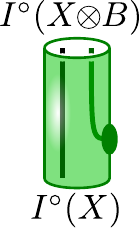} := \pic[1.25]{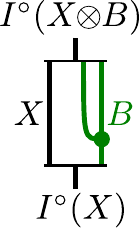} \, .
\end{equation}
It follows immediately that one has the identities
\begin{equation}
\label{eq:rho-gamma_identities}
\pic[1.25]{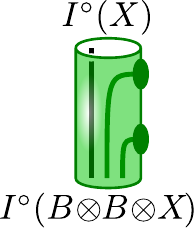} \hspace{-4pt}=\hspace{-4pt}
\pic[1.25]{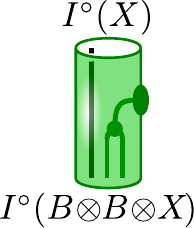}, \quad
\pic[1.25]{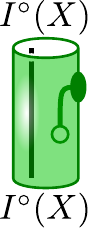} \hspace{-4pt}= \id_{\Iz(X)} \, , 
\pic[1.25]{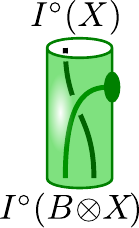} \hspace{-4pt}=\hspace{-4pt}
\pic[1.25]{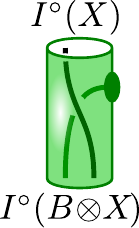},
\end{equation}
(similar identities also hold for $\gamma_X$, comultiplication and counit of $B$).
Using the structure morphisms~\eqref{eq:Iz2_morphisms}, for all $X,Y\in\mcC$ one also gets:
\begin{equation}
\label{eq:Iz_non_separable}
\pic[1.25]{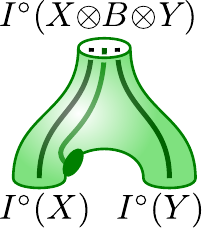} \hspace{-4pt}=\hspace{-4pt} \pic[1.25]{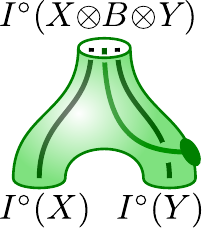} \, , \quad
\pic[1.25]{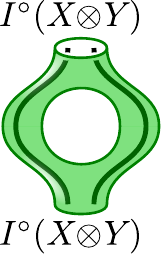} \hspace{-4pt}=\hspace{-4pt} \pic[1.25]{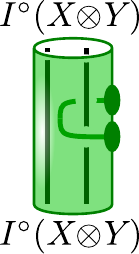} \, .
\end{equation}
Using the definitions~\eqref{eq:rho-gamma_morphs} it is easy to see that the right-hand side of the second identity in~\eqref{eq:Iz_non_separable} is an idempotent, projecting onto $\Iz(X)\otimes_B\Iz(Y)$.
In particular, $\Iz$ need not be separable, since $\Iz(X)\otimes_B\Iz(Y)$ is in general a proper subobject of $\Iz(X\otimes Y)$ in $\mcC_B^\loc$ and so the morphism $\Iz_2(X,Y)$ cannot have a left inverse.
$\overline{\Iz_2}(X,Y)$ is however a right inverse of $\Iz_2(X,Y)$, i.e.\ one has
\begin{equation}
\label{eq:Iz2_inclusion}
\pic[1.25]{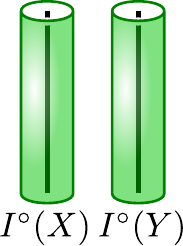} =
\pic[1.25]{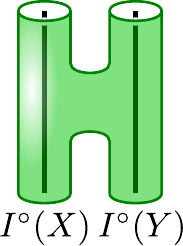} \, .
\end{equation}
This follows from $\Iz(\opid) = B$ being the tensor unit in $\mcC_B^\loc$ and acting on the objects in the image of $\Iz$ by the unitor morphisms (recall that outside the cylinders depicting $\Iz$ in~\eqref{eq:Iz2_inclusion} the graphical calculus is of the category~$\mcC_B^\loc$ and so the tensor unit and the unitor morphisms need not be displayed).

Finally, from definitions~\eqref{eq:Iz_funct_def} and~\eqref{eq:rho-gamma_morphs} follows that for an arbitrary bimodule $M\in\mcC_B^\loc$ one has
\begin{equation}
\label{eq:Iz_simplify}
\im \pic[1.25]{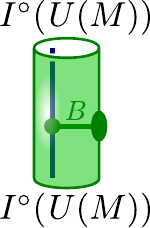}  \cong M  \, , \qquad
\im \pic[1.25]{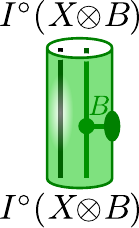} \cong \Iz(X) \, .
\end{equation}
In particular, $M$ is a subobject of $\Iz(U(M))$ in $\mcC_B^\loc$, where $U\colon\mcC_B^\loc\ra\mcC$ is the forgetful functor.

\subsubsection*{Step (2)}

It is easy to find a family of symmetric separable Frobenius algebras with respect to which the functor $\Iz$ is strongly separable.
For that we note that since $\dim\mcC(\opid,B)=1$ and $\vareps\circ\eta = \dim_\mcC B$, the scissors identity~\eqref{eq:scissors_id} implies
\begin{equation}
\label{eq:scissors_on_B}
\pic[1.25]{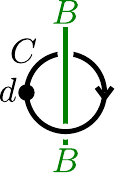} = \frac{\Dim\mcC}{\dim_\mcC B} \cdot
\pic[1.25]{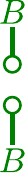} \, .
\end{equation}
Let $(A,\psi)$ be an arbitrary symmetric separable Frobenius algebra in $\mcC$ and let $(A_C, \psi_C)$ be the algebra
\begin{equation}
A_C := C^* \otimes A \otimes C \, , \quad \psi_C := \frac{1}{(\Dim\mcC)^{1/2}} \cdot \pic[1.25]{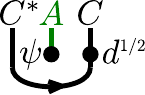} \, .
\end{equation}
and the structure morphisms are as in~\eqref{eq:XAX_alg}.
Recall that according to Example~\ref{eg:RX_Morita_mod} the bimodule $R_C := {_A}(A \otimes C){_{A_C}}$ is an isometric Morita module.

\begin{lem}
For an arbitrary module $L_C\in{_{A_C}\mcC}$ the following identity holds:
\begin{equation}
\label{eq:scissors_on_B_ito_algs}
\pic[1.25]{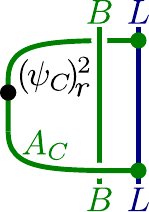} = \frac{1}{\dim_\mcC B} \cdot
\pic[1.25]{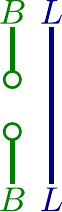} \, .
\end{equation}
\end{lem}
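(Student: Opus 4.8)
The plan is to derive \eqref{eq:scissors_on_B_ito_algs} as an instance of the ``scissors on $B$'' relation \eqref{eq:scissors_on_B} established just above, by massaging the left-hand side into a configuration where that relation applies verbatim. The $L_C$-dependent part of the diagram involves the action of $A_C = C^* \otimes A \otimes C$ on $L_C$, so the $B$-line appearing on the left encircles a composite strand built from a $C^*$- and a $C$-strand coming from this action, together with the $\dhalf = d^{1/2}$-insertions hidden inside the $\psi_C$'s (recall from \eqref{eq:XAX_alg} and Example~\ref{eg:RX_Morita_mod} that $\psi_C$ carries the normalisation factor $(\Dim\mcC)^{-1/2}$ and a $C$-strand decorated by $\dhalf$). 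First I would use the module axiom for $L_C$ over $A_C$, the commutativity of $B$, and isotopy invariance of the ribbon graphical calculus of $\mcC$ to slide the $B$-line so that it forms a plain loop around a single $C$-strand carrying a $d$-insertion, i.e.\ a Kirby-coloured strand, leaving the rest of the diagram (containing $L_C$ and the residual $A$-action) untouched outside the loop.

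Once the diagram is in that form, \eqref{eq:scissors_on_B} replaces the $B$-loop around the Kirby-coloured $C$-strand by $\frac{\Dim\mcC}{\dim_\mcC B}$ times the ``cut'' diagram, in which the $B$-line is severed by a counit--unit pair of $B$. Re-gluing this severed $B$-line back into its original position in the surrounding picture (using counitality/unitality of $B$ and, once more, the module axioms for $L_C$) reproduces precisely the right-hand side of \eqref{eq:scissors_on_B_ito_algs} up to a scalar. It then remains to check that this scalar equals $\frac{1}{\dim_\mcC B}$: the factor $\Dim\mcC$ produced by \eqref{eq:scissors_on_B} is compensated by the $(\Dim\mcC)^{-1/2}$ normalisations carried by the two $\psi_C$-insertions, equivalently by the value $\Dim\mcC = \tr_\mcC d$ of the closed Kirby-coloured $C$-loop that gets created when the $C^*$- and $C$-strands of the $A_C$-action close up, so that one is left with $\frac{1}{\dim_\mcC B}$ as claimed.

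The main obstacle is the first step: arranging the $B$-line into exactly the ``loop around a Kirby-coloured strand'' position demanded by \eqref{eq:scissors_on_B}. This requires tracking carefully how the $B$-line interacts with the $C^*$-, $C$- and $A$-strands of the $A_C$-module structure on $L_C$ and with the duality morphisms, as well as keeping precise count of the powers of $\Dim\mcC$ that accumulate along the way. Everything beyond that reduces to \eqref{eq:scissors_on_B}, the defining identities of a symmetric separable Frobenius algebra, and isotopy of ribbon diagrams, so the argument is otherwise routine.
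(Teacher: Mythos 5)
You correctly identify \eqref{eq:scissors_on_B} as the engine of the proof and your scalar bookkeeping (the $\Dim\mcC$ from the scissors move cancelling against the $(\Dim\mcC)^{-1/2}$ normalisations of the two $\psi_C$-insertions) is right, but the step you dismiss as ``the main obstacle \dots otherwise routine'' is exactly where the argument lives, and your proposed route through it does not work. For a \emph{general} module $L_C\in{_{A_C}\mcC}$ the action morphism $A_C\otimes L_C\ra L_C$ is an opaque coupon: the $C$- and $C^*$-strands created by the $\psi_C^2$-insertion terminate inside it and never close up into a loop, so no amount of applying the module axioms, commutativity of $B$ and isotopy will produce the closed Kirby-coloured circle that \eqref{eq:scissors_on_B} requires. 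The paper resolves this not by diagram manipulation but by a structural reduction: since $R_C = A\otimes C$ is a Morita module (Example~\ref{eg:RX_Morita_mod}), one may assume without loss of generality that $L_C = C^*\otimes L$ for some $L\in{_A\mcC}$; only then is the $A_C$-action explicit, the $C$-strands close into a Kirby-coloured loop, and \eqref{eq:scissors_on_B} applies in a single step. This reduction (or an equivalent one, e.g.\ passing to induced modules) is the missing idea in your proposal; without it the identity is not accessible, precisely because it is the conjugation by $C$ inside $A_C = C^*\otimes A\otimes C$ that makes the scissors identity available at all.

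A secondary but genuine error: you have the two colours in \eqref{eq:scissors_on_B} interchanged. That identity cuts an (open) $B$-\emph{strand} passing through a closed Kirby-coloured $C$-loop, as one sees from its derivation from \eqref{eq:scissors_id} with $X=B$; it says nothing about a closed $B$-loop encircling an open Kirby-coloured $C$-strand, which is a different morphism (a diagonal operator on $C$ determined by the $s$-matrix, not a cutting). Your plan to ``sever the $B$-loop by a counit--unit pair'' therefore invokes \eqref{eq:scissors_on_B} in a configuration to which it does not apply.
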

\begin{proof}
As $R_C$ is a Morita module, one can without loss of generality take $L_C = C^* \otimes L$ for some $L\in{_A\mcC}$.
One then has:
\begin{equation}
\frac{1}{\Dim\mcC} \cdot
\pic[1.25]{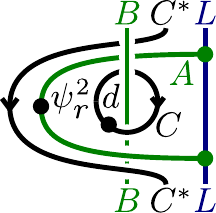} \stackrel{\eqref{eq:scissors_on_B}}{=} \frac{1}{\dim_\mcC B} \cdot
\pic[1.25]{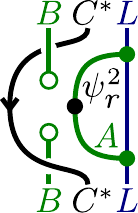} = \frac{1}{\dim_\mcC B} \cdot
\pic[1.25]{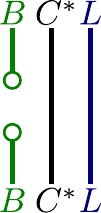}
\end{equation}
as needed.
\end{proof}

\begin{cor}
The ribbon Frobenius functor $\Iz$ is strongly separable with respect to $(A_C,\psi_C)$.
Moreover, the algebra $\Iz(A_C)$ has a section of the form 
\begin{equation}
\label{eq:psi-Iz}
\psi_{\Iz} := (\dim_\mcC B)^{1/2} \cdot \Iz(\psi_C) \, .    
\end{equation}
\end{cor}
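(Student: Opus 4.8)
The plan is to reduce everything to a single graphical identity. By Definition~\ref{def:F-A_strongly_sep_full} and the reformulation recorded right after it, the pair $(\Iz,A_C)$ is strongly separable exactly when there is a section $\psi_F\colon \opid_{\mcC_B^\loc}=\Iz(\opid)\ra\Iz(A_C)$ satisfying the condition~\eqref{eq:F_strong_sep_cond} for $(\Iz,A_C)$; and by that same reformulation such a $\psi_F$ is automatically a section of the symmetric Frobenius algebra $\Iz(A_C)\in\mcC_B^\loc$. Hence both halves of the statement follow at once once we check that~\eqref{eq:F_strong_sep_cond} holds with $\psi_F:=\psi_{\Iz}=(\dim_\mcC B)^{1/2}\cdot\Iz(\psi_C)$ from~\eqref{eq:psi-Iz}. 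Here $\Iz\circ U\colon {}_{A_C}\mcC_{A_C}\ra\mcC_B^\loc$ is a pivotal Frobenius functor, being the composite of the pivotal Frobenius functors $U$ of Example~\ref{eg:FF_forgetful} and $\Iz$, so that strong separability of $(\Iz,A_C)$ literally means separability of $\Iz\circ U$.

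The verification of~\eqref{eq:F_strong_sep_cond} I would carry out as a direct computation in the graphical calculus of $\mcC_B^\loc$, displaying the extra $B$-strands. Expand the left-hand side using the balanced maps~\eqref{eq:Ftensor_morphs} for $\Iz^{A_C}_\otimes$ and $\overline{\Iz}^{A_C}_\otimes$, the structure morphisms~\eqref{eq:Iz2_morphisms} of $\Iz$, and the definition of $\psi_C$ (hence of $\psi_{\Iz}$), which inserts two Kirby-coloured $C$-strands carrying $\dhalf$-insertions. By the second identity in~\eqref{eq:Iz_non_separable}, the composite $\Iz_2\circ\overline{\Iz_2}$ is the idempotent onto the tensor product over $B$, whose only obstruction to being the identity is a single $B$-strand. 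Passing to the $A_C$-relative version and inserting the two $\psi_{\Iz}$-factors produces, because of the $C^*\otimes A\otimes C$-shape of $A_C$, a closed $C$-loop threaded by that $B$-strand. The key step is then to apply the identity~\eqref{eq:scissors_on_B_ito_algs} — the algebra-level scissors identity, itself a consequence of~\eqref{eq:scissors_on_B} — with $L_C$ the $A_C$-module occurring in the diagram: this cuts the $B$-strand at the cost of a factor $1/\dim_\mcC B$. Once the $B$-strand is severed the obstruction idempotent of~\eqref{eq:Iz_non_separable} collapses to the identity on $\Iz(K\otimes_{A_C}L)$, and a bookkeeping of the scalars — $1/\Dim\mcC$ from the two $\psi_C$-normalisations, $\dim_\mcC B$ from the two $(\dim_\mcC B)^{1/2}$-factors in $\psi_{\Iz}$, and $1/\dim_\mcC B$ from~\eqref{eq:scissors_on_B_ito_algs} — leaves total coefficient $1$, as required.

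The hard part is exactly this diagrammatic bookkeeping: one has to keep track of several nested idempotents (the $A_C$-balancing, the $B$-balancing, and the localisation projector $p^\circ$ built into $\Iz$) as well as the spurious $\psi_{\Iz}$-insertions that arise when composing balanced maps (cf.~\eqref{eq:extra_psi}), so as to isolate precisely the $C$-loop/$B$-strand configuration to which~\eqref{eq:scissors_on_B_ito_algs} applies. A useful simplification is that, $R_C$ being an isometric Morita module (Example~\ref{eg:RX_Morita_mod}), one may reduce to $A_C$-modules of the special form $C^*\otimes L$ with $L$ an $A$-module — the same form in which~\eqref{eq:scissors_on_B_ito_algs} is established — where the $C$-loop is manifest; everything else (the identities~\eqref{eq:F_graph_calc:assoc-unitality}--\eqref{eq:F_graph_calc:twist} for $\Iz$ and the module axioms) then enters only in routine rearrangements.
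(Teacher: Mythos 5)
Your proposal is correct and follows the paper's proof essentially verbatim: the paper verifies~\eqref{eq:F_strong_sep_cond} for $(\Iz,A_C)$ by exactly the computation you describe, first using the second identity of~\eqref{eq:Iz_non_separable} to exhibit the obstructing $B$-strand and then cutting it with~\eqref{eq:scissors_on_B_ito_algs}, the factor $\dim_\mcC B$ coming from $\psi_{\Iz}^2$ cancelling the $1/\dim_\mcC B$ that identity produces. One small bookkeeping caveat: the $1/\Dim\mcC$ from the $\psi_C$-normalisations is already absorbed into the left-hand side of~\eqref{eq:scissors_on_B_ito_algs}, so listing it as a separate factor alongside the $1/\dim_\mcC B$ double-counts it --- the net coefficient is $\dim_\mcC B\cdot\tfrac{1}{\dim_\mcC B}=1$, as you correctly conclude.
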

\begin{proof}
For all modules $K_C\in{\mcC_{A_C}}$ and $L_C\in{_{A_C}\mcC}$ one computes
\begin{equation}
\label{eq:Iz-ACA_sep_calc}
\pic[1.25]{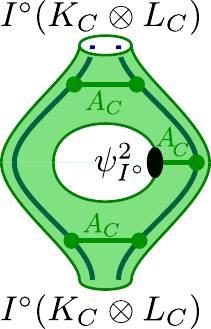} = d_B
\pic[1.25]{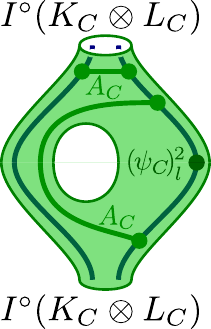} \stackrel{\eqref{eq:Iz_non_separable}}{=} d_B
\pic[1.25]{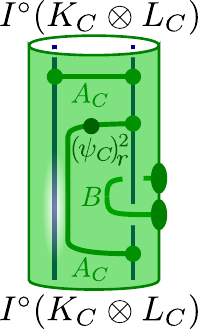} \hspace{-4pt}\stackrel{\eqref{eq:scissors_on_B_ito_algs}}{=}\hspace{-4pt}
\pic[1.25]{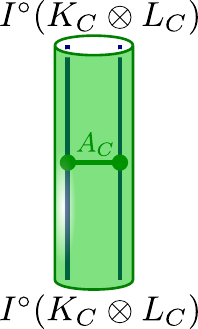} ,
\end{equation}
where we have denoted $d_B = \dim_\mcC B$ for brevity.
The condition~\eqref{eq:F_strong_sep_cond} therefore holds for $(\Iz_0, A_C)$ as needed.
\end{proof}

Now let $\opA = (A,T,\a,\abar,\psi,\phi)$ be an arbitrary orbifold datum in $\mcC$.
By Proposition~\ref{defnprp:RA_orb_datum}, the isometric Morita module $R_C := {_A}(A \otimes C){_{A_C}}$ yields the Morita transport orbifold datum $\opA_C := R_C(\opA)$ in $\mcC$.
The section $\psi_{\Iz}$ in~\eqref{eq:psi-Iz} is of the form as in Remark~\ref{rem:F-A_special}, so that one has
\begin{cor}
\label{cor:inv_orb_datum_constr}
$(\Iz, \opA_C)$ is compatible and therefore by Proposition~\ref{defnprp:FA_orb_datum} produces the orbifold datum $\Iz(\opA_C)$ in $\mcC_B^\loc$, whose last entry is
\begin{equation}
\label{eq:phi-Iz}
\phi_{\Iz} := \frac{\phi}{(\dim_\mcC B)^{1/2}} \, .
\end{equation}
\end{cor}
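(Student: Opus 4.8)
The plan is to verify the two conditions in the definition of a compatible ribbon Frobenius functor for the pair $(\Iz,\opA_C)$ and to read off $\phi_{\Iz}$ along the way; the genuine work has already been carried out in the two corollaries above, so the argument will mostly be an assembly of Remark~\ref{rem:F-A_special}, Proposition~\ref{defnprp:RA_orb_datum}i) and Proposition~\ref{defnprp:FA_orb_datum}.

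First I would dispose of condition i). The corollary directly above records that $\Iz$ is strongly separable with respect to $(A_C,\psi_C)$ and that $\Iz(A_C)$ carries the section $\psi_{\Iz}=(\dim_\mcC B)^{1/2}\cdot\Iz(\psi_C)$, so the only thing left is that $\psi_{\Iz}$ is invertible in the sense of Proposition~\ref{prp:FA_psi_inv}. For that I would use that $\varphi=\dhalf\in\End_\mcC(C)$ is invertible (as $\tr_\mcC d=\Dim\mcC\neq 0$), so by Example~\ref{eg:XAX_alg} the section $\psi_C$ of $A_C=C^{*}\otimes A\otimes C$ has a multiplicative inverse $\psi_C^{-1}\colon\opid\ra A_C$; applying the monoidal Frobenius functor $\Iz$, whose unit is $\Iz_0=\id_B$ so that $\Iz(\eta_{A_C})$ is the unit of $\Iz(A_C)$ and $\Iz_2(\opid,\opid)$ is the unitor of $B=\opid_{\mcC_B^\loc}$, and using naturality of $\Iz_2$, one finds that $(\dim_\mcC B)^{-1/2}\cdot\Iz(\psi_C^{-1})$ is a multiplicative inverse of $\psi_{\Iz}$. (Since $\mcC_B^\loc$ is spherical fusion one could also appeal to the convention that invertibility may be assumed without loss of generality, but I prefer to exhibit the inverse.)

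Next I would treat condition ii). The point is that, because $\Iz(\opid)=B$ and $\Iz_0=\id_B$, the section has precisely the shape
\[
\psi_{\Iz}=(\dim_\mcC B)^{1/2}\cdot\bigl[\,\opid'\xra{\Iz_0}\Iz(\opid)\xra{\Iz(\psi_C)}\Iz(A_C)\,\bigr]
\]
considered in Remark~\ref{rem:F-A_special}, with $F=\Iz$, $\psi=\psi_C$ and $\xi=(\dim_\mcC B)^{1/2}$. That remark then supplies the identity~\eqref{eq:F-A_compatible_cond} automatically, with $\phi_{\Iz}=\phi_{\opA_C}/\xi$, where $\phi_{\opA_C}$ is the last entry of the Morita-transported orbifold datum $\opA_C=R_C(\opA)$. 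By Proposition~\ref{defnprp:RA_orb_datum}i) a Morita transport leaves this entry unchanged, so $\phi_{\opA_C}=\phi$ and hence $\phi_{\Iz}=\phi/(\dim_\mcC B)^{1/2}$. With both conditions established, $(\Iz,\opA_C)$ is compatible, so Proposition~\ref{defnprp:FA_orb_datum}iii) produces the orbifold datum $\Iz(\opA_C)$ in $\mcC_B^\loc$, whose last entry is $\phi_{\Iz}$ by Proposition~\ref{defnprp:FA_orb_datum}i).

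I do not expect a real obstacle: the statement is essentially bookkeeping on top of the machinery developed above. The only mildly delicate point, and the one I would write out in full, is the claim that a weak monoidal functor carries a multiplicative inverse of a section to a multiplicative inverse, i.e.\ the invertibility of $\psi_{\Iz}$.
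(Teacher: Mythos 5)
Your proof is correct and follows essentially the same route as the paper: the paper likewise establishes the corollary by observing that $\psi_{\Iz}=(\dim_\mcC B)^{1/2}\cdot\Iz(\psi_C)$ has exactly the form required by Remark~\ref{rem:F-A_special}, so that compatibility condition ii) holds automatically with $\phi_{\Iz}=\phi/(\dim_\mcC B)^{1/2}$, strong separability having been settled in the preceding corollary. Your additional explicit verification that $\Iz$ carries the multiplicative inverse of $\psi_C$ to one of $\psi_{\Iz}$ is a harmless extra detail the paper leaves to its standing convention.
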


\subsubsection*{Step (3)}
Let us abbreviate $\Iz_\opA = (\Iz)^{\opA_C}$.
We need to prove the following
\begin{lem}
\label{lem:Iz_is_equiv}
The induced ribbon functor $\Iz_\opA\colon\mcC_{\opA_C} \ra (\mcC_B^\loc)_{\Iz(\opA_C)}$ is an equivalence.
\end{lem}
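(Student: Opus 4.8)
The plan is to apply Proposition~\ref{prp:ssi_funct_equiv}: both $\mcC_{\opA_C}$ and $(\mcC_B^\loc)_{\Iz(\opA_C)}$ are multifusion, hence finitely semisimple, so it is enough to show that the functor $\Iz_\opA$ --- which is already a well-defined ribbon, and in particular pivotal, functor by Corollary~\ref{cor:inv_orb_datum_constr} and Proposition~\ref{defnprp:FA_orb_datum}iv) --- is fully faithful and surjective.

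\emph{Surjectivity.} I would mimic the argument in the proof of Proposition~\ref{prp:UA_orb_datum}iii). By Remark~\ref{rem:pipe_objs_on_induced_bimods} applied to the orbifold datum $\Iz(\opA_C)$, the pipe objects $\{P_{\Iz(\opA_C)}(Y)\}_{Y\in\mcC_B^\loc}$ generate $(\mcC_B^\loc)_{\Iz(\opA_C)}$. A direct computation, using that $\Iz$ is a ribbon Frobenius functor and that the $\Iz(A_C)$-(co)actions on $\Iz$ of the $T$-component of $\opA_C$ are $B$-balanced, shows that $\Iz_\opA\circ P_{\opA_C}$ (the pipe functor applied to induced bimodules, cf.\ Remark~\ref{rem:pipe_objs_on_induced_bimods}) agrees up to isomorphism with $P_{\Iz(\opA_C)}$ precomposed with $\Iz$. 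Combined with~\eqref{eq:Iz_simplify}, which exhibits any $Y\in\mcC_B^\loc$ as a subobject of $\Iz(U(Y))$ with $U\colon\mcC_B^\loc\ra\mcC$ the forgetful functor, this shows that every generator $P_{\Iz(\opA_C)}(Y)$ is a subobject --- hence, by semisimplicity, a direct summand --- of $\Iz_\opA(P_{\opA_C}(U(Y)))$. Thus $\Iz_\opA$ is surjective.

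\emph{Full faithfulness, and the main obstacle.} Since $\Iz_\opA$ is a pivotal functor between pivotal multifusion categories, Proposition~\ref{prp:surj_pivotal_functs} reduces this to showing that $\Iz_\opA$ is surjective on morphism spaces. Using Proposition~\ref{prp:pipe_objs_generate} together with the biadjunction between the forgetful functor $\mcC_{\opA_C}\ra{_{A_C}\mcC_{A_C}}$ and its pipe-functor adjoint $P_{\opA_C}$, one reduces further to morphism spaces between pipe objects $P_{\opA_C}(M')$, $P_{\opA_C}(N')$ on induced bimodules $M'$, $N'$; here the averaging idempotent~\eqref{eq:avg_map} (and its analogue for $\Iz(\opA_C)$) must be shown to be compatible with $\Iz$. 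The hard part is the following: a morphism of $\Iz(A_C)$-$\Iz(A_C)$-bimodules in $\mcC_B^\loc$ satisfying the $T$-crossing conditions~\eqref{eq:M} need \emph{not} obviously lie in the image of $\Iz$, because $\Iz$ is not full with respect to $A_C$. This is where the use of the Kirby colour object $C$ enters: the scissors identity~\eqref{eq:scissors_on_B}, in the form~\eqref{eq:scissors_on_B_ito_algs}, is meant to show that precisely the part of such a $\mcC_B^\loc$-morphism which $\Iz$ does not see is annihilated once the morphism is forced to intertwine the $T$-crossings, so that it does descend to a morphism in $\mcC_{\opA_C}$. Carrying out this reduction and the attendant string-diagram computations is somewhat lengthy, and is therefore deferred to Appendix~\ref{appsec:proof_Iz_is_equiv}.
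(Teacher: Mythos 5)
Your proposal follows essentially the same route as the paper: full faithfulness is reduced via pivotality (Proposition~\ref{prp:surj_pivotal_functs}) to surjectivity on morphism spaces, which is established by showing that the averaged morphism $\operatorname{avg}f$ of an arbitrary bimodule morphism lies in the image of $\Iz_\opA$ using the scissors identity~\eqref{eq:scissors_on_B_ito_algs}, while essential surjectivity is obtained by comparing pipe objects across $\Iz$. The only cosmetic difference is that the paper's Appendix~\ref{appsec:proof_Iz_is_equiv} proves the direct isomorphism $P_{\Iz(\opA_C)}(M)\cong\Iz_\opA(P_{\opA_C}(U(M)))$ for $M\in\mcC_B^\loc$ rather than exhibiting $P_{\Iz(\opA_C)}(M)$ merely as a direct summand via a commutation of pipe functors with $\Iz$, but either statement suffices.
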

For that one must show that it is fully faithful and essentially surjective.
Both these steps are achieved by straightforward calculations which are given in Appendix~\ref{appsec:proof_Iz_is_equiv}.
In particular, there we show that
\begin{enumerate}[(a)]
\item
For all $M,N\in\mcC_{\opA_C}$ and an arbitrary $\Iz(A_C)$-$\Iz(A_C)$-bimodule morphism $f\colon\Iz(M)\ra\Iz(N)$, its average morphism $\operatorname{avg}f$ in $(\mcC_B^\loc)_{\Iz(\opA_C)}$ (see~\eqref{eq:avg_map}) is in the image of $\Iz_\opA$.
Since $\operatorname{avg}$ is a projector onto the morphism spaces of $(\mcC_B^\loc)_{\Iz(\opA_C)}$, this implies that $\Iz_\opA$ is surjective on the morphism spaces and hence, since $\Iz_\opA$ is pivotal, automatically fully faithful (see Proposition~\ref{prp:surj_pivotal_functs}).
\item
For all $M\in\mcC_B^\loc$, the pipe object $P_{\Iz(\opA_C)}(M)$ (see Remark~\ref{rem:pipe_objs_on_induced_bimods}) is in the essential image of $\Iz_\opA$.
Since the pipe objects generate $(\mcC_B^\loc)_{\Iz(\opA_C)}$, this implies that $\Iz_\opA$ is essentially surjective (see Proposition~\ref{prp:ssi_funct_equiv}).
\end{enumerate}

\subsubsection*{Step (4)}
Specialising the orbifold datum $\opA$ in Lemma~\ref{lem:Iz_is_equiv} to the trivial orbifold datum $\opid$ in $\mcC$, we obtain the orbifold datum $\Iz(\opid_C)$ in $\mcC_B^\loc$ and braided equivalences $\mcC\simeq\mcC_{\opid_C}\simeq (\mcC_B^\loc)_{\Iz(\opid_C)}$.
We now unpack the statements of Propositions~\ref{defnprp:RA_orb_datum} and~\ref{defnprp:FA_orb_datum} for this case to obtain both $\Iz(\opid_C)$ and the equivalences explicitly.

\medskip

As the symbols for the orbifold datum $\opA$ and its constituents are no longer in use, we will abbreviate $\Iz(\opid_C) =: \opA = (A,T,\a,\abar,\psi,\phi)$ for the rest of the section.
We have:
\begin{itemize}
\item
$A = \Iz(C^* \otimes C)$ with
\begin{align} \nonumber
&  \text{multiplication: } && \pic[1.25]{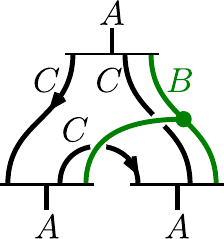} ,
&& \text{comultiplication: } && \pic[1.25]{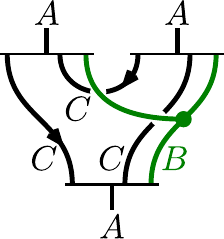},\\ \label{eq:inv-orb-dat_A}
&  \text{unit:} && \pic[1.25]{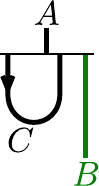} ,
&& \text{counit:} && \pic[1.25]{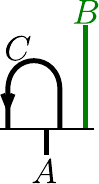} ;
\end{align}
\item
$T =\Iz(C^* \otimes C \otimes C)$ with $A$-actions
\begin{equation}
\label{eq:inv-orb-dat_T}
\pic[1.25]{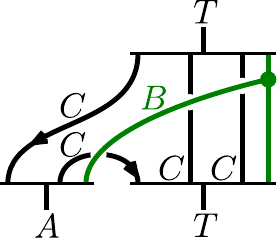} ,~
\pic[1.25]{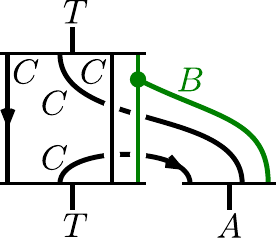} ,~
\pic[1.25]{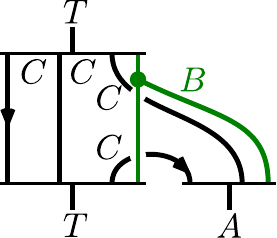} ;
\end{equation}
\item
$\a$, $\abar$ are given by
\begin{equation}
\label{eq:inv-orb-dat_alphas}
\a    = \pic[1.25]{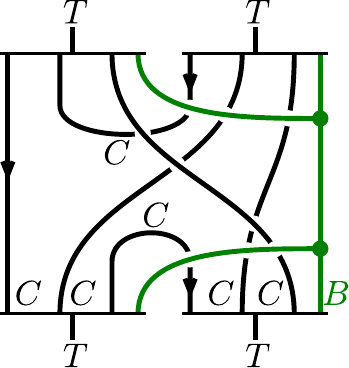} , \quad
\abar = \pic[1.25]{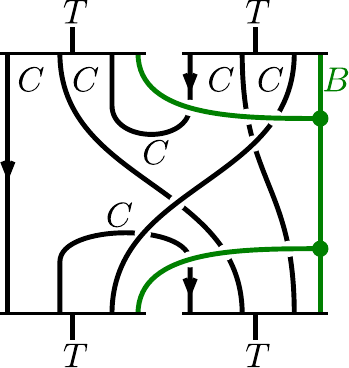} ;
\end{equation}
\item
$\displaystyle \psi = \left(\frac{\dim_\mcC B}{\Dim \mcC}\right)^{1/2} \cdot \pic[1.25]{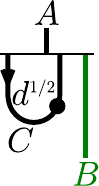}$,
\hspace{80pt}
$\bullet$ $\displaystyle\phi =\frac{1}{(\dim_\mcC B)^{1/2}}$ \, .
\end{itemize}
The equivalence $\Iz_C := [\Iz_{\opid_C}\colon\mcC\xra{\sim}(\mcC_B^\loc)_\opA]$ is defined
\begin{align}\nonumber
&\text{on objects:   } && X && \mapsto && (\Iz(C^* \otimes X \otimes C), \tau_1, \tau_2, \taubar{1}, \taubar{2}) \, ,\\ \label{eq:IzC_functor}
&\text{on morphisms: } && [f\colon X \ra Y] && \mapsto && \Iz(\id_{C^*} \otimes f \otimes \id_C) \, ,
\end{align}
where the left/right $A$-actions on $\Iz(C^* \otimes X \otimes C)$ are
\begin{equation}
\pic[1.25]{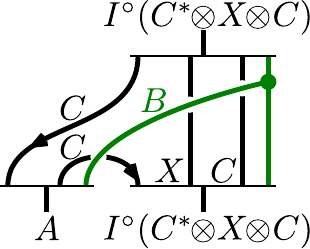} \, , \quad
\pic[1.25]{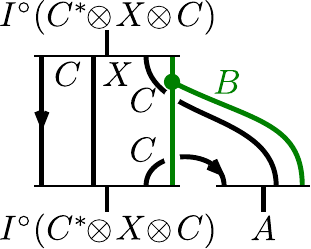}
\end{equation}
and the $T$-crossings are
\begin{equation}
\tau_1    = \pic[1.25]{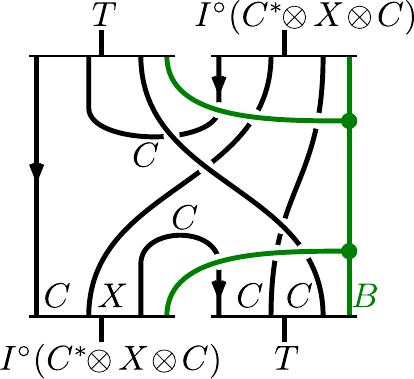} , \quad
\tau_2    = \pic[1.25]{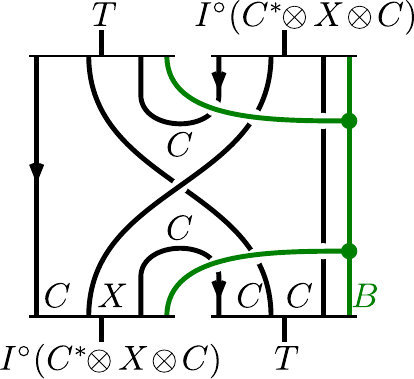} ,
\end{equation}
with $\taubar{1}$, $\taubar{2}$ defined similarly.

Finally, the monoidal structure morphisms $(\Iz_C)_2(X,Y)\colon\Iz_C(X)\otimes_A\Iz_C(Y)\xra{\sim}\Iz_C(X\otimes Y)$ and their inverses are given by the balanced maps
\begin{equation}
\pic[1.25]{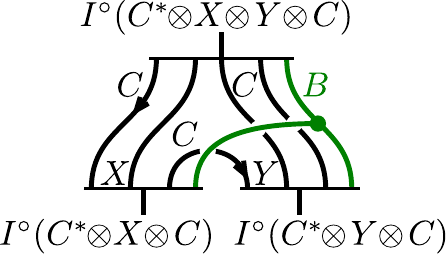} ,
\pic[1.25]{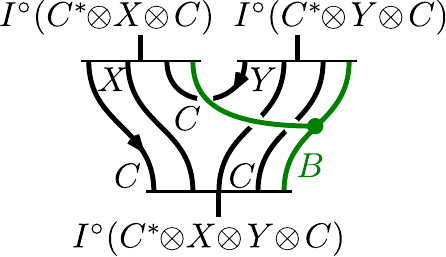}
\end{equation}
and one has $[(\Iz_C)_0\colon A\xra{\sim}\Iz_C(\opid) \cong \Iz(C^* \otimes C) \cong A] = \id_A$.
\begin{thm}
\label{thm:cond_inv_orb_datum}
The tuple $\opA = (A,T,\a,\abar,\psi,\phi)$ listed in~\eqref{eq:inv-orb-dat_A}--\eqref{eq:inv-orb-dat_alphas} is a simple orbifold datum in $\mcC_B^\loc$ and the functor $\Iz_C\colon\mcC\ra(\mcC_B^\loc)_\opA$ defined in~\eqref{eq:IzC_functor} together with the monoidal structure $((\Iz_C)_2, (\Iz_C)_0)$ is a ribbon equivalence.
\end{thm}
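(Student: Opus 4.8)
The plan is to deduce the theorem from the machinery already assembled in Steps (1)--(3), so that the only genuinely new content is the bookkeeping identifying the abstractly constructed objects with the explicit string diagrams displayed in \eqref{eq:inv-orb-dat_A}--\eqref{eq:inv-orb-dat_alphas} and \eqref{eq:IzC_functor}. Concretely, I would start from the trivial orbifold datum $\opid$ in $\mcC$ (with underlying algebra $\opid$, bimodule $T=\opid$, crossings $\id$ and $\phi=1$), whose associated MFC is $\mcC$ itself, and specialise the construction to it. With $A=\opid$ one has $A_C=C^*\otimes C$, and the isometric Morita module of Example~\ref{eg:RX_Morita_mod} becomes $R_C={}_\opid C_{A_C}$ (taking $\varphi=d^{1/2}$, which is invertible with $\tr_\mcC\varphi^2=\Dim\mcC\neq 0$). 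By Proposition~\ref{defnprp:RA_orb_datum} the Morita transport $\opid_C:=R_C(\opid)$ is an orbifold datum in $\mcC$ and the induced functor $R_C\colon\mcC=\mcC_\opid\xra{\sim}\mcC_{\opid_C}$ is a ribbon equivalence; by Corollary~\ref{cor:inv_orb_datum_constr} the pair $(\Iz,\opid_C)$ is compatible, so by Proposition~\ref{defnprp:FA_orb_datum} the tuple $\Iz(\opid_C)$ is an orbifold datum in $\mcC_B^\loc$ and the transport $\Iz_\opA:=(\Iz)^{\opid_C}\colon\mcC_{\opid_C}\ra(\mcC_B^\loc)_{\Iz(\opid_C)}$ is a ribbon functor, which is an equivalence by Lemma~\ref{lem:Iz_is_equiv}. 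Setting $\Iz_C:=\Iz_\opA\circ R_C$ then produces a ribbon equivalence $\mcC\xra{\sim}(\mcC_B^\loc)_{\Iz(\opid_C)}$.

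The next step is to check that $\Iz(\opid_C)$, written out, is exactly the tuple $\opA$ of \eqref{eq:inv-orb-dat_A}--\eqref{eq:inv-orb-dat_alphas}. This is a direct computation: one substitutes $A=\opid$, $T=\opid$, $R_C=C$ and the explicit bimodule structure and isomorphisms of Example~\ref{eg:RX_Morita_mod} into the Morita-transport formulas \eqref{eq:RA_orb_datum} to obtain $\opid_C$ with underlying algebra $A_C=C^*\otimes C$ (the algebra of Example~\ref{eg:XAX_alg}), bimodule $T^{R_C}\cong C^*\otimes C\otimes C$, and crossings $\a^{R_C},\abar^{R_C}$ built from braidings; then one applies $\Iz$ and its (co)monoidal structure \eqref{eq:Iz2_morphisms} via the transport formulas \eqref{eq:FA_orb_datum}, so that the underlying objects become $A=\Iz(C^*\otimes C)$ and $T=\Iz(C^*\otimes C\otimes C)$ with the actions of \eqref{eq:inv-orb-dat_T} and the crossings of \eqref{eq:inv-orb-dat_alphas}. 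The scalars fall out automatically: the normalisation $(\Dim\mcC)^{-1/2}$ in $\psi_C$ together with the factor $(\dim_\mcC B)^{1/2}$ in \eqref{eq:psi-Iz} yields the stated $\psi$, and $\phi=\phi_{\Iz}$ is as in \eqref{eq:phi-Iz}. The same substitution applied to the object-transport formulas \eqref{eq:RM_obj}, \eqref{eq:FM_obj} identifies $\Iz_C=\Iz_\opA\circ R_C$ with the functor of \eqref{eq:IzC_functor}, while the monoidal structure morphisms $(\Iz_C)_2$ are obtained by composing the balanced maps $F^A_\otimes$ of \eqref{eq:Ftensor_morphs} for $F=\Iz$ with the monoidal structure $R_2$ of Proposition~\ref{prp:R_pivotal}, and $(\Iz_C)_0=\id_A$ since both factors have identity unitor component. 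Throughout one must keep track of the identification $R_C^*\cong C^*\otimes A^*\cong C^*\otimes A$ (using symmetry of $A$) and of the extra $\psi$-insertions produced when composing balanced maps, cf.\ \eqref{eq:extra_psi}.

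Finally, simplicity is immediate: $\mcC$ is fusion because it is a MFC, and $\Iz_C$ is a monoidal equivalence, so $(\mcC_B^\loc)_\opA\simeq\mcC$ is fusion; hence its monoidal unit $A$ is simple, i.e.\ $\opA$ is a simple orbifold datum. I expect the main obstacle to be purely computational rather than conceptual: the diagram chase verifying that the two descriptions of $\opA$ and of $\Iz_C$ coincide, where the most delicate bookkeeping is matching the normalisation scalars, tracking the $\psi$-insertions, and carrying the $R_C^*\cong C^*\otimes A$ identification through the nested relative tensor products.
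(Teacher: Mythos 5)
Your proposal is correct and follows essentially the same route as the paper: Theorem~\ref{thm:cond_inv_orb_datum} is precisely the paper's Step~(4), i.e.\ the specialisation of Corollary~\ref{cor:inv_orb_datum_constr} and Lemma~\ref{lem:Iz_is_equiv} to the trivial orbifold datum $\opid$, followed by unpacking the Morita-transport and $\Iz$-transport formulas to obtain \eqref{eq:inv-orb-dat_A}--\eqref{eq:inv-orb-dat_alphas} and \eqref{eq:IzC_functor}, with simplicity deduced from $(\mcC_B^\loc)_\opA\simeq\mcC$ being fusion. Your bookkeeping of the scalars $\psi$, $\phi$ and of the composition $\Iz_C=\Iz_\opA\circ R_C$ matches the paper's.
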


\begin{rem}
We should emphasise that having a MFC $\mcD\simeq\mcC_B^\loc$, finding an orbifold datum in $\mcD$ inverting the condensation is still a hard task since one needs to know $\mcC$, $B$ and the equivalence $\simeq$ to make use of $\opA$.
For example, finding all such orbifold data in $\Vect_\opk$ would amount to finding all Drinfeld centres $\mcZ(\mcS)$ for spherical fusion categories $\mcS$ with $\Dim\mcS\neq 0$, see Section~\ref{sec:Witt_equiv} below.
Nevertheless, the results of this section will allow us to make some general statements about the MFCs obtained from orbifold data.
Moreover, the explicit expressions for the constituents of $\opA$ can be used to explore various properties of a general orbifold datum undoing a condensation, for example in Section~\ref{subsec:Morita_theory_for_cond_inv} we briefly look at what can be said about the Morita class of the algebra $A = \Iz(C^* \otimes C)$ in $\mcC_B^\loc$.
\end{rem}

As a final exercise in this section, let us check the formula~\eqref{eq:Dim-CA_formula} for the categorical dimension of $(\mcC_B^\loc)_\opA$.
One has:
\begin{align} \nonumber
&
\tr_{\mcC_B^\loc} \omega_A^2 =
\pic[1.25]{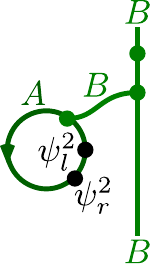} \hspace{-4pt}= \left(\frac{\dim_\mcC B}{\Dim\mcC}\right)^2 \hspace{-2pt}\cdot\hspace{-2pt}
\pic[1.25]{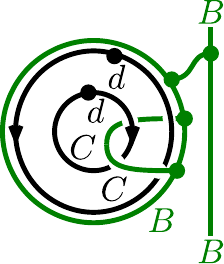} \hspace{-4pt}\stackrel{\eqref{eq:scissors_on_B}}{=} \frac{\dim_\mcC B}{\Dim\mcC} \cdot
\pic[1.25]{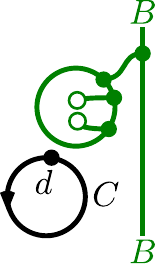} \\ \label{eq:tr-omega_calc}
&=
\dim_\mcC B \cdot \id_B \, ,
\end{align}
where we have used the pivotal structure of $\mcC_B^\loc$ to write the trace.
As usual, one identifies $\mcC_B^\loc(B,B)\cong\opk$ by mapping $\id_B \mapsto 1$, so that~\eqref{eq:tr-omega_calc} reads $\tr_{\mcC_B^\loc} \omega_A^2 = \dim_\mcC B$.
This yields:
\begin{equation}
\Dim (\mcC_B^\loc)_\opA =
\frac{\Dim \mcC_B^\loc}{\phi^8 \cdot \tr_{\mcC_B^\loc}\omega_A^2} =
\frac{\Dim \mcC / (\dim_\mcC B)^2}{(\dim_\mcC B)^{-4} \cdot (\dim_\mcC B)^2} =
\Dim \mcC
\end{equation}
as expected.

\subsection{Example: Morita theory of $E_6$ inversion}
\label{subsec:Morita_theory_for_cond_inv}
In this section we recall the examples of orbifold data found in~\cite{MR2} and show that they are in fact examples of condensation inversion.
We keep the argument general enough to be possibly applicable to other similar examples, in particular we show how in general the Morita class of the algebra $A = \Iz(C^* \otimes C)\in\mcC_B^\loc$ as in Theorem~\ref{thm:cond_inv_orb_datum} is related to that of the algebra $B\otimes B$ in $\mcC$.

\medskip

So far three explicit examples of condensation inversion were considered: in~\cite{CRS3} the following orbifold data were constructed
\begin{enumerate}[i)]
\item
In the trivial MFC $\Vect$ of finite dimensional $\opk$-vector spaces, a spherical fusion category $\mcS$ with $\Dim\mcS\neq 0$ can be used to construct an orbifold datum $\opA^\mcS$.
In~\cite{MR1} it was shown that the associated MFC $\Vect_{\opA^\mcS}$ is equivalent to the Drinfeld centre $\mcZ(\mcS)$, so that $\opA^\mcS$ undoes the condensation by the so-called Lagrangian algebra in $\mcZ(\mcS)$ (see~\cite[Sec.\,4.2]{DMNO}).
\item
Let $G$ be a finite group.
Then a $G$-crossed extension $\mcC_G^\times = \bigoplus_{g\in G} \mcC_g$ of a MFC $\mcC = \mcC_1$ yields an orbifold datum $\opA^G$ in $\mcC$.
The associated MFC $\mcC_{\opA^G}$ is expected (but as of yet not shown) to be equivalent to the equivariantisation $(\mcC_G^\times)^G$, in which case $\opA^G$ undoes the deequivariantisation procedure, which is a condensation of $(\mcC_G^\times)^G$.
\end{enumerate}
Of main interest to us in this section is the following ad-hoc example considered in~\cite{MR2}:
\begin{enumerate}[i)]
\setcounter{enumi}{2}
\item
The rank 11 MFC $\mcC(sl(2),10)$ of integrable highest weight modules of the affine Lie algebra $\widehat{sl}(2)_{10}$ has a condensable algebra $E_6$ whose condensation is an Ising-type MFC $\mcI_{\zeta,\epsilon}$ where the parameters $\epsilon\in\{\pm 1\}$, $\zeta\in\{\sqrt[8]{-1}\}$ describe one of several possible ribbon structures (for $E_6$-condensation one has $\epsilon=-1$ and $\zeta=\exp(3/8 \pi i)$). 
The initial ideas of how condensation inversion can be done has lead to finding instances of orbifold data $\opA_{h,\epsilon}$, $h^3 = \zeta$ in $\mcI_{\zeta,\epsilon}$.
These orbifold data were used to demonstrate how one can obtain some information about the associated MFCs (e.g.\ rank and categorical dimensions of some objects) without knowing them exactly.
It was however never proved that one of the associated MFCs $(\mcI_{\zeta,\epsilon})_{\opA_{h,\epsilon}}$ is actually $\mcC(sl(2),10)$, even though they were shown to all have 11 simples which for $h=\exp(19/24 \pi i)$, $\epsilon=-1$ have the same categorical dimensions as those of $\mcC(sl(2),10)$.
\end{enumerate}

Recall that the Ising categories $\mcI_{\zeta,\epsilon}$ all have 3 simples $\{\opid,\vareps,\sigma\}$ and the fusion rules $\vareps^{\otimes 2} \cong \opid$, $\vareps\otimes\sigma\cong\sigma$, $\sigma^{\otimes 2}\cong\opid\oplus\vareps$.
By construction, the orbifold data $\opA_{h,\epsilon} = (\widetilde{A}, \widetilde{T},\dots)$ in the example iii) above are exactly\footnote{The setting for using orbifold data in~\cite{MR2} was that of $\D$-separable Frobenius algebras and Euler completion, see Remark~\ref{rem:Delta-sep_orb_data}, so technically here we should take a rescaling of $\widetilde{A}$. In this section however we are only interested in the Morita transports of $A$ and $T$ entries, for which the scalings play no role and so we do need to require the Morita transport to be isometric.} the ones which satisfy the following ansatz for the algebra $\widetilde{A}$ and the $\widetilde{A}$-$\widetilde{A}^{\otimes2}$-bimodule $\widetilde{T}$:
\begin{equation}
\label{eq:undo-E6_orb_datum}
\widetilde{A} = \opid_\imath \oplus \opid_\varphi \, , \quad
\widetilde{T} = \bigoplus_{a,b,c\in\{\imath,\varphi\}} {_at_{bc}}\, , ~
\text{where }
\begin{cases}
{_\imath t_{\imath\imath}} = {_\varphi t_{\varphi\imath}} = {_\varphi t_{\imath\varphi} = \opid}\\
{_\imath t_{\varphi\varphi}} = \opid \, , \, {_\varphi t_{\varphi\varphi}} = \sigma\\
{_\imath t_{\imath\varphi}} = \dots = 0
\end{cases} \, .
\end{equation}
Here the indices $\imath$ and $\varphi$ are used only to distinguish between the two copies of $\opid$ in $\widetilde{A}$ and their actions on $\widetilde{T}$.
The splitting of $\widetilde{T}$ is into submodules ${_at_{bc}}$ on which only $\opid_a$-$\opid_b \otimes \opid_c$ acts non-trivially by unitors.
By the end of this section we will show the following
\begin{thm}
\label{thm:E6_is_inversion}
The orbifold datum in Theorem~\ref{thm:cond_inv_orb_datum} undoing the $E_6$-condensation of $\mcC(sl(2),10)$ can be Morita-transported to a one of the form~\eqref{eq:undo-E6_orb_datum}.
\end{thm}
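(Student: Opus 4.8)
\emph{Setup and reduction.} Fix $\mcC=\mcC(sl(2),10)$, $B=E_6$, so that $\mcC_B^\loc\simeq\mcI_{\zeta,\epsilon}$ for the relevant Ising-type MFC. By Theorem~\ref{thm:cond_inv_orb_datum} the condensation-inversion orbifold datum is $\opA=(A,T,\a,\abar,\psi,\phi)$ with $A=\Iz(C^*\otimes C)$ and $T=\Iz(C^*\otimes C\otimes C)$ as in~\eqref{eq:inv-orb-dat_A}--\eqref{eq:inv-orb-dat_alphas}, where $C$ is the Kirby colour object of $\mcC$. A tuple of the form~\eqref{eq:undo-E6_orb_datum} is one whose underlying algebra is $\widetilde A=\opid_\imath\oplus\opid_\varphi$, the direct sum of two copies of the tensor unit of $\mcI_{\zeta,\epsilon}$, and whose underlying bimodule $\widetilde T$ splits into the six simple pieces ${}_at_{bc}$ listed there. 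By Proposition~\ref{defnprp:RA_orb_datum}(i),(iii) it therefore suffices to produce a Morita module ${}_AR_{\widetilde A}$ (not necessarily isometric — rescalings of the Frobenius structure do not affect the $A$- and $T$-entries of the transport) with $\widetilde A$ of this shape, and to check that the transported bimodule $T^R=R^*\otimes_AT\otimes_{A^{\otimes2}}R^{\otimes2}$ decomposes as $\widetilde T$; the entries $\a^R$, $\abar^R$, $\psi'$, $\phi$ are then supplied automatically and are unconstrained by the ansatz.

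\emph{Morita class of $A$.} The first task is to determine the Morita class of $A=\Iz(C^*\otimes C)$ in $\mcC_B^\loc$, and here I would use the general analysis of this section, which relates this class to that of $B\otimes B$ in $\mcC$ (equivalently, to the structure of the category $\mcBCB$ of $B$-bimodules): the indecomposable algebra summands of $A$ correspond to those of $B\otimes B$, and the associated module categories match up accordingly. Specialising to $B=E_6$, I would then carry out the explicit computation of $E_6\otimes E_6$ as an object of $\mcC(sl(2),10)$ — using the fusion rules of $\widehat{sl}(2)_{10}$ and the algebra structure of $E_6$ — and read off from it that $A$ breaks into exactly two indecomposable blocks, each Morita trivial, i.e.\ that $A$ is Morita equivalent to $\opid_\imath\oplus\opid_\varphi$. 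This yields the Morita module ${}_AR_{\widetilde A}$, so that $R(\opA)$ has underlying algebra $\widetilde A$ of the form required by~\eqref{eq:undo-E6_orb_datum}.

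\emph{The transported bimodule.} It remains to identify $T^R$ with $\widetilde T$. Since $\widetilde A=\opid_\imath\oplus\opid_\varphi$ and $\widetilde A^{\otimes2}$ splits into four copies of the unit indexed by $bc\in\{\imath\imath,\imath\varphi,\varphi\imath,\varphi\varphi\}$, the bimodule $T^R$ decomposes canonically into graded pieces ${}_a(T^R)_{bc}\in\mcI_{\zeta,\epsilon}$, $a\in\{\imath,\varphi\}$, obtained by cutting $T=\Iz(C^*\otimes C\otimes C)$ with the idempotents on $A$ and $A^{\otimes2}$ pulled back along $R$. Using the explicit $A$-actions~\eqref{eq:inv-orb-dat_T} on $T$, the equivalence $\mcC\xra{\sim}\mcI_{\zeta,\epsilon}$ furnished by $\Iz_C$ of Theorem~\ref{thm:cond_inv_orb_datum}, and the decomposition of $E_6\otimes E_6$ from the previous step, one computes each of these eight pieces, obtaining ${}_\imath t_{\imath\imath}={}_\varphi t_{\varphi\imath}={}_\varphi t_{\imath\varphi}={}_\imath t_{\varphi\varphi}=\opid$, ${}_\varphi t_{\varphi\varphi}=\sigma$ and the three remaining pieces zero, exactly as in~\eqref{eq:undo-E6_orb_datum}. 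Together with the previous step this exhibits $R(\opA)$ as a tuple of the stated shape; since the orbifold data $\opA_{h,\epsilon}$ of~\cite{MR2} are by definition precisely those satisfying this ansatz, $R(\opA)$ is one of them, and a count of the $11$ simples of $\mcC_{R(\opA)}\simeq\mcC_\opA\simeq\mcC(sl(2),10)$ (using Proposition~\ref{defnprp:RA_orb_datum}(iv) and Theorem~\ref{thm:cond_inv_orb_datum}) pins down the values of $h$ and $\epsilon$.

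\emph{Main difficulty.} The conceptual reductions above are routine given Proposition~\ref{defnprp:RA_orb_datum} and the Morita machinery of this section; the real work — and the main obstacle — is the concrete representation-theoretic computation in $\mcC(sl(2),10)$: decomposing $E_6\otimes E_6$ together with its algebra structure precisely enough to read off the two-block Morita picture for $A$, and then tracking $T=\Iz(C^*\otimes C\otimes C)$ through the Morita transport to obtain the six summands ${}_at_{bc}$. This requires explicit control of the associativity and braiding data of $\widehat{sl}(2)_{10}$ and of the $E_6$ Frobenius algebra.
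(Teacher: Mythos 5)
Your high-level route coincides with the paper's: reduce to exhibiting a Morita module from $A=\Iz(C^*\otimes C)$ to $\opid_\imath\oplus\opid_\varphi$, using the fact that the Morita class of $A$ in $\mcC_B^\loc$ is governed by that of $B^{\otimes 2}$ in $\mcC$, and then compute the transported bimodule. But the decisive step is exactly the one you defer to as ``the main obstacle'', and your proposed way of carrying it out is not what makes the argument work. Two points. First, the two-block Morita picture for $B^{\otimes2}$ cannot be read off from the decomposition of $E_6\otimes E_6$ as an object of $\mcC(sl(2),10)$ --- the underlying object does not determine the Morita class; the paper takes $B^{\otimes2}\sim B^{\oplus 2}$ as known input (assumption~\eqref{eq:BxB_sum_of_ones_assumption}), and the content lies elsewhere. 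Second, and more importantly, you assert that tracking $T=\Iz(C^*\otimes C\otimes C)$ through the Morita transport requires explicit control of the associativity and braiding data of $\widehat{sl}(2)_{10}$ and of the $E_6$ Frobenius algebra. The paper's proof is engineered precisely to avoid this. The missing idea is the structural ansatz~\eqref{eq:Ra-XaB_assumption} that the summands of the Morita bimodule from $B\otimes B$ to $B^{\oplus 2}$ be of the special form $R_a=X_a\otimes B$ for objects $X_a\in\mcC$; under this ansatz the transported bimodule collapses to the closed formula ${_at_{bc}}\cong\Iz(X_a^*X_bX_c)$ of~\eqref{eq:t-abc_under_assumptions}, and Proposition~\ref{prp:Ra-XaB_assumption_cond} converts the ansatz into two verifiable conditions at the level of objects and of $B$-$B$-bimodules, namely $\bigoplus_aX_aBX_a^*\cong B^{\otimes2}$ and $\Iz(X_a^*X_b)\cong\delta_{ab}B$.

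With $X_\imath=\underline{0}$ and $X_\varphi=\underline{1}$ both conditions, and then the eight summands ${_at_{bc}}=\Iz(X_a^*X_bX_c)$, are computed purely from the fusion rules of $\widehat{sl}(2)_{10}$ and the table~\eqref{eq:E6_Iz} of localised induced modules (i.e.\ from the classification of simple and local $E_6$-modules) --- no $F$-matrices, $R$-matrices or explicit $E_6$ multiplication enter. Without this ansatz your outline stalls at the point where it matters: you would have to construct the Morita bimodule and the eight relative tensor products by hand from the associator data, a substantially harder and genuinely different computation from the one the theorem actually rests on.
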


We do this by first considering the Morita class of the algebra $A = \Iz(C^* \otimes C)$ in general and then imposing a series of assumptions both on the Morita class and on the Morita modules relating its elements, all of which apply to the $E_6$ case.

\medskip

Let $(F,\psi)$ be a symmetric separable Frobenius algebra in $\mcC$ and ${}_{B\otimes B} R _F$ a Morita module.
The $F$-$F$-bimodule isomorphism $R^* \otimes_{B^{\otimes 2}} R \cong F$ provides $F$ with a structure of a $B$-$B$-bimodule with the left and right actions given by
\begin{equation}
\pic[1.25]{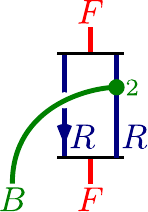} =
\pic[1.25]{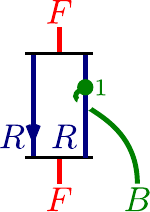}
\qquad \text{and} \qquad
\pic[1.25]{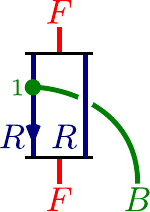} =
\pic[1.25]{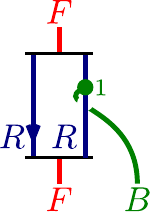}\, .
\end{equation}
By definition the $B$-actions on $F$ commute with the multiplication of $F$ in the sense that the following identities hold:
\begin{equation*}
\pic[1.25]{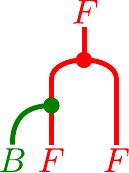}=
\pic[1.25]{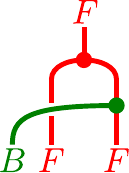}=
\pic[1.25]{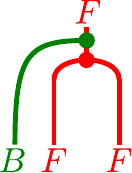},\quad
\pic[1.25]{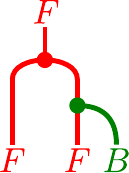}=
\pic[1.25]{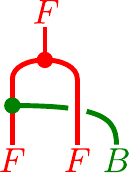}=
\pic[1.25]{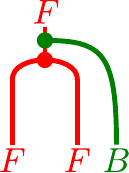}.
\end{equation*}

\begin{rem}
Such kind of compatibility between $F$ and $B$ yields what was called in~\cite[Def.\,2.10]{KMRS} an algebra $F\in\mcC$ over a pair of condensable algebras $(B,B')$ in $\mcC$ (in this case one has $B=B'$).
There they were used for an internal state-sum construction of domain walls between two bulk TQFTs of Reshetikhin--Turaev type $Z^{\operatorname{RT}}_{\mcC_B^\loc}$ and $Z^{\operatorname{RT}}_{\mcC_{B'}^\loc}$ (see Section~\ref{sec:introduction} for a brief review).
The occurrance of such algebras here is not coincidental: we are investigating the algebra $B\otimes B$ which labels the gap domain wall (see Figure~\ref{fig:gap_defect}).
\end{rem}

We proceed to construct out of $F$ and $R$ a symmetric separable Frobenius algebra $G$ in $\mcC^\loc_B$ and a Morita context ${}_A Q_G$ as follows:
\goodbreak
\begin{equation}
\label{eq:G_Q_def}
    G := \im \pic[1.25]{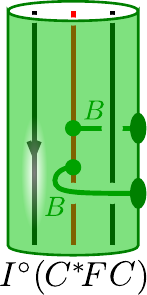} \qquad, \qquad
    Q := \im \pic[1.25]{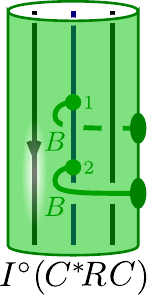} \, .
\end{equation}
The structure maps of $G$ are inherited from those of the algebra $\Iz(C^*FC)$ since the identities~\eqref{eq:rho-gamma_identities}, \eqref{eq:Iz_non_separable} imply that the idempotent defining $G$ is an algebra homomorphism.
Similarly, the idempotent defining $Q$ is a $\Iz(C^*FC)$-$A$-bimodule morphism so that $Q$ has left $A$ and right $G$ actions as needed.
That ${_A Q_G}$ is indeed a Morita module follows from the two computations sketched below:
\begin{align}
\nonumber
&Q \otimes_G Q^* \cong\\
&
\im \frac{\dim_\mcC B}{\Dim\mcC} \cdot \hspace{-18pt}\pic[1.25]{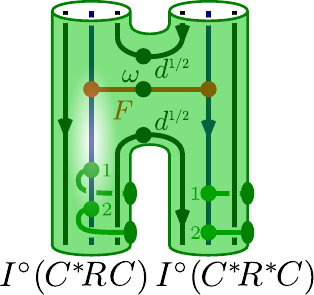} \hspace{-18pt} \cong
\im \hspace{-8pt} \pic[1.25]{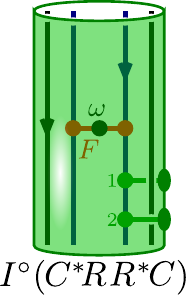} \hspace{-6pt} \cong
\im \hspace{-8pt} \pic[1.25]{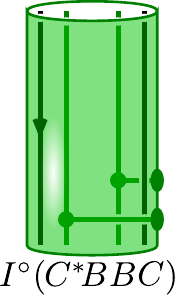} \hspace{-6pt} \cong
    \hspace{-8pt} \pic[1.25]{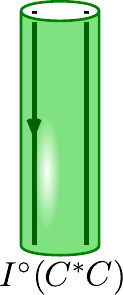} \hspace{-6pt} \cong A \, ,
\end{align}
and
\begin{equation}
Q^* \otimes_A Q \cong
\im \frac{\dim_\mcC B}{\Dim\mcC} \cdot \hspace{-18pt}
    \pic[1.25]{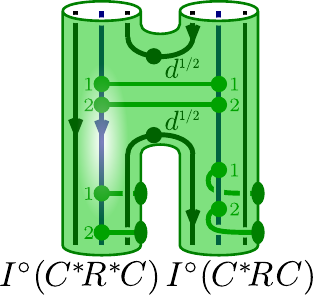} \hspace{-16pt} \cong
\im \hspace{-8pt}\pic[1.25]{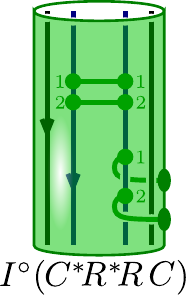} \hspace{-6pt} \cong 
\im \pic[1.25]{63_G_idemp.pdf} \cong
G \, .
\end{equation}
The algebra $F$ can in general be split into a direct sum of algebras $\bigoplus_{a=1}^n F_a$, which also splits $R$ into a direct sum $\bigoplus_a R_a$ of $B^{\otimes 2}$-$F_a$-bimodules.
From~\eqref{eq:G_Q_def} one also obtains the splittings of algebras $G\cong\bigoplus_a G_a$ and of $A$-$G$-bimodules $Q\cong\bigoplus_a Q_a$.
Suppose that for some index $a\in\{1,\dots,n\}$ in this decomposition one has $F_a = B$.
Then $R_a$ is a $B^{\otimes 2}$-$B$-bimodule, whose two left $B$-actions we will index by 1,2 and the right action by 0 (similarly as in~\eqref{eq:right_T_actions_commute}).
For the corresponding algebra $G_a\in\mcC_B^\loc$ one has:
\begin{equation}
G_a =
\im \pic[1.25]{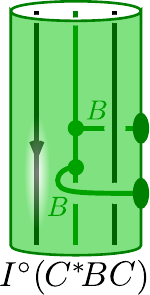} \cong
\im \pic[1.25]{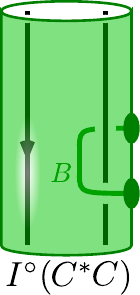} \cong
\Iz(C)^* \otimes_B \Iz(C) \, ,
\end{equation}
i.e.\ $G_a$ is Morita equivalent to the trivial algebra $\opid_{\mcC_B^\loc} = B$ in $\mcC_B^\loc$ via the Morita module $\Iz(C)^*$.
This also allows one to change the entry $Q_a$ into
\begin{align}\nonumber
&
\widetilde{Q}_a := Q_a \otimes_{G_a} \Iz(C^*) \cong\\ \label{eq:Qtild}
&
\im \frac{\dim_\mcC B}{\Dim\mcC} \cdot \hspace{-18pt} \pic[1.25]{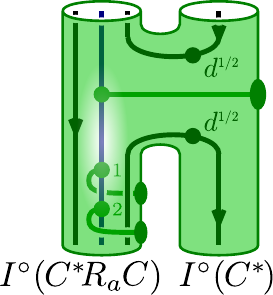} \cong
\im \pic[1.25]{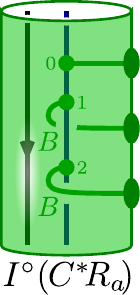} \cong
\im \pic[1.25]{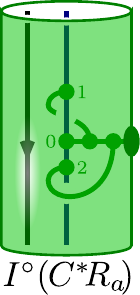} ,
\end{align}
where the right action of $B$ on $\widetilde{Q}_a$ is by the unitor $\mcC_B^\loc$.

\medskip

Until now the discussion about the Morita class of the algebra $A=\Iz(C^*C)\in\mcC_B^\loc$ was completely general.
We now make the following
\begin{equation}
\label{eq:BxB_sum_of_ones_assumption}
\text{assumption: the algebra $B^{\otimes 2}\in\mcC$ is Morita equivalent to $\bigoplus_{a=1}^n B$} \, .
\end{equation}
This means that $A$, as an algebra in $\mcC_B^\loc$ is Morita equivalent to $\bigoplus_{a=1}^n B$, i.e.\ the direct sum of $n$ copies of the tensor unit.
The Morita module is $\widetilde{Q} = \bigoplus_a \widetilde{Q}_a$ whose entries are as in~\eqref{eq:Qtild}.

We now turn to the Morita transport $\opA^{\widetilde{Q}}$ of the condensation inversion orbifold datum $\opA$ in $\mcC_B^\loc$, specifically its entry
\begin{equation}
\widetilde{T} := T^{\widetilde{Q}} = \widetilde{Q}^* \otimes_A T \otimes_{A^{\otimes 2}} (\widetilde{Q}^{\otimes 2})
= \bigoplus_{a,b,c = 1}^n {_at_{bc}} \, ,
\text{ where }
{_at_{bc}} = \widetilde{Q}^*_a \otimes_A T \otimes_{A^{\otimes 2}} (\widetilde{Q}_b \otimes_B \widetilde{Q}_c) \, .
\end{equation}
Explicit calculation of each of the objects ${_at_{bc}}$ yields the following simplification:
\begin{align} \nonumber
&
\im \left(\frac{d_B}{D}\right)^3 \cdot \hspace{-18pt} \pic[1.25]{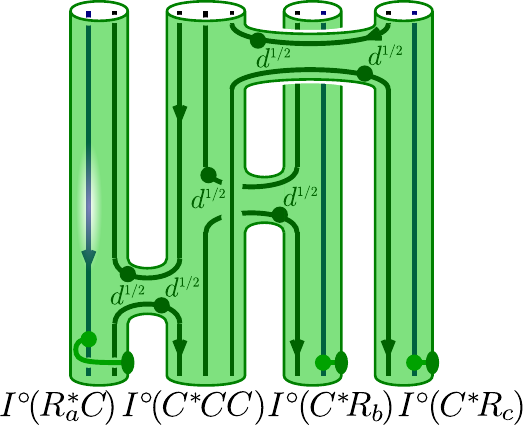} \hspace{-24pt} \cong 
\im \left(\frac{d_B}{D}\right)^2 \cdot \hspace{-18pt} \pic[1.25]{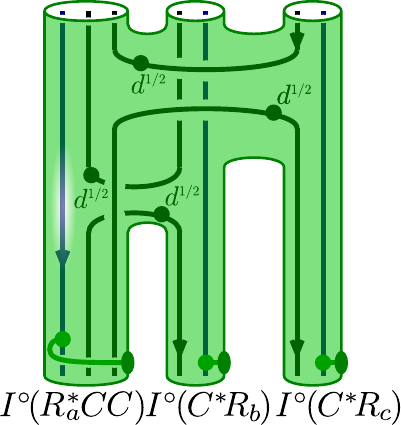}\\ \label{eq:t-abc_calc}
&\cong
\im \frac{d_B}{D} \cdot \hspace{-18pt} \pic[1.25]{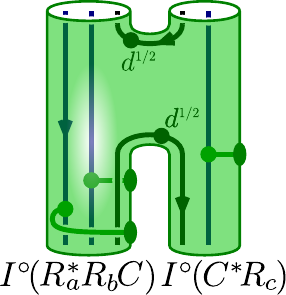}  \cong
\im \pic[1.25]{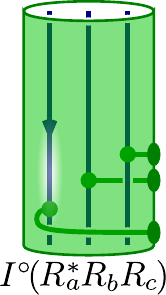} \cong
\im \pic[1.25]{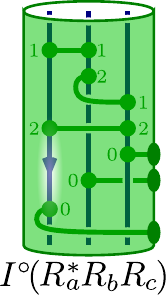} ,
\end{align}
where we have abbreviated $d_B = \dim_\mcC B$, $D = \Dim \mcC$ and composed all the $B$-actions on the modules $R_a$ occurring in the right-hand side of~\eqref{eq:Qtild} into one (before decomposing back in the last step of~\eqref{eq:t-abc_calc}).
Let us now make one more
\begin{align} \nonumber
&\text{assumption:} && \text{the $B^{\otimes 2}$-$B$-bimodules $R_a$ have the form $X_a \otimes B$ for some $X_a\in\mcC$}\\ \label{eq:Ra-XaB_assumption}
& && \text{and the actions} \pic[1.25]{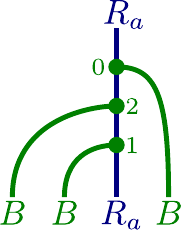} = \pic[1.25]{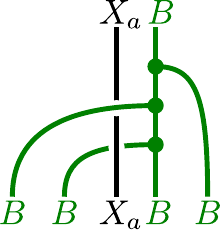}.
\end{align}
Under this assumption, the right hand side of~\eqref{eq:t-abc_calc} can be further simplified by removing all $B$ lines as in~\eqref{eq:Iz_simplify} coming from the expressions $R_a = X_a\otimes B$, which yields:
\begin{equation}
\label{eq:t-abc_under_assumptions}
{_at_{bc}} \cong \Iz(X_a^* X_b X_c) \, .
\end{equation}

The assumption~\eqref{eq:Ra-XaB_assumption} is not trivial in that it is not immediately implied by the assumption~\eqref{eq:BxB_sum_of_ones_assumption}.
We are not sure how severely imposing it limits the examples of condensation inversion orbifold data, but it does hold for the $E_6$ example as shown below.
A way to test it is the following
\begin{prp}
\label{prp:Ra-XaB_assumption_cond}
The assumption~\eqref{eq:Ra-XaB_assumption} holds if and only if there are objects $X_a\in\mcC$, $a=1,\dots,n$ such that:
\begin{enumerate}[i)]
\item $\bigoplus_a X_a  B  X^*_a \cong B^{\otimes 2}$ as objects in $\mcC$;
\item $\Iz(X^*_a X_b) \cong \delta_{ab}B$ as $B$-$B$-bimodules.
\end{enumerate}
\end{prp}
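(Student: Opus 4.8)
\emph{Strategy.} Conditions (i) and (ii) are exactly what one reads off from the two halves of the Morita-module property of $R=\bigoplus_a R_a$ once the summands are taken in the explicit form $X_a\otimes B$ of \eqref{eq:Ra-XaB_assumption}. So the plan is: first compute the relative tensor products $R_a^*\otimes_{B^{\otimes 2}}R_b$ and $R_a\otimes_B R_a^*$ in terms of the objects $X_a$; then read (ii) off the isomorphism $R^*\otimes_{B^{\otimes 2}}R\cong F$ and (i) off $R\otimes_F R^*\cong B^{\otimes 2}$; and finally run the argument in reverse. Throughout, the hypothesis that $\mcC$ is fusion (so that all module and bimodule categories over the separable algebras in sight are finitely semisimple) is what makes the converse direction go through.

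\emph{The key computation and the forward direction.} Assuming \eqref{eq:Ra-XaB_assumption}, $R_a=X_a\otimes B$ carries the right $B$-action by multiplication on the tensorand $B$ and the two left $B$-actions obtained by braiding the acting copy of $B$ through $X_a$ and then multiplying into $B$; dualizing gives $R_a^*\cong X_a^*\otimes B$ with the mirrored structure. A direct string-diagram calculation, entirely analogous to those in Sections~\ref{subsec:condensable_algs} and~\ref{subsec:Morita_transports}, then yields $R_a\otimes_B R_a^*\cong X_a\otimes B\otimes X_a^*$ with the transported $B^{\otimes 2}$-bimodule structure acting through $X_a$ resp.\ $X_a^*$, and $R_a^*\otimes_{B^{\otimes 2}}R_b\cong\Iz(X_a^*\otimes X_b)$: here one uses that both right $B^{\otimes 2}$-actions on $R_a^*$ and both left $B^{\otimes 2}$-actions on $R_b$ factor, up to braiding, through the iterated multiplication $B^{\otimes 2}\to B$, so that the $B^{\otimes 2}$-balancing reproduces precisely the two legs of the localization idempotent defining $\Iz$ (see \eqref{eq:Iz_funct_def} and \eqref{eq:Iz_simplify}). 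Since $R$ is a Morita module by the standing assumption \eqref{eq:BxB_sum_of_ones_assumption}, one has $R^*\otimes_{B^{\otimes 2}}R\cong F=\bigoplus_c B$ as $F$-$F$-bimodules; comparing the $(a,b)$-components, which vanish off the diagonal because $\opid_a\otimes\opid_b\cong 0$, forces $\Iz(X_a^*\otimes X_b)\cong\delta_{ab}B$ as $B$-$B$-bimodules, which is (ii). Likewise $R\otimes_F R^*\cong B^{\otimes 2}$ as $B^{\otimes 2}$-bimodules, so $\bigoplus_a X_a\otimes B\otimes X_a^*\cong B^{\otimes 2}$ at least as objects of $\mcC$, which is (i).

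\emph{Converse and the main obstacle.} Given objects $X_a$ satisfying (i) and (ii), I would set $R_a:=X_a\otimes B$ with the bimodule structure written in \eqref{eq:Ra-XaB_assumption} — associativity of the actions and commutativity of the two left actions being a routine check from commutativity of $B$ and the hexagons — and put $R:=\bigoplus_a R_a$, $F:=\bigoplus_a B$; it then suffices to show that this $R$ is a Morita module from $B^{\otimes 2}$ to $F$, since then \eqref{eq:Ra-XaB_assumption} holds by construction (the earlier assumption \eqref{eq:BxB_sum_of_ones_assumption} only guaranteed the existence of such a Morita module, and any two differ by an invertible bimodule). The key computation gives $R^*\otimes_{B^{\otimes 2}}R\cong\bigoplus_{a,b}\Iz(X_a^*\otimes X_b)\cong\bigoplus_a B=F$, and (ii) guarantees that each surviving component isomorphism $\Iz(X_a^*\otimes X_a)\cong B$ respects the $B$-$B$-bimodule structure, so this is an isomorphism of $F$-$F$-bimodules. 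It also gives $R\otimes_F R^*\cong\bigoplus_a X_a\otimes B\otimes X_a^*$, which by (i) is isomorphic to $B^{\otimes 2}$ \emph{as an object of} $\mcC$; the remaining — and genuinely delicate — step, which I expect to be the main obstacle, is to promote this to an isomorphism of $B^{\otimes 2}$-bimodules, i.e.\ to certify that $R$ really is a Morita module and not merely the data of an essentially surjective module functor. I would do this via the canonical $B^{\otimes 2}$-bimodule map $\iota\colon B^{\otimes 2}\to R\otimes_F R^*$ encoding the $B^{\otimes 2}$-action on $R$: since $R\otimes_F R^*$ is a separable algebra in the semisimple bimodule category, $\iota$ is a split monomorphism there, and the length count supplied by (i) then forces $\iota$ to be an isomorphism. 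With both $R^*\otimes_{B^{\otimes 2}}R\cong F$ and $R\otimes_F R^*\cong B^{\otimes 2}$ as bimodules in hand, $R$ is a Morita module, completing the proof. A secondary point needing care is the identification $R_a^*\otimes_{B^{\otimes 2}}R_b\cong\Iz(X_a^*\otimes X_b)$ itself, where one must track how the two braided left $B$-actions on $R_a=X_a\otimes B$ reassemble, after passing to the $B^{\otimes 2}$-relative tensor product, into the two legs of the localization idempotent; everything else is routine diagrammatic bookkeeping of the sort used throughout Sections~\ref{subsec:condensable_algs}--\ref{subsec:Morita_theory_for_cond_inv}.
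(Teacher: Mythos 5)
Your forward direction and the first half of your converse essentially coincide with the paper's proof: the paper also reads off i) and ii) from $R_a\otimes_B R_a^*\cong X_a\otimes B\otimes X_a^*$ and $R_a^*\otimes_{B^{\otimes 2}}R_b\cong\Iz(X_a^*\otimes X_b)$, and in the converse it also defines $R_a:=X_a\otimes B$ and builds the $F$-$F$-bimodule isomorphism $R^*\otimes_{B^{\otimes 2}}R\cong\bigoplus_a\Iz(X_a^*X_a)\cong B^{\oplus n}$ from ii) (its morphism $f$, normalised by endomorphisms $w_a$ with $\tr_\mcC w_a=1$, using that ii) forces $\dim{_B\mcC_B}(\Iz(X_a^*X_a),B)=1$).

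The gap is in your treatment of the second bimodule isomorphism $R\otimes_F R^*\cong B^{\otimes 2}$. You argue that the canonical unit map $\iota\colon B^{\otimes 2}\to R\otimes_F R^*$ is a split monomorphism ``since $R\otimes_F R^*$ is a separable algebra in the semisimple bimodule category'', and then finish by a length count using i). Separability of an algebra does not make its unit map a monomorphism: in the present situation the tensor unit of ${_{B^{\otimes 2}}\mcC_{B^{\otimes 2}}}$ is decomposable (by \eqref{eq:BxB_sum_of_ones_assumption} the algebra $B^{\otimes 2}$ has $n$ blocks), and $\iota$ kills precisely those simple summands of $B^{\otimes 2}$ coming from blocks that act by zero on $R$; a separable algebra supported on a proper subset of the blocks gives a non-injective unit. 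So mono-ness of $\iota$ is exactly the statement that $R=\bigoplus_a X_a\otimes B$ is a faithful $B^{\otimes 2}$-module, and this is the nontrivial content that must be extracted from condition ii) — your argument never uses ii) at this stage, while i) alone is only an isomorphism of underlying objects and cannot force a bimodule-level statement. This is precisely where the paper does its real work: it writes down an explicit evaluation-type bimodule map $g\colon\bigoplus_a X_a B X_a^*\to B^{\otimes 2}$ together with a candidate inverse $g^{-1}$, and proves $g^{-1}\circ g=\id$ by a computation in which the projector onto $\Iz(X_a^*X_b)\cong\delta_{ab}\Iz(X_a^*X_a)$ appears and is split using the components of $f$ — i.e.\ condition ii) enters essentially — and only then invokes i) together with semisimplicity to conclude that $g$ is a two-sided inverse (your length-count step, which is fine once a bimodule mono or split epi between objects isomorphic in $\mcC$ is in hand). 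To repair your proof you would either have to prove faithfulness of $R$ from i) and ii) directly, or replace the $\iota$-argument by an explicit one-sided-inverse computation of the paper's kind.
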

\begin{proof}
Clearly~\eqref{eq:Ra-XaB_assumption} implies i) and ii) by Morita equivalence
\begin{align} \nonumber
&
B^{\otimes 2} && \cong
R \otimes_{B^{\oplus n}} R^* \cong
\bigoplus_a R_a \otimes_B R_a^* \cong 
\bigoplus_a X_a B \otimes_B B X_a^* \cong
\bigoplus_a X_a B X_a^* \, ,\\ \nonumber
&
\delta_{ab}B &&\cong
B_a \otimes_{B^{\oplus n}} B^{\oplus n} \otimes_{B^{\oplus n}} B_b \cong
B_a \otimes_{B^{\oplus n}} R^* \otimes_{B^{\otimes 2}} R \otimes_{B^{\oplus n}} B_b\\
& && \cong
R_a^* \otimes_{B^{\otimes 2}} R_b \cong
\Iz(X_a^* X_b) \, .
\end{align}
The other implication comes from defining $R_a$, $a=1,\dots,n$ as in~\eqref{eq:Ra-XaB_assumption} and constructing the explicit bimodule isomorphisms
\begin{equation}
f\colon R^* \otimes_{B^{\otimes 2}} R\cong \bigoplus_a \Iz(X_a^*X_a) \ra B^{\oplus n} \, , \quad
g\colon R \otimes_{B^{\oplus n}} R^* \cong \bigoplus_a X_a B X^*_a \ra B^{\otimes 2}
\end{equation}
as follows:
\begin{align} \nonumber
&
f=\bigoplus_{a,b} f_{ab} = \bigoplus_{a,b} \d_{ab}\pic[1.25]{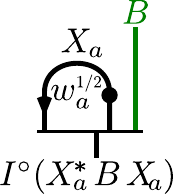},
&&
f^{-1} = \bigoplus_{a,b} f^{(-1)}_{ab} = \bigoplus_{a,b} \d_{ab}\pic[1.25]{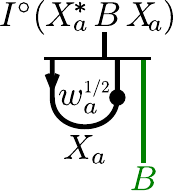},\\
&
g=\bigoplus_{a} g_a = \bigoplus_a \pic[1.25]{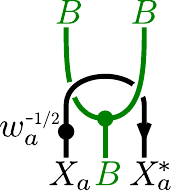},
&&
g^{-1}=\bigoplus_{a} g^{(-1)}_a = \bigoplus_a\pic[1.25]{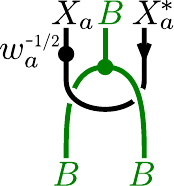},
\end{align}
where $w_a^{1/2}\in\End_\mcC X_a$ are arbitrary invertible morphisms such that for $w_a = w_a^{1/2}\circ w_a^{1/2}$ one has $\tr_\mcC(w_a) = 1$.
By construction, $f$ and $g$ are $B^{\oplus n}$- and $B^{\otimes 2}$-bimodule morphisms, so one only needs to show that $f$, $f^{-1}$ and $g$, $g^{-1}$ are indeed two-sided inverses.

For $f$ one has
\begin{equation}
f\circ f^{-1} = \bigoplus_a f_{aa} \circ f_{aa}^{(-1)} = \bigoplus_a (\tr_\mcC w_a \cdot \id_B) = \id_{B^{\oplus n}} \, ,
\end{equation}
and since ii) implies $\dim\mcBCB(\Iz(X_a^* X_a),B)=1$ and so that $f_{aa}$ and $f_{aa}^{(-1)}$ are two-sided inverses, an analogous computation yields $f^{-1}\circ f= \id$ as well.

For $g$ one has
\begin{align} \nonumber
&g^{-1}\circ g = \bigoplus_{a,b} g^{-1}_b\circ g_a \\
&=
\bigoplus_{a,b} \pic[1.25]{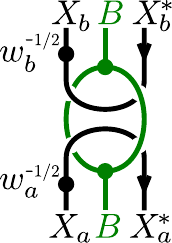} =
\bigoplus_{a,b} \pic[1.25]{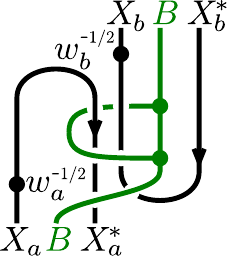} \stackrel{(*)}{=}
\bigoplus_{a,b} \d_{ab}\pic[1.25]{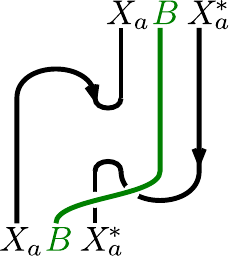} =
\id_{}\, ,
\end{align}
where in step~$(*)$ one recognises the projector onto $\Iz(X_a^* X_b) \cong \d_{ab}\Iz(X_a^* X_a)$ and uses the morphisms $f_{aa}$, $f_{aa}^{(-1)}$ from above to split the identity.
Since by i) the domain and the codomain are isomorphic, $g^{-1}$ must be the two-sided inverse.
\end{proof}

\subsubsection*{Example: $E_6$ inversion\footnote{I would like to thank Ingo Runkel for the help with this calculation.}}
Recall that the category $\mcC = \mcC(sl(2),k)$ of integrable highest weight modules of $\widehat{sl}(2)$ at level $k$ has $k+1$ simple objects $\underline{0},\underline{1},\dots,\underline{k}$ with fusion rules
\begin{equation}
\underline{m} \otimes \underline{n} = \bigoplus_{l=|m-n|}^{\min({m+n,\, 2k-m-n)}} \underline{l} \, ,
\end{equation}
where the sum is by steps of 2: $\underline{|m-n|} \oplus \underline{|m-n| + 2} \oplus \dots$.
Note that all objects are self dual: $\underline{m}^* = \underline{m}$.
In the case $k=10$, $\mcC$ contains a condensable algebra whose underlying object is $B = \underline{0} \oplus \underline{6}$ (the $E_6$ algebra).
The category $\mcC_B$ of right $B$-modules has six simple objects:
\begin{align} \nonumber
&
M_0 = B = \underline{0} \oplus \underline{6} \, ,
&&
M_1 = \underline{1} \otimes B = \underline{1} \oplus \underline{5} \oplus \underline{7} \, ,\\ \nonumber
&
M_2 = \underline{2} \otimes B = \underline{2} \oplus \underline{4} \oplus \underline{6} \oplus \underline{8} \, ,
&&
M_3 = \underline{9} \otimes B = \underline{3} \oplus \underline{5} \oplus \underline{9} \, ,\\
&
M_4 = \underline{10} \otimes B = \underline{4} \oplus \underline{10} \, ,
&&
M_5 = \underline{3} \oplus \underline{7} \, ,
\end{align}
all of which except for $M_5$ happen to be the induced modules and the local modules being $M_0$, $M_4$ and $M_5$ (see~\cite[Sec.\,6]{KO}).
The equivalence $\mcC_B^\loc\xra{\sim}\mcI$, where $\mcI=\mcI_{\zeta,\epsilon}$ is the Ising-type MFC with $\epsilon=-1$, $\zeta=\exp(3/8\pi i)$, is given by
\begin{equation}
\label{eq:sl2-10_to_Ising_funct}
M_0 \mapsto \opid \, , \quad M_4\mapsto \vareps \, , \quad M_5\mapsto\sigma \, .
\end{equation}
Knowing the simples $M_0,\dots,M_5$ one can quickly find the decompositions for the rest of the induced modules:
\begin{align} \nonumber
&
\underline{3} \otimes B = M_3 \oplus M_5 \, ,
&&
\underline{4} \otimes B = M_2 \oplus M_4 \, ,
&&
\underline{5} \otimes B = M_1 \oplus M_3 \, ,\\
&
\underline{6} \otimes B = M_0 \oplus M_2 \, ,
&&
\underline{7} \otimes B = M_1 \oplus M_5 \, ,
&&
\underline{8} \otimes B = M_2 \, .
\end{align}
Since $\Iz(\underline{m})$ is the localisation of the induced module $\underline{m}\otimes B$, one gets:
\begin{equation}
\label{eq:E6_Iz}
\Iz(\underline{m}) =
\begin{cases}
M_0 \, , \,\underline{m} = \underline{0}, \underline{6} \\ 
M_4 \, , \,\underline{m} = \underline{4}, \underline{10}\\
M_5 \, , \,\underline{m} = \underline{3}, \underline{7}\\
~0 \, , \, \text{otherwise}
\end{cases} .
\end{equation}

It is known that for the case of $E_6$ algebra, $B^{\otimes 2}$ is Morita equivalent to $B^{\oplus 2}$ so that the assumption~\eqref{eq:BxB_sum_of_ones_assumption} holds.
As in~\eqref{eq:undo-E6_orb_datum}, let us index the two copies of $B$ by symbols $\imath$, $\varphi$.
We claim that the corresponding summands $R_\imath$, $R_\varphi$ of the Morita module are as in the assumption~\eqref{eq:Ra-XaB_assumption} with $X_\imath = \underline{0}$ and $X_\varphi = \underline{1}$.
For this we need to check the conditions in Proposition~\ref{prp:Ra-XaB_assumption_cond}:
for i) one has:
\begin{equation} 
X_\imath B X^*_\imath \oplus X_\varphi B X^*_\varphi \cong
B ~\oplus ~ (\underline{1} \otimes B \otimes \underline{1}) =
\underline{0}^{\oplus 2} \oplus \underline{2} \oplus \underline{4} \oplus \underline{6}^{\oplus 3} \oplus \underline{8} =
B^{\otimes 2}
\end{equation}
as objects in $\mcC$, whereas for ii) one has
\begin{align} \nonumber
&
\Iz(X^*_\imath X_\imath) = \Iz(\underline{0}) = B \, ,
&&
\Iz(X^*_\imath X_\varphi) = \Iz(\underline{1}) = 0 \, ,\\
&
\Iz(X^*_\varphi X_\imath) = \Iz(\underline{1}) = 0
&&
\Iz(X^*_\varphi X_\varphi) = \Iz(\underline{0} \oplus \underline{2}) = B \, ,
\end{align}
as needed.

Finally, we can compute the summands of $\widetilde{T} = \bigoplus_{a,b,c\in\{\imath,\varphi\}} {_at_{bc}}$ by using~\eqref{eq:t-abc_under_assumptions} and~\eqref{eq:E6_Iz}, which under the identification~\eqref{eq:sl2-10_to_Ising_funct} yields exactly~\eqref{eq:undo-E6_orb_datum}.

\section{Witt equivalence}
\label{sec:Witt_equiv}
The construction of modular fusion categories (MFCs) from orbifold data suggests the following
\begin{defn}
We say that two MFCs $\mcC$ and $\mcD$ are \textit{orbifold equivalent} if there is an orbifold datum $\opA$ in $\mcC$ such that $\mcC_\opA\simeq\mcD$ as ribbon fusion categories.
\end{defn}
\noindent
At this point the word `equivalence' in the above definition is dubious, but it will be justified by the main result of this section: the orbifold relation on MFCs is in fact the same as Witt equivalence, a well known equivalence relation which we recall in Section~\ref{subsec:Witt_vs_orb_rels} below.
As a straightforward implication of this we look at the notion of a unital orbifold datum in Section~\ref{subsec:unital_orb_data} and show that up to an equivalence of the associated MFCs, all simple orbifold data can be taken to be unital. 

In this section only we will assume $\operatorname{char}\opk = 0$\footnote{This is only to use the results of~\cite{DGNO, DMNO} which assume $\operatorname{char}\opk$ to be zero.
Although generalisations to fields of arbitrary characteristic seem possible, one might expect some nuances to arise.}.

\subsection{Witt vs orbifold relations on MFCs}
\label{subsec:Witt_vs_orb_rels}
The Witt equivalence relation on MFCs was introduced in \cite{DMNO} and can be formulated in several ways, two of which are given by
\begin{prp}\cite[Prop.\,5.15]{DMNO}
\label{prp:Witt_eq_conds}
Let $\mcC$ and $\mcD$ be MFCs.
Then the following are equivalent:
\begin{enumerate}[i)]
\item 
\label{prp:Witt_eq_conds:i}
there exists a spherical fusion category $\mcS$ and a ribbon equivalence $\mcC \boxtimes \widetilde{\mcD}\simeq\mcZ(\mcS)$ (recall that $\widetilde{\mcD}$ denotes the category $\mcD$ with the reversed braiding);
\item
\label{prp:Witt_eq_conds:iii}
there exists a MFC $\mcE$ and two condensable algebras $B,B'\in\mcE$ such that $\mcE_B^\loc\simeq\mcC$ and $\mcE_{B'}^\loc\simeq\mcD$ as ribbon fusion categories.
\end{enumerate}
$\mcC$ and $\mcD$ are then called \textit{Witt equivalent} if any one of these conditions apply.
The ribbon equivalence as in \ref{prp:Witt_eq_conds:i} is called a \textit{Witt trivialisation}.
\end{prp}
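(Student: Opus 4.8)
\medskip
\noindent\textbf{Proof proposal.} The plan is to reproduce the argument of \cite[Prop.\,5.15]{DMNO}, keeping track of the ribbon/spherical refinements; note that in this section $\operatorname{char}\opk=0$, so all of \cite{DGNO,DMNO} applies unrestrictedly. For \ref{prp:Witt_eq_conds:i}~$\Rightarrow$~\ref{prp:Witt_eq_conds:iii} I would argue constructively. Given a ribbon equivalence $\Phi\colon\mcC\boxtimes\widetilde\mcD\xra{\sim}\mcZ(\mcS)$ with $\mcS$ spherical fusion, put $\mcE:=\mcC\boxtimes\widetilde\mcD\boxtimes\mcD$; this is a MFC, since $\mcZ(\mcS)$ is one by Proposition~\ref{prp:ZS_is_MFC} (in characteristic $0$ one has $\Dim\mcS\neq0$) and a Deligne product of MFCs is again a MFC (the $s$-matrix of $\mcA\boxtimes\mcB$ is the Kronecker product of those of $\mcA$ and $\mcB$). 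Recall that for a spherical fusion category $\mcA$ the centre $\mcZ(\mcA)$ carries a canonical Lagrangian algebra $L_\mcA$, namely the image of $\opid$ under the right adjoint of the forgetful functor $\mcZ(\mcA)\to\mcA$, which --- after rescaling its Frobenius structure to be $\D$-separable, cf.\ Example~\ref{eg:Euler_completion} --- is a condensable algebra in the sense of Definition~\ref{def:condensable_algebra} with $\mcZ(\mcA)^\loc_{L_\mcA}\simeq\Vect_\opk$ as MFCs (see~\cite[Sec.\,4]{DMNO}). Applying this to $\mcS$, and noting that the dolphin equivalence of Definition~\ref{def:MFC}\,iii) identifies $\widetilde\mcD\boxtimes\mcD$ (equivalently $\mcD\boxtimes\widetilde\mcD$) with $\mcZ(\mcD)$ as ribbon categories, I take
\begin{equation*}
B:=\Phi^{-1}(L_\mcS)\boxtimes\opid_\mcD\,,\qquad B':=\opid_\mcC\boxtimes L'\,,
\end{equation*}
where $L'\in\widetilde\mcD\boxtimes\mcD$ corresponds to $L_\mcD$; both are condensable in $\mcE$ (read as $(\mcC\boxtimes\widetilde\mcD)\boxtimes\mcD$ and as $\mcC\boxtimes(\widetilde\mcD\boxtimes\mcD)$ respectively). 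Since localisation commutes with Deligne products compatibly with the braiding~\eqref{eq:CBloc_braiding} and the twist, one obtains ribbon equivalences $\mcE_B^\loc\simeq\mcZ(\mcS)^\loc_{L_\mcS}\boxtimes\mcD\simeq\mcD$ and $\mcE_{B'}^\loc\simeq\mcC\boxtimes\mcZ(\mcD)^\loc_{L_\mcD}\simeq\mcC$; swapping the roles of $B$ and $B'$ then yields~\ref{prp:Witt_eq_conds:iii}.

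For \ref{prp:Witt_eq_conds:iii}~$\Rightarrow$~\ref{prp:Witt_eq_conds:i} the plan is to pass to the Witt group $\mathcal{W}$ of non-degenerate braided fusion categories of~\cite[Sec.\,5]{DMNO}, in which $[\widetilde\mcC]=-[\mcC]$ by Definition~\ref{def:MFC}\,iii). The key input is that a condensation does not change the Witt class: for the connected \'etale algebra underlying $B\in\mcE$, \cite[Cor.\,3.30]{DMNO} supplies a braided equivalence $\mcE\boxtimes\widetilde{\mcE_B^\loc}\simeq\mcZ(\mcE_B)$, where $\mcE_B$ is the fusion category of all $B$-modules (monoidal since $B$ is commutative, fusion since $B$ is separable); hence $[\mcE]=[\mcE_B^\loc]=[\mcC]$ in $\mathcal{W}$, and likewise $[\mcE]=[\mcD]$. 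Therefore $[\mcC]=[\mcD]$, so $\mcC\boxtimes\widetilde\mcD$ is Witt trivial, i.e.\ there is a fusion category $\mcS_0$ and a braided equivalence $\mcC\boxtimes\widetilde\mcD\simeq\mcZ(\mcS_0)$; equivalently, $\mcC\boxtimes\widetilde\mcD$ contains a Lagrangian algebra $L$.

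The remaining --- and, I expect, the genuinely delicate --- step is to upgrade this braided statement to a \emph{ribbon} equivalence with \emph{spherical} $\mcS$. I would set $\mcS:=(\mcC\boxtimes\widetilde\mcD)_L$, the fusion category of all $L$-modules, and check, using that $\mcC\boxtimes\widetilde\mcD$ is ribbon and $L$ is a symmetric commutative Frobenius algebra, that $\mcS$ inherits a spherical structure (with $\Dim\mcS\neq0$, automatic in characteristic $0$) and that the canonical braided equivalence $\mcC\boxtimes\widetilde\mcD\simeq\mcZ(\mcS)$ of~\cite[Sec.\,4]{DMNO} is in fact ribbon for it, which gives~\ref{prp:Witt_eq_conds:i}. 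Throughout the argument the underlying \emph{braided} statements are the soft part; the real work --- and the point where $\operatorname{char}\opk=0$ and the results of \cite{DGNO,DMNO} are genuinely used --- is the coherent propagation of the pivotal/spherical data through the various condensations and Deligne products.
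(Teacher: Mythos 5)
The paper does not actually prove this proposition: it is quoted from \cite[Prop.\,5.15]{DMNO} (in its ribbon/spherical form) and only used as an input, so there is no internal argument to compare against. Your proposal is a reconstruction of the cited proof, and it is essentially faithful to it. The construction for \ref{prp:Witt_eq_conds:i}~$\Rightarrow$~\ref{prp:Witt_eq_conds:iii} --- taking $\mcE=\mcC\boxtimes\widetilde{\mcD}\boxtimes\mcD$, with $B$ the pullback along $\Phi$ of the canonical Lagrangian algebra of $\mcZ(\mcS)$ (tensored with $\opid_\mcD$) and $B'$ obtained from the canonical Lagrangian algebra of $\mcZ(\mcD)$ via the dolphin equivalence --- is exactly the one in \cite{DMNO}, and the compatibility of localisation with Deligne products that you invoke is standard for separable algebras in fusion categories over an algebraically closed field; note also that any braided equivalence onto $\Vect_\opk$ is automatically ribbon, so the identifications $\mcZ(\mcS)^\loc_{L_\mcS}\simeq\Vect_\opk$ cause no trouble. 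The converse via the Witt group and \cite[Cor.\,3.30]{DMNO} (giving $[\mcE]=[\mcE_B^\loc]$) is likewise the intended argument.

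The one step that goes beyond bookkeeping is the ribbon/spherical upgrade in \ref{prp:Witt_eq_conds:iii}~$\Rightarrow$~\ref{prp:Witt_eq_conds:i}, which you correctly isolate but only sketch. To close it one needs: (a) the Lagrangian algebra $L\in\mcC\boxtimes\widetilde{\mcD}$ produced by Witt triviality is a priori only \'etale, so it must first be endowed with a symmetric $\Delta$-separable Frobenius structure making it condensable in the sense of Definition~\ref{def:condensable_algebra} (possible in characteristic $0$ since $L$ is haploid, and unique up to the rescaling of Example~\ref{eg:Euler_completion}); (b) sphericity of $\mcS=(\mcC\boxtimes\widetilde{\mcD})_L$, which follows from the haploidness argument the paper itself uses (end of Section~\ref{subsec:relative_tensor_products} and the proof of Lemma~\ref{lem:ZCAi-ZF_equiv}), since $\mcS$ embeds as a full monoidal subcategory of the $L$-$L$-bimodules; and (c) ribbonness of the canonical braided equivalence $\mcC\boxtimes\widetilde{\mcD}\simeq\mcZ(\mcS)$, which reduces to checking pivotality because a braided functor between ribbon categories is ribbon iff it is pivotal (Section~\ref{sec:preliminaries}). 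With these points supplied your argument is complete; as written, the final step is a statement of intent rather than a proof, but it is precisely the content for which the paper defers to the literature.
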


One of the main results of this paper is:
\begin{thm}
\label{thm:Witt_vs_orb_eq}
Two MFCs $\mcC$ and $\mcD$ are Witt equivalent if and only if they are orbifold equivalent
\end{thm}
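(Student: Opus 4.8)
The plan is to prove the two implications separately, using the tools built up in the earlier sections and the two equivalent formulations of Witt equivalence in Proposition~\ref{prp:Witt_eq_conds}.

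\textbf{Orbifold equivalence implies Witt equivalence.} Suppose $\mcD\simeq\mcC_\opA$ for some orbifold datum $\opA$ in $\mcC$. I would show directly that condition~\ref{prp:Witt_eq_conds:i} of Proposition~\ref{prp:Witt_eq_conds} holds, i.e. construct a spherical fusion category $\mcS$ together with a ribbon equivalence $\mcC_\opA\boxtimes\widetilde{\mcC}\simeq\mcZ(\mcS)$. The candidate is $\mcS=\mcC_\opA^1$ (or a suitable component category of it), and the candidate functor is the first one in~\eqref{eq:intro_Witt_triv}, namely $\mcC_\opA\boxtimes\widetilde{\mcC}\ra\mcZ(\mcC_\opA^1)$, which should be extracted by sending $M\boxtimes X$ to a pair consisting of an object of $\mcC_\opA^1$ built from $M$ and the pipe object $P_\opA(X)$, equipped with a half-braiding coming from the $T$-crossings $\tau_2$, $\taubar{2}$. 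I would first check this assignment is a well-defined braided functor (the hexagon for the half-braiding follows from \eqrefT{1}--\eqrefT{7} and the compatibility identities \eqrefO{1}--\eqrefO{8}), then verify it is ribbon (the twist on $\mcC_\opA$ is~\eqref{eq:CA_br_twist} and on $\mcC$ the ordinary one), and finally prove it is an equivalence using Proposition~\ref{prp:ssi_funct_equiv}: fully faithfulness via the averaging projector~\eqref{eq:avg_map} combined with the scissors identity~\eqref{eq:scissors_id}, and surjectivity because the pipe objects $P_\opA(X)$ generate $\mcC_\opA$ (Remark~\ref{rem:pipe_objs_on_induced_bimods}) and their images generate $\mcZ(\mcC_\opA^1)$. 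The nuisance that $\mcC_\opA^1$ is only multifusion rather than spherical fusion is handled by Proposition~\ref{prp:ZA-ZAii_equiv}: replacing $\mcC_\opA^1$ by a component category $(\mcC_\opA^1)_{ii}$ does not change its Drinfeld centre up to braided (and, by Example~\ref{eg:FF_Ind_functor}, pivotal, hence ribbon) equivalence, and $\Dim(\mcC_\opA^1)_{ii}\neq0$ so the centre is genuinely modular by Proposition~\ref{prp:ZS_is_MFC}.

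\textbf{Witt equivalence implies orbifold equivalence.} Here I would use formulation~\ref{prp:Witt_eq_conds:iii}: there is a MFC $\mcE$ and condensable algebras $B,B'\in\mcE$ with $\mcE_B^\loc\simeq\mcC$, $\mcE_{B'}^\loc\simeq\mcD$. By Remark~\ref{rem:condensation_orb_datum}, $B'$ gives an orbifold datum $\opB'$ in $\mcE$ with $\mcE_{\opB'}\simeq\mcE_{B'}^\loc\simeq\mcD$. Now apply the machinery of Section~\ref{subsec:inv_orb_datum} to the algebra $B$ and the orbifold datum $\opB'$ in $\mcE$: Corollary~\ref{cor:inv_orb_datum_constr} produces the Morita transport $\opB'_C=R_C(\opB')$ and the orbifold datum $\Iz_B(\opB'_C)$ in $\mcE_B^\loc\simeq\mcC$, and Lemma~\ref{lem:Iz_is_equiv} gives ribbon equivalences $\mcD\simeq\mcE_{\opB'}\simeq\mcE_{\opB'_C}\simeq(\mcE_B^\loc)_{\Iz_B(\opB'_C)}$. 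Transporting this orbifold datum across the equivalence $\mcE_B^\loc\simeq\mcC$ (which preserves orbifold data and associated MFCs, being in particular a ribbon functor hence trivially a ribbon Frobenius functor compatible with any orbifold datum, cf.\ Proposition~\ref{defnprp:FA_orb_datum}) yields an orbifold datum $\opA$ in $\mcC$ with $\mcC_\opA\simeq\mcD$, as desired.

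\textbf{Main obstacle.} The routine parts are the two invocations of earlier machinery in the second implication and the ``ribbon functor'' checks; the genuinely hard step is establishing that the candidate functor $\mcC_\opA\boxtimes\widetilde{\mcC}\ra\mcZ(\mcC_\opA^1)$ in the first implication is an equivalence. Writing down the functor and its half-braiding is a careful but mechanical exercise in graphical calculus with the $T$-crossings; proving fully faithfulness requires showing that the averaged idempotent projecting onto $\mcC_\opA$-morphisms matches, after the Deligne product and the passage to the centre, the one projecting onto half-braided morphisms in $\mcZ(\mcC_\opA^1)$, and this is where the scissors identity and a dimension count (using $\Dim\mcC_\opA^1\neq0$) do the real work. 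I would organise this as a separate proposition (Proposition~\ref{prp:CCA-ZCA1_equiv} and Remark~\ref{rem:CAC-ZCA2_equiv} in the referenced numbering), together with Lemma~\ref{lem:ZCAi-ZF_equiv} handling the multifusion-to-fusion reduction, and then the theorem follows by combining these with Remark~\ref{rem:condensation_orb_datum} and the results of Section~\ref{subsec:inv_orb_datum}.
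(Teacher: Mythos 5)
Your overall architecture matches the paper's: the ``Witt $\Rightarrow$ orbifold'' direction via Remark~\ref{rem:condensation_orb_datum}, Corollary~\ref{cor:inv_orb_datum_constr} and Lemma~\ref{lem:Iz_is_equiv} is exactly the paper's argument and is fine, and the ``orbifold $\Rightarrow$ Witt'' direction is indeed carried by a functor $\mcC_\opA\boxtimes\widetilde{\mcC}\ra\mcZ(\mcC_\opA^1)$ plus the component-category reduction of Lemma~\ref{lem:ZCAi-ZF_equiv}. But the hard direction has a genuine gap where you claim the equivalence. First, your candidate for the second tensor factor is off: the paper sends $X\in\widetilde{\mcC}$ to $F_r(X)=(X\otimes A,\tau_1^X,\taubar{1}^X)$ with a ``dolphin''-type half-braiding~\eqref{eq:FrX_morphs} --- an object of $\mcC_\opA^1$ that in general admits \emph{no} $\tau_2$-crossing --- not to the pipe object $P_\opA(X)$, which lives in $\mcC_\opA$ and would land $F_r(\widetilde{\mcC})$ essentially inside $F_l(\mcC_\opA)$, destroying the factorisation into mutually centralising subcategories that the argument relies on.

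Second, and more importantly, your justification of essential surjectivity --- ``pipe objects generate $\mcC_\opA$ and their images generate $\mcZ(\mcC_\opA^1)$'' --- begs the question: generation of the source says nothing about the image exhausting $\mcZ(\mcC_\opA^1)$, and this is precisely the hard content of Proposition~\ref{prp:CCA-ZCA1_equiv}. The paper proves it by showing $F_r(\widetilde{\mcC})'\simeq F_l(\mcC_\opA)$ --- a substantial computation in which the half-braiding of an object $((N,\tau_1,\taubar{1}),\gamma)\in F_r(\widetilde{\mcC})'$ with $(T_2,\a)\in\mcC_\opA^1$ is shown to reconstruct a $\tau_2$-crossing satisfying \eqrefT{1}--\eqrefT{7} --- and then invokes M\"uger's factorisation theorem (Theorem~\ref{thm:MFCs_factor}) to conclude that the ranks of source and target agree. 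Neither tool appears in your outline. Likewise, your route to fully faithfulness via the averaging projector~\eqref{eq:avg_map} and the scissors identity misses the key ingredient: the paper shows $F(\D\boxtimes i)$ is simple by using the \emph{non-degeneracy of the braiding of $\mcC$} to force any endomorphism commuting with all $\Gamma_{F_r(X)}$ into the form $g\otimes\id_i$, and only then argues that $g$ satisfies~\eqref{eq:M} for both $i=1$ (partial trace) and $i=2$ (commutation with the half-braiding of $(T_2,\a)$). Without the modularity input and the centraliser/factorisation argument, the equivalence claim is unsupported.
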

\noindent
The remainder of this section will be dedicated to proving the above statement.

\medskip

One of the implications in the statement of Theorem~\ref{thm:Witt_vs_orb_eq} directly follows from the results in Section~\ref{subsec:inv_orb_datum}:
If $\mcC$ and $\mcD$ are Witt equivalent, let $\mcE$, $B$, $B'$ be as in Proposition~\ref{prp:Witt_eq_conds}\ref{prp:Witt_eq_conds:iii}.
By Remark~\ref{rem:condensation_orb_datum}, there is an orbifold datum $\opB'$ in $\mcE$ such that $\mcE_{\opB'}\simeq\mcE_{B'}^\loc\simeq\mcD$ as ribbon fusion categories.
Then by Theorem~\ref{thm:inv_orb_datum} there is an orbifold datum $\Iz_B(\opB'_C)$ in $\mcE_B^\loc\simeq\mcC$ (whose form is inferred from Corollary~\ref{cor:inv_orb_datum_constr}) such that $\mcC_{\Iz(\opB'_C)}\simeq \mcE_{\opB'_C}\simeq\mcE_{\opB'}\simeq\mcD$ (see Lemma~\ref{lem:Iz_is_equiv}), so that $\mcC$ and $\mcD$ are orbifold equivalent.

\medskip

Showing the other implication in the statement of Theorem~\ref{thm:Witt_vs_orb_eq} will need a different approach:
Having a simple orbifold datum $\opA=(A,T,\a,\abar,\psi,\phi)$ in $\mcC$ we will construct a Witt trivialisation for the pair of MFCs $\mcC$ and $\mcC_\opA$, so that the condition in Proposition~\ref{prp:Witt_eq_conds}\ref{prp:Witt_eq_conds:i} holds.

\medskip

Recall from Section~\ref{subsec:orb_data_and_assoc_cats} that for $i\in\{1,2\}$ the category $\mcC_\opA^i$ is defined to have objects $(M\in\mcACA,\tau_i,\taubar{i})$, where $\tau_i$, $\taubar{i}$ are the respective $T$-crossing and its inverse.
We start by defining two functors
\begin{equation}
F_l\colon \mcC_\opA \ra \mcZ(\mcC_\opA^1) \, , \qquad
F_r\colon \widetilde{\mcC} \ra \mcZ(\mcC_\opA^1) \, .
\end{equation}

For $F_l$ one sets for all $M=(M,\tau_1,\taubar{1},\tau_2,\taubar{2})\in\mcC_\opA$
\begin{equation}
\label{eq:FlM}
F_l(M,\tau_1,\taubar{1},\tau_2,\taubar{2}) := ((M,\tau_1,\taubar{1}), c^\opA_{M,-}) \, ,
\end{equation}
where the braiding $c^\opA_{M,-}$ of $\mcC_\opA$ can be used as a half-braiding for $M$ by the same definition~\eqref{eq:CA_br_twist}, but taking $N\in\mcC_\opA^1$ (the hexagon identity is proved in the same way as the braiding property, see~\cite[Prop.\,3.5]{MR1}).

For $F_r$ one defines for all $X\in\widetilde{\mcC}$
\begin{equation}
F_r(X) := ((X \otimes A, \tau_1^X, \taubar{1}^X), \gamma^X) \, ,
\end{equation}
where the left and right actions on $X\otimes A$, the $T$-crossings $\tau_1^X$, $\taubar{1}^X$ and the half-braiding $\gamma^X$ are defined by
\begin{equation}
\label{eq:FrX_morphs}
\pic[1.25]{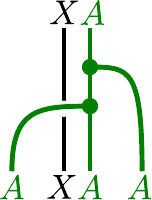} , \,
\tau_1^X := \pic[1.25]{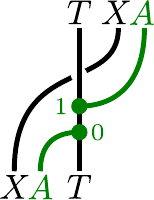} , \,
\taubar{1}^X := \pic[1.25]{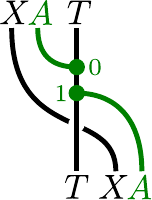} , \,
\gamma^X_N := \pic[1.25]{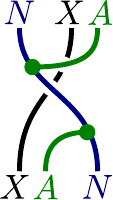} .
\end{equation}

On morphisms $F_l$ acts as identity and $F_r$ sends $[f\colon X\ra Y]$ to $f \otimes \id_A$.

\medskip

The penultimate step in constructing a Witt trivialisation of $\mcC_\opA \boxtimes \widetilde{\mcC}$ is
\begin{prp}
\label{prp:CCA-ZCA1_equiv}
Let $\opA$ be a simple orbifold datum.
Then
\begin{equation}
\label{eq:CCA-ZCA1_equiv:F_funct}
F \colon \mcC_\opA \boxtimes \widetilde{\mcC} \ra \mcZ(\mcC_\opA^1) \, , \quad M \boxtimes X \mapsto F_l(M) \otimes_{\mcZ(\mcC_\opA^1)} F_r(X)
\end{equation}
is a ribbon equivalence.
In particular, $\mcC_\opA^1$ is an indecomposable multifusion category.
\end{prp}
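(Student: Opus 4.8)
The plan is to verify first that $F_l\colon\mcC_\opA\ra\mcZ(\mcC_\opA^1)$ and $F_r\colon\widetilde{\mcC}\ra\mcZ(\mcC_\opA^1)$ are ribbon functors whose images centralise each other in $\mcZ(\mcC_\opA^1)$, so that $F$ of \eqref{eq:CCA-ZCA1_equiv:F_funct} is a well-defined ribbon functor out of the Deligne product, and then to show that $F$ is a full embedding and essentially surjective; the ``in particular'' will drop out at the end. First I would check the ribbon properties. The functor underlying $F_l$ is the forgetful functor $\mcC_\opA\ra\mcC_\opA^1$, which is strict monoidal and pivotal since the monoidal and pivotal structures on both categories are inherited from $\mcACA$; the half-braiding $c^\opA_{M,-}$ used in \eqref{eq:FlM} satisfies the hexagon by \cite[Prop.\,3.5]{MR1}, and preservation of twists is read off from \eqref{eq:CA_br_twist}. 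For $F_r$ one must verify by direct string-diagram computation, using the orbifold axioms \eqrefO{1}--\eqrefO{8} and the properties of $(A,\psi)$ and $T$, that $(X\otimes A,\tau_1^X,\taubar{1}^X)$ from \eqref{eq:FrX_morphs} lies in $\mcC_\opA^1$, that $\gamma^X$ is a half-braiding, and that $F_r$ carries a strong monoidal structure for which $F_r(c^{-1}_{Y,X})$ matches the braiding of $\mcZ(\mcC_\opA^1)$ --- the reversed braiding on $\mcC$ being forced by the handedness of $\gamma^X_N$. Recording $F_l(A)=F_r(\opid)=\opid_{\mcZ(\mcC_\opA^1)}$ and then checking $c_{F_r(X),F_l(M)}\circ c_{F_l(M),F_r(X)}=\id$ (the algebraic shadow of sliding a bulk line past a wall line, cf.\ Figure~\ref{fig:Witt_ito_defects}) shows, since $M\boxtimes\opid$ and $\opid\boxtimes X$ are mutually transparent in a Deligne product of braided categories, that $F_l\boxtimes F_r$ descends to the braided functor $F$, which is pivotal (inherited), hence ribbon.

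Next I would prove that $F$ is fully faithful. Since $\opA$ is simple, $\mcC_\opA$ is a MFC by Theorem~\ref{thm:CA_is_MFC}, hence so is $\mcC_\opA\boxtimes\widetilde{\mcC}$; a ribbon functor out of a MFC is automatically a full embedding by \cite[Cor.\,3.26]{DMNO} (applied to $F$ regarded as landing in the full fusion subcategory of the multifusion category $\mcZ(\mcC_\opA^1)$ generated by its image). Alternatively this can be shown ``by hand'' along the lines of Appendix~\ref{appsec:proof_Iz_is_equiv}: from \eqrefO{1}--\eqrefO{8}, \eqrefT{1}--\eqrefT{7} and the averaging morphism \eqref{eq:avg_map} one constructs an idempotent projecting an arbitrary morphism of underlying objects onto $\mcZ(\mcC_\opA^1)(F(-),F(-))$ with image inside $F(\mcC_\opA\boxtimes\widetilde{\mcC})$, which yields surjectivity of $F$ on Hom-spaces, whereupon Proposition~\ref{prp:surj_pivotal_functs} applies. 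As $\mcC_\opA\boxtimes\widetilde{\mcC}$ is semisimple, the essential image of the fully faithful $F$ is automatically closed under direct summands, so it remains only to see that $F$ is essentially surjective.

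For essential surjectivity I would use the (two-sided) adjunction between the forgetful functor $\Phi\colon\mcZ(\mcC_\opA^1)\ra\mcC_\opA^1$ and the induction functor $I$: every object of $\mcZ(\mcC_\opA^1)$ is a summand of $I(N)$ for some $N\in\mcC_\opA^1$, and since $I(N_1\otimes N_2)$ is a summand of $I(N_1)\otimes I(N_2)$ and $I(N^*)\cong I(N)^*$, it suffices to treat $N$ running over a family generating $\mcC_\opA^1$ under $\otimes_\opA^1$, duals and summands. Such a family is obtained, as for the pipe objects generating $\mcC_\opA$ in Remark~\ref{rem:pipe_objs_on_induced_bimods}, from the objects $T\otimes_1 L$ of \eqref{eq:TxL_T-crossings} and the $\Phi F_r(X)=(X\otimes A,\tau_1^X,\taubar{1}^X)$, $X\in\mcC$, which one checks lie (up to summands) in the image of $\Phi F$. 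Combined with the projection formula $I(\Phi(Z))\cong Z\otimes I(\opid_{\mcC_\opA^1})$ and a direct identification of $I(\opid_{\mcC_\opA^1})$ as a summand of the image under $F$ of a suitable object built from the Kirby colour $C$ of \eqref{eq:Kirby_col} (e.g.\ $P_\opA(C)\boxtimes C$), this shows every $I(N)$, hence every object of $\mcZ(\mcC_\opA^1)$, is a summand of some $F(W)$. Then by Proposition~\ref{prp:ssi_funct_equiv} $F$ is an equivalence, and being ribbon it is a ribbon equivalence. Finally, $\mcZ(\mcC_\opA^1)\simeq\mcC_\opA\boxtimes\widetilde{\mcC}$ has a simple tensor unit; since the centre of a multifusion category splits as the direct sum of the centres of its indecomposable summands, $\mcC_\opA^1$ must be indecomposable.

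The main obstacle is the essential surjectivity step, equivalently the triviality of the centraliser of $F(\mcC_\opA\boxtimes\widetilde{\mcC})$ in $\mcZ(\mcC_\opA^1)$. Concretely, the real content is identifying which objects of $\mcC_\opA^1$ arise, up to summands, from $F_l$ and $F_r$ (a generation statement analogous to Proposition~\ref{prp:UA_orb_datum}iii) and its appendix proof) and locating $I(\opid_{\mcC_\opA^1})$ inside the image of $F$; the verifications in the earlier steps, while lengthy, are routine bookkeeping with the graphical calculus for orbifold data.
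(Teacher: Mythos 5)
Your skeleton (well-definedness, full faithfulness, essential surjectivity) matches the paper's, but both substantive steps are handled differently and each has a genuine gap. For full faithfulness, the appeal to \cite[Cor.\,3.26]{DMNO} is circular: that corollary needs the target to be a non-degenerate braided \emph{fusion} category, whereas at this stage $\mcZ(\mcC_\opA^1)$ is only known to be braided multifusion. Its unit is $F(\opid\boxtimes\opid)$, and it is precisely full faithfulness that forces $\End(F(\opid\boxtimes\opid))=\opk$, hence simplicity of the unit, hence indecomposability of $\mcC_\opA^1$, which is what Proposition~\ref{prp:ZA-ZAii_equiv} and Lemma~\ref{lem:ZCAi-ZF_equiv} require before one can identify $\mcZ(\mcC_\opA^1)$ with the MFC $\mcZ(\mcF)$. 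If $\mcC_\opA^1$ were decomposable, $\mcZ(\mcC_\opA^1)$ would split as a direct sum and the ``full fusion subcategory generated by the image'' would not exist, since it would contain the non-simple unit. Your fallback via an averaging idempotent is not constructed (the map \eqref{eq:avg_map} projects onto $\mcC_\opA(M,N)$, not onto morphism spaces of $\mcZ(\mcC_\opA^1)$). The paper instead argues directly that $F(\D\boxtimes i)$ is simple for simples $\D$, $i$: an endomorphism $f$ commutes with the half-braiding of $F(\D\boxtimes i)$ evaluated at every $F_r(X)$, and non-degeneracy of the braiding of $\mcC$ then forces $f=g\otimes\id_i$; a partial trace and commutation with the half-braiding at $(T_2,\a)$ (which is $\tau_2^\D$) place $g$ in $\End_{\mcC_\opA}(\D)=\opk$.

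For essential surjectivity your induction-functor route rests on two unproved and non-routine claims: that the objects $T\otimes_1 L$ and $(X\otimes A,\tau_1^X,\taubar{1}^X)$ generate $\mcC_\opA^1$ up to summands, duals and $\otimes_\opA^1$ (no one-sided analogue of Proposition~\ref{prp:pipe_objs_generate} for $\mcC_\opA^1$ is available, and $T_2$ is not obviously of the form $\Phi\circ F$ of anything), and that $I(\opid_{\mcC_\opA^1})$ is a summand of an object in the image of $F$ (the candidate $P_\opA(C)\boxtimes C$ is not computed). The paper's mechanism is entirely different and is the real content of the proof: it shows $F_r(\widetilde{\mcC})'\simeq F_l(\mcC_\opA)$ by reconstructing the missing crossing of an object $((N,\tau_1,\taubar{1}),\gamma)$ in the centraliser as $\tau_2:=\gamma_{T_2}$, the half-braiding with $(T_2,\a)$ from \eqref{eq:T1_T2_objs}, and verifying \eqrefT{1}--\eqrefT{7} together with the $A$-$A^{\otimes 2}$-bimodule property from the orbifold axioms; the factorisation Theorem~\ref{thm:MFCs_factor} then forces the ranks of $\mcC_\opA\boxtimes\widetilde{\mcC}$ and $\mcZ(\mcC_\opA^1)$ to agree, which together with full faithfulness gives essential surjectivity. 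Without this centraliser computation, or a proof of your two generation claims, the argument does not close.
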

\begin{rem}
\label{rem:CAC-ZCA2_equiv}
One could have similarly constructed an equivalence $\mcC \boxtimes \widetilde{\mcC_\opA}\ra\mcZ(\mcC_\opA^2)$ instead, so that $\mcC_\opA^2$ too is an indecomposable multifusion category.
\end{rem}
\noindent
Before starting with the proof one should note that there is a slight complication due to $\opA$ not a priori being haploid, i.e.\ $\mcC_\opA^1$ not necessarily being a spherical fusion category as required in Proposition~\ref{prp:Witt_eq_conds}\ref{prp:Witt_eq_conds:iii}, but rather a pivotal multifusion category.
This is not hard to remedy by exchanging $\mcC_\opA^1$ for its component category, see Proposition~\ref{prp:ZA-ZAii_equiv}.
This will also later play a role in proving Proposition~\ref{prp:CCA-ZCA1_equiv}.

\medskip

It is straightforward to find a monoidal structure and show that $F$ is indeed a ribbon functor (note that in case $\opA$ is a trivial orbifold datum, $F$ is exactly the equivalence $\mcC\boxtimes\widetilde{\mcC}\xra{\sim}\mcZ(\mcC)$ in Definition~\ref{def:MFC}).
We focus on checking that $F$ is fully faithful and essential surjective.
\begin{proof}[Proof of Proposition~\ref{prp:CCA-ZCA1_equiv}: $F$ is fully faithful]
It is enough to show that for simples $\D\in\mcC_\opA$ and $i\in\widetilde{\mcC}$, the object $F(\D\boxtimes i)\in\mcZ(\mcC_\opA^1)$ is also simple.
Let $X\in\widetilde{\mcC}$ be an arbitrary object and $\Gamma$ the half-braiding of $F(\D\boxtimes i)$, \i.e.\ the product of the half-braidings as in~\eqref{eq:FlM} and~\eqref{eq:FrX_morphs}.
Since morphisms in $\mcZ(\mcC_\opA^1)$ by definition commute with half-braidings, any morphism $f\in\End_{\mcZ(\mcC_\opA^1)}F(\D\boxtimes i)$ satisfies in particular
\begin{equation}
\Gamma_{F_r(X)} \circ (f \otimes \id_{F_r(X)}) =
(f \otimes \id_{F_r(X)}) \circ \Gamma_{F_r(X)}
\end{equation}
for all $X\in\widetilde{C}$.
This can be rewritten as follows:
\begin{equation}
\label{eq:Witt_triv_proof:f_simple}
\pic[1.25]{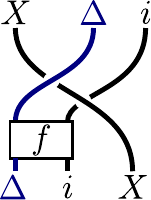} =
\pic[1.25]{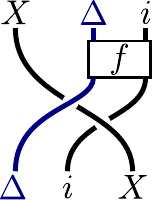} \, ,
\end{equation}
where we have used the identification of the underlying $A$-$A$-bimodules $\D\otimes_A F_r(X) \cong \D\otimes X$ (the latter one with the $A$-actions analogous to those in~\eqref{eq:FrX_morphs}) so that one has $c_{\D,F_r(X)}^\opA = c_{\D, X}$ in~\eqref{eq:FlM}, and similarly $\D\otimes_A F_r(i) \otimes_A F_r(X) \cong  \D \otimes X \otimes i$ for the domains of the morphisms on both sides.

Since~\eqref{eq:Witt_triv_proof:f_simple} also holds as an identity of morphisms in $\mcC$, the modularity of $\mcC$ implies that $f = g \otimes \id_i$ for some $g\in\End_\mcC(\D)$.
We claim that $g$ must also be a morphism in $\End_{\mcC_\opA}\D$, i.e.\ satisfy~\eqref{eq:M}.
To show this, note that since $f$ is a morphism in $\mcZ(\mcC_\opA^1)$, so is its partial trace with respect to $i$
\begin{equation}
\pic[1.25]{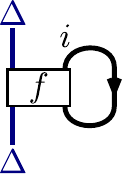} = \dim_\mcC i \cdot g \, ,
\end{equation}
i.e.\ $g$ is readily a morphism in $\mcZ(\mcC_\opA^1)$ and hence also a morphism in $\mcC_\opA^1$ so that~\eqref{eq:M} holds for $i=1$.
To show that it holds for $i=2$ as well, note that being a morphism in $\mcZ(\mcC_\opA^1)$ it commutes with the half-braidings of the objects in this Drinfeld centre, in particular the half-braiding of $\D$ with the object $(T_2,\a)\in\mcC_\opA^1$ which by~\eqref{eq:FlM} is exactly $c^\opA_{\D,T_2} = \tau_2^\D$.

Since $\D$ is a simple object of $\mcC_\opA$ and $g\in\End_{\mcC_\opA}\D$, one has $g = \xi \cdot \id_\Delta$ for some $\xi\in\opk$ and therefore also $f = \xi \cdot \id_{F(\D\boxtimes i)}$.
Hence we have started with an arbitrary endomorphism $f\in\End_{\mcZ(\mcC_\opA^1)}F(\D\boxtimes i)$ and showed that it is proportional to the identity, i.e.\ $F(\D\boxtimes i)\in\mcZ(\mcC_\opA^1)$ is simple as needed.
\end{proof}

Let us now briefly address the complication due $\mcC_\opA^1$ being a multifusion and not necessarily a spherical fusion category.
Note that, by Proposition~\ref{prp:ZA-ZAii_equiv} and the comment preceding, having showed that $F$ is fully faithful also implies that $\mcC_\opA^1$ (and similarly $\mcC_\opA^2$) are indecomposable.
\begin{lem}
\label{lem:ZCAi-ZF_equiv}
Let $\opA$ be a simple orbifold datum in a MFC $\mcC$.
Then $\mcC_\opA^i$, $i\in\{1,2\}$ has a component category $\mcF$ which is a spherical fusion category and the canonical braided equivalence $\mcZ(\mcC_\opA^i)\xra{\sim}\mcZ(\mcF)$ is ribbon.
In particular, if $\opA$ is haploid, $\mcC_\opA^i$ is a spherical fusion category.
\end{lem}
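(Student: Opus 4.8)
\textbf{Proof plan for Lemma~\ref{lem:ZCAi-ZF_equiv}.}
The plan is to apply Proposition~\ref{prp:ZA-ZAii_equiv} (and its explicit equivalence~\eqref{eq:ZA-ZAii_equiv}) to the multifusion category $\mcC_\opA^i$, which is indecomposable thanks to the discussion preceding this lemma (that $F$ from Proposition~\ref{prp:CCA-ZCA1_equiv} is fully faithful forces $\mcC_\opA^1$, and by Remark~\ref{rem:CAC-ZCA2_equiv} also $\mcC_\opA^2$, to be indecomposable). Writing $\opid_{\mcC_\opA^i} = A = \bigoplus_j \opid_j$ for the decomposition of the monoidal unit into simples, the component category $\mcF := \mcF_{jj} = \opid_j \otimes \mcC_\opA^i \otimes \opid_j$ is a fusion category, and Proposition~\ref{prp:ZA-ZAii_equiv} gives a braided equivalence $\mcZ(\mcC_\opA^i) \xra{\sim} \mcZ(\mcF)$. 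By Example~\ref{eg:FF_Ind_functor}, the projection functor $\opid_j \otimes - \otimes \opid_j$ is the bimodule induction functor ${}_{\opid_j}(\mcC_\opA^i)_{\opid_j}$, and since $\opid_j$ is a symmetric $\D$-separable Frobenius algebra in the pivotal category $\mcC_\opA^i$, this functor is pivotal; hence the equivalence $\mcZ(\mcC_\opA^i)\simeq\mcZ(\mcF)$ is pivotal, and since both sides are ribbon (as Drinfeld centres of pivotal categories with their canonical structures, and the source inherits a ribbon structure compatible with that of $\mcC_\opA$ via the twist~\eqref{eq:CA_br_twist}), a braided pivotal equivalence between ribbon categories is automatically ribbon.

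The two substantive points are: (a) that $\mcF$ is \emph{spherical}, not merely pivotal fusion; and (b) the final sentence, that if $\opA$ is haploid then $\mcC_\opA^i$ is itself a spherical fusion category. For (a) the natural route is the one already hinted at in Section~\ref{subsec:relative_tensor_products}: one knows that if $\mcC$ is spherical fusion and the algebra $A$ is haploid then $\mcACA$ is spherical; here the relevant algebra living over $\mcC_\opA$ is $A$ itself, but $\mcF$ is a component category of $\mcC_\opA^i$, and the argument should reduce sphericity of $\mcF$ to the fact that $\mcC$ is spherical together with the explicit expressions~\eqref{eq:tr_in_ACA} for left/right traces in categories of bimodules. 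Concretely, for a simple object of $\mcF$ one compares $\tr_l$ and $\tr_r$ by unwinding the $T$-crossing data using~\eqrefT{1}--\eqrefT{7} and~\eqrefO{1}--\eqrefO{8}, pushing everything down to a trace computation in the spherical category $\mcC$; the left/right discrepancy that could a priori appear in a pivotal multifusion $\mcACA$ is killed because $\opid_j$ is simple in $\mcC_\opA^i$ and $\mcC$ is spherical. For (b), haploidness of $\opA$ means $\mcC_\opA^i$ is fusion, so there is only one component category and $\mcF = \mcC_\opA^i$; sphericity then follows from the same computation as in (a).

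I expect the main obstacle to be part (a): verifying sphericity of the component category $\mcF$. The subtlety is exactly the one flagged in Section~\ref{subsec:relative_tensor_products}, namely that bimodule categories over a symmetric separable Frobenius algebra are pivotal but need \emph{not} be spherical in general, and the extra $\psi^2$-insertions in the trace formulae~\eqref{eq:tr_in_ACA} must be handled with care; moreover here one is working not with $\mcACA$ but with the more elaborate category $\mcC_\opA^i$ carrying $T$-crossing data, so the reduction of a trace in $\mcC_\opA^i$ to a trace in $\mcC$ requires unpacking the pivotal structure inherited from $\mcACA$ and using the orbifold identities to resolve the $T$-lines appearing in $\tr_l\id_\mu - \tr_r\id_\mu$. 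Once this bookkeeping is done the sphericity of $\mcC$ closes the argument, and the remaining claims (indecomposability, the ribbon upgrade of the Proposition~\ref{prp:ZA-ZAii_equiv} equivalence) are routine given the results already established.
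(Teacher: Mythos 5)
Your overall architecture matches the paper's: indecomposability of $\mcC_\opA^i$ from Proposition~\ref{prp:CCA-ZCA1_equiv}, the component category via Proposition~\ref{prp:ZA-ZAii_equiv}, pivotality of the projection functor via Example~\ref{eg:FF_Ind_functor}, and ``braided $+$ pivotal $\Rightarrow$ ribbon''. That part is fine.

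The gap is in the sphericity argument, which you correctly flag as the main obstacle but then only gesture at. Two problems. First, the lemma asserts that \emph{some} component category $\mcF_{jj}$ is spherical, not that all of them are, and your proposal never chooses the index $j$; the paper's proof must pick $j$ with $\tr_\mcC(\omega_j)_{A_j}^2\neq 0$, and the existence of such a $j$ is exactly where the hypothesis that $\opA$ is simple enters (simplicity gives $\tr_\mcC\omega_A^2=\sum_j\tr_\mcC(\omega_j)_{A_j}^2\neq 0$ by Theorem~\ref{thm:CA_is_MFC}). Your sketch uses simplicity of $\opA$ nowhere, so it cannot be complete. Second, your assertion that the left/right discrepancy ``is killed because $\opid_j$ is simple in $\mcC_\opA^i$ and $\mcC$ is spherical'' is not a proof and is not obviously true for an arbitrary component: the actual mechanism is that the partial trace of $\id_\D$ for $\D\in\mcF$ simple is a scalar multiple of $\id_{A_j}$ (simplicity of $A_j$ in $\mcC_\opA^i$), and composing with $(\omega_j)_{A_j}^2$ and applying $\tr_\mcC$ gives $\tr_\mcC(\omega_j)_\D^2=\dim_{l}\D\cdot\tr_\mcC(\omega_j)_{A_j}^2$, and likewise for $\dim_r\D$ using sphericity of $\tr_\mcC$. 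If $\tr_\mcC(\omega_j)_{A_j}^2=0$ this identity is vacuous (note $A_j$ and $\D$ need not be simple as bimodules in $\mcC$, so Lemma~\ref{lem:tr-psi_non_zero} does not apply to them directly), which is precisely why the choice of $j$ matters. Finally, your proposed route of unwinding the $T$-crossings via \eqrefT{1}--\eqrefT{7} and \eqrefO{1}--\eqrefO{8} is a detour: the pivotal structure of $\mcC_\opA^i$ is inherited from the bimodule category, so the whole trace comparison lives in ${}_{A_j}\mcC_{A_j}$ and never touches the $T$-crossing data.
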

\begin{proof}
Let $A \cong \bigoplus_j A_j$ be a decomposition of the monoidal unit in $\mcC_\opA^i$.
Recall that one has by definition $\mcF = A_j \otimes_\opA^i \mcC_\opA^i \otimes_\opA^i A_j$ for some index $j$.
As was argued in Example~\ref{eg:FF_Ind_functor}, each $A_j$ is trivially a symmetric $\D$-separable Frobenius algebra in $\mcC_\opA^i$, $\mcF$ is the category of $A_j$-$A_j$-bimodules in $\mcC_\opA^i$ and the projector functor $N\mapsto A_j \otimes_\opA^i N \otimes_\opA^i A_j$ is identical to the bimodule induction functor, which is pivotal.
The equivalence $\mcZ(\mcC_\opA^i)\xra{\sim}\mcZ(\mcF)$ is obtained from the same projection (see~\eqref{eq:ZA-ZAii_equiv}) and is therefore both braided and pivotal, hence ribbon. 

As in Example~\ref{eg:FF_forgetful}, by applying the forgetful functor $\mcC_\opA^i\ra\mcACA\ra\mcC$ each $A_j$ becomes a symmetric separable Frobenius algebra in $\mcC$ with the section $\psi_j=[\opid\xra{\psi}A\ra A_j]$.
It is enough to compare the left/right dimensions of a simple $\D\in\mcF$, which can be done in the category of $A_j$-$A_j$-bimodules in $\mcC$.
Using~\eqref{eq:tr_in_ACA} one has:
\begin{equation}
\pic[1.25]{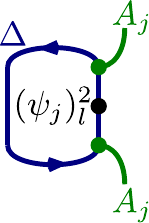} = (\dim_l)_{\mcC_\opA^i}\D \cdot
\pic[1.25]{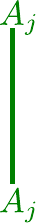} \, .
\end{equation}
Precomposing both sides with $(\omega_j)_A^2$ and taking trace in $\mcC$ yields
\begin{equation}
\label{eq:ZCAi-ZF_equiv:dim-Delta}
\tr_\mcC (\omega_j)_\Delta^2 = (\dim_l)_{\mcC_\opA^i}\Delta \cdot \tr_\mcC (\omega_j)_{A_j}^2 \, .
\end{equation}
Since $\opA$ is simple and so $\tr_\mcC\omega_A^2 = \sum_j \tr_\mcC(\omega_j)_{A_j}^2 \neq 0$, one can choose the index $j$  so that both sides of~\eqref{eq:ZCAi-ZF_equiv:dim-Delta} are non-zero, which yields a formula for the left dimension of $\D$ in $\mcC_\opA^i$.
The same formula is obtained by similarly computing the right dimension of $\D$ (in this step one has to use the fact that $\mcC$ is spherical).
\end{proof}

In light of Lemma~\ref{lem:ZCAi-ZF_equiv} we see that a proper candidate for Witt trivialisation needed for Theorem~\ref{thm:Witt_vs_orb_eq} is obtained by picking a spherical component category $\mcF$ of $\mcC_\opA^1$ and composing $\mcC_\opA \boxtimes \widetilde{\mcC} \xra{F} \mcZ(\mcC_\opA^1) \xra{\sim} \mcZ(\mcF)$.
It remains to show that $F$ is essentially surjective.
For this we use the factorisation property of MFCs (see~\cite[Cor.\,7.8]{Mug2}, \cite[Prop.\,2.11, Thm.\,3.14]{DGNO})
\begin{thm}
\label{thm:MFCs_factor}
Let $\mcC$ be a MFC and $\mcD$ a full subcategory of $\mcC$ which is also a MFC.
Then $\mcD \boxtimes \mcD' \simeq \mcC$ as ribbon fusion categories, where $\mcD'$ is the full subcategory of $\mcC$ with the objects
\begin{equation}
\label{eq:commutator_in_MFC}
\mcD' = \{X\in\mcC ~,~ c_{Y,X} \circ c_{X,Y} = \id_{X\otimes Y} \text{ for all } Y\in\mcD\} \, .
\end{equation}
\end{thm}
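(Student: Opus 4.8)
The statement is Müger's factorisation theorem, and the plan is to realise the claimed equivalence by an explicit tensor functor. Note first that $\mcD'$, as defined in \eqref{eq:commutator_in_MFC}, is the Müger centraliser of $\mcD$ in $\mcC$: it is a full ribbon fusion subcategory of $\mcC$, closure under $\otimes$, duals and direct summands being immediate from the naturality of the braiding and the hexagon axioms. I would then write down
\[
G\colon \mcD\boxtimes\mcD'\ra\mcC\,,\qquad X\boxtimes Y\mapsto X\otimes Y\,,
\]
which exists by the universal property of the Deligne product since $(X,Y)\mapsto X\otimes Y$ is bilinear and biexact, and equip it with the monoidal structure $G_0:=\id_\opid$ and $G_2\big(X\boxtimes Y,\,X'\boxtimes Y'\big):=\id_X\otimes c_{Y,X'}\otimes\id_{Y'}$ (associators suppressed). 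The pentagon and hexagon coherences for $(G,G_2,G_0)$ collapse, after cancelling the braidings occurring on both sides, to the single identity $c_{X',Y}\circ c_{Y,X'}=\id$, which holds because $X'\in\mcD$ and $Y\in\mcD'$; the same identity makes $G$ braided. Since $\theta_{X\otimes Y}=(\theta_X\otimes\theta_Y)\circ c_{Y,X}\circ c_{X,Y}=\theta_X\otimes\theta_Y$ whenever $X\in\mcD$, $Y\in\mcD'$, the functor $G$ is ribbon. It then remains to check that $G$ is fully faithful and essentially surjective, at which point Proposition~\ref{prp:ssi_funct_equiv} finishes the argument.

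Before doing either I would isolate the two inputs from the non-degeneracy of $\mcC$. First, Müger's double centraliser theorem for modular categories gives $(\mcD')'=\mcD$ and the dimension formula $\Dim\mcD\cdot\Dim\mcD'=\Dim\mcC$. Second, $\mcD\cap\mcD'$ is the symmetric (Müger) centre of $\mcD$; since $\mcD$ is itself modular this symmetric centre is $\simeq\Vect_\opk$, i.e.\ a simple object of $\mcC$ lying in both $\mcD$ and $\mcD'$ must be isomorphic to $\opid$.

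Full faithfulness is then a $\Hom$-space count on simples $X,X'\in\Irr_\mcD$ and $Y,Y'\in\Irr_{\mcD'}$. Using the transparency of objects of $\mcD'$ with respect to $\mcD$ to reorder tensor factors, one rewrites $\Hom_\mcC(X\otimes Y,\,X'\otimes Y')\cong\Hom_\mcC\big(\opid,\,(X^*\otimes X')\otimes(Y^*\otimes Y')\big)$ with $X^*\otimes X'\in\mcD$ and $Y^*\otimes Y'\in\mcD'$, and decomposes both factors into simples. Everything reduces to the vanishing $\Hom_\mcC(\opid,\,U\otimes V)=0$ for $U\in\Irr_\mcD$, $V\in\Irr_{\mcD'}$ not both isomorphic to $\opid$: indeed $\Hom_\mcC(\opid,U\otimes V)\cong\Hom_\mcC(U^*,V)$ is non-zero only if the simples $U^*\in\mcD$ and $V\in\mcD'$ agree, which by the previous paragraph forces $U\cong V\cong\opid$. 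Hence $\dim\Hom_\mcC(X\otimes Y,\,X'\otimes Y')=\delta_{X,X'}\,\delta_{Y,Y'}=\dim\Hom_{\mcD\boxtimes\mcD'}(X\boxtimes Y,\,X'\boxtimes Y')$, so $G$ is fully faithful. For essential surjectivity I would observe that the essential image $\mcB:=G(\mcD\boxtimes\mcD')$ is a fusion subcategory of $\mcC$ with $\Dim\mcB=\Dim(\mcD\boxtimes\mcD')=\Dim\mcD\cdot\Dim\mcD'=\Dim\mcC$; in characteristic $0$ a fusion subcategory of full global dimension is the whole category, so $\mcB=\mcC$ and $G$ is an equivalence, necessarily ribbon.

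The routine parts are the construction of $G$ and the $\Hom$-count; the genuine content — and the step I expect to be the real obstacle in a self-contained treatment — is the non-degeneracy input of the second paragraph, namely $\mcD\cap\mcD'\simeq\Vect_\opk$ and $\Dim\mcD\cdot\Dim\mcD'=\Dim\mcC$. Proving these from scratch requires analysing a projector built from the Kirby colour of $\mcD$ and determining when it annihilates a simple object of $\mcC$, which is precisely the technical heart of \cite[Sec.\,7]{Mug2} and \cite{DGNO}; here I would simply cite those results.
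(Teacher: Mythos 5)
There is nothing in the paper to compare against here: Theorem~\ref{thm:MFCs_factor} is quoted from \cite[Cor.\,7.8]{Mug2} and \cite[Prop.\,2.11, Thm.\,3.14]{DGNO} and the paper gives no proof of its own. Your reconstruction is the standard argument from those references and is sound: the functor $G(X\boxtimes Y)=X\otimes Y$ with $G_2$ built from the braiding, full faithfulness by the $\Hom$-count reducing to $\Hom_\mcC(U^*,V)=0$ for non-trivial simples $U\in\mcD$, $V\in\mcD'$ (using $\mcD\cap\mcD'\simeq\Vect_\opk$), and essential surjectivity by the dimension formula $\Dim\mcD\cdot\Dim\mcD'=\Dim\mcC$. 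You also correctly flag that these last two facts are the genuine content and that you are importing them rather than proving them, which is exactly what the paper does wholesale. One cosmetic point: the monoidal coherence of $(G,G_2,G_0)$ does not require transparency at all --- $\otimes\colon\mcC\boxtimes\mcC\ra\mcC$ with the braiding as middle interchange is a monoidal functor for any braided $\mcC$, by the hexagon axioms alone; transparency of $\mcD'$ against $\mcD$ is needed only to make $G$ \emph{braided} (and hence, together with $\theta_{X\otimes Y}=(\theta_X\otimes\theta_Y)\circ c_{Y,X}\circ c_{X,Y}$, ribbon). This does not affect the validity of the argument.
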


We are now ready to continue showing that the functor~\eqref{eq:CCA-ZCA1_equiv:F_funct} is an equivalence.
\begin{proof}[Proof of Proposition~\ref{prp:CCA-ZCA1_equiv}: $F$ is essentially surjective]
It is enough to show that the rank of the source and the target categories of $F$ coincide.
For this we show that
\begin{equation}
\label{eq:FrC-comm-Fl-CA_equiv}
F_r(\widetilde{\mcC})' \simeq F_l(\mcC_\opA) \, .
\end{equation}
Then by Theorem~\ref{thm:MFCs_factor} there exists some equivalence $F_l(\mcC_\opA)\boxtimes F_r(\widetilde{\mcC})\simeq\mcC_\opA\boxtimes\widetilde{\mcC}\simeq\mcZ(\mcC_\opA^1)$, so that the ranks indeed coincide.

Let $((N,\tau_1,\taubar{1}),\gamma)\in F_r(\widetilde{\mcC})' \subseteq \mcZ(\mcC_\opA^1)$ be an arbitrary object where $\gamma\colon N \otimes_\opA^1 - \Ra - \otimes_\opA^1 N$ is the half-braiding.
We define morphisms $\tau_2\colon N \otimes_A T_2 \lra T_2 \otimes_A N : \taubar{2}$ to be the half-braiding $\gamma_{T_2}$ of $N$ with the object $(T_2,\a)\in\mcC_\opA^1$ (see~\eqref{eq:T1_T2_objs}) and its inverse, both seen as morphisms in $\mcACA$.
We claim that the tuple $(N,\tau_1,\tau_2, \taubar{1}, \taubar{2})$ satisfies the identities~\eqrefT{1}--\eqrefT{7} and so is a good candidate to be an object of $\mcC_\opA$:
\begin{itemize}
\item
\eqrefT{1}: already holds by definition.
\item
\eqrefT{2}: holds since $\gamma_{T_2}$ is a morphism in $\mcC_\opA^1$ and therefore satisfies~\eqref{eq:M} for $i=1$:
\begin{equation}
\pic[1.25]{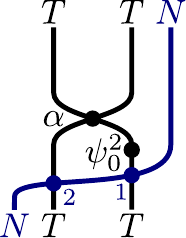} =
\pic[1.25]{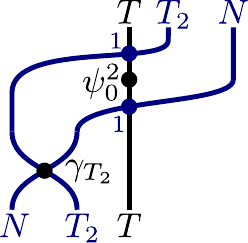} =
\pic[1.25]{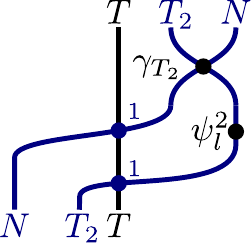} =
\pic[1.25]{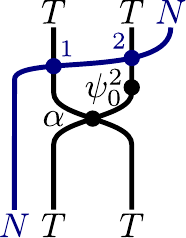}
\end{equation}
\item
\eqrefT{3}: let $T\in\widetilde{\mcC}$ be the underlying object of the corresponding $A$-$A^{\otimes 2}$-bimodule.
One has $T_2 \otimes_\opA^1 F_r(T) \cong T \otimes T$.
Set $\a'\colon T_2 \otimes_\opA^1 T_2 \ra T_2 \otimes_\opA^1 F_r(T)$ to be the morphism in $\mcC_\opA^1$ as defined by the balanced map
\begin{equation}
\a' := \pic[1.25]{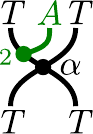} \, .
\end{equation}
That it is indeed a morphism in $\mcC_\opA^1$ follows directly from the identity~\eqrefO{1}.
The half-braiding of $N$ then must commute with it:
\begin{equation}
\pic[1.25]{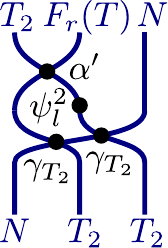} =
\pic[1.25]{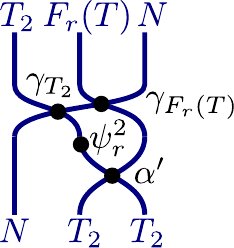} =
\pic[1.25]{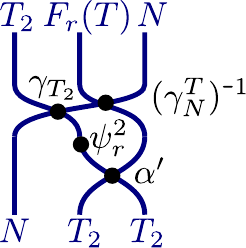} ,
\end{equation}
where in the second step we have used that $N\in F_r(\widetilde{\mcC})'$ and therefore by the condition in~\eqref{eq:commutator_in_MFC}, the braiding of $N$ and $F_r(T)$ in $\mcZ(\mcC_\opA^1)$ (which is by definition $\gamma_{F_r(T)}$) is equal to the inverse braiding of $F_r(T)$ and $N$ (which is equal to $(\gamma^T_N)^{-1}$ and is obtained as the inverse of the half-braiding as in~\eqref{eq:FrX_morphs}).
Unpacking the definitions this yields:
\begin{equation}
\pic[1.25]{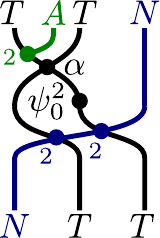} =
\pic[1.25]{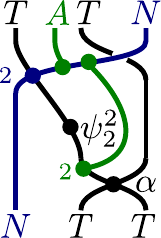} =
\pic[1.25]{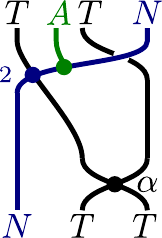} .
\end{equation}
Post-composing the $A$-line on both sides with the counit $\vareps:A \ra \opid$ then yields exactly the identity~\eqrefT{3}.
\item
\eqrefT{4}, \eqrefT{5}: hold since $\gamma_{T_2}$, $\gamma_{T_2}^{-1}$ are inverses of each other.
\item
\eqrefT{6}, \eqrefT{7}: hold since $\mcC_\opA^1$, and therefore also $\mcZ(\mcC_\opA^1)$, is pivotal and the half-braiding $\widetilde{\gamma}\colon N^* \otimes_\opA^1 - \Ra - \otimes_\opA^1 N^*$ of a dual object $N^*\in\mcZ(\mcC_\opA^1)$ is by definition such that $\widetilde{\gamma}_{T_2}$, $\widetilde{\gamma}_{T_2}^{-1}$ are exactly as in~\eqref{eq:CA_MN_A_T-crossings} upon identifying $\tau_2 = \gamma_{T_2}$ (see~\cite[Sec.\,5.2.2]{TV}).
\end{itemize}

It remains to check that the half-braiding $\gamma_{T_2}$ is an $A$-$A^{\otimes 2}$-bimodule morphism.
This is done in a similar way as checking the identity~\eqrefT{3}:
Let us define a morphism $\rho\colon T_2 \otimes_\opA^1 F_r(A) \ra T_2$ in $\mcC_\opA^1$ by
\begin{equation}
\pic[1.25]{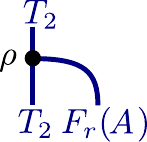} :=
\pic[1.25]{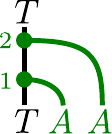} \, .
\end{equation}
That it satisfies~\eqref{eq:M} for $i=1$ follows from identities~\eqref{eq:right_T_actions_commute} and~\eqref{eq:aabar-A_commute} as well as how the $T$-crossing of $F_r(A)$ was defined in~\eqref{eq:FrX_morphs}.
Then one has:
\begin{equation}
\pic[1.25]{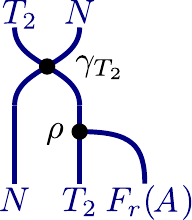} =
\pic[1.25]{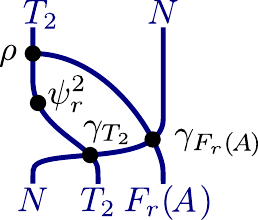} =
\pic[1.25]{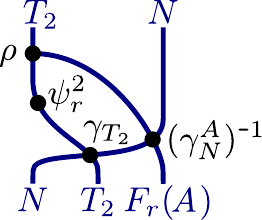} .
\end{equation}
Unpacking the definitions one gets:
\begin{equation}
\pic[1.25]{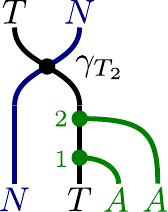} =
\pic[1.25]{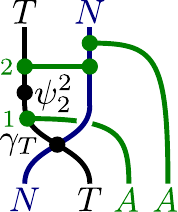} =
\pic[1.25]{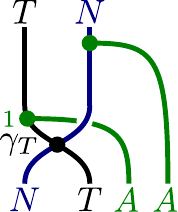} .
\end{equation}
Pre-composing the rightmost $A$ line with the unit $\eta\colon\opid\ra A$ then gives exactly the missing condition for $\gamma_{T_2}$ to be an $A$-$A^{\otimes 2}$-bimodule morphism.

The assignment
\begin{equation}
    F_r(\widetilde{\mcC})' \ni ((N,\tau_1,\taubar{1}),\gamma) \mapsto F_l(N,\tau_1,\tau_2,\gamma_{T_2},\gamma_{T_2}^{-1})\in F_l(\mcC_\opA)
\end{equation}
then provides one with the inverse to the inclusion $F_l(\mcC_\opA)\ra F_r(\widetilde{\mcC})'$ and so the equivalence~\eqref{eq:FrC-comm-Fl-CA_equiv} holds as needed.
\end{proof}

\subsection{Corollary on unital orbifold data}
\label{subsec:unital_orb_data}
Although we have not pursued this point of view, an orbifold datum $\opA = (A,T,\a,\abar,$ $\psi,\phi)$ in a MFC $\mcC$ can be seen as a certain algebra object in the monoidal bicategory $\mcA{}lg_\mcC$ of algebras in $\mcC$, their bimodules and bimodule morphisms (with the monoidal product being given by the tensor product of algebras in $\mcC$).
Indeed, the $A$-$A^{\otimes 2}$-bimodule $T$ is a 1-morphism in $\mcA{}lg_\mcC(A\otimes A, A)$ that can be interpreted as the multiplication and the $A$-$A^{\otimes 3}$-bimodule morphisms $\a$, $\abar$ can be seen as the associator 2-morphisms.
The current definition however does not include the unit into the construction, which suggests the following
\begin{defn}
A \textit{unital orbifold datum} in a MFC $\mcC$ is a tuple $(\opA, U, \l, \rho)$, where $\opA=(A,T,\a,\abar,\psi,\phi)$ is an orbifold datum in $\mcC$, $U$ is a left $A$-module and $\l\colon T \otimes_2 U$ $\xra{\sim }A$, $\rho\colon T \otimes_1 U \xra{\sim} A$ are isomorphisms in $\mcC_\opA^1$ and $\mcC_\opA^2$ respectively (where $A$ has the standard $T$-crossings~\eqref{eq:CA_MN_A_T-crossings} and $T \otimes_i U$, $i\in\{1,2\}$ the ones as in~\eqref{eq:TxL_T-crossings}).
\end{defn}

All examples of orbifold data considered in the references~\cite{CRS3,MR2} were unital.
The main claim of this section is
\begin{thm}
\label{thm:unital_orb_data}
Let $\opA$ be a simple orbifold datum in a MFC $\mcC$.
Then there is a simple unital orbifold datum $\opA'$ such that $\mcC_{\opA}\simeq\mcC_{\opA'}$ are equivalent as ribbon fusion categories.
\end{thm}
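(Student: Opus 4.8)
The plan is to combine Theorem~\ref{thm:Witt_vs_orb_eq} with the explicit condensation-inversion orbifold datum of Theorem~\ref{thm:cond_inv_orb_datum}, whose entries turn out to carry a canonical unit. Given a simple orbifold datum $\opA$ in a MFC $\mcC$, Theorem~\ref{thm:CA_is_MFC} tells us $\mcC_\opA$ is a MFC, and Theorem~\ref{thm:Witt_vs_orb_eq} shows that $\mcC$ and $\mcC_\opA$ are Witt equivalent. By Proposition~\ref{prp:Witt_eq_conds}\ref{prp:Witt_eq_conds:iii} there is then a MFC $\mcE$ and condensable algebras $B, B'\in\mcE$ with $\mcE_B^\loc\simeq\mcC$ and $\mcE_{B'}^\loc\simeq\mcC_\opA$ as ribbon fusion categories. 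Applying Theorem~\ref{thm:inv_orb_datum} (or rather its proof, via Lemma~\ref{lem:Iz_is_equiv} and the explicit form in Theorem~\ref{thm:cond_inv_orb_datum}) to the condensable algebra $B\in\mcE$ and the orbifold datum $\opB'$ in $\mcE$ associated to $B'$ (Remark~\ref{rem:condensation_orb_datum}) produces an orbifold datum $\opA'=\Iz_B(\opB'_C)$ in $\mcE_B^\loc\simeq\mcC$ with $\mcC_{\opA'}\simeq\mcE_{\opB'}\simeq\mcE_{B'}^\loc\simeq\mcC_\opA$ as ribbon fusion categories. Since $\opA$ is simple so is $\mcC_\opA$, hence $\opA'$ is simple. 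So it remains only to exhibit a unital structure on the specific orbifold datum produced by the condensation-inversion construction.

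For this, I would work with the explicit expressions~\eqref{eq:inv-orb-dat_A}--\eqref{eq:inv-orb-dat_alphas}, applied not to the trivial orbifold datum but to $\opB'_C$; the underlying object of $A$ is of the form $\Iz(C^*\otimes A_{\opB'}\otimes C)$ and of $T$ of the form $\Iz(C^*\otimes A_{\opB'}\otimes C\otimes C)$ type (the precise shape being read off from combining Proposition~\ref{defnprp:RA_orb_datum} and Proposition~\ref{defnprp:FA_orb_datum} as in Step~(4) of Section~\ref{subsec:inv_orb_datum}). The natural candidate for the module $U$ is $\Iz$ applied to a copy of $C$ (geometrically: the line defect closing off one of the $C$-lines running through the tube $A$), equipped with its induced left $A$-action. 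The isomorphisms $\l\colon T\otimes_2 U\xra{\sim}A$ and $\rho\colon T\otimes_1 U\xra{\sim}A$ are then built from the evaluation/coevaluation morphisms of $C$ together with the structure morphisms of $\Iz$ from~\eqref{eq:Iz2_morphisms}; intuitively they record that capping off the extra $C$-strand in $T$ with $U$ collapses the trivalent tube to the straight tube $A$. One then checks that $\l$ is a morphism in $\mcC_\opA^1$ and $\rho$ in $\mcC_\opA^2$, i.e.\ that each satisfies the relevant instance of~\eqref{eq:M}; this is a string-diagram manipulation using~\eqrefT{4}, \eqrefO{2}, \eqrefO{6}, the $T$-crossings~\eqref{eq:TxL_T-crossings} on $T\otimes_i U$, and the identities~\eqref{eq:rho-gamma_identities}, \eqref{eq:Iz_non_separable} for $\Iz$, of the same flavour as the computations throughout Section~\ref{subsec:inv_orb_datum}. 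Invertibility of $\l$, $\rho$ follows because both sides are (after forgetting to $\mcC$) obtained from a zig-zag for $C$ inside the $\Iz$-tubes, so the inverses are given by the corresponding coevaluation expressions, and an idempotent-splitting argument as in~\eqref{eq:Iz_simplify} shows the two composites are identities.

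The main obstacle I anticipate is bookkeeping rather than conceptual: carefully matching the $T$-crossing structures so that $\l$ and $\rho$ genuinely live in $\mcC_\opA^1$ and $\mcC_\opA^2$ respectively, because the $T$-crossings on $T\otimes_i U$ in~\eqref{eq:TxL_T-crossings} involve $\a$, $\abar$ and $\psi$ in a way that must cancel precisely against the structure morphisms of the pipe-and-$\Iz$ presentation of $A$; here one genuinely needs identities~\eqrefO{1}--\eqrefO{3} (for the $\a$-manipulations) and the compatibility condition~\eqref{eq:F-A_compatible_cond} (which in the present setting holds with $\phi_F=\phi/\xi$ by Remark~\ref{rem:F-A_special}) to see the normalisation constants line up. An alternative, cleaner route I would also try is to observe directly that the orbifold datum $\opB$ attached to any condensable algebra $B$ in Remark~\ref{rem:condensation_orb_datum} is \emph{manifestly} unital --- take $U={}_BB$, with $\l$, $\rho$ the unit/multiplication isomorphisms $B\otimes_i B\cong B$ --- and that unitality is preserved both under Morita transport (Proposition~\ref{defnprp:RA_orb_datum}, since the transport is an equivalence of the ambient pivotal bicategory and carries $U$, $\l$, $\rho$ along) and under transport along a ribbon Frobenius functor compatible with $\opA$ (Proposition~\ref{defnprp:FA_orb_datum}, applying $F$ to $U$, $\l$, $\rho$), so that $\opA'=\Iz_B(\opB'_C)$ inherits a unital structure for free. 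This reduces the whole theorem to the one-line observation about $\opB$ plus the two functoriality lemmas, and I would present it this way, keeping the explicit formula check of the previous paragraph only as a sanity verification.
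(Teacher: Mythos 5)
Your preferred route (the final paragraph) is exactly the paper's proof: the paper reduces the theorem via Theorem~\ref{thm:Witt_vs_orb_eq} to showing that $\Iz_B(\opB'_C)$ is unital, observes that $\opB'$ carries the obvious unital structure $U=B'$ with $\l,\rho$ the unitors of ${_{B'}\mcE_{B'}}$, and proves in a separate proposition that unitality is preserved under Morita transport and under transport along a compatible ribbon Frobenius functor (Corollary~\ref{cor:A-IZAC_unital}). The explicit $\Iz$-of-$C$ construction in your middle paragraph is not needed (and the candidate unit there should really be the transported module $\Iz(C^*\otimes B')$ rather than $\Iz(C)$), but since you relegate it to a sanity check this does not affect the argument.
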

This follows almost directly from the proof of Theorem~\ref{thm:Witt_vs_orb_eq}: just below its statement it was argued that an orbifold datum, producing the MFC $\mcD\simeq\mcE_{B'}^\loc$ out of a Witt equivalent MFC $\mcC\simeq\mcE_B^\loc$ (where $\mcC,\mcD,\mcE,B,B'$ are as in Proposition~\ref{prp:Witt_eq_conds}) can be taken to be $\Iz_B(\opB'_C)$.
The converse implication in Theorem~\ref{thm:Witt_vs_orb_eq} then implies that \textit{all} MFCs associated to orbifold data can be obtained this way.
It is then enough to show that $\Iz_B(\opB'_C)$ is naturally a unital orbifold datum, for which we have a more general statement:
\begin{prp}
Let $\mcC$ be a MFC and $\opA=(\opA,U,\l,\rho)$ a unital orbifold datum in $\mcC$.
Then both Morita transports of $\opA$ and transports of $\opA$ along compatible ribbon Frobenius functors are naturally unital orbifold data.
\end{prp}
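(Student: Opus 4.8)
The plan is to observe that the two transport constructions of Propositions~\ref{defnprp:RA_orb_datum} and~\ref{defnprp:FA_orb_datum} do not merely produce a new orbifold datum, but come equipped with functors on the auxiliary categories $\mcC_\opA$, $\mcC_\opA^1$, $\mcC_\opA^2$ that are equivalences (respectively pivotal functors) preserving the monoidal and pivotal structure. The unit datum $(U,\l,\rho)$ of $\opA$ will then be transported simply by applying these functors to $U$, $\l$ and $\rho$, after identifying the objects $T\otimes_j U$ and $A$ appearing in the definition with their images.

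First I would treat the Morita case. Let $(B,\psi')$ be a symmetric separable Frobenius algebra, ${}_AR_B$ an isometric Morita module, and $R(\opA)=(B,T^R,\a^R,\abar^R,\psi',\phi)$ the Morita transport of Proposition~\ref{defnprp:RA_orb_datum}. I would set $U^R:=R^*\otimes_A U$, which carries a natural left $B$-module structure from the left $B$-action on $R^*$. The key step is to exhibit canonical isomorphisms of bimodules $R(T\otimes_2 U)\cong T^R\otimes_2 U^R$ and $R(T\otimes_1 U)\cong T^R\otimes_1 U^R$ — using $T^R\cong R^*\otimes_A T\otimes_{A^{\otimes 2}}R^{\otimes 2}$, the isomorphism $R\otimes_B R^*\cong A$, and the idempotent description of relative tensor products — together with $R(A)=R^*\otimes_A A\otimes_A R\cong R^*\otimes_A R\cong B=\opid_{\mcC_{R(\opA)}^i}$, and then to check that these isomorphisms intertwine the respective $T$-crossings of~\eqref{eq:TxL_T-crossings} and~\eqref{eq:CA_MN_A_T-crossings}, i.e.\ are morphisms in $\mcC_{R(\opA)}^1$ and $\mcC_{R(\opA)}^2$. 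Granting this, I would define $\l^R$ as the composite $T^R\otimes_2 U^R\cong R(T\otimes_2 U)\xra{R(\l)}R(A)\cong B$ in $\mcC_{R(\opA)}^1$, and $\rho^R$ analogously in $\mcC_{R(\opA)}^2$; since $R$ restricts to pivotal equivalences $\mcC_\opA^i\xra{\sim}\mcC_{R(\opA)}^i$ (Proposition~\ref{defnprp:RA_orb_datum}iv)), the morphisms $R(\l)$, $R(\rho)$ are again isomorphisms, hence so are $\l^R$, $\rho^R$, and $(R(\opA),U^R,\l^R,\rho^R)$ is the desired unital orbifold datum.

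The transport along a ribbon Frobenius functor is entirely parallel. Given $F\colon\mcC\ra\mcD$ compatible with $\opA$, with transport $F(\opA)=(F(A),F(T),\a^F,\abar^F,\psi_F,\phi_F)$, I would put $U^F:=F(U)$ with the left $F(A)$-action $F(\l)\circ F_2(A,U)$ as in~\eqref{eq:FA_actions}. Using the natural transformations $F^A_\otimes$, $\overline{F}^A_\otimes$ of~\eqref{eq:Ftensor_morphs} and the strong separability~\eqref{eq:F_strong_sep_cond} of $(F,A)$, one obtains canonical isomorphisms $F^{\opA,i}(T\otimes_j U)\cong F(T)\otimes_j F(U)$ and $F^{\opA,i}(A)\cong F(A)=\opid_{\mcD_{F(\opA)}^i}$ in $\mcD_{F(\opA)}^i$, and then sets $\l^F$ equal to $F(T)\otimes_2 F(U)\cong F^{\opA,1}(T\otimes_2 U)\xra{F^{\opA,1}(\l)}F^{\opA,1}(A)\cong F(A)$ in $\mcD_{F(\opA)}^1$, and $\rho^F$ similarly in $\mcD_{F(\opA)}^2$; these are isomorphisms because the $F^{\opA,i}$ are pivotal functors (Proposition~\ref{defnprp:FA_orb_datum}iv)).

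The hard part is not conceptual but bookkeeping: verifying that the isomorphisms $R(T\otimes_j U)\cong T^R\otimes_j U^R$ and $F^{\opA,i}(T\otimes_j U)\cong F(T)\otimes_j F(U)$ really are morphisms in $\mcC_{R(\opA)}^i$, $\mcD_{F(\opA)}^i$ — i.e.\ that they commute with the $T$-crossings and are compatible with the module structures on $U^R$, $F(U)$ — rather than being mere bimodule isomorphisms. This involves the same string-diagram manipulations used in the proofs of Propositions~\ref{defnprp:RA_orb_datum} and~\ref{defnprp:FA_orb_datum}, in particular keeping track of the extra $\psi$- and $\psi_F$-insertions produced by composing balanced maps (cf.~\eqref{eq:extra_psi},~\eqref{eq:tr-R_id},~\eqref{eq:F_strong_sep_cond}). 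A cleaner way to organise the whole argument, in the spirit of the Remark following Proposition~\ref{defnprp:RA_orb_datum}, is to regard $\opA$ as an algebra object in the pivotal bicategory $\mcA{}lg_\mcC(A)$: then $(U,\l,\rho)$ is precisely a unit $1$-morphism together with its unitor $2$-morphisms, and since an isometric Morita module (respectively a compatible ribbon Frobenius functor) induces a structure-preserving equivalence $\mcA{}lg_\mcC(A)\xra{\sim}\mcA{}lg_\mcC(B)$ (respectively an appropriate pseudofunctor into $\mcA{}lg_\mcD$), it automatically carries this unit datum to one for the transported orbifold datum.
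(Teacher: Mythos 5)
Your proposal matches the paper's proof essentially verbatim: the transported unit is $R^*\otimes_A U$ (resp.\ $F(U)$), the new unitors are the composites of the canonical isomorphisms $R_\otimes$ (resp.\ $F^A_\otimes$) with $R(\l),R(\rho)$ (resp.\ $F(\l),F(\rho)$), and the remaining work is the direct verification of~\eqref{eq:M}. The bicategorical reformulation you sketch at the end is likewise the one the paper itself suggests in the remark following Proposition~\ref{defnprp:RA_orb_datum}.
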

\begin{proof}
For an isometric Morita module ${_AR_B}$ one defines the unital structure to be given by the left $B$-module $R(U) = R^*\otimes_A U$ and the $B$-$B$-bimodule isomorphisms
\begin{align}\nonumber
&
T^R \otimes_2 R(U) \xra{R_\otimes(R^* \otimes_0 T \otimes_1 R,U)} R(T\otimes_2 U) \xra{R(\l)} R(A) \, ,\\
&
T^R \otimes_1 R(U) \xra{R_\otimes(R^* \otimes_0 T \otimes_2 R,U)} R(T\otimes_1 U) \xra{R(\rho)} R(A) \, ,
\end{align}
where the morphisms $R_\otimes(R^* \otimes_0 T \otimes_i R,U)$, $i\in\{1,2\}$ are defined as in~\eqref{eq:Rtensor_morphs}, with $\zeta = \id$ as $R$ is isometric and treating $R^* \otimes_0 T \otimes_i R$ as a left $A$-module.
That they are morphisms in $\mcC_{R(\opA)}^1$ and $\mcC_{R(\opA)}^2$ can be shown by checking~\eqref{eq:M} directly.

For another MFC $\mcD$ and a ribbon Frobenius functor $F\colon\mcC\ra\mcD$ which is compatible with $\opA$, the unital structure of $F(\opA)$ is the left $F(A)$-module $F(U)$ together with the following $F(A)$-$F(A)$-bimodule isomorphisms
\begin{align}\nonumber
&
F(T) \otimes_2' F(U) \xra{F^A_\otimes(T,U)} F(T \otimes_2 U) \xra{F(\l)} F(A) \, , \\
&
F(T) \otimes_2' F(U) \xra{F^A_\otimes(T,U)} F(T \otimes_2 U) \xra{F(\rho)} F(A) \, ,
\end{align}
where $F^A_\otimes(T_i,U)$ are defined as in~\eqref{eq:Ftensor_morphs}.
Again, a direct computation shows that they are morphisms in $\mcD_{F(\opA)}^1$ and $\mcD_{F(\opA)}^2$.
\end{proof}
This immediately implies:
\goodbreak
\begin{cor}
\label{cor:A-IZAC_unital}
Let $\mcC$ be a MFC, $\opA$ a unital orbifold datum in $\mcC$ and $B\in\mcC$ a condensable algebra.
Then the orbifold datum $\Iz(\opA_C)$ as in Corollary~\ref{cor:inv_orb_datum_constr} is also unital.
\end{cor}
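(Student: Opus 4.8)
The plan is to obtain this by chaining the two assertions of the Proposition just above along the two transports that together define $\Iz(\opA_C)$. Write $\opA=(\opA,U,\l,\rho)$ for the given unital orbifold datum, its underlying ordinary orbifold datum being denoted $\opA$ as well. Recall from Section~\ref{subsec:inv_orb_datum} that $\opA_C=R_C(\opA)$ is the Morita transport of $\opA$ along the Morita module $R_C={}_A(A\otimes C)_{A_C}$, and that by Example~\ref{eg:RX_Morita_mod} this Morita module is \emph{isometric}. Also recall from Corollary~\ref{cor:inv_orb_datum_constr} that $(\Iz,\opA_C)$ is compatible, so that the transport $\Iz(\opA_C)$ along the ribbon Frobenius functor $\Iz\colon\mcC\ra\mcC_B^\loc$ is a bona fide orbifold datum.

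First I would apply the Morita-transport half of the Proposition to the isometric Morita module $R_C$. This equips $\opA_C$ with a canonical unital structure: its unit module is the left $A_C$-module $R_C^*\otimes_A U$, and the two structure isomorphisms $T^{R_C}\otimes_i(R_C^*\otimes_A U)\xra{\sim}R_C(A)\cong A_C$, $i\in\{1,2\}$, are built by composing the natural transformation $R_\otimes$ of~\eqref{eq:Rtensor_morphs} (taken with $\zeta=\id$, since $R_C$ is isometric) with $R_C$ applied to $\l$ and to $\rho$, exactly as in the proof of the Proposition. Hence $\opA_C$ is a unital orbifold datum in $\mcC$.

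Second I would apply the ribbon-Frobenius-functor half of the Proposition to $F=\Iz$ and the unital orbifold datum $\opA_C$. The hypothesis required is that $(\Iz,\opA_C)$ be compatible, which is exactly Corollary~\ref{cor:inv_orb_datum_constr}. The Proposition then furnishes $\Iz(\opA_C)$ with a canonical unital structure: the unit module $\Iz(R_C^*\otimes_A U)$ together with the two isomorphisms obtained by composing the natural transformation of~\eqref{eq:Ftensor_morphs} for $\Iz$ and $A_C$ with $\Iz$ applied to the two structure morphisms of $\opA_C$ produced in the previous step. This is precisely the assertion of the corollary.

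There is no genuine obstacle here: the statement is a formal consequence of the Proposition, and the only points to check are that the two hypotheses of the Proposition hold in the present situation, namely that $R_C$ is isometric and that $(\Iz,\opA_C)$ is compatible. Both are already recorded in the excerpt (Example~\ref{eg:RX_Morita_mod} and Corollary~\ref{cor:inv_orb_datum_constr} respectively), so the only work is the bookkeeping of composing the two canonical unital structures, which I would leave implicit.
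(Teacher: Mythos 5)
Your proposal is correct and follows exactly the paper's own (two-line) proof: $\opA_C$ is unital because it is a Morita transport of the unital $\opA$ along the isometric module $R_C$, and $\Iz(\opA_C)$ is then unital because $(\Iz,\opA_C)$ is compatible, both by the preceding Proposition. The extra bookkeeping you spell out (the explicit unit module and structure isomorphisms) is consistent with the constructions in that Proposition's proof.
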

\begin{proof}
$\opA_C$, being a Morita transport of $\opA$, is unital and since $(\Iz,\opA_C)$ are compatible, $\Iz(\opA_C)$ is unital.
\end{proof}

Going back to the previous example, let $\mcC$, $\mcD$, $\mcE$, $B$, $B'$ be as in Proposition~\ref{prp:Witt_eq_conds}.
For the orbifold datum $\opB'$ in $\mcE$ as in Remark~\ref{rem:condensation_orb_datum}, the triple $(U,\l,\rho)$ with $U=B'$ and $\l,\rho\colon B' \otimes_{B'} B' \xra{\sim} B'$ being given by the unitors in ${_{B'}\mcC_{B'}}$, is an obvious unital structure, so that by Corollary~\ref{cor:A-IZAC_unital} $\Iz_B(\opB')$ is unital as needed.

\appendix
\section{Proof of Lemma~\ref{lem:Iz_is_equiv}}
\label{appsec:proof_Iz_is_equiv}
In this appendix section we lay out the details for the steps (a) and (b) in proving Lemma~\ref{lem:Iz_is_equiv} as formulated below its statement.
For the rest of the section, let $B$ be a condensable algebra in a MFC $\mcC$, $\Iz\colon \mcC\ra\mcC_B^\loc$ the local induction functor, $\opA$ an orbifold datum in $\mcC$ and $\opA_C$ its Morita transport along the Morita module $R_C = A \otimes C$.
By Corollary~\ref{cor:inv_orb_datum_constr}, $(\Iz,\opA_C)$ is compatible so that $\Iz(\opA_C)$ is an orbifold datum in $\mcC_B^\loc$ and one has a braided functor $(\Iz)^{\opA_C}\colon\mcC_{\opA_C}\ra(\mcC_B^\loc)_{\Iz(\opA_C)}$.
We will use the abbreviations $d_B = \dim_\mcC B$ for the categorical dimension of $B$, $\opA_C = (A_C, T_C, \a_C, \abar_C, \psi_C, \phi_C)$ for the entries in the orbifold datum $\opA_C$ and $\Iz_\opA=(\Iz)^{\opA_C}$.

\subsubsection*{Step (a)}
We need to show that for two objects $M,N\in\mcC_{\opA_C}$ and an arbitrary morphism $f\in\mcC(M,N)$ between the underlying objects of $\mcC$, the average morphism $\operatorname{avg}f\in\mcC_{\opA_C}(M,N)$ (as defined in~\eqref{eq:avg_map}) is in the image of $(\Iz)^{\opA_C}$.
We first note that using the definition~\eqref{eq:Iz_funct_def} and the auxiliary morphisms~\eqref{eq:rho-gamma_morphs}, it is straightforward to find a morphism $\widetilde{f}\colon M \ra N \otimes B$ in $\mcC$ such that
\begin{equation}
\pic[1.25]{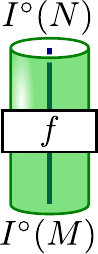} = \pic[1.25]{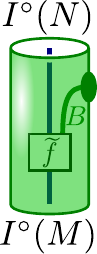} \, ,
\end{equation}
which furthermore can be taken to be a left $A_C$-module morphism since the map $\operatorname{avg}$ factors through the projection on the space of such morphisms.
Specialising the expression~\eqref{eq:avg_map} for the orbifold datum $\Iz(\opA_C)$ and the expressions~\eqref{eq:psi-Iz} and~\eqref{eq:phi-Iz} for its entries $\psi_{\Iz}$ and $\phi_{\Iz}$ yields:
\goodbreak
\begin{align}\nonumber
&
(\phi \cdot d_B)^{-4} \cdot \operatorname{avg}f\\ \nonumber
&=
\pic[1.25]{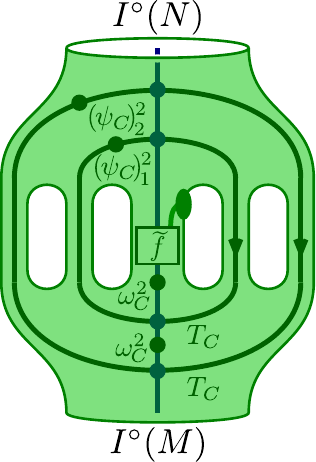} \hspace{-4pt}\stackrel{\eqref{eq:Iz_non_separable}}{=}\hspace{-4pt}
\pic[1.25]{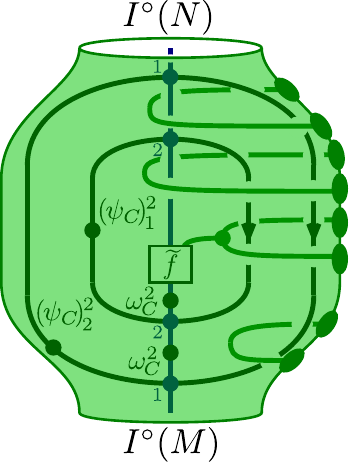} \hspace{-4pt}\stackrel{(*)}{=}\hspace{-4pt}
\pic[1.25]{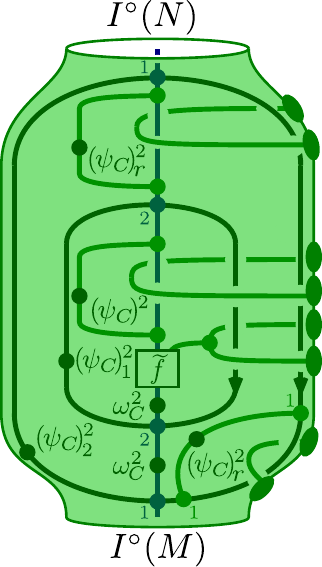}\\ \nonumber
&\stackrel{(**)}{=} \frac{1}{(d_B)^3}
\pic[1.25]{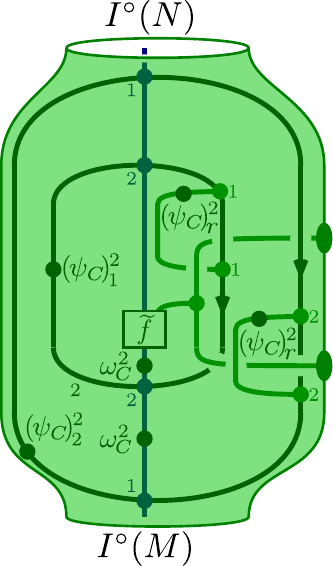} \stackrel{\eqref{eq:scissors_on_B_ito_algs}}{=} \frac{1}{(d_B)^5}
\pic[1.25]{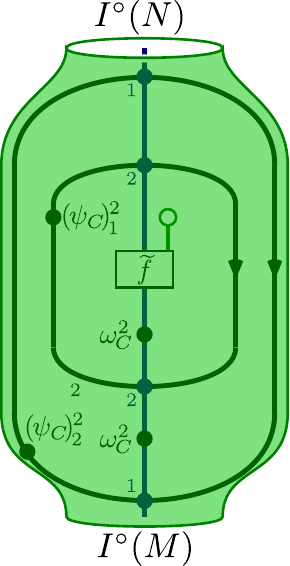}\\ \label{eq:avg-Iz_calc}
&=
(\phi\cdot d_B)^{-4} \cdot \Iz\bigg(d_B^{-1} \cdot \operatorname{avg}((\id_N\otimes\vareps)\circ\widetilde{f})\bigg) \, .
\end{align}
Here in equalities $(*)$ and $(**)$ we have used the separability property of the algebra $A_C$, the commutation relations between $A_C$-actions and the $T$-crossings of $M$ and $N$, as well as the identity~\eqref{eq:scissors_on_B_ito_algs}, for example one of the steps is obtained as follows:
\goodbreak
\begin{align} \nonumber
&
\pic[1.25]{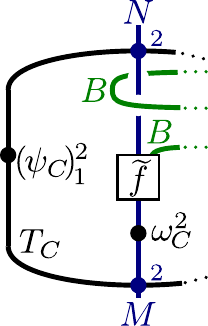} =
\pic[1.25]{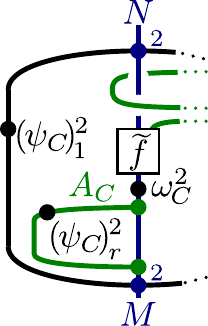} =
\pic[1.25]{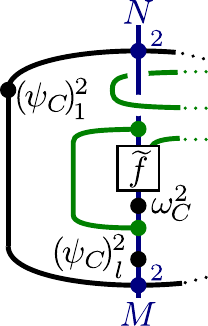} =
\pic[1.25]{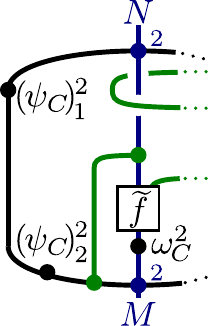}\\
&=
\pic[1.25]{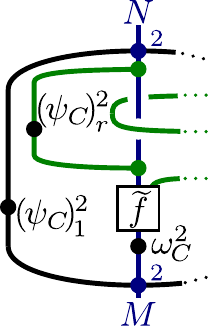} = \frac{1}{d_B}
\pic[1.25]{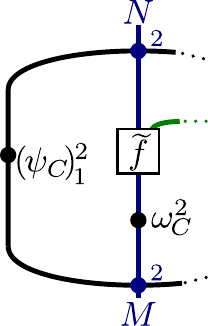} \, .
\end{align}
As the morphism $\operatorname{avg}((\id_N\otimes\vareps)\circ\widetilde{f})$, on which $\Iz$ is applied on the right-hand side of~\eqref{eq:avg-Iz_calc}, is in $\mcC_{\opA_C}$, this computation shows that $\operatorname{avg}f$, and therefore any morphism in $(\mcC_B^\loc)_{\opA_C}$, is in the image of $\Iz_\opA$ as needed.

\subsubsection*{Step (b)}
Let $U\colon\mcC_B^\loc\ra\mcC$ be the forgetful functor.
For an arbitrary object $M\in\mcC_B^\loc$ we claim that one has
\begin{equation}
\label{eq:PI-IP_iso}
    P_{\Iz(\opA_C)}(M) \cong \Iz_\opA(P_{\opA_C}(U(M))) \quad\text{in}~(\mcC_B^\loc)_{\Iz(\opA_C)} \, ,
\end{equation}
which we will show by constructing an invertible morphism.

Recall from~\eqref{eq:pipe_funct_def} and Remark~\ref{rem:pipe_objs_on_induced_bimods} the definition of the pipe functor $P_\opA\colon\mcC\ra\mcC_\opA$ for an arbitrary orbifold datum $\opA=(A,T,\a,\abar,\psi,\phi)$ in a MFC $\mcC$.
Using the canonical isomorphisms $T\otimes_2 A \cong T$, $A \otimes_2 T^* \cong T^*$, for all $X\in\mcC$ one can rewrite
\begin{align}\nonumber
P_\opA(X)
&\cong 
\im \pic[1.25]{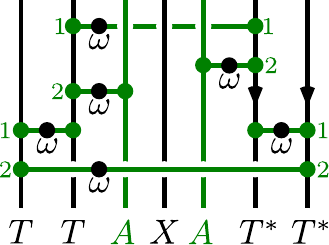} \cong
\im \pic[1.25]{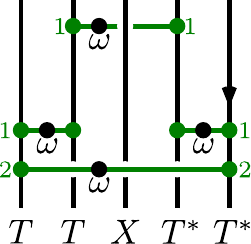}\\ \label{eq:PX_simplified}
&\cong
\im \pic[1.25]{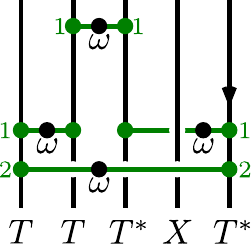} \cong
\im \pic[1.25]{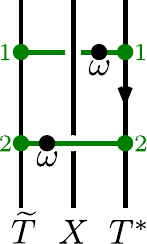} ,
\end{align}
where on the right-hand side we have merged the left three $T$-labelled lines into a single $A$-$A^{\otimes 2}$-bimodule $\widetilde{T} := T \otimes_1 (T_1 \otimes_A T^*_1)$ with $T_1$ denoting the $A$-$A$-bimodule obtained from $T$ by forgetting the second right $A$-action.
We adapt the notations~\eqref{eq:right_T_actions_commute} and~\eqref{eq:psis_on_T} to $\widetilde{T}$.

Let us use the expression on the right-hand side of~\eqref{eq:PX_simplified} for the orbifold datum $\Iz(\opA_C)$.
Since $(\Iz,A_C)$ is strongly separable, $\Iz$ preserves relative tensor products with respect to $A_C$ to those with respect to $\Iz(A_C)$ (see Definition~\ref{def:F-A_strongly_sep_full} and Proposition~\ref{prp:F-A_pivotal_embed}) so one has $\widetilde{\Iz(T_C)} \cong \Iz(\widetilde{T}_C)$.
The isomorphism~\eqref{eq:PI-IP_iso} can then be explicitly given by the balanced maps
\begin{equation}
\label{eq:eeinv_morphs}
e = \pic[1.25]{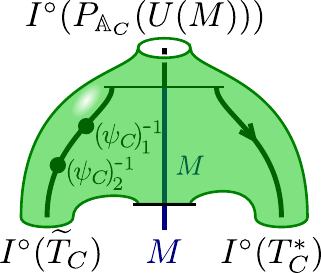} \, , \quad
e^{-1} = \pic[1.25]{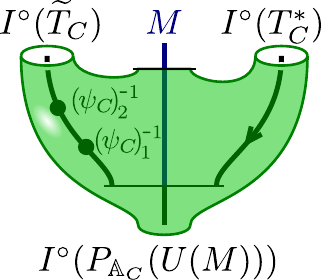} \, ,
\end{equation}
where the morphisms $M\lra\Iz(M)$ are the ones splitting the first idempotent in~\eqref{eq:Iz_simplify}.
To show that they indeed satisfy all the requirements, one needs to check that they are indeed morphisms in $(\mcC_B^\loc)_{\Iz(\opA_C)}$ (i.e.\ satisfy~\eqref{eq:M}) and are inverses of each other.
All this is done by straightforward calculations, we only show the inverse property.

The identity $e\circ e^{-1} = \id$ follows from the computation in Figure~\ref{fig:eeinv_id}, where on the left-hand side one indeed recognises the morphism $d_B^{-2} \cdot e\circ e^{-1}$ with the $(\psi_C)_1^2$- and $(\psi_C)_2^2$-insertions and the factor $d_B^{-2}$ coming from composing the maps, balanced with respect to two actions of the algebra $(\Iz(A_C), \psi_{\Iz})$ as explained in~\eqref{eq:extra_psi}, and the right-hand side is equal to $d_B^{-2} \cdot \id$ by property~\eqref{eq:scissors_on_B_ito_algs}.

The other identity $e^{-1}\circ e = \id$ is implied by the calculation in Figure~\ref{fig:einve_id}, where on the left-hand side one recognises the balanced map corresponding to the identity morphism of $P_{\Iz(\opA_C)}(M)$ and the right-hand side is exactly the balanced map corresponding to $e^{-1}\circ e$: the $B$-line can be absorbed to the projection $\Iz(U(M))\ra M$ and the two $A_C$-lines result from splitting of the idempotent~\eqref{eq:PX_simplified} on  $\widetilde{T}_C \otimes U(M) \otimes T$ defining $P_{\opA_C}(U(M))$.
\begin{figure}
\captionsetup{format=plain, indention=0.5cm}
%\centering
\begin{subfigure}[b]{1.0\textwidth}
	%\centering
	\pic[1.25]{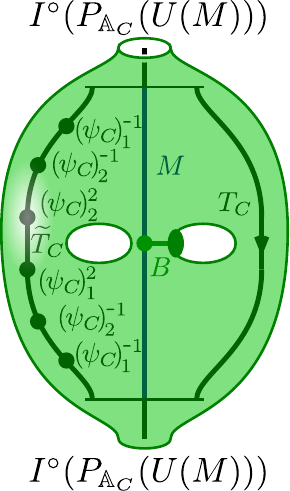} \hspace{-4pt}$\stackrel{\eqref{eq:Iz_non_separable}}{=}$\hspace{-10pt}
    \pic[1.25]{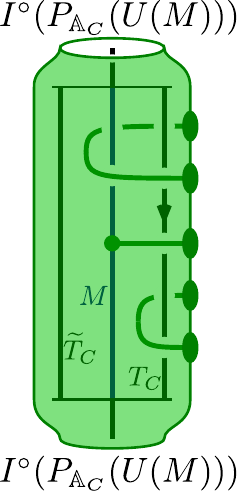} \hspace{-10pt}$=$\hspace{-10pt}
    \pic[1.25]{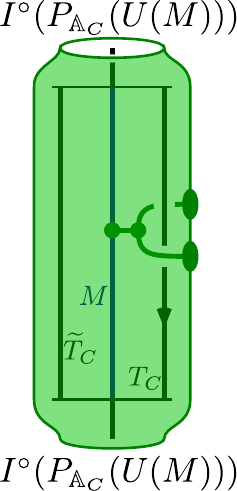} \hspace{-10pt}$=$\hspace{-4pt} 
    \pic[1.25]{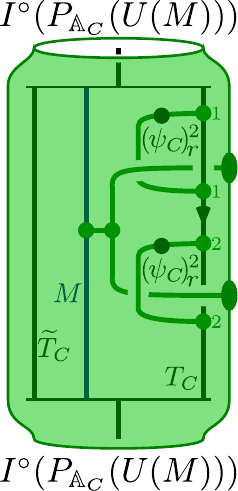}
	\caption{}
	\label{fig:eeinv_id}
\end{subfigure}\\
\begin{subfigure}[b]{1.0\textwidth}
	%\centering
	\pic[1.25]{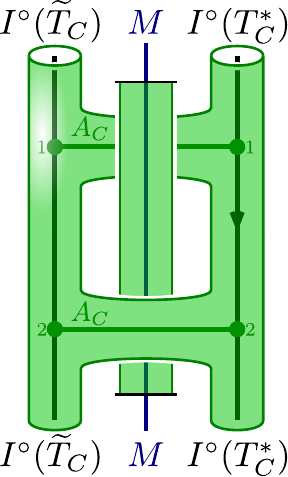} $\stackrel{\eqref{eq:Iz2_inclusion}}{=}$
    \pic[1.25]{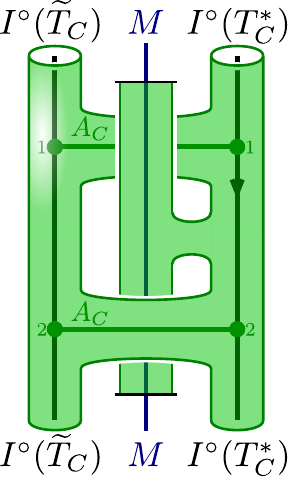} $\stackrel{\eqref{eq:F_graph_calc:braided},~ \eqref{eq:F_graph_calc:cobraided}}{=}$
    \pic[1.25]{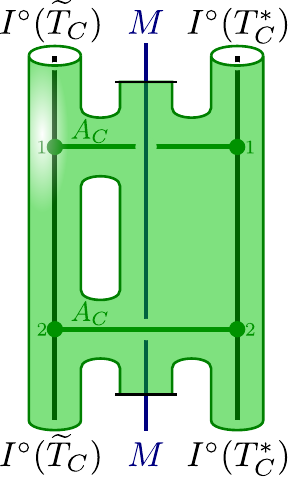}\\
    $\stackrel{\eqref{eq:Iz_non_separable}}{=}$
    \pic[1.25]{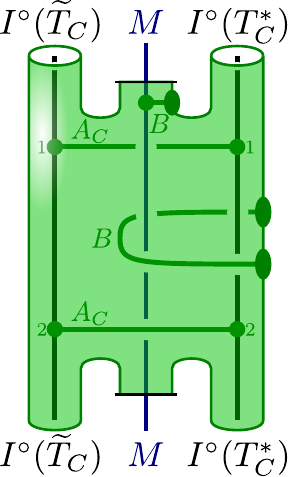} $=$
    \pic[1.25]{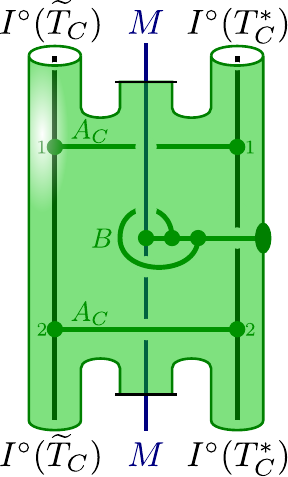} $=$
    \pic[1.25]{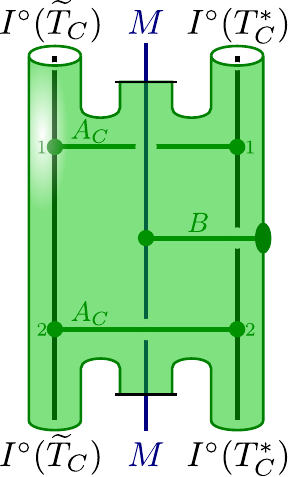}
	\caption{}
	\label{fig:einve_id}
\end{subfigure}
\caption{
Showing that the morphisms $e$, $e^{-1}$ in~\eqref{eq:eeinv_morphs} are inverses of each other.
(a) computation implying $e \circ e^{-1} = \id$; (b) computation implying $e^{-1} \circ e = \id$.
}
\label{fig:inverses_computation}
\end{figure}

\newcommand{\arxiv}[2]{\href{http://arXiv.org/abs/#1}{#2}}
\newcommand{\doi}[2]{\href{http://doi.org/#1}{#2}}

\end{document}